\def\section{\@startsection{section}{1}%
  \z@{2.5ex plus 1ex minus .2ex}%
  {1.6ex plus .2ex}%
  {\centering\normalfont\bfseries\large}}
\definecolor{indigo}{rgb}{0.29, 0.0, 0.51}  
\definecolor{dgreen}{RGB}{0, 161, 75}
\theoremstyle{plain}
\newtheorem{theorem}{Theorem}
\newtheorem{corollary}[theorem]{Corollary}
\newtheorem{proposition}[theorem]{Proposition}
\newtheorem{lemma}[theorem]{Lemma}
\newtheorem{question}[theorem]{Question}
\newtheorem{conjecture}[theorem]{Conjecture}
\theoremstyle{definition}
\newtheorem{definition}[theorem]{Definition}
\theoremstyle{remark}
\newtheorem{remark}[theorem]{Remark}
\newtheorem{example}[theorem]{Example}
\numberwithin{theorem}{section}
\newcommand{\dfn}[1]{{\em #1}}        
\newcommand{\R}{\mathbb{R}}           
\newcommand{\Q}{\mathbb{Q}}           
\newcommand{\Z}{\mathbb{Z}}           
\newcommand{\C}{\mathbb{C}}           
\newcommand{\N}{\mathbb{N}}           
\newcommand*\bigcdot{\mathpalette\bigcdot@{0.6}}
\newcommand*\bigcdot@[2]{\mathbin{\vcenter{\hbox{\scalebox{#2}{$\m@th#1\bullet$}}}}}
\newcommand{\vects}[2]{\left(\begin{smallmatrix} #1 \\ #2 \end{smallmatrix}\right)}  
\DeclareMathOperator{\Tight}{Tight}
\DeclareMathOperator\tb{tb}                   
\DeclareMathOperator\rot{rot}                 
\DeclareMathOperator\self{sl}                 
\DeclareMathOperator\tw{tw}                   
\DeclareMathOperator{\tor}{tor}               
\DeclareMathOperator{\dep}{d}
\DeclareMathOperator{\ten}{t}    
\DeclareMathOperator{\spinc}{\mathrm{Spin}^{\textit{c}}}  
\DeclareFontFamily{U} {cmr}{}
\DeclareFontShape{U}{cmr}{m}{n}{
  <-6> cmr5
  <6-7> cmr6
  <7-8> cmr7
  <8-9> cmr8
  <9-10> cmr9
  <10-12> cmr8
  <12-> cmr9}{}
\DeclareSymbolFont{Xcmr} {U} {cmr}{m}{n}
\DeclareMathSymbol{\Phi}{\mathord}{Xcmr}{8}
\begin{document}

\title{Non-loose torus knots}

\author{John B. Etnyre}

\author{Hyunki Min}

\author{Anubhav Mukherjee}

\address{School of Mathematics \\ Georgia Institute of Technology \\  Atlanta, GA}
\email{etnyre@math.gatech.edu}

\address{Department of Mathematical Sciences \\ KAIST \\ Daejeon, Korea}
\email{hmin27@kaist.ac.kr}

\address{School of Mathematics \\ Georgia Institute of Technology \\ Atlanta, GA}
\email{etnyre@math.gatech.edu}

\begin{abstract}
We give a complete coarse classification of Legendrian and transverse torus knots in any contact structure on $S^3$. We also give the first examples of virtually overtwisted contact structures that have (arbitrarily large) Giroux torsion. 
\end{abstract}

\maketitle
\tableofcontents

\section{Introduction}
The study of Legendrian and transverse knots in contact $3$-manifolds has gone hand in hand with the development and application of contact geometry, with many key features (like tightness) and constructions (like Legendrian and transverse surgery) relying on them. Moreover, over the last 20 or so years, a rich and beautiful theory of Legendrian and transverse knots has developed. However, there has been surprisingly little work on Legendrian knots in overtwisted contact manifolds. This might partially be due to the fact that overtwisted contact structures are classified and are determined by their algebraic topology \cite{Eliashberg89}. However, non-loose Legendrian and transverse knots in overtwisted contact structures, which are those with tight complements, are of great interest. For example, Legendrian surgery on a non-loose Legendrian knot might produce tight contact structures and hence be the key to the classification of tight contact structures on certain manifolds. Indeed, in a forthcoming paper by the first two authors, Tosun and Varvarezos~\cite{EtnyreMinTosunpre}, the results in this paper will be used to classify tight contact structures on some small Seifert fibered spaces where such a classification has remained elusive. The result in this paper will also be used in joint work of the first two authors and Piccirillo and Roy \cite{EtnyreMinPiccirilloRoyPre} to construct explicit symplectic embeddings of rational homology balls into $\C P^2$ and its blowups. 
In addition, the results in this paper will illuminate many new features of non-loose knots, showing that there is as rich a structure to them as for the much studied Legendrian and transverse knots in tight contact manifolds. 

In this paper, we give a complete coarse classification of non-loose Legendrian and transverse torus knots in any overtwisted contact structure on $S^3$. Combining with the results of~\cite{Etnyre13} and ~\cite{EtnyreHonda01}, this completes the coarse classification of Legendrian and transverse torus knots in any contact structure on $S^3$. In particular, this gives the first classification of Legendrian knots that involves Giroux torsion in their complement. The existence of such knots was shown in \cite{Etnyre13}, but all previous classification results only considered the case without Giroux torsion. 

Previously, non-loose knots were only completely classified for Legendrian and transverse representatives of the unknot \cite{EliashbergFraser98}. There have been several partial classifications for other knots \cite{Etnyre13} and in particular torus knots \cite{GeigesOnaran20a, Matkovic20Pre}. In Section~\ref{thealgorithm}, we will give a simple algorithm to classify non-loose Legendrian and transverse torus knots. In Section~\ref{sec:2n+1} and~\ref{sec:(5,8)}, we will also give closed form classifications for non-loose Legendrian and transverse $(2, \pm(2n+1))$-torus knots and $(5,\pm 8)$-torus knots. 

\subsection{New techniques}
The proofs of our main results relay on two main ingredients, convex surfaces and the geometry of the Farey graph. While convex surface theory is now a well-known part of contact geometry, we have to develop several new techniques. The most interesting might be the ability to add Giroux torsion to some virtually overtwisted contact structures, see Lemma~\ref{lem:staytight} and its associated lemmas, which relies on what appears to be a novel application of Honda's work on tricky non-rotative layers \cite{Honda01}. To the authors' knowledge, all previous work involving Giroux torsion --- its existence or adding it to an existing contact structure --- has been restricted to universally tight contact structures. In fact, Conjecture~1.4 of Giroux \cite{Giroux00}, in part, asks if on a closed oriented $3$-manifold there are only finitely many isotopy classes of tight but virtually overtwisted contact structures, which would imply that one could not add an arbitrary amount of Giroux torsion to such contact structures. (In past examples, there was no Giroux torsion on tight but virtually overtwisted contact structures, so it would seem that one might not be able to add any torsion to a virtually overtwisted contact structure while preserving tightness.) We give the first examples of the existence of Giroux torsion for tight but virtually overtwisted contact structures and, in fact, produce infinite families of tight but virtually overtwisted contact structures on a fixed manifold. These examples are on the complement of some torus knots, but in a future work, the first two authors will give similar examples on closed $3$-manifolds \cite{EtnyreMinPre}, thus disproving the above mentioned conjecture. 

Another interesting technique is the ability to detect non-loose knots by carefully applying the state transition technique in overtwisted contact structures. This allows us to determine when all Legendrian $(p,q)$-knots are non-loose without relying on contact surgery diagrams or invariants from Heegaard Floer homology. This is done for $(p,q)$-torus knots with $\tb<pq$ in Propositions~\ref{wings} and~\ref{diamonds} and the same arguments work for $\tb=pq$ as well; moreover, Propositions~\ref{propxwing} and~\ref{inftyV} show that all non-loose knots with $\tb>pq$ are destabilizations of ones with $\tb=pq$, so the non-looseness can also be seen without contact surgery diagrams or invariants from Heegaard Floer homology. 

We also develop new techniques 
to analyze pairs of paths in the Farey graph that approach a given fraction from different directions, see Section~\ref{subsec:pathsinFG}. From this, we can, among other things, determine when two non-loose knots stabilize to become equivalent, see Propositions~\ref{merge} and~\ref{alldiamonds}. We can also use this to calculate the classical invariants of non-loose torus knots without relying on contact surgery diagrams, see Lemma~\ref{computer}; and this, in particular, allows us to distinguish non-loose Legendrian knots after adding Giroux torsion to their complements, see Lemma~\ref{postorsion}  (there does not seem to be a way to do this using the more classical surgery diagram approach to computing rotation numbers). 

\subsection{Prior classification results}\label{sec:prior}
So far, with the exception of \cite{Etnyre13, GeigesOnaran15}, non-loose Legendrian knots have only been studied in $S^3$. To discuss these results, we recall that Eliashberg \cite{Eliashberg89} classified overtwisted contact structures on all $3$-manifolds, and on the $3$-sphere they are in one-to-one correspondence with $\Z$. We will denote by $\xi_n$, for $n\in\Z$, the overtwisted contact structure on $S^3$ with $d_3(\xi_n)=n$. See Section~\ref{htpclasses} for the definition of the $d_3$-invariant.\footnote{We adopt the convention that the $d_3$-invariant of contact structures on $S^3$ are integers and the standard tight contact structure has $d_3$-invariant $0$. This differs from the original definition in \cite{Gompf98} by $1/2$. We also note that some papers enumerate overtwisted contact structures with their Hopf invariant which is the negation of the $d_3$-invariant.}

\subsubsection{Basic notation for Legendrian and transversal knots.}
We denote the Thurston-Bennequin invariant of a Legendrian knot $L$ by $\tb(L)$ and its rotation number by $\rot(L)$. For a transverse knot $T$ we denote its self-linking number by $\self(T)$. We use the notation $S_+(L)$ to denote the positive stabilization of $L$ and $S_-(L)$ its negative stabilization. We also denote the amount of convex torsion in the complement of a standard neighborhood of a Legendrian knot $L$ by $\tor(L)$. See Section~\ref{oldclassification} for the definition of convex torsion. Note that all classification results so far, except those in \cite{Etnyre13}, only considered Legendrian knots with $\tor = 0$. We say knots are \dfn{coarsely classified} if they are classified up to co-orientation preserving contactomorphism, smoothly isotopic to the identity. It is well-known that loose knots are coarsely classified by the classical invariants \cite[Theorem~1.4]{Etnyre13}. We say two Legendrian knots $L_1$ and $L_2$ are \dfn{equivalent} if there is a co-orientation preserving contactomorphism, smoothly isotopic to the identity, sending $L_1$ to $L_2$, and similarly for transverse knots. 

\subsubsection{Non-loose unknots}
Previously, the only knot type for which there was a complete coarse classification of non-loose knots was the unknot. In \cite{EliashbergFraser98}, Eliashberg and Fraser showed that only the contact structure $\xi_1$ supports non-loose Legendrian unknots and the non-loose representatives are: $L_\pm^i$ for $i\geq 2$ and $L^1$ such that 
\begin{align*}
  \tb(L_\pm^i)=i &\text{ and } \rot(L_\pm^i)=\pm(i-1),\\
  \tb(L^1) = 1 &\text{ and } \rot(L^1) = 0,
\end{align*}
and satisfy
\begin{align*}
  &S_\pm(L_\pm^{i+1})=L_\pm^i \text{ and } S_\pm(L_\pm^2)=L^1,\\ 
  &S_\mp(L_\pm^i) \text{ and } S_\pm(L^1) \text{ are loose.}
\end{align*}
From this, one can also see that there are no non-loose transverse unknots. 

To visualize the classical invariants of Legendrian knots, we consider the \dfn{mountain range} of Legendrian knots for a given smooth knot type. \begin{definition}
Given a smooth knot type $K$ and a fixed contact structure $\xi$, we denote by $\mathcal{L}(K)$ the set of Legendrian knots in $\xi$ up to coarse equivalence\footnote{Usually, $\mathcal{L}(K)$ denotes the set of Legendrian knots up to Legendrian isotopy. In this paper however, since we only consider coarse classification, we adopt this definition.} and consider a map $\Phi:\mathcal{L}(K)\to \Z^2$ that sends $L\in \mathcal{L}(K)$ to $(\rot(L), \tb(L))$. The image of $\Phi$ is called the \dfn{mountain range} of $K$. We can also restrict $\Phi$ to the subset $\mathcal{L}_{nl}(K)$ of non-loose Legendrian knots realizing $K$; and since we completely understand loose Legendrian knots isotopic to $K$, we will refer to the mountain range of $K$ in some overtwisted contact structure, as the image of $\Phi$ restricted to $\mathcal{L}_{nl}(K)$. We also note that a non-loose knot can have convex torsion in its complement. So when discussing mountain ranges, we will sometimes emphasize if the non-loose knots have no convex torsion in their complement (we will say $\tor=0$) or specify what the torsion is for given points in the mountain range.
\end{definition} 

\subsubsection{Partial results for non-loose torus knots}
In \cite{GeigesOnaran20a}, Geiges and Onaran gave the next coarse classification results for some torus knots with specific classical invariants. They considered only ``strongly exceptional" knots. The term exceptional is what we are calling non-loose, and strongly means there is no Giroux torsion in the complement. In this paper we will say such knots are non-loose without convex torsion, or non-loose knots with $\tor=0$, see Definition~\ref{defnoftor}. Their results are as follows.

\begin{itemize}
  \item[] {\bf Left-handed trefoil:} There are exactly two non-loose Legendrian representatives without convex torsion with $\tb=-5$ or $\tb<-6$, and there is at least one with $\tb=1$ and at least two for all other values of $\tb$. All these examples are in $\xi_2$.\\
  \item[] {\bf Right-handed trefoil:} There are exactly four non-loose Legendrian knots having $\tor=0$ and $\tb=7$. Two have $\rot=\pm 4$ and live in $\xi_1$ and the other two have $\rot=\pm 8$ and live in $\xi_{-1}$. They also constructed non-loose Legendrian knots in $\xi_{-1}$ with $\tb$ realizing any integer less than or equal to $5$ and such Legendrian knots in $\xi_{1}$ with $\tb$ realizing any integer greater than or equal to $6$. \\
  \item[] {\bf Other torus knots:} For $p \geq 2$ and $n \geq 1$, there are exactly $2p$ non-loose Legendrian $(p,np+1)$-knots having $\tor=0$ and $\tb=np^2+p+1$. If $n \geq 2$, then there are exactly $2(p-1)(n-1)$ such non-loose Legendrian $(p, -(np-1))$-torus knots having $\tor=0$ and $\tb=-np^2+p-1$. They also worked out the rotation numbers of these knots and which overtwisted contact structures in which they live.  
\end{itemize}

In \cite{Matkovic20Pre}, Matkovi\v c coarsely classified non-loose negative $(p,q)$-torus knots with $\tb < pq$ and $\tor =0$. The classification is in terms of specific contact surgery descriptions in Figure~\ref{fig:torus-knots}, and to determine if a given surgery description is non-loose one must determine if Legendrian surgery on the knot produces a tight contact manifold. This is translated into information about the rotation numbers in the contact surgery diagram; which, in turn, is equivalent to our ``pairs of decorated paths'' description used in our classification algorithm given in Section~\ref{thealgorithm}.

\subsubsection{Other results on non-loose Legendrian knots}
While we do not consider links in this paper, we do mention that Geiges and Onaran have coarsely classified all non-loose Legendrian Hopf links (including ones with $\tor > 0$) in \cite{GeigesOnaran20b}. This and Eliashberg and Fraser's result above are the only complete coarse classification of non-loose representatives of a link type. 

The only results involving the classification up to Legendrian isotopy (not just the coarse classification) of non-loose Legendrian knots is the work of Vogel, \cite{Vogel18}. He showed that for each $\tb$ and $\rot$ realized by a non-loose unknot above, there are exactly two non-loose unknots up to Legendrian isotopy. We believe that some of our results below can also be upgraded to classifications up to Legendrian isotopy, but that will be the subject of future work. 

We also note that in addition to the above works, there have been many constructions of non-loose knots, see for example \cite{Ghiggini06b, Ghiggini06, LiscaOzsvathStipsiczSzab09}. 

\subsection{General non-loose torus knots}\label{introgeneral}
From now on, we assume that $|q| > p > 0$. In Section~\ref{thealgorithm}, we will give a simple algorithm to classify non-loose Legendrian and transverse representatives of any $(p,q)$-torus knot. This algorithm is justified in Section~\ref{nonloosetorusknots} after a careful analysis of the tight contact structure on the complement of torus knots is carried out in Section~\ref{classificationoncomplement}. In Section~\ref{overview} will will outline the strategy for classifying non-loose torus knots to motivate the many technical results that are proven in the rest of Section~\ref{classificationoncomplement}. (To understand Section~\ref{overview}, one will need to read Section~\ref{pairsodecorated} and~\ref{subsec:pathsinFG} for the notation used, and possibly some earlier sections if one is not familiar with paths in the Farey graph.) In this section, we discuss the properties of the classifications that have a simple closed form. 
The first observation from our classification concerns destabilizing non-loose Legendrian knots. 

\begin{theorem}\label{gen1}
  Any non-loose $(p,q)$-torus knot with $\tb\not=pq$ destabilizes if $pq>0$. Similarly, any non-loose $(p,q)$-torus knot with $\tb\not=pq$ or $|pq|-|p|-|q|$ destabilizes if $pq<0$. 
   Non-loose Legendrian knots with $\tb=pq$ sometimes destabilize and sometimes do not. 
\end{theorem}

We can also restrict the potential $d_3$-invariants of overtwisted contact structures that support non-loose torus knots. 

\begin{theorem}\label{parity}
Suppose $\xi$ is an overtwisted contact structure on $S^3$ supporting a non-loose Legendrian $(p,q)$-torus knot $L$ with $\tor(L)=n$ and if $pq>0$ then assume that $\xi$ is not $\xi_0$ or $\xi_{-pq+p+q}$, then 
  \[
    d_3(\xi)=
    \begin{cases}
    \text{odd} & \text{ if $pq>0$ and $n \in \Z$, or $pq<0$ and $n$ is a half-integer,}\\
    \text{even} & \text{ if $pq<0$ and $n \in \Z$, or $pq>0$ and $n$ is a half-integer.}
    \end{cases}
  \]
When $pq>0$ and $\xi=\xi_0$, respectively $\xi_{-pq+p+q}$, then knots with $\tb> pq-p-q$ have half-integer, respectively integer, torsion, while those with $\tb\leq pq-p-q$ have integer, respectively half-integer, torsion. 
\end{theorem}

To state our classification, we first let $q/p$ be a rational number with $|q/p|>1$. If $pq<0$, we have the continued fraction
\[
  \frac qp = [a_1,\ldots, a_m] = a_1-\frac{1}{a_2-\frac{1}{\cdots - \frac{1}{a_m}}}
\]
where $a_i \leq -2$ for $1 \leq i \leq m$. If $pq>0$, we consider the continued fraction
\[
  \left( \frac pq -1 \right)^{-1}=[a_1,\ldots, a_m].
\]
In either case, we consider the continued fraction
\[
  \left(\frac qp - \left\lceil \frac qp\right\rceil\right)^{-1}=[b_1,\ldots, b_n].
\]
We now set
\[
  m(p,q)=|(a_1+1)\cdots(a_{m-1}+1)a_m|\cdot|(b_1+1)\cdots (b_{n-1}+1)b_n|
\]
and
\[
  n(p,q)=|(a_1+1)\cdots(a_m+1)|\cdot|(b_1+1)\cdots (b_n+1)|.
\]
According to Giroux and Honda's classification of tight contact structures on solid tori, which we recall in Section~\ref{oldclassification}, $m(p,q)$ is the number of tight contact structures on a solid torus with convex boundary having two dividing curves of slope $q/p$ times the number of such contact structures with dividing slope $p/q$, and $n(p,q)$ is the number of tight contact structures on $L(p,-q)\#L(q,-p)$, where we use the convention that $L(p,q)$ is ${-p/q}$-surgery on the unknot.

We can now enumerate all the non-loose Legendrian $(p,q)$-torus knots with $\tb>pq$ and $\tor = 0$. 

\begin{theorem}\label{thm:>pq} 
  Suppose $|q| > p > 1$. There are exactly $2n(p,q)$ non-loose $(p,q)$-torus knots with $\tb = i>pq$ and $\tor = 0$ which we denote by 
  \[
    L_{\pm,k}^{i}\, \text{ for }1\leq k\leq n(p,q)
  \]
  except when $pq<0$ and $i=|pq|-|p|-|q|$. We also know that $\rot(L_{+,k}^{i})=-\rot(L_{-,k}^{i})$  and
  \begin{align*}
    &S_\pm(L_{\pm,k}^{i})=L_{\pm,k}^{i-1}\, \text{ for $i>pq+1$,}\\&S_\mp(L_{\pm,k}^{i}) \text{ is loose for $i > pq$.}
  \end{align*}
Moreover, $S^j_\pm(L_{\pm,k}^{pq+1})$ is non-loose for any $j > 0$. Any $L_{\pm,k}^{i}$ can be realized as a Legendrian knot shown in Figure~\ref{fig:tb=pq+m}. 
  
 When $pq<0$ and $\tb=|pq|-|p|-|q|$, there are $2n(p,q)+1$ non-loose representatives. Of these, $2n(p,q)$ are denoted as above and have the same properties under stabilization. The extra Legendrian knot is denoted by $L_e$. We have $\rot(L_e)=0$ and
  \[
    S_\pm(L_e)=L_{\pm, 1}^{|pq|-|p|-|q|-1}.
  \]
Moreover, $L_e$ is contained in the contact structure $\xi_{|pq|-|p|-|q|+1}$.
\end{theorem}

\begin{figure}[htbp]{\tiny
  \vspace{0.1cm}
  \begin{overpic}[scale=1,tics=10]{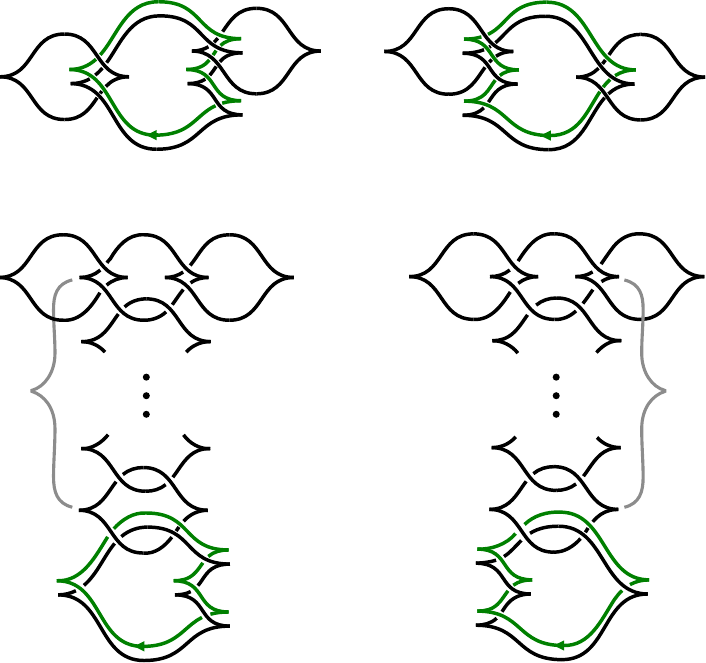}
    \put(-5,305){$(\frac{p'-p}{p'})$}
    \put(145,280){$(\frac{q'}{q'-q})$}
    \put(100,250){$(+1)$}
    \put(45,310){$L_+$}
    
    \put(179,312){$(\frac{p'-p}{p'})$}
    \put(330,265){$(\frac{q'}{q'-q})$}
    \put(225,250){$(+1)$}
    \put(285,310){$L_-$}
    
    \put(-5,205){$(\frac{p'-p}{p'})$}
    \put(130,205){$(\frac{q'}{q'-q})$}
    \put(75,209){$(-1)$}
    \put(105,153){$(-1)$}
    \put(105,102){$(-1)$}
    \put(105,72){$(-1)$}
    \put(115,55){$L_+$}
    \put(113,12){$(+1)$}
    \put(-8,130){$m-1$}

    \put(192,205){$(\frac{p'-p}{p'})$}
    \put(327,205){$(\frac{q'}{q'-q})$}
    \put(247,209){$(-1)$}
    \put(217,153){$(-1)$}
    \put(217,102){$(-1)$}
    \put(217,72){$(-1)$}
    \put(217,55){$L_-$}
    \put(211,12){$(+1)$}
    \put(325,130){$m-1$}
  \end{overpic}}
  \caption{Half of the $2n(p,q)$ of non-loose Legendrian $(p,q)$-torus knots with $\tb = pq + 1$ and $\tor=0$ are shown on the top left and the other half on the top right. Similarly, half of the $2n(p,q)$ of non-loose Legendrian $(p,q)$-torus knots with $\tb = pq + m$ for $m>1$ and $\tor=0$ are shown on the bottom left and the other half on the bottom right. Here, ${q'/p'}$ is the largest rational number such that $pq'-p'q=1$.}
  \label{fig:tb=pq+m}
\end{figure}

In Section~\ref{htpclasses}, we show how to compute the rotation number of $L_{\pm,k}^{i}$ and the $d_3$-invariant of the contact structure on which it lives, but we note that $L_{+,k}^{i}$ and $L_{-,k}^{i}$ live in the same contact structure. We note that surgery diagrams as in Figure~\ref{fig:tb=pq+m} first appeared in work of Geiges and Onaran \cite{GeigesOnaran20a} for specific torus knots, but it is clear from their work that one can construct examples of non-loose $(p,q)$-torus knots with $\tb>pq$ for any $p$ and $q$. Our work shows that all such knots, except one when $pq<0$, come from these diagrams. 

We can similarly enumerate non-loose Legendrian knots with $\tb=pq$ and $\tor=0$. 

\begin{theorem}\label{thm:<=pq}
  Suppose $|q| > p > 1$. The number of non-loose Legendrian $(p,q)$-torus knots with $\tb=pq$ and $\tor =0$ is exactly
  \[
    \begin{cases}
    m(p,q) & \text{ if } pq>0,\\
    m(p,q)-2\left|\left\lceil \frac qp\right\rceil\right| & \text{ if } pq<0
    \end{cases}
  \]
  and any such Legendrian knot can be realized as a Legendrian knot shown in Figure~\ref{fig:torus-knots}. 
\end{theorem}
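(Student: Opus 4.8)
The plan is to pass to the complement of $L$ and reduce the count to Honda's classification of tight contact structures on solid tori, organized by paths in the Farey graph. Because $\tb(L)=pq$ is exactly the framing that the Heegaard torus induces on a $(p,q)$--torus knot, a non-loose $L$ with $\tor(L)=0$ can be isotoped so that it sits as a Legendrian ruling curve on a convex Heegaard torus $T$ splitting $S^3 = V_1 \cup_T V_2$; producing this convex $T$ of the correct slope uses that the complement $M = S^3 \setminus \interior N(L)$ is tight, so convex surface theory applies. Removing a standard neighborhood $N(L)$ cuts $T$ into a convex annulus $A = T \setminus N(L)$, and after edge-rounding, $M$ is recovered by gluing the two solid tori $V_1' = V_1 \setminus N(L)$ and $V_2' = V_2 \setminus N(L)$ along $A$. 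I would first record that $L$ is non-loose exactly when $M$ is tight, and that cutting along $A$ sends a tight structure on $M$ to a tight structure on $V_1' \sqcup V_2'$ with boundary dividing slopes $q/p$ and $p/q$ respectively; conversely tight structures on the two pieces glue to a tight $M$ provided the bypasses straddling $A$ do not create an overtwisted disk.

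With this reduction, the counting becomes bookkeeping in the Farey graph. By Honda's classification the tight structures on a solid torus with two dividing curves of slope $q/p$ are enumerated by decorated paths and number $|(a_1+1)\cdots(a_{m-1}+1)a_m|$, and those with dividing slope $p/q$ number $|(b_1+1)\cdots(b_{n-1}+1)b_n|$, where the $a_i,b_i$ are the continued-fraction coefficients fixed before the statement. Taking the product gives $m(p,q)$ tight structures on $V_1' \sqcup V_2'$, and hence an upper bound of $m(p,q)$ on the number of non-loose representatives---once one checks that distinct pairs of tight pieces cannot produce knots identified by a contactomorphism smoothly isotopic to the identity. This injectivity I would obtain from the rotation-number and $d_3$ computations of Lemma~\ref{computer}, sharpened where two knots share classical invariants by the tight contact structure on the complement itself, which by construction records the pair of solid-torus structures.

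For the matching lower bound when $pq>0$ the plan is to realize all $m(p,q)$ candidates by the contact surgery diagrams of Figure~\ref{fig:torus-knots} and to verify that each is genuinely non-loose using the state-transition arguments of Propositions~\ref{wings} and~\ref{diamonds}, which the introduction notes apply verbatim at $\tb=pq$. Since every tight structure on the two pieces is realized and no gluing across $A$ is obstructed when $pq>0$, the correspondence is a bijection and the count is exactly $m(p,q)$.

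The hard part is the case $pq<0$, where the correction $-2\left|\left\lceil q/p\right\rceil\right|$ appears: here not every pair of tight pieces glues to a tight complement, and a precise number of gluings admit a bypass across $A$ forcing an overtwisted disk, so the corresponding knots are loose. The crux is to identify exactly these $2\left|\left\lceil q/p\right\rceil\right|$ obstructed configurations. I would do this through the analysis of pairs of Farey-graph paths in Section~\ref{pathsinFG}: the two continued-fraction paths approach $q/p$ from opposite sides---through $\lceil q/p\rceil$ on one side and $\lfloor q/p\rfloor$ on the other---and the extreme decorations at the ends of these paths, whose number is governed by $\lceil q/p\rceil$, are precisely the ones for which the two approaching paths force an incompatible, overtwisted gluing along $A$. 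Pinning down this boundary count and showing it is neither an over- nor an undercount is the delicate point, and it is exactly where the interplay between convex surface theory and the path combinatorics developed in Section~\ref{pathsinFG} is indispensable.
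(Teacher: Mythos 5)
Your overall reduction --- put $L$ on a convex Heegaard torus of dividing slope $q/p$, cut the complement into two solid tori joined along an annulus, invoke Honda's classification to get the upper bound $m(p,q)$, separate the candidates by rotation numbers, and realize them by the surgery diagram of Figure~\ref{fig:torus-knots} --- is essentially the paper's proof of Lemma~\ref{lem:=pq}. But your explanation of the correction term $-2\left|\left\lceil q/p\right\rceil\right|$ when $pq<0$ is wrong, and the step you flag as ``the crux'' would fail. You propose that exactly $2\left|\left\lceil q/p\right\rceil\right|$ of the $m(p,q)$ pairs of tight solid tori glue to an \emph{overtwisted} complement, so that the corresponding knots are loose. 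In fact \emph{every} one of the $m(p,q)$ pairs glues to a tight complement with $\tor=0$ (this is precisely the content of Lemma~\ref{lem:=pq}: Legendrian surgery on each $L_{P_1,P_2}$ cancels a $(+1)$-surgery in Figure~\ref{fig:torus-knots} and yields a tight manifold, so the complement is tight in all $m(p,q)$ cases, for either sign of $pq$). There are no ``obstructed gluings'' to hunt for, and an argument organized around finding them cannot produce the stated count.

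The actual source of the correction is definitional, not a gluing obstruction: a knot is \emph{non-loose} only if the ambient contact structure is overtwisted, and when $pq<0$ the maximal Thurston--Bennequin invariant of a Legendrian $(p,q)$--torus knot in $(S^3,\xi_{std})$ is exactly $pq$, with exactly $2\left|\left\lceil q/p\right\rceil\right|$ representatives achieving it by the Etnyre--Honda classification. These correspond to the decorated pairs $(P_1,P_2)$ with all signs equal (up to the last block of $P_2$; see Lemma~\ref{sdt+xi1} and Remark~\ref{extrablockfornegative}), for which $\xi_{P_1,P_2}=\xi_{std}$. They have tight complements but live in a tight $S^3$, so they are subtracted from $m(p,q)$. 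When $pq>0$ the maximal $\tb$ in $\xi_{std}$ is $pq-p-q<pq$, so no candidate lands in $\xi_{std}$ and no subtraction occurs. Your proof needs this last step --- identifying which $\xi_{P_1,P_2}$ equal $\xi_{std}$ --- rather than a bypass argument across $A$.
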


In Section~\ref{htpclasses}, we show how to compute the rotation numbers of these knots and the $d_3$-invariants of the contact structures in which it lives. We note that the surgery diagram in Figure~\ref{fig:torus-knots} first appeared in work of Lisca and Stipsicz  \cite{LiscaStipsicz07} in the context of small Seifert fibered spaces, and then in work of Lisca, Ozsv\'ath, Stipsicz, and Szabo \cite{LiscaOzsvathStipsiczSzab09} to construct some non-loose Legendrian torus knots using Heegaard-Floer theory.  

\begin{figure}[htbp]
  \vspace{0.1cm}
  \begin{overpic}[scale=1,tics=10]{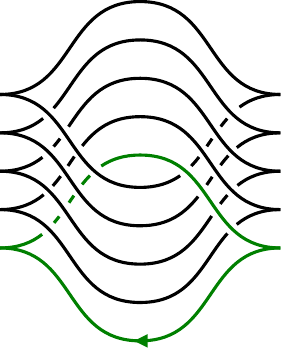}
    \put(140,120){$(-\frac{p}{p'})$}
    \put(140,101){$(-\frac{q}{q-q'})$}
    \put(140,82){$(+1)$}
    \put(140,63){$(+1)$}
    \put(120,30){$L$}
  \end{overpic}
  \caption{$L$ is a non-loose torus knot with $\tb(L)=pq$ and $\tor(L)=0$. Here, ${q'/p'}$ is the largest rational number such that $pq'-p'q=1$.}
  \label{fig:torus-knots}
\end{figure}

\begin{remark}
  We will see there are always $m(p,q)$ Legendrian $(p,q)$-knots with $\tb=pq$ and $\tor=0$, but when $pq<0$, it turns out that $2\left|\left\lceil q/p\right\rceil\right|$ of those are in $(S^3,\xi_{std})$.
\end{remark}

The algorithm in Section~\ref{thealgorithm} will give a complete classification of non-loose Legendrian and torus knots, but we can easily describe the qualitative features of the classification through mountain ranges. We begin this discussion with some terminology. 
\begin{definition}\label{quantfeatures}
We say a mountain range contains an \dfn{infinite $\textsf{X}$ based at $r$} if there are non-loose Legendrian representatives $L^\pm_n$ for all $n\in \Z$ such that 
\begin{align*}
  \tb(L^\pm_n)=n &\text{ and } \rot(L^\pm_n)=\mp(r-n)
\end{align*}
and satisfy
\[
S^\pm(L_n^\pm)=L_{n-1}^\pm \text{ and }  S^\mp(L_n^\pm) \text{ is loose.}
\]
See Figure~\ref{fig-genericXwing} (but ignore the grey shaded region). 

We say a mountain range for a knot type \dfn{contains an infinite $\textsf{V}$ with vertex at $(a,b)$} if there are non-loose Legendrian representatives 
 $L^\pm_n$ for $n\geq 2$ and $L_1$ such that 
\begin{align*}
  \tb(L^\pm_n)=b+n&\text{ and } \rot(L^\pm_n)=a \mp(n-1),\\
  \tb(L_1) = b &\text{ and } \rot(L_1) = a,
\end{align*}
and satisfy
\begin{align*}
  &S_\pm(L^\pm_{n})=L^\pm_{n-1} \text{ for } n\geq3, \text{ and } S_\pm(L^\pm_2)=L_1,\\ 
  &S_\mp(L^\pm_n) \text{ and } S_\pm(L_1) \text{ are loose.}
\end{align*}
See the right drawing of Figure~\ref{fig-exceptionMR} (but ignore the grey shaded region). 

We say a mountain range for a knot type that contains an infinite $\textsf{X}$ also has \dfn{wings} if there are other Legendrian representatives indicated by the grey region in Figure~\ref{fig-genericXwing}. See Theorem~\ref{leg58} for a description of a wing in a specific example and Definition~\ref{wingdef} and Theorem~\ref{merge} for the precise definition.

We say a mountain range for a knot type that contains an infinite $\textsf{V}$ also has \dfn{diamonds} if there are other Legendrian representatives indicated in the grey region in Figure~\ref{fig-exceptionMR}. See Theorem~\ref{leg58} for a description of a wing in a specific example and Definition~\ref{rigorousdiamond} for the precise definition. 
\end{definition}

We can now summarize the qualitative behavior of mountain ranges for non-loose torus knots. 
\begin{theorem}
For all but one of the overtwisted contact structures supporting non-loose Legendrian $(p,q)$-torus knots, the mountain range for the non-loose representatives with $\tor=0$ will contain an infinite $\textsf{X}$, possibly with wings. If there are wings, their peaks have $\tb=pq$. There is an algorithm to determine which overtwisted contact structures support such non-loose knots, see Section~\ref{thealgorithm}. We call these mountain ranges \dfn{generic}. 

There exists one overtwisted contact structure for each $(p,q)$-torus knot where the classification of non-loose representatives is different, see Figure~\ref{fig-exceptionMR}. If $pq>0$, the contact structure $\xi_1$ will contain an infinite $\textsf{V}$, possibly with diamonds. If $pq<0$, then the contact structure $\xi_{|pq|-|p|-|q|}$ will contain an infinite $\textsf{X}$ with an extra Legendrian at the crossing point. We call these mountain ranges \dfn{exceptional}.
\end{theorem}
We note that the proof of this theorem follows directly from the algorithm in Section~\ref{thealgorithm} that classifies all non-loose Legendrian torus knots, and that algorithm is justified in Section~\ref{nonloosetorusknots}.


\begin{figure}[htbp]
  \begin{overpic}{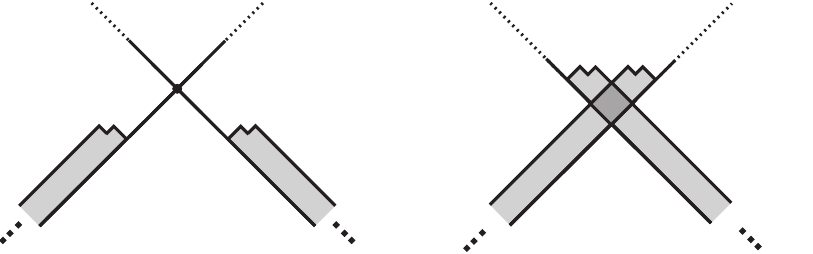}
  \end{overpic}
  \caption{Generic mountain ranges for non-loose Legendrian $(p,q)$-torus knots with $\tor = 0$. On the left is the case where $pq<0$ and on the right is where $pq>0$. The peaks occur at $\tb=pq$. Each integral point in the lightly shaded region, whose coordinates sum to be odd, is realized by a unique non-loose Legendrian knot, while in the darker shaded region on the right and crossing point on the left there are exactly two representatives with those invariants.}
  \label{fig-genericXwing}
\end{figure}

The number of peaks in the wings will depend on $(p,q)$ and decorated paths in the Farey graph from $\infty$ to $q/p$ and then to $0$, see Section~\ref{thealgorithm}.
However, we observe the following about the wings. 

\begin{theorem}\label{bigwings}
  Given any positive integers $n$ and $m_1, \ldots, m_{n-1}$ there is some $(p,q)$-torus knot whose mountain range of non-loose knots with $\tor=0$ in some overtwisted contact structure has $2n$ peaks, and the distance between the $i^{th}$ and $(i+1)^{st}$ peak is at least $m_i$ (we label the peaks according to their distance from the infinite $\textsf{X}$). 
\end{theorem}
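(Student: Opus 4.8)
The plan is to read off both the number of peaks and their horizontal positions directly from the classification, and then to realize the prescribed data by choosing the continued fraction expansion of $q/p$ appropriately. First I would recall, from the mountain range description and Theorem~\ref{thm:<=pq}, that the peaks of the mountain range sit at $\tb=pq$, and that the horizontal coordinate of a peak is the rotation number of the corresponding non-loose representative. Thus ``the number of peaks'' is the number of distinct rotation numbers realized at $\tb=pq$, and ``the distance between the $i^{th}$ and $(i+1)^{st}$ peak'' is the difference of consecutive such rotation numbers. Working with $pq>0$ for definiteness, Theorem~\ref{thm:<=pq} tells us that the number of $\tb=pq$ representatives with $\tor=0$ is exactly $m(p,q)$, and the involution sending $\rot\mapsto-\rot$ (the symmetry $L_{+,k}\leftrightarrow L_{-,k}$) pairs these symmetrically about $0$. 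So it suffices to produce a torus knot with exactly $n$ distinct positive rotation numbers $0<r_1<\cdots<r_n$ at $\tb=pq$, none equal to $0$, satisfying $r_{i+1}-r_i\geq m_i$; reflecting gives $n$ further peaks with negative rotation number, for $2n$ peaks in total.

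Next I would use the decorated-path description of Section~\ref{thealgorithm}. The $\tb=pq$ representatives correspond to choices of decorations along a path in the Farey graph from $\infty$ to $q/p$ and on to $0$; each branching vertex of this path contributes one factor $|a_i+1|$ (respectively $|b_j+1|$) to the product formula for $m(p,q)$, and the decoration chosen at that vertex shifts the rotation number by an integer multiple of a weight determined by the continued fraction data up to that vertex. Expanding $\rot$ as the sum of these weighted contributions expresses the set of achievable rotation numbers at $\tb=pq$ as the value set of a mixed-radix counter: the choice at the $i^{th}$ branching vertex ranges over $|a_i+1|$ values and is scaled by a weight $w_i$, with the $w_i$ growing as one moves along the path.

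The construction is then to prescribe the shape of the continued fraction of $q/p$. I would arrange for exactly $n-1$ genuine branching vertices on the positive-rotation side (taking the corresponding coefficients $a_i\leq -3$ so that $|a_i+1|\geq 2$, while setting the remaining coefficients to $-2$ so they act as non-branching spacers), and I would inflate the weights $w_1<\cdots<w_{n-1}$ by inserting enough spacer coefficients $-2$. Ordering the resulting rotation numbers, the consecutive gaps are governed by the weights appearing in this mixed-radix expansion, so by choosing the active coefficients and the number of spacers so that the smallest weight controlling the gap at position $i$ is at least $m_i$, I obtain $n$ positive peaks with $r_{i+1}-r_i\geq m_i$. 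Matching $m(p,q)$ to give precisely $2n$ peaks and none at $\rot=0$ is then a bookkeeping check on the product formula and on the parity constraint of Theorem~\ref{parity}.

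The main obstacle is the rotation-number analysis underlying the second step: one must establish precisely how $\rot$ of a $\tb=pq$ representative depends on the decorations, computing the weights $w_i$ and verifying that distinct decoration patterns yield distinct, correctly ordered rotation numbers behaving as a mixed-radix counter, so that consecutive gaps are indeed controlled by the weights. This is exactly the pairs-of-paths analysis of Section~\ref{pathsinFG} together with the rotation-number computation of Lemma~\ref{computer}. Once those formulas are in hand, selecting the continued fraction coefficients to meet the bounds $m_i$ is an elementary, if slightly fiddly, matter of arranging the carry structure of the counter, and the existence of a suitable $(p,q)$ follows.
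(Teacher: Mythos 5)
Your proposal correctly locates the peaks at $\tb=pq$ and correctly identifies the continued fraction expansion of $q/p$ as the dial to turn, but the reduction in your second step has a genuine gap. You take ``the number of peaks'' to be the number of distinct rotation numbers realized by $\tb=pq$ representatives and propose to arrange for exactly $n$ positive such values, using the global count $m(p,q)$ from Theorem~\ref{thm:<=pq}. But $m(p,q)$ counts representatives over \emph{all} overtwisted contact structures: the Legendrian knots $L_{P_1,P_2}$ with $\tb=pq$ are distributed among up to $n(p,q)$ contact structures distinguished by $d_3(\xi_{P_1,P_2})$ (Step~1 of the algorithm), and the peaks of the mountain range in a single contact structure are only those $L_{P_1^k,P_2^k}$ coming from one compatible family of decorations. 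The number of peaks on one side of the $\textsf{X}$ is therefore the maximal inconsistency minus one, i.e.\ the number of continued fraction blocks of $\overline{P_1}\cup P_2^\intercal$ minus one --- not the number of decorations. Worse, your stated target is internally inconsistent with your gap requirements: by Lemma~\ref{lem:rotdiff} distinct decorations give distinct rotation numbers, so the number of distinct rotation numbers at $\tb=pq$ \emph{is} $m(p,q)$, and the long continued fraction blocks you need to inflate the weights force $m(p,q)$ to be far larger than $2n$. So ``matching $m(p,q)$ to give precisely $2n$ peaks'' cannot be made to work, and even if it could, the resulting $2n$ Legendrian knots would lie in different contact structures and would not form a single mountain range.

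The second missing ingredient is that the chosen representatives must assemble into one connected wing structure attached to an infinite $\textsf{X}$, so that ``the $i^{th}$ peak'' and the valleys between peaks are even defined; this is the content of Propositions~\ref{wings}, \ref{merge} and~\ref{alldiamonds}. For the compatible family $(P_1^k,P_2^k)$, $k=2,\dots,i$, the wings merge and the horizontal distance between consecutive peaks is $2n_{k-1}'=2\bigl(\bigl|s_{k-1}\bigcdot \tfrac qp\bigr|-\bigl|s_{k-2}\bigcdot \tfrac qp\bigr|\bigr)$, which is controlled by the lengths of the continued fraction blocks (Lemma~\ref{increasing}). The paper's proof is exactly this: choose $q/p$ with enough continued fraction blocks, each sufficiently long, take a maximally inconsistent decoration, and read off the peak count and gaps from Propositions~\ref{merge} and~\ref{alldiamonds}. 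If you replace your global rotation-number count with this per-contact-structure, per-compatible-family analysis, the remainder of your plan (tuning the coefficients to inflate the weights) does go through and coincides with the paper's argument.
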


\begin{figure}[htbp]
  \begin{overpic}{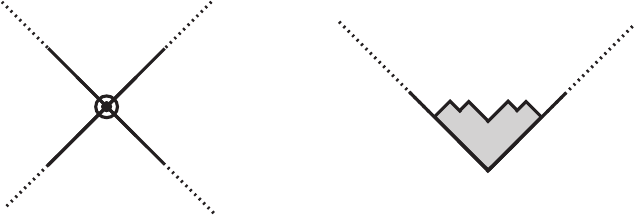}
  \end{overpic}
  \caption{The mountain range of non-loose Legendrian $(p,q)$-torus knots with $\tor=0$ for the exceptional contact structures. On the left is the mountain range for a negative $(p,q)$-torus knot in $\xi_{|pq|-|p|-|q|+1}$. The crossing is at $(\rot,\tb)=(0,|pq|-|p|-|q|)$ and there are three distinct non-loose Legendrian knots. On the right is the mountain range for a positive $(p,q)$ torus knot in $\xi_1$. The lower vertex is at $(\rot,\tb)=(0,pq-p-q+2)$. } 
  \label{fig-exceptionMR}
\end{figure}

We note that it might be possible that the mountain range for some $(p,q)$-torus knot and some overtwisted contact structure consists of the union of more than one diagram shown in Figures~\ref{fig-genericXwing} and~\ref{fig-exceptionMR}. For example, a mountain range could contain two infinite $\textsf{X}$s. If this happens, then the Legendrian knots depicted in each figure are never equivalent to those in another. In all our computed examples, we see that this never occurs and conjecture that it never does. 

\begin{conjecture} \label{conjecture}
  In each overtwisted contact structure that supports non-loose $(p,q)$-torus knots, the mountain range of such knots is given by only one of the diagrams indicated in Figures~\ref{fig-genericXwing} and~\ref{fig-exceptionMR}.
\end{conjecture}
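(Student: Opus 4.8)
The plan is to reduce the conjecture to a non-coincidence statement about the $d_3$-invariant, and then to attack that statement through the combinatorics of decorated paths in the Farey graph developed in Section~\ref{pathsinFG}. The first step is to observe that, for fixed $(p,q)$, the sign of $pq$ is fixed, so only two diagram-types are relevant: the generic one of Figure~\ref{fig-genericXwing} and the exceptional one of Figure~\ref{fig-exceptionMR}. Since an overtwisted contact structure on $S^3$ is determined by its $d_3$-invariant, two diagram-types can be superimposed in a single contact structure only if they share a $d_3$-invariant. Every non-loose representative appearing as a crossing or a peak in these diagrams has $\tb=pq$, so using the algorithm of Section~\ref{thealgorithm} together with Lemma~\ref{computer}, I would express the rotation numbers of these representatives, and hence (via the $d_3$-computation of Section~\ref{htpclasses}) the resulting $d_3$-invariant, as an explicit function of the pair of decorated paths in the Farey graph running from $\infty$ to $q/p$ and then to $0$.

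This reduces the conjecture to the claim that the $d_3$-invariant, viewed as a function on the set of diagram-types for fixed $(p,q)$, separates the exceptional type from every generic type that could be laid over it. I would first dispose of the interaction between the two types. The exceptional negative diagram lives in $\xi_{|pq|-|p|-|q|+1}$ and the exceptional positive diagram lives in $\xi_1$, each carrying a distinguished crossing of rotation number $0$ (for $L_e$) or a distinguished lower vertex; these $d_3$-values are pinned down directly from Theorems~\ref{thm:>pq} and~\ref{thm:<=pq}. It then suffices to show that no generic central $\textsf{X}$, whose crossing carries rotation numbers $\pm r$ with $r\neq 0$, can be forced into the same contact structure. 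Theorem~\ref{parity} already constrains the parities that occur, and I would attempt to sharpen it into a strict inequality by bounding the $d_3$-invariant of any generic crossing away from the exceptional value, uniformly in $(p,q)$.

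The main obstacle is precisely this separation, which is at bottom a Diophantine non-coincidence for quantities assembled from the continued-fraction products $(a_1+1)\cdots(a_m+1)$ and $(b_1+1)\cdots(b_n+1)$ that control $m(p,q)$, $n(p,q)$, and the rotation-number formula. There is no evident structural reason these values avoid collision, which is exactly why the statement has so far only been verified in computed examples. My proposed route is to exploit the fact that the $d_3$-invariant is a rigid, essentially quadratic expression in the path data, and to establish a monotonicity result: as one passes between neighbouring decorated paths by the elementary edge-swap and stabilization moves that generate them (the moves underlying Propositions~\ref{merge} and~\ref{alldiamonds}), the $d_3$-invariant should change strictly and in a controlled direction. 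Such a monotonicity would confine the generic crossings to a range strictly avoiding the exceptional value, and would simultaneously rule out any collision among the remaining $d_3$-values. I expect proving this monotonicity uniformly over all $(p,q)$ to be the crux: the generating moves interact nontrivially with the two independent paths (to $q/p$ and to $0$), and controlling their joint effect on $d_3$ is exactly where the analysis of pairs of paths in Section~\ref{pathsinFG} would have to be pushed further than the present classification requires.
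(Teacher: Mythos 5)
The statement you are addressing is stated in the paper as a \emph{conjecture}, not a theorem: the authors explicitly say that in all their computed examples no two of the mountain ranges of Figures~\ref{fig-genericXwing} and~\ref{fig-exceptionMR} coincide in a single overtwisted contact structure, and that they \emph{conjecture} this never happens; the only case in which it is known is for negative torus knots, via Matkovi\v{c}'s transverse simplicity argument in \cite{Matkovic20Pre}. So there is no proof in the paper to compare against, and your proposal does not supply one either. Your first reduction is correct but is essentially a restatement of the problem: since overtwisted contact structures on $S^3$ are classified by $d_3$, the conjecture is exactly the assertion that non-compatible $2$-inconsistent decorated pairs $(P_1,P_2)$ (together with the half-Lutz-twisted structures carrying the torsion families) have pairwise distinct $d_3$-invariants. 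Everything before your ``main obstacle'' paragraph is already implicit in Sections~\ref{pathsinFG} and~\ref{htpclasses}; the content of the conjecture begins precisely where your argument stops.

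The concrete gap is the claimed monotonicity of $d_3$ under the elementary sign-swap moves relating decorated paths, for which you give no argument and for which there is no evident mechanism. The moves underlying Propositions~\ref{merge} and~\ref{alldiamonds} relate \emph{compatible} pairs, which by construction give the \emph{same} contact structure, so $d_3$ is constant along them; the moves you actually need to control are sign changes taking one non-compatible class to another, and there the change in $c^2=\mathbf{rot}^{\intercal}M^{-1}\mathbf{rot}$ is quadratic and mediated by the off-diagonal entries of $M^{-1}$, with flips in different continued fraction blocks interacting. The computed $(5,\pm8)$ examples (where the $d_3$-values $1,-1,-3,-7,-9,-15,-19,-27,\ldots$ appear) show the values are scattered rather than ordered along any natural enumeration of decorations, and Theorem~\ref{parity} extracts only a parity constraint from the characteristic-vector structure of $c$. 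A minor factual slip aside (the crossings of the exceptional diagrams sit at $\tb=|pq|-|p|-|q|$ and $\tb=pq-p-q+2$, not at $\tb=pq$), the essential point is that you have correctly located the Diophantine non-coincidence that needs to be proved, but the step that would prove it is exactly the part you defer, so the conjecture remains open under your approach.
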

        
In \cite{Matkovic20Pre} Matkovi\v{c} classified non-loose Legendrian $(p,q)$-torus knots with $pq<0$, $\tb<pq$ and $\tor = 0$, from this she could classify all non-loose transverse knots with $\tor = 0$, as well. She could then show that if two non-loose transverse knots were not related by stabilization, then they were in distinct overtwisted contact structures. This verifies our conjecture for negative torus knots.

We now consider the number of contact structures supporting non-loose Legendrian $(p,q)$-torus knots. 

\begin{theorem}\label{numbersupportingnonloose}
  There are at most  $n(p,q)$ overtwisted contact structures supporting non-loose Legendrian $(p,q)$-torus knots with $\tor = 0$. And at most 
  \[
    n(p,q) + |(a_1+1)\cdots(a_{m-1}+1)|\cdot|(b_1+1)\cdots (b_{n-1}+1)|
  \]
  overtwisted contact structures supporting any non-loose Legendrian $(p,q)$-torus knots. 
\end{theorem}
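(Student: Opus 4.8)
The plan is to extract the bound on the number of supporting contact structures directly from the enumeration already established in Theorems~\ref{thm:>pq} and~\ref{thm:<=pq} together with the $d_3$-computation promised in Section~\ref{htpclasses}. The key structural fact I would rely on is the relationship $\rot(L_{+,k}^{i})=-\rot(L_{-,k}^{i})$ from Theorem~\ref{thm:>pq}, which says the non-loose knots come in mirror pairs $L_{\pm,k}^{i}$ that live in the \emph{same} contact structure. Since a contact structure on $S^3$ is determined by its $d_3$-invariant, counting supporting contact structures amounts to counting the distinct values of $d_3$ realized by the family $\{L_{\pm,k}^{i}\}$. The count $2n(p,q)$ of knots with fixed $\tb=i>pq$ therefore collapses to at most $n(p,q)$ distinct $d_3$-values, one for each index $k$ with $1\leq k\leq n(p,q)$.

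First I would show that the $d_3$-invariant of the contact structure containing $L_{\pm,k}^{i}$ depends only on $k$ and not on $i$. This should follow from the stabilization relations $S_\pm(L_{\pm,k}^{i})=L_{\pm,k}^{i-1}$: stabilization does not change the ambient contact structure, so the entire tower $\{L_{\pm,k}^{i}:i>pq\}$ for fixed $k$ sits inside one fixed overtwisted contact structure. Hence the distinct contact structures supporting $\tor=0$ representatives with $\tb>pq$ are indexed by $k\in\{1,\ldots,n(p,q)\}$, giving the bound $n(p,q)$; this is an \emph{at most} statement because different values of $k$ might a priori yield the same $d_3$-invariant. The representatives with $\tb=pq$ from Theorem~\ref{thm:<=pq} do not add new contact structures in the $\tor=0$ case, since by Theorem~\ref{gen1} every non-loose knot with $\tb>pq$ destabilizes down toward $\tb=pq$, so the $\tb=pq$ layer lives in the same contact structures already accounted for (one must check the extra $L_e$ lands in $\xi_{|pq|-|p|-|q|+1}$, which is already among the $n(p,q)$ by its own stabilization to $L_{\pm,1}^{|pq|-|p|-|q|-1}$).

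For the second, larger bound covering \emph{all} non-loose representatives, I would incorporate the knots with $\tor>0$. By Theorem~\ref{parity}, increasing the Giroux torsion toggles the parity of $d_3$, so positive-torsion representatives can live in contact structures not reached by the $\tor=0$ family, and these must be counted separately. The extra term $|(a_1+1)\cdots(a_{m-1}+1)|\cdot|(b_1+1)\cdots(b_{n-1}+1)|$ has the flavor of $n(p,q)$ but with the final factors $a_m+1$ and $b_n+1$ deleted; I would identify this as the number of torsion-supporting ``branches'' arising when one adds a single unit of Giroux torsion, using the mechanism of Lemma~\ref{lem:staytight} and Lemma~\ref{postorsion}. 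Concretely, adding torsion is governed by the non-rotative layer analysis, and the count of distinct resulting tight complements (hence distinct $d_3$-values) is exactly this truncated product; summing with the $n(p,q)$ from the torsion-free case gives the stated total.

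The main obstacle I anticipate is the second bound, specifically proving that the torsion-carrying representatives contribute no more than the stated truncated product of distinct contact structures. The torsion-free count is essentially bookkeeping on top of the preceding theorems, but the torsion case requires showing that once Giroux torsion is added, the distinguishing data of the complement is captured by the shortened continued-fraction products—i.e.\ that the last continued-fraction factor no longer contributes an independent choice. This is precisely where the subtle ``adding torsion to virtually overtwisted structures'' technique enters, and I expect the delicate point to be verifying, via Lemma~\ref{postorsion}, that the rotation-number (hence $d_3$) computation for torsion representatives really does factor through the truncated product rather than the full $n(p,q)$.
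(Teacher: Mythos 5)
Your first bound is morally the right reduction, but the step where you dispose of the $\tb=pq$ representatives does not work as written. Theorem~\ref{gen1} tells you that non-loose knots with $\tb\neq pq$ (other than $L_e$) destabilize, and Theorem~\ref{thm:>pq} tells you that the knots with $\tb>pq$ organize into $n(p,q)$ towers on which $d_3$ is constant; but neither statement says anything about which contact structures contain the non-loose knots with $\tb=pq$ that do \emph{not} destabilize. Those are exactly the knots $L_{P_1,P_2}$ associated to $2$-consistent pairs of decorated paths (the extra peaks of the wings and diamonds), and the fact that they inhabit contact structures already counted is not a consequence of destabilization: it requires the compatibility mechanism of Section~\ref{pathsinFG} (every decorated pair is compatible with a $2$-inconsistent one defining the same contact structure, which together with Lemma~\ref{lem:thickenornot} and Lemma~\ref{lem:>pq} gives the count of $2n(p,q)$ such pairs), or equivalently Propositions~\ref{wings}, \ref{merge} and~\ref{alldiamonds} showing that every wing merges into an infinite $\textsf{X}$ or $\textsf{V}$. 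This is in fact how the paper argues: it counts the $2$-inconsistent pairs directly and divides by the involution $(P_1,P_2)\mapsto(-P_1,-P_2)$, rather than tracking $d_3$-invariants of the towers as you do. Your $d_3$ bookkeeping for the $\tb>pq$ family is fine, but without the compatibility input the bound $n(p,q)$ is not established for all $\tor=0$ representatives.

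For the second bound there are two problems. The assertion that ``increasing the Giroux torsion toggles the parity of $d_3$'' is false for integer increments: a full Lutz twist preserves $d_3$, and by Proposition~\ref{moretorsion} the knots with integer $\tor$ live in the \emph{same} contact structures $\xi_{P_1,P_2}$ already counted in the first bound; only the half-integer torsion representatives produce new contact structures $\xi'_{P_1,P_2}$, exactly one for each totally $2$-inconsistent pair up to sign. The facts you actually need are that torsion-carrying complements arise only from totally $2$-inconsistent pairs (Lemma~\ref{lem:overtwisted} and Proposition~\ref{moretorsion}) and that the number of such pairs is $2|(a_1+1)\cdots(a_{m-1}+1)|\cdot|(b_1+1)\cdots(b_{n-1}+1)|$ (Lemma~\ref{postorsion}); you point at the right lemmas but explicitly defer the verification as ``the main obstacle I anticipate,'' so the truncated product is identified by analogy rather than derived. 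As it stands the second bound is a plausible plan, not a proof.
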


We note that if Conjecture~\ref{conjecture} is true, then the upper bound in Theorem~\ref{numbersupportingnonloose} gives the exact number of such contact structures. 

We also give some qualitative properties of non-loose torus knots with $\tor >0$. One notable fact about these knots is that $\tor$ is always finite. This is not true for general Legendrian knots. For example, any loose Legendrian knot has $\tor = \infty$, meaning that its complement can contain arbitrarily large amounts of convex torsion. It is an open problem whether there exist non-loose knots with $\tor = \infty$. See \cite[Problem~3.42]{K3:problems}.

\begin{theorem}\label{thm:torsion}
  Let $L$ be a non-loose Legendrian $(p,q)$-torus knot with $\tor(L) > 0$. Then
  \begin{enumerate}
    \item $\tor(L)$ is finite, 
    \item by performing an $n$-fold convex Lutz twist (see Section~\ref{lutzsection}) along the boundary of the standard neighborhood of $L$ with an appropriate sign, we obtain a new non-loose Legendrian $L'$ with the same classical invariants as $L$ and $\tor(L')=\tor(L)+n$, and 
    \item there exists a unique Legendrian knot $L_0$ with $\tb(L_0) = pq$ and $\tor(L_0)=0$ such that the complement of $L$ is obtained by attaching a convex torsion layer in the complement of $L_0$.
  \end{enumerate}
\end{theorem}

\subsection{Non-loose transverse torus knots} 
We now turn to transverse knots. As is well-known \cite[Theorem~2.10]{EtnyreHonda01}, the classification of transverse knots is equivalent to the classification of Legendrian knots up to negative stabilization. Thus, our algorithm for classifying non-loose Legendrian $(p,q)$-torus knots will also classify non-loose transverse $(p,q)$-torus knots. 

\begin{theorem}\label{gentransverse}
  Suppose $\xi$ is an overtwisted contact structure supporting non-loose transverse $(p,q)$-torus knots. If we suppose Conjecture~\ref{conjecture} is true, then in $\xi$, either 
  \begin{enumerate}
    \item\label{1} there are a finite number of non-loose transverse knots $T_1,\ldots T_n$, the stabilization of $T_i$ is $T_{i+1}$ for $i<n$, and the stabilization of $T_n$ is loose; or 
    \item\label{2} there are an infinite number of non-loose transverse knots with the same self-linking number and they are distinguished by the Giroux torsion in their complement. 
  \end{enumerate}
  In the former case, all the $T_i$ have zero Giroux torsion in their complement. If Conjecture~\ref{conjecture} is not true, then the set of non-loose transverse knots in $\xi$ could be a union of several copies of non-loose knots of type~\eqref{1} and~\eqref{2} above. 
\end{theorem}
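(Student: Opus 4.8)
The plan is to deduce Theorem~\ref{gentransverse} directly from the Legendrian classification by passing to the equivalence relation generated by negative stabilization, using the well-known fact \cite[Theorem~2.10]{EtnyreHonda01} that transverse knots correspond to Legendrian knots modulo negative stabilization. First I would fix an overtwisted contact structure $\xi$ supporting non-loose transverse $(p,q)$--torus knots and, assuming Conjecture~\ref{conjecture}, observe that the non-loose Legendrian mountain range in $\xi$ is exactly one of the diagrams in Figures~\ref{fig-genericXwing} and~\ref{fig-exceptionMR}. A non-loose transverse knot is then represented by a non-loose Legendrian knot, and two such represent the same transverse knot precisely when they become Legendrian-equivalent after negatively stabilizing each some number of times. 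The key point is that negative stabilization corresponds to moving down-and-to-the-left along the mountain range (decreasing $\tb$ by one and $\rot$ by one), so the transverse classification amounts to tracking the $S_-$--orbits of non-loose Legendrian knots together with the condition that an $S_-$--stabilization remains non-loose.

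The main step is to read off these $S_-$--orbits from the explicit destabilization structure already established in Theorems~\ref{gen1}, \ref{thm:>pq}, and \ref{thm:<=pq}. By Theorem~\ref{thm:>pq}, for $\tor=0$ the knots with $\tb>pq$ satisfy $S_\pm(L_{\pm,k}^{i})=L_{\pm,k}^{i-1}$ while $S_\mp(L_{\pm,k}^{i})$ is loose; in particular, along each $L_{-,k}^{\bigcdot}$ ray the negative stabilization $S_-$ walks down the right-hand side of the $\textsf{X}$ until reaching $\tb=pq$, and then $S^j_-$ applied to an appropriate $\tb=pq$ representative stays non-loose for all $j>0$. This is what produces the two mutually exclusive behaviors: either the $S_-$--orbit eventually leaves the non-loose locus (case~\eqref{1}, a finite chain $T_1,\ldots,T_n$ with $S(T_i)=T_{i+1}$ and $S(T_n)$ loose), or it persists indefinitely with $\tb=pq$ fixed, giving infinitely many transverse representatives with equal self-linking number (case~\eqref{2}). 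I would then invoke Theorem~\ref{thm:torsion} to identify the persistent case with the $\tor>0$ phenomenon: since the self-linking numbers agree, the transverse invariant cannot distinguish these knots, so they must be distinguished by Giroux torsion in their complements, and Theorem~\ref{thm:torsion}(2) guarantees each arises from the unique $\tb=pq$, $\tor=0$ knot by inserting a torsion layer. Conversely, in the finite-chain case every $T_i$ has a Legendrian representative of the form $S_-^{a}(L)$ with $\tb>pq$ or the terminal $\tb=pq$ destabilizable representative, all of which carry $\tor=0$, which yields the last sentence of case~\eqref{1}.

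For the sharper form I would handle the exceptional mountain ranges of Figure~\ref{fig-exceptionMR} separately but by the same bookkeeping: there the extra knot $L_e$ (when $pq<0$) has $S_\pm(L_e)=L_{\pm,1}^{|pq|-|p|-|q|-1}$ by Theorem~\ref{thm:>pq}, so its $S_-$--orbit simply merges into the generic right-hand ray and contributes a finite chain of type~\eqref{1}; symmetrically for the positive exceptional range in $\xi_1$. Finally, to remove the conjecture hypothesis I would note that without Conjecture~\ref{conjecture} a single $\xi$ might carry several of the mountain ranges of Figures~\ref{fig-genericXwing} and~\ref{fig-exceptionMR} simultaneously, and since (by the remark following the conjecture) Legendrian knots from distinct such diagrams are never equivalent, their $S_-$--orbits are disjoint; hence the transverse knots split as a disjoint union of orbits, each of type~\eqref{1} or~\eqref{2}, which is exactly the stated conclusion.

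The hard part will be verifying that negative stabilization does not secretly merge two a priori distinct $S_-$--orbits within a single mountain range, i.e.\ that the transverse-equivalence relation is no coarser than what the down-left rays predict; this is where the Farey-graph analysis of Propositions~\ref{merge} and~\ref{alldiamonds} — determining exactly when two non-loose knots stabilize to become equivalent — is essential, together with the computation of rotation numbers in Lemma~\ref{computer} to certify that distinct rays carry genuinely different invariants and therefore cannot be identified after any number of negative stabilizations.
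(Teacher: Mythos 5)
Your overall strategy---reducing to the Legendrian classification via the correspondence between transverse knots and Legendrian knots modulo negative stabilization, and then reading off $S_-$--orbits from the mountain ranges under Conjecture~\ref{conjecture}---is exactly the paper's, but the central dichotomy you articulate is wrong, and this is a genuine gap rather than a matter of exposition. You claim case~\eqref{1} arises when ``the $S_-$--orbit eventually leaves the non-loose locus.'' It does not: if some negative stabilization of a Legendrian knot $L$ is loose, then the transverse push-off of $L$ is itself loose (its complement contains the complement of every $S_-^j(L)$; the paper uses this explicitly in the proof of Lemma~\ref{standardstructures}), so such orbits contribute nothing to the theorem. In both cases~\eqref{1} and~\eqref{2} the relevant Legendrian representatives remain non-loose under arbitrarily many negative stabilizations; in the generic picture of Figure~\ref{fig-genericXwing} these are precisely the knots in the wings $W(L_{P_1^k,P_2^k})$ on one side together with the corresponding ray of the $\textsf{X}$, and the finite chain $T_1,\ldots,T_n$ of case~\eqref{1} is indexed by the finitely many such $S_-$--orbits (equivalently, the finitely many self-linking numbers occurring in the wings), linked by $S_+$, i.e.\ by transverse stabilization. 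Likewise, a single $S_-$--orbit that ``persists indefinitely'' produces exactly one transverse knot, not the infinitely many of case~\eqref{2}; the infinitude there comes from the torsion parameter, i.e.\ from the infinitely many distinct non-loose Legendrian knots $L_\pm^{i,k}$ sharing each $(\rot,\tb)$, and your sentence ``since the self-linking numbers agree\ldots they must be distinguished by Giroux torsion'' assumes rather than proves that these transverse knots are distinct.

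What is actually needed, and what your proposal never supplies, is the reason the two cases are mutually exclusive: why does a mountain range with nontrivial wings force every non-loose representative to have $\tor=0$? The paper's proof turns on the observation that the $2$-inconsistent pair of paths defining the $\textsf{X}$ of such a mountain range is compatible with a $3$-inconsistent pair and therefore cannot be totally $2$-inconsistent, whence by Lemma~\ref{postorsion} and Proposition~\ref{moretorsion} no convex Giroux torsion can occur in any of these complements; Conjecture~\ref{conjecture} then guarantees that no other mountain range (in particular no torsion-carrying bare $\textsf{X}$) coexists in the same $\xi$. With that dichotomy in place---nontrivial wings imply $\tor=0$ and case~\eqref{1}, while a bare $\textsf{X}$ carrying torsion gives case~\eqref{2}---the rest of your bookkeeping (Propositions~\ref{merge} and~\ref{alldiamonds} for when orbits merge, Lemma~\ref{computer} for distinguishing them) is fine, but as written the proposal misidentifies which Legendrian knots survive to become non-loose transverse knots and omits the one substantive step.
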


We note that in \cite{Matkovic20Pre}, Matkovi\v{c} proved that negative torus knots (with no Giroux torsion in their complements) are transversely simple and gave an algorithm that could be used to obtain the above results for these knots. 

\subsection{Non-loose \texorpdfstring{$(2,\pm (2n+1))$}{(2,±(2n+1))}-torus knots} \label{sec:2n+1}
Here we give an explicit classification of non-loose Legendrian and transverse $(2,\pm (2n+1))$-torus knots for $n \in \mathbb{N}$. All the results in this section will be established in Section~\ref{fullclass22np1}.

\subsubsection{Non-loose Legendrian \texorpdfstring{$(2,\pm (2n+1))$}{(2,±(2n+1))}-torus knots} We begin with Legendrian $(2,2n+1)$-torus knots. 

\begin{theorem}\label{thm:(2,2n+1)}
  The $(2,2n+1)$-torus knot has non-loose Legendrian representatives only in $\xi_1,\xi_0,$ and $\xi_{1-2n}$. The classification in each of these contact structures is as follows.
  \begin{enumerate}
    \item In $(S^3,\xi_{0})$, there are non-loose Legendrian $(2,2n+1)$-torus knots $L_\pm^{i,k+\scriptscriptstyle\frac 12}$ for $i \in \mathbb{Z}$ and $k \in \mathbb{N} \cup \{0\}$ such that \label{item:RHT-torsions}
    \[
      \tb(L_\pm^{i,k+\scriptscriptstyle\frac 12})=i, \text{ and } \rot(L_\pm^{i,k+\scriptscriptstyle\frac 12})=\mp(i-2n+1),
    \]
    \[
      \tor(L_\pm^{i,k+\scriptscriptstyle\frac 12}) = \begin{cases} k+ \frac12 \,&\text{ if }\,i>2n-1,\\ k+1 &\text{ if }\,i\leq 2n-1, \end{cases}
    \]  
    \[
      S_\pm(L_\pm^{i,k+\scriptscriptstyle\frac 12})=L_\pm^{i-1,k+\scriptscriptstyle\frac 12} \text{ and } S_\mp(L_\pm^{i,k+\scriptscriptstyle\frac 12}) \text{ is loose}.
    \]
    \item  In $(S^3,\xi_{1-2n})$, there are non-loose Legendrian $(2,2n+1)$-torus knots $L_\pm^{i,k}$ for $i \in \Z$ and $k \in \mathbb{N}\cup \{0\}$ such that 
    \[
      \tb(L_\pm^{i,k})=i, \text{ and } \rot(L_\pm^{i,k})=\mp(i+2n-1),
    \]
    \[
      \tor(L_\pm^{i,k}) = \begin{cases} k \,&\text{ if } i>2n-1,\\ k + \frac12 &\text{ if } i\leq 2n-1, \end{cases}
    \] 
    \[
      S_\pm(L_\pm^{i,k})=L_\pm^{i-1,k} \text{ and } S_\mp(L_\pm^{i,k}) \text{ is loose.}
    \]
    \item  In $(S^3,\xi_{1})$, there are non-loose Legendrian knots 
    \begin{align*}
      &L_\pm^i, \,\,\,\text{ for }\,\,i>2n+1,\\
      &L_{2,\pm}^i, \,\,\,\text{ for }\,\, 2n+4 \leq i \leq 4n+2, \\
      & L^{2n+1}, \text{ and } L_2^{2n+3}
    \end{align*}
    with
    \[
      \tb(L_\pm^i)=i, \text{ and } \rot(L_\pm^i)=\mp(i-2n-1),
    \]
    \[
      \tb(L^{2n+1})=2n+1, \text{ and } \rot(L^{2n+1})=0,
    \]
    \[
      \tb(L_{2,\pm}^i)=i, \text{ and } \rot(L_{2,\pm}^i)= \mp (i - 2n-3),
    \]
    \[
      \tb(L_2^{2n+3})=2n+3, \text{ and } \rot(L_2^{2n+3})=0
    \]
    such that 
    \[
      S_\pm(L_\pm^i)=L_\pm^{i-1}, \text{ for $i \geq 2n+3$, and } S_\pm(L_\pm^{2n+2})=L^{2n+1},
    \]
    \[
      S_\pm(L_{2,\pm}^i)=L_{2,\pm}^{i-1}, \text{ for $i\geq 2n+5$, and } S_\pm(L_{2,\pm}^{2n+4})= L_2^{2n+3},
    \]
    \[
      S_\mp(L_{2,\pm}^i)=L_\pm^{i-1}, \text{ for } i\geq 2n+4, S_\mp(L_2^{2n+3})=L_\pm^{2n+2}
    \]
    and $S_\mp(L_\pm^i)$ and $S_\pm(L^{2n+1})$ are loose. All these Legendrian knots have $\tor = 0$. 
  \end{enumerate}
\end{theorem}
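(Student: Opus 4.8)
The plan is to run the general algorithm of Section~\ref{thealgorithm} together with the closed-form Theorems~\ref{thm:>pq}, \ref{thm:<=pq}, \ref{numbersupportingnonloose}, and~\ref{thm:torsion} on the single pair $(p,q)=(2,2n+1)$, and read off the three mountain ranges. The first step is purely combinatorial: since $pq>0$, I compute $\left(\tfrac pq-1\right)^{-1}=\tfrac{2n+1}{1-2n}=[\underbrace{-2,\dots,-2}_{n-1},\,-3]$ and $\left(\tfrac qp-\left\lceil\tfrac qp\right\rceil\right)^{-1}=\left(-\tfrac12\right)^{-1}=[-2]$. Substituting into the definitions of $m(p,q)$ and $n(p,q)$ gives $n(2,2n+1)=2$ and $m(2,2n+1)=6$, and the bound of Theorem~\ref{numbersupportingnonloose} becomes $n(p,q)+1=3$. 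This already confines every non-loose representative to at most three overtwisted contact structures; producing representatives in exactly three (below) will yield the ``only in $\xi_1,\xi_0,\xi_{1-2n}$'' statement.

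Next I assemble the $\tor=0$ skeleton. Theorem~\ref{thm:>pq} gives $2n(2,2n+1)=4$ non-loose knots at each $\tb=i>pq=4n+2$, together with the stabilization rule $S_\pm(L_{\pm,k}^i)=L_{\pm,k}^{i-1}$ and $S_\mp$ loose, and Theorem~\ref{thm:<=pq} gives $m(2,2n+1)=6$ knots at $\tb=pq$. The crux here is to distribute these among the contact structures and to record rotation numbers, which I would do with Lemma~\ref{computer} and the $d_3$-computation of Section~\ref{htpclasses}. I expect the four knots at each $i>4n+2$ to split as the pair $L_\pm^i$ in $\xi_1$, with $\rot=\mp(i-2n-1)$, and the pair $L_\pm^{i,0}$ in $\xi_{1-2n}$, with $\rot=\mp(i+2n-1)$; the parities of $d_3=1$ and $d_3=1-2n$ agree with Theorem~\ref{parity} for these $\tor=0$ representatives. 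For $\tb\le pq$ the behaviour is governed by the merging analysis of Propositions~\ref{merge} and~\ref{alldiamonds} applied to the two Farey paths approaching $(2n+1)/2$: in $\xi_{1-2n}$ the $\tor=0$ representatives descend as two non-merging branches (by Theorem~\ref{gen1} everything below $pq$ destabilizes), whereas $\xi_1$ realizes the exceptional range of Figure~\ref{fig-exceptionMR}. There I must produce the second family $L_{2,\pm}^i$, prove the cross-stabilizations $S_\mp(L_{2,\pm}^i)=L_\pm^{i-1}$, and locate the two vertices $L^{2n+1}$ at $pq-p-q+2=2n+1$ and $L_2^{2n+3}$; this is exactly where the diamond structure of the paired Farey paths is decisive.

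Finally I handle $\tor>0$ using Theorem~\ref{thm:torsion} and the torsion-insertion lemmas. By Theorem~\ref{thm:torsion} every positive-torsion representative is a convex Giroux torsion layer inserted into the complement of the unique $\tb=pq$, $\tor=0$ knot, so the families $L_\pm^{i,k}$ in $\xi_{1-2n}$ and $L_\pm^{i,k+\frac12}$ in $\xi_0$ are built by repeatedly adding half-torsion layers; Lemma~\ref{lem:staytight} is what guarantees the result stays tight in the complement, hence non-loose, and produces the infinite towers in the virtually overtwisted structures $\xi_0$ and $\xi_{1-2n}$, while Lemma~\ref{postorsion} separates consecutive members by their rotation numbers. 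The main obstacle, I expect, is precisely this torsion bookkeeping: pinning down the exact $\tor$ values, and in particular the half-integer jump as $\tb$ crosses the threshold $pq-p-q=2n-1$ (so that $\tor$ reads $k$ versus $k+\tfrac12$ in $\xi_{1-2n}$, and $k+\tfrac12$ versus $k+1$ in $\xi_0$), and then proving that these towers together with the threshold jump exhaust the non-loose representatives, with no further torsion appearing in the exceptional structure $\xi_1$. This is the step that leans hardest on the novel control of Giroux torsion in virtually overtwisted contact structures, and I would treat it as the heart of the argument.
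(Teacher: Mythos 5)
Your proposal follows essentially the same route as the paper's proof, which is precisely to run the algorithm of Section~\ref{thealgorithm} on the pair of paths $P_1=\{(2n+1)/2,n\}$, $P_2=\{(2n+1)/2,n+1,\infty\}$, identify the three contact structures from the decorations ($\xi_1$ from the consistent/compatible ones, $\xi_{1-2n}$ from the totally $2$-inconsistent one via Lemma~\ref{standardstructures}, and $\xi_0$ by adding half Giroux torsion), and read off the mountain ranges; your combinatorial computations ($n(2,2n+1)=2$, $m(2,2n+1)=6$, at most three supporting contact structures) are all correct. The only cosmetic difference is that you invoke Theorem~\ref{numbersupportingnonloose} to cap the number of contact structures rather than directly enumerating the $2$-inconsistent decorations and their $d_3$-invariants, but this amounts to the same count.
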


See Figure~\ref{fig:RHT-mountain} for the mountain ranges of non-loose Legendrian right-handed trefoils. 

\begin{figure}[htbp]{\footnotesize
  \vspace{0.2cm}
  \begin{overpic}{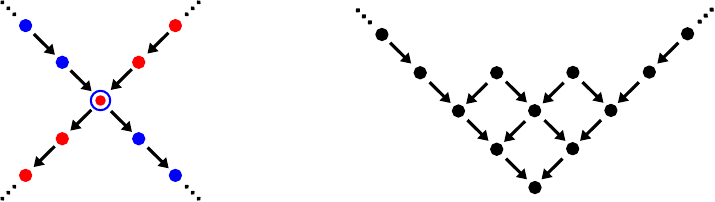}
    \put(7, 98){$-2$}
    \put(23, 98){$-1$}
    \put(45, 98){$0$}
    \put(65, 98){$1$}
    \put(83, 98){$2$}

    \put(165, 77){$7$}
    \put(165, 59){$6$}
    \put(165, 39.5){$5$}
    \put(165, 22){$4$}
    \put(165, 4){$3$}

    \put(195, 98){$-3$}
    \put(212, 98){$-2$}
    \put(231, 98){$-1$}
    \put(255, 98){$0$}
    \put(273, 98){$1$}
    \put(292, 98){$2$}
    \put(310, 98){$3$}
  \end{overpic}}
  \vspace{0.1cm}
  \caption{The left is the mountain range for the non-loose Legendrian right-handed trefoils in $\xi_0$ and $\xi_{-1}$. In $\xi_0$, the crossing point of the $\textsf{X}$ is at $\tb=1$ while in $\xi_{-1}$, it is at $\tb=-1$. Each dot represents an infinite family of Legendrian representatives distinguished by convex torsion, and at the cross we have two infinite families. The right is the mountain range in $\xi_1$. Each dot represents a unique non-loose Legendrian representative.}
  \label{fig:RHT-mountain}
\end{figure}

We now turn to Legendrian $(2,-(2n+1))$-torus knots. 

\begin{theorem}\label{thm:(2,-(2n+1))}
  The $(2,-(2n+1))$-torus knot has non-loose Legendrian representatives only in $\xi_{n+l+1}$ and $\xi_{n-l}$ for $l\in \{-n+1, -n+3, \ldots, n-3, n-1\}$. The classification in each of these contact structures is as follows.
  \begin{enumerate}
    \item  In $(S^3,\xi_{2n})$, there are non-loose Legendrian $(2,2n+1)$-torus knots $L_{n-1,\pm}^{i, k}$ for $i\in \Z, k\in\N\cup\{0\}$, and $L_e$ with
    \[
      \tb(L_{n-1,\pm}^{i,k})=i,\,\, \rot(L_{n-1,\pm}^{i,k})=\mp(i -2n + 1), \text { and } \tor(L_{n-1,\pm}^{i,k})=k,
    \]
    \[
      \tb(L_e)=2n-1,\,\, \rot(L_e)=0, \text{ and } \tor(L_e) = 0,
    \]
    \[
      L_{n-1,\pm}^{i,k}=S_\pm(L_{n-1,\pm}^{i-1,k}) \text{ and } S_\pm(L_e)=L_{n-1,\pm}^{2n-2,0},
    \]
    \[
      S_\mp(L_{n-1,\pm}^{i,k}) \text{ is loose.}
    \]
    \item  In $(S^3,\xi_{n+l+1})$ for $l\in \{-n+1,-n+3, \ldots, n-3\}$, there are non-loose Legendrian $(2,2n+1)$-torus knots $L_{l,\pm}^{i,k}$ having 
    \[
      \tb(L_{l,\pm}^{i,k})=i,\,\, \rot(L_{l,\pm}^{i,k})=\mp(i - 2l - 1), \text{ and } \tor(L_{l,\pm}^{i,k}) = k,
    \]
    \[
      S_\pm(L_{l,\pm}^{i,k})=L_{l,\pm}^{i-1,k} \text{ and } S_\mp(L_{l,\pm}^{i,k}) \text{ loose.}
    \]
    \item In $(S^3,\xi_{n-l})$ for $l\in \{-n+1,-n+3, \ldots, n-3, n-1\}$, there are non-loose Legendrian $(2,2n+1)$-torus knots $L_{l,\pm}^{i,k+\scriptscriptstyle\frac 12}$ having 
    \[
      \tb(L_{l,\pm}^{i,k+\scriptscriptstyle\frac 12})=i,\,\, \rot(L_{l,\pm}^{i,k+\scriptscriptstyle\frac 12})=\mp(i+2l+1), \text{ and } \tor(L_{l,\pm}^{i,k+\scriptscriptstyle\frac 12}) = k + \frac12,
    \]
    \[
      S_\pm(L_{l,\pm}^{i,k+\scriptscriptstyle\frac 12})=L_{l,\pm}^{i-1,k+\scriptscriptstyle\frac 12} \text{ and } S_\mp(L_{l,\pm}^{i,k+\scriptscriptstyle\frac 12}) \text{ loose}.
    \]
  \end{enumerate}
\end{theorem}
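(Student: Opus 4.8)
The plan is to specialize the general classification algorithm of Section~\ref{thealgorithm} to $(p,q) = (2,-(2n+1))$ and then read off the explicit invariants via Section~\ref{htpclasses}. The first step is a continued-fraction computation. Since $pq<0$, one finds $\frac qp = -\frac{2n+1}{2} = [-(n+1),-2]$ and $\bigl(\frac qp - \lceil \tfrac qp\rceil\bigr)^{-1} = (-\tfrac12)^{-1} = [-2]$, so that $m(2,-(2n+1)) = |(-n)(-2)|\cdot|{-2}| = 4n$ and $n(2,-(2n+1)) = |(-n)(-1)|\cdot|{-1}| = n$. I would also record $\lceil q/p\rceil = -n$ and $|pq|-|p|-|q| = 2n-1$; by Theorem~\ref{thm:>pq} this already fixes $\tb(L_e) = 2n-1$, $\rot(L_e)=0$, and locates $L_e$ in the exceptional structure $\xi_{|pq|-|p|-|q|+1} = \xi_{2n}$, matching case~(1).

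With these numbers, the $\tor=0$ skeleton of the classification assembles from the general theorems. Theorem~\ref{thm:<=pq} gives $m(p,q) - 2|\lceil q/p\rceil| = 4n-2n = 2n$ non-loose representatives with $\tb = pq$ and $\tor=0$; Theorem~\ref{thm:>pq} produces the $2n(p,q) = 2n$ families with $\tb>pq$ together with the single extra knot $L_e$; and Theorem~\ref{gen1} forces everything with $\tb<pq$ to destabilize, so each $\tor=0$ family closes up into an infinite $\textsf{X}$. This places the $\tor=0$ knots in exactly $n(p,q)=n$ contact structures, which I would identify as the exceptional $\xi_{2n}$ (case~(1)) together with $\xi_{n+l+1}$ for $l\in\{-n+1,\ldots,n-3\}$ (case~(2)), using Theorem~\ref{numbersupportingnonloose} for the count and the parity statement of Theorem~\ref{parity} --- for $pq<0$ integer torsion forces $d_3$ even, and indeed $n+l+1$ is even on this index set --- to confirm the labelling.

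The positive-torsion families are then built on top of the $\tb=pq$ representatives via Theorem~\ref{thm:torsion}: every non-loose knot with $\tor>0$ is obtained by gluing a finite, well-defined convex Giroux torsion layer into the complement of a unique $\tb=pq$, $\tor=0$ knot, and conversely the lemmas surrounding Lemma~\ref{lem:staytight} show such layers can be inserted while keeping the complement tight. The genuinely delicate point is case~(3): its half-integer torsion lives in $\xi_{n-l}$, and since $n-l$ is odd, Theorem~\ref{parity} confirms this is exactly where half-integer torsion must occur. I would check that the towers $L_{l,\pm}^{i,k}$ and $L_{l,\pm}^{i,k+\frac12}$ exhaust all torsion values, and that the two index families $\{\xi_{n+l+1}\}$ and $\{\xi_{n-l}\}$ are disjoint, since they have opposite $d_3$-parity; this yields the stated total of $2n$ supporting contact structures and, with Theorem~\ref{numbersupportingnonloose}, shows there are no others.

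The final and hardest step is to compute the rotation numbers and pin each knot to its named contact structure, thereby separating the two legs $L_{l,+}$ and $L_{l,-}$ of each $\textsf{X}$. This I would carry out with the decorated-path description of Section~\ref{htpclasses}: analyze the pairs of paths in the Farey graph from $\infty$ to $-\tfrac{2n+1}{2}$ and on to $0$, extract $\rot$ from the signed path data as in Lemma~\ref{computer}, and invoke Lemma~\ref{postorsion} to see that adding torsion never collapses distinct representatives. The main obstacle is the bookkeeping: I expect the real work to be showing that the wings of the generic mountain range (Figure~\ref{fig-genericXwing}) are trivial for $p=2$ --- so that each family really is a clean infinite $\textsf{X}$ indexed by a single $i\in\Z$ --- and that the three rotation-number formulas come out exactly as $\mp(i-2n+1)$, $\mp(i-2l-1)$, and $\mp(i+2l+1)$ in cases~(1)--(3). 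Given this, the symmetry $\rot(L_{l,+}^{i,\bullet})=-\rot(L_{l,-}^{i,\bullet})$ and the vanishing $\rot(L_e)=0$ follow from the conjugation symmetry of the construction together with the fact that $L_e$ sits at the crossing of the exceptional $\textsf{X}$.
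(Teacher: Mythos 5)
Your overall strategy is the paper's own: specialize the algorithm of Section~\ref{thealgorithm} to $q/p=-\tfrac{2n+1}{2}$, and your preliminary arithmetic ($m=4n$, $n(p,q)=n$, $|pq|-|p|-|q|=2n-1$, the exceptional structure $\xi_{2n}$, the count $2n$ of $\tb=pq$ representatives) all checks out. The honest flagging of the remaining work (rotation numbers via Lemma~\ref{computer}, absence of wings) is also pointed at the right machinery. On the wings: the clean way to see this, and what the paper actually uses, is that here $P_1=\{-\tfrac{2n+1}{2},-n-1\}$ and the truncated path $P_2^\intercal=\{-\tfrac{2n+1}{2},-n\}$ each consist of a single edge, so every non-loose decoration is automatically \emph{totally} $2$-inconsistent (the long block $B_3=\{-n,\ldots,-1\}$ lies beyond $\lceil q/p\rceil$ and only shuffles internally, parametrizing $l$); hence Step~3 never produces extra peaks and, by the same token, every one of these structures carries the positive-torsion towers.

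There is, however, one genuine gap in the middle of your plan: you propose to identify the supporting contact structures as $\xi_{n+l+1}$ (and then $\xi_{n-l}$) ``using Theorem~\ref{numbersupportingnonloose} for the count and the parity statement of Theorem~\ref{parity} \ldots to confirm the labelling.'' Those two results cannot do this. Theorem~\ref{numbersupportingnonloose} is only an upper bound on the number of supporting structures, and Theorem~\ref{parity} only constrains $d_3$ mod~$2$; together they say ``at most $n$ structures, all with even $d_3$,'' which is consistent with many sets of even integers other than $\{2,4,\ldots,2n\}$. To prove the theorem as stated you must actually compute $d_3(\xi_{P_1,P_2})$ for each of the $2n$ decorations, which is exactly what the paper does: convert each decorated pair of paths to the surgery diagram of Section~\ref{sec:FareytoSurgery}, write down the $5\times 5$ linking matrix, and evaluate the formula~(\ref{formula:d3}) with the $2n$ rotation vectors to get $d_3=n+l+1$; the companion computation $|R(P_1,P_2)|=4n+2l+3$ then feeds both the rotation numbers and, via the half-Lutz-twist formula of Section~\ref{classwithtorsion}, the identification $d_3(\xi')=n-l$ for the half-integer-torsion families. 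Your opening sentence does gesture at Section~\ref{htpclasses}, so the fix is only to replace the parity/counting shortcut with this explicit computation --- but as written, the labelling step of your argument does not go through.
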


See Figure~\ref{fig:LHT-mountain} for the mountain ranges of non-loose Legendrian left-handed trefoils.

\begin{figure}[htbp]{\footnotesize
  \vspace{0.4cm}
  \begin{overpic}{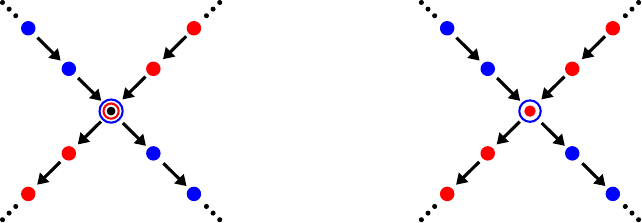}
    \put(5, 114){$-2$}
    \put(25, 114){$-1$}
    \put(50, 114){$0$}
    \put(70, 114){$1$}
    \put(90, 114){$2$}

    \put(205, 114){$-2$}
    \put(225, 114){$-1$}
    \put(250, 114){$0$}
    \put(270, 114){$1$}
    \put(290, 114){$2$}

    \put(-15, 89){$3$}
    \put(-15, 69){$2$}
    \put(-15, 52){$1$}
    \put(-15, 33){$0$}
    \put(-22, 12){$-1$}

    \put(180, 89){$1$}
    \put(180, 69){$0$}
    \put(173, 52){$-1$}
    \put(173, 32){$-2$}
    \put(173, 12){$-3$}
  \end{overpic}}
  \vspace{0.1cm}
  \caption{The left is the mountain range for the non-loose Legendrian left-handed trefoils in $\xi_2$. The right is in $\xi_1$. Each red and blue dot or circle represents an infinite family of Legendrian representatives distinguished by convex torsion. The black dot represents the extra Legendrian $L_e$ }
  \label{fig:LHT-mountain}
\end{figure}

\subsubsection{Non-loose transverse \texorpdfstring{$(2,\pm (2n+1))$}{(2,±(2n+1))}-torus knots}
We now consider the classification of non-loose transverse $(2,2n+1)$-torus knots.

\begin{theorem}\label{thm:(2,2n+1)-transverse}
  The $(2,2n+1)$-torus knot has non-loose transverse representatives only in $\xi_0,$ and $\xi_{1-2n}$. The classification in each of these structures is as follows.
  \begin{enumerate}
    \item In $(S^3,\xi_0)$, there is a family of non-loose transverse $(2,2n+1)$-torus knots $T^{k}$ for $k \geq 1$ with 
    \[
      \self(T^k)=2n-1 \text{ and } \tor(T^k) = k
    \]
    and when stabilized $T^k$ becomes loose. Moreover, $T^{k+1}$ is obtained from a Lutz twist on $T^k$ for $i>i$ and $T^1$ is obtained from a Lutz twist on the maximal self-linking transverse representative of the $(2,2n+1)$-torus knot in $(S^3,\xi_{std})$. 
    \item In $(S^{3},\xi_{1-2n})$, there is a family of non-loose transverse $(2,2n+1)$-torus knots $T^{k+\scriptscriptstyle\frac{1}{2}}$ for $i \geq 0$ with 
    \[
      \self(T^{k+\scriptscriptstyle\frac{1}{2}}) = -2n+1 \text{ and } \tor(T^{k+\scriptscriptstyle\frac{1}{2}}) = k + \frac12
    \]
    any when stabilized $T^{k+\scriptscriptstyle\frac 12}$ becomes loose. Moreover, $T^{k+\scriptscriptstyle\frac{1}{2}}$ is obtained by the half Lutz twist of $T^{k}$.
  \end{enumerate}
\end{theorem}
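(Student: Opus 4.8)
The plan is to derive the transverse classification directly from the Legendrian classification in Theorem~\ref{thm:(2,2n+1)} using the correspondence between transverse knots and Legendrian knots modulo negative stabilization \cite[Theorem~2.10]{EtnyreHonda01}. Recall that every transverse knot $T$ is the positive transverse pushoff $T_+(L)$ of a Legendrian approximation $L$, that $T_+(L)=T_+(S_-(L))$, and that $\self(T_+(L))=\tb(L)-\rot(L)$. First I would establish the non-looseness criterion: $T_+(L)$ is non-loose if and only if $S_-^{\,j}(L)$ is non-loose for every $j\geq 0$. One direction is immediate, since the complement of $T_+(L)$ contains the complement of a standard neighborhood of each $S_-^{\,j}(L)$, so an overtwisted complement for any approximation forces one for $T$. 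For the converse, the complement of $T$ is exhausted by the nested complements of the neighborhoods $N(S_-^{\,j}(L))$, so a putative overtwisted disk in the complement of $T$, being compact, would already lie in some $S^3\setminus N(S_-^{\,j}(L))$.

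With this criterion in hand, the classification is read off from the three mountain ranges in Theorem~\ref{thm:(2,2n+1)}. Since a non-loose transverse knot must have non-loose Legendrian approximations, and these exist only in $\xi_1,\xi_0,\xi_{1-2n}$, it suffices to examine the negative-stabilization orbits in each. In $\xi_1$ every orbit terminates in a loose knot: the relations recorded in Theorem~\ref{thm:(2,2n+1)} show that each chain eventually passes through $S_-(L_+^{\bullet})$ (which is $S_\mp(L_+^{\bullet})$, loose) or reaches $L^{2n+1}$, whose negative stabilization is loose; hence $\xi_1$ carries no non-loose transverse $(2,2n+1)$--torus knots. In $\xi_0$ and $\xi_{1-2n}$, by contrast, the families $\{L_-^{i,k+\frac12}\}_{i\in\Z}$ and $\{L_-^{i,k}\}_{i\in\Z}$ are each closed under negative stabilization, since $S_-(L_-^{i,\bullet})=L_-^{i-1,\bullet}$, and consist entirely of non-loose knots, so by the criterion each fixed-torsion family defines a single non-loose transverse knot. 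Computing $\self=\tb-\rot$ along these families gives $\self=2n-1$ in $\xi_0$ and $\self=-2n+1$ in $\xi_{1-2n}$, independent of $i$ as required, and distinct torsion indices give distinct transverse knots, yielding the families $T^{k}$ and $T^{k+\frac12}$.

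It remains to pin down the torsion values, the Lutz-twist descriptions, and the behavior under transverse stabilization. For the last point, transverse stabilization corresponds to positive Legendrian stabilization $T_+(L)\mapsto T_+(S_+(L))$, and since $S_+(L_-^{i,\bullet})=S_\mp(L_-^{i,\bullet})$ is loose, its pushoff is loose; this gives the claim that stabilizing $T^{k}$ or $T^{k+\frac12}$ produces a loose transverse knot. The convex Giroux torsion $\tor(T)$ is computed from the complement of $T$, which agrees with the limiting value of $\tor$ along the approximations $L_-^{i,\bullet}$ as $i\to-\infty$, namely the value listed in Theorem~\ref{thm:(2,2n+1)} for $i\leq 2n-1$; after reindexing this yields the stated integer torsion $k$ in $\xi_0$ and half-integer torsion $k+\frac12$ in $\xi_{1-2n}$. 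Finally, to identify the families with iterated Lutz twists I would start from the maximal self-linking transverse $(2,2n+1)$--torus knot in $(S^3,\xi_{std})$, which has $\self=2n-1$, and track the effect of a full Lutz twist (preserving $d_3$ and $\self$ while raising the torsion by an integer) and a half Lutz twist (altering both $d_3$ and $\self$ while raising the torsion by a half), matching these against the homotopy-class and torsion computations of Section~\ref{htpclasses}.

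The hard part will be the bookkeeping for the convex Giroux torsion: reconciling the half-integer shift between the Legendrian torsion $\tor(L)$ and the transverse torsion $\tor(T)$, and verifying that a full (resp. half) Lutz twist along the transverse knot adds exactly the claimed amount of convex torsion while landing in the predicted overtwisted contact structure. This will require the torsion-adding and torsion-detecting machinery developed earlier, in particular the lemmas surrounding Lemma~\ref{lem:staytight} together with the rotation-number and torsion computations of Lemma~\ref{computer} and Lemma~\ref{postorsion}, as well as a careful comparison of standard neighborhoods of the transverse knot and its Legendrian approximations. By contrast, the purely combinatorial extraction of $\self$ and of the stabilization structure from Theorem~\ref{thm:(2,2n+1)} is routine.
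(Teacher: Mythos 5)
Your proposal is correct and takes essentially the same route as the paper: the paper's own proof is a one-line appeal to the equivalence between transverse knots and Legendrian knots up to negative stabilization, deducing everything from Theorem~\ref{thm:(2,2n+1)} by realizing $T^k$ and $T^{k+\scriptscriptstyle\frac 12}$ as transverse push-offs of the appropriate $L_-^{0,\bullet}$. Your write-up simply makes explicit the negative-stabilization orbit analysis, the $\self=\tb-\rot$ computation, and the torsion reindexing that the paper leaves implicit.
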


Finally we discuss non-loose transverse $(2,-(2n+1))$-torus knots. 

\begin{theorem}\label{thm:(2,-(2n+1))-transverse}
  The $(2,-(2n+1))$-torus knot has non-loose transverse representatives only in $\xi_{n+l+1}$ and $\xi_{n-l}$ for $l\in \{-n+1, -n+3, \ldots, n-3, n-1\}$. The classification in each of these structures is as follows.
  \begin{enumerate}
    \item In $(S^3,\xi_{n+l+1})$, for $l\in \{-n+1, -n+3, \ldots, n-3, n-1\}$ there are non-loose transverse $(2,-(2n+1))$-torus knots $T^k_l$ for $k\geq 0$ with 
    \[
      \self(T^k_l)=2l+1 \text{ and } \tor(T^k_l) = k 
    \]
    and when stabilized $T^k_l$ becomes loose. Moreover, $T_l^{k+1}$ is obtained from a Lutz twist on $T^k_l$ for $k\geq0$.
    \item In $(S^3, \xi_{n-l})$ for $l\in \{-n+1, -n+3, \ldots, n-3, n-1\}$, there are non-loose transverse $(2,-(2n+1))$-torus knots $T^{k+\scriptscriptstyle\frac 12}_l$ for $k\geq 0$ with 
    \[
      \self(T^{k+\scriptscriptstyle\frac 12}_l)=-2l-1 \text{ and } \tor(T^{k+\scriptscriptstyle\frac 12}_l) = k+\frac12  
    \]
    and when stabilized $T^{k+\scriptscriptstyle\frac 12}_l$ becomes loose. Moreover, $T_l^{k+\scriptscriptstyle\frac 12}$ is obtained from the half Lutz twist on $T^{k}_l$.  
  \end{enumerate}
\end{theorem}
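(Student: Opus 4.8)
The plan is to deduce this transverse classification directly from the Legendrian classification in Theorem~\ref{thm:(2,-(2n+1))} via the correspondence between transverse knots and Legendrian knots modulo negative stabilization \cite[Theorem~2.10]{EtnyreHonda01}. Recall that this correspondence sends a Legendrian knot $L$ to its positive transverse push-off $T_+(L)$, which satisfies $\self(T_+(L)) = \tb(L) - \rot(L)$, that negative stabilization preserves the positive push-off, so that $T_+(S_-(L)) = T_+(L)$, and that $T_+(L)$ is non-loose exactly when every negative stabilization of $L$ is non-loose, since the complement of the transverse knot is exhausted by the complements of standard neighborhoods of the $S_-^j(L)$. The strategy is therefore to group the non-loose Legendrian representatives of Theorem~\ref{thm:(2,-(2n+1))} into their negative-stabilization orbits; each orbit all of whose members are non-loose determines one non-loose transverse knot, whose invariants are read off from any Legendrian in the orbit.

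First I would identify which Legendrian families survive. From Theorem~\ref{thm:(2,-(2n+1))} we have $S_-(L_{l,-}^{i,k}) = L_{l,-}^{i-1,k}$ and $S_-(L_{l,-}^{i,k+\scriptscriptstyle\frac12}) = L_{l,-}^{i-1,k+\scriptscriptstyle\frac12}$, so the $L_{l,-}$ families are closed under negative stabilization with every member non-loose; each such family thus yields a single non-loose transverse knot. Because in the negative case the torsion is independent of $i$ along an orbit, $\tor$ descends cleanly, giving $\tor = k$ and $\tor = k+\tfrac12$ respectively. Computing on any representative gives $\self = \tb - \rot = i - (i - 2l - 1) = 2l+1$ for the family in $\xi_{n+l+1}$, which we name $T^k_l$, and $\self = i - (i + 2l + 1) = -2l - 1$ for the family in $\xi_{n-l}$, which we name $T^{k+\scriptscriptstyle\frac12}_l$. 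By contrast $S_-(L_{l,+}^{i,k})$ is loose, so $T_+(L_{l,+}^{i,k}) = T_+(S_-(L_{l,+}^{i,k}))$ has overtwisted complement and is loose; hence the $L_{l,+}$ families contribute no non-loose transverse knots. Finally, the extra Legendrian knot $L_e$ in $\xi_{2n}$ satisfies $S_-(L_e) = L_{n-1,-}^{2n-2,0}$, so $T_+(L_e) = T^0_{n-1}$ is already on the list and produces nothing new. Completeness of the transverse list, including the assertion that only the contact structures $\xi_{n+l+1}$ and $\xi_{n-l}$ occur, follows from completeness of the Legendrian list.

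Next I would verify the stabilization and Lutz-twist assertions. Transverse stabilization lowers self-linking by two and corresponds to a positive stabilization of the Legendrian approximation, so the transverse stabilization of $T^k_l$ is $T_+(S_+(L_{l,-}^{i,k}))$; since $S_+(L_{l,-}^{i,k})$ is loose by Theorem~\ref{thm:(2,-(2n+1))}, the stabilized transverse knot is loose, and similarly for $T^{k+\scriptscriptstyle\frac12}_l$. For the Lutz-twist statements I would use that a full Lutz twist along a transverse knot preserves the homotopy class of the plane field, hence keeps us in $\xi_{n+l+1}$, while adding one full convex Giroux torsion layer to the complement, raising $\tor$ by one; matching torsion values gives $T^{k+1}_l$ from a full Lutz twist on $T^k_l$. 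The half Lutz twist instead adds a half torsion layer and changes $d_3$ from $n+l+1$ to $n-l$, a shift of $-(2l+1) = -\self(T^k_l)$, consistent with the standard effect of a half Lutz twist, and simultaneously sends $\self = 2l+1$ to $-2l-1$ and $\tor = k$ to $k+\tfrac12$, yielding $T^{k+\scriptscriptstyle\frac12}_l$. These torsion-bookkeeping facts are exactly the content of the Giroux-torsion results already established in the paper, specialized to the trefoil.

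I expect the main obstacle to be the Lutz-twist bookkeeping rather than the enumeration: one must confirm that a full, respectively half, Lutz twist along these specific transverse knots increments the torsion by exactly one, respectively one-half, and shifts $d_3$ by exactly the claimed amount, keeping careful track of the sign and half-integer conventions so that the integer and half-integer families interleave correctly. The subtle logical point in the enumeration itself is the implication that $S_-(L)$ loose forces $T_+(L)$ loose, which is what genuinely excludes the $L_{l,+}$ families and makes $L_e$ merge into $T^0_{n-1}$; making this precise requires the fact that the transverse complement contains the complement of a standard neighborhood of any negative stabilization of $L$, but this is a direct consequence of the push-off construction and the definition of looseness.
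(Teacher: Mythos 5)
Your proposal is correct and follows essentially the same route as the paper: the paper's proof is a one-line reduction to Theorem~\ref{thm:(2,-(2n+1))} via the correspondence between transverse knots and Legendrian knots up to negative stabilization, identifying $T^k_l$ and $T^{k+\scriptscriptstyle\frac12}_l$ as the transverse push-offs of $L_{l,-}^{0,k}$ and $L_{l,-}^{0,k+\scriptscriptstyle\frac12}$. Your more detailed bookkeeping (the $\self=\tb-\rot$ computations, the exclusion of the $L_{l,+}$ orbits, the merging of $L_e$ into $T^0_{n-1}$, and the Lutz-twist torsion accounting) is all consistent with what the paper leaves implicit.
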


\subsection{Non-loose \texorpdfstring{$(5,\pm 8)$}{(5,8)}-torus knots} \label{sec:(5,8)}
Here we give a classification of non-loose Legendrian and transverse $(5,\pm 8)$-torus knots as their classification shows some features not seen in the non-loose $(2,\pm(2n+1))$-torus knots. These knots are part of the family of $(5, \pm(5n+3))$-torus knots and their classification is quite similar and is left as an exercise for the reader. All the results in this section will be established in Section~\ref{fullclass58}.

\begin{theorem}\label{leg58}
  The $(5,8)$-torus knot has non-loose Legendrian representatives only in $\xi_1$, $\xi_0$, $\xi_{-1}$, $\xi_{-2}$, $\xi_{-3}$, $\xi_{-4}$, $\xi_{-7}$, $\xi_{-8}$, $\xi_{-9}$ $\xi_{-15}$, $\xi_{-19}$, and $\xi_{-27}$. The classification in each of these contact structures is as follows. See Figures~\ref{fig:r85-1} and~\ref{fig:r85-2}.
  \begin{enumerate}
    \item in $(S^3,\xi_1)$ we have non-loose Legendrian knots $L_\pm^i$ for $i>29$ and $L^{29}$ such that $\tb(L_\pm^i)=i$, $\tb(L^{29})=29$ and 
    \[
    \rot(L_\pm^i)=\mp(i-29) \text{ and } \rot(L^{29})=0,
    \]
    \[
    S_\pm(L_\pm^i)=L_\pm^{i-1}, \text{ for $i>30$ }, S_\pm(L_\pm^{20})=L^{29}, \text{ and } S\mp(L_\pm^i) \text{ and } S\pm(L^{29}) \text{ are loose.}
    \]
    In addition, there are Legendrian knots $L_{k,\pm}^{40}, k=2,3,4,$ with Thurston-Bennequin invariant $40$ and 
    \[
    \rot(L_{2,\pm}^{40})=\mp 9, \rot(L_{3,\pm}^{40})=\mp 7, \text{ and } \rot(L_{4,\pm}^{40})=\mp 3.
    \]
    When these knots are stabilized to have the same invariants (or the invariants of the $L_\pm^i$ or $L^{29}$) they become equivalent and they are non-loose until stabilized outside the $\textsf{V}$ defined by the $L_\pm^i$ and $L^{29}$. None of these Legendrian knots have convex torsion in their complement. 
    \item In $(S^3,\xi_{-1})$ there are Legendrian knots $L_\pm^i$ for $i\in \Z$ and $L_{2,\pm}^i$ for $i\leq 40$ such that 
    \[
      \tb(L_\pm^i)=\tb(L_{2,\pm}^i)=i, \rot(L_\pm^i)=\mp (i-21), \text{ and } \rot(L_{2,\pm}^i)=\mp(i-19).
    \]
    Moreover, $S_\pm(L_\pm^i)=L_\pm^{i-1}, S_\pm(L_{2,\pm}^i)=L_{2,\pm}^{i-1}, S_\mp(L_{2,\pm}^i)=L_\pm^{i-1}$, and $S_\mp(L_\pm^i)$ is loose. No stabilization of $L_+^i$ or $L_{2,+}^i$ is equivalent to a stabilization of $L_i^i$ or $L_{2,-}^i$. All these Legendrian knots have no convex torsion in their complement. 
    \begin{figure}[htbp]{\footnotesize
    \vspace{0.1cm}
    \begin{overpic}{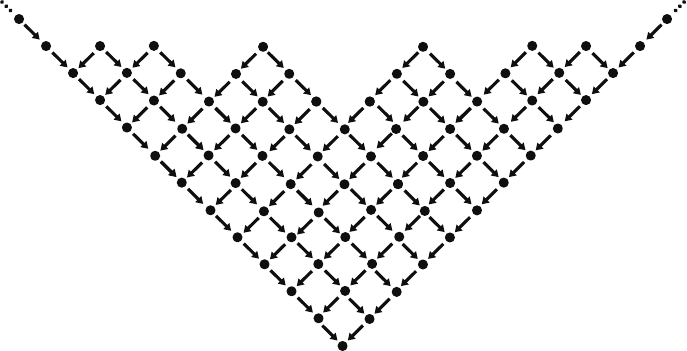}
      \put(-1, 172){$-12$}
      \put(41, 172){$-9$}
      \put(67, 172){$-7$}
      \put(120, 172){$-3$}
      \put(315, 172){$12$}
      \put(280, 172){$9$}
      \put(254, 172){$7$}
      \put(200, 172){$3$}
      \put(330, 143){$40$}
      \put(330, -1){$29$}
    \end{overpic}}
    \vspace{0.1cm}
    \caption{The mountain range for the non-loose Legendrian $(5,8)$-torus knots in $\xi_{1}$. Each dot or circle represents a unique non-loose Legendrian knot.}
    \label{fig:r85-1}
  \end{figure}
  \item In $(S^3, \xi_{-3})$ and $(S^3,\xi_{-7})$ we have Legendrian knots $L_\pm^i$ with $\tb(L_\pm^i)=i$ and $\tor(L_\pm^i)=0$ such that
  \[
    \rot(L_\pm^i)=\begin{cases}
    \mp(i-13) & \text{ in } \xi_{-3},\\
    \mp(i-3)& \text{ in } \xi_{-7},
    \end{cases}
  \]
  \[
    S_\pm(L_\pm^i)=L_\pm^{i-1} \text{ and } S_\mp(L_\pm^i) \text{ is loose.}
  \]
  \item In $S^3$ with the contact structures $\xi_{-9}. \xi_{-15}, \xi_{-19}, \xi_{-27}$ we have the Legendrian knots $L_\pm^{i,k}$ where $\tb(L_\pm^{i,k})=i$ and 
  \[
    \rot(L_\pm^{i,k})=\begin{cases}
    \mp(i+1) & \text{ in } \xi_{-9},\\
    \mp(i+11)& \text{ in } \xi_{-15},\\
    \mp(i+17)& \text{ in } \xi_{-19},\\
    \mp(i+27)& \text{ in } \xi_{-27},
    \end{cases}
  \]
  \[
    S_\pm(L_\pm^{i,k})=L_\pm^{i-1,k} \text{ and } S\mp(L_\pm^{i,k}) \text{ is loose.}
  \]
  Moreover, $\tor(L_\pm^{i,k})=k + 1/2$ if it is in $\xi_{-27}$ and $i \leq 27$. Otherwise, $\tor(L_\pm^{i,k})=k$. 
  \item In $S^3$ with the contact structures $\xi_{-8},\xi_{-4},\xi_{-2},$ and $\xi_0$ we have the Legendrian knots $L_{\pm}^{i,k+\scriptscriptstyle\frac 12}$ where $\tb(L_\pm^{i,k+\scriptscriptstyle\frac 12})=i$ and 
  \[
    \rot(L_\pm^{i,k+\scriptscriptstyle\frac 12})=\begin{cases}
    \mp(-1+i) & \text{ in } \xi_{-8},\\
    \mp(-11+i)& \text{ in } \xi_{-4},\\
    \mp(-17+i)& \text{ in } \xi_{-2},\\
    \mp(-27+i)& \text{ in } \xi_{-0},
    \end{cases}
  \]
  \[
    S_\pm(L_\pm^{i,k+\scriptscriptstyle\frac 12})=L_\pm^{i-1,k+\scriptscriptstyle\frac 12} \text{ and } S_\mp(L_\pm^{i,k+\scriptscriptstyle\frac 12}) \text{ is loose.}
  \]
  Moreover, $\tor(L_\pm^{i,k+\scriptscriptstyle\frac 12}) = k + 1$ if it is in $\xi_{0}$ and $i\leq 27$. Otherwise, $\tor(L_\pm^{i,k+\scriptscriptstyle\frac 12}) = k + 1/2$.
  \end{enumerate}
\end{theorem}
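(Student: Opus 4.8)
\emph{Overall strategy.} The plan is to obtain the entire classification as a bookkeeping exercise on top of the general algorithm of Section~\ref{thealgorithm} together with the closed-form counts of Theorems~\ref{thm:>pq}, \ref{thm:<=pq}, and~\ref{thm:torsion}, specialized to $(p,q)=(5,8)$; since $(5,8)$ is the first member of the $(5,5n+3)$ family with $|q|>p$, the same computation will handle the whole family. First I would record the continued-fraction data. As $pq=40>0$, I use $\bigl(\tfrac pq-1\bigr)^{-1}=-\tfrac83=[-3,-3]$, giving $(a_1,a_2)=(-3,-3)$, and $\bigl(\tfrac qp-\lceil\tfrac qp\rceil\bigr)^{-1}=\bigl(\tfrac85-2\bigr)^{-1}=-\tfrac52=[-3,-2]$, giving $(b_1,b_2)=(-3,-2)$. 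Hence
\[
  m(5,8)=|(a_1+1)a_2|\cdot|(b_1+1)b_2|=|(-2)(-3)|\cdot|(-2)(-2)|=24,
\]
\[
  n(5,8)=|(a_1+1)(a_2+1)|\cdot|(b_1+1)(b_2+1)|=|(-2)(-2)|\cdot|(-2)(-1)|=8.
\]

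\emph{Enumerating the $\tor=0$ representatives.} Theorem~\ref{thm:>pq} gives exactly $2n(5,8)=16$ non-loose knots with $\tb=i>40$, arranged in $\pm$-pairs $L_{\pm,k}^{i}$ ($1\le k\le 8$) that destabilize to $\tb=41$; there is no extra knot $L_e$ because $pq>0$. Theorem~\ref{thm:<=pq} gives $m(5,8)=24$ non-loose knots with $\tb=40=pq$. Since $pq>0$, the distinguished exceptional structure is $\xi_1$, whose mountain range is the right-hand diagram of Figure~\ref{fig-exceptionMR} with lower vertex at $(\rot,\tb)=(0,pq-p-q+2)=(0,29)$; this yields the $\textsf V$ built from $L^{29}$ and $L_\pm^{i}$ ($i>29$) in part~(1). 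The remaining structures carry the generic $\textsf X$-with-wings range of Figure~\ref{fig-genericXwing}, with the crossing at $\tb=40$.

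\emph{Distributing the knots and computing invariants.} Next I would spread these $16+24$ knots among the contact structures and compute their classical invariants. The key tool is the pair-of-decorated-paths description of Section~\ref{thealgorithm}, which here amounts to tracking the two short paths in the Farey graph dictated by $(a_1,a_2)$ and $(b_1,b_2)$ from $\infty$ to $8/5$ and then to $0$; feeding these into the $\rot$ and $d_3$ formulas of Section~\ref{htpclasses} (in particular Lemma~\ref{computer}) produces the rotation numbers and the ambient $d_3$ values. The $\tor=0$ knots then land in precisely $n(5,8)=8$ structures, $\xi_1,\xi_{-1},\xi_{-3},\xi_{-7},\xi_{-9},\xi_{-15},\xi_{-19},\xi_{-27}$, matching the bound of Theorem~\ref{numbersupportingnonloose}. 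Propositions~\ref{merge} and~\ref{alldiamonds} then determine exactly when two strands merge or become loose under stabilization; this is what records the wing data, namely the three wing pairs $L_{k,\pm}^{40}$ ($k=2,3,4$) collapsing into the $\textsf V$ of $\xi_1$ in part~(1), and the second strand $L_{2,\pm}^{i}$ with $S_\mp(L_{2,\pm}^{i})=L_\pm^{i-1}$ in $\xi_{-1}$ in part~(2).

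\emph{Adding Giroux torsion, and the main obstacle.} Finally, for the representatives with $\tor>0$ in parts~(4) and~(5) I would invoke Theorem~\ref{thm:torsion}: each is obtained by attaching convex Giroux-torsion layers to a unique $\tb=40$, $\tor=0$ knot, the attachment performed via Lemma~\ref{lem:staytight} and the resulting invariants computed and separated using Lemmas~\ref{postorsion} and~\ref{computer}. The parity Theorem~\ref{parity} serves as a consistency check on the generic ranges and predicts that integer torsion accumulates over the odd-$d_3$ structures while half-integer torsion accumulates over the even ones $\xi_0,\xi_{-2},\xi_{-4},\xi_{-8}$, for a total of $8+|(a_1+1)||(b_1+1)|=8+4=12$ supporting structures. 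The hard part will be the two anomalous ranges where the torsion value jumps across the threshold $\tb=pq-p-q=27$: the shift to $k+\tfrac12$ in $\xi_{-27}$ for $i\le 27$ and to $k+1$ in $\xi_0$ for $i\le 27$, where a half Lutz twist alters the homotopy type. These do not follow from the uniform counts and instead require a delicate application of the torsion-addition lemmas, checking directly that the knot complement remains tight exactly up to the asserted amount of torsion and no further.
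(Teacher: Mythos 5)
Your proposal is correct and follows essentially the same route as the paper: the paper's proof of Theorem~\ref{leg58} is precisely an application of the algorithm of Section~\ref{thealgorithm}, writing down the pair of decorated paths for $8/5$, converting to the surgery diagram to compute the $d_3$-invariants and rotation numbers, and then invoking Steps 2 and 3 together with the torsion discussion of Section~\ref{classwithtorsion}; your continued-fraction data, the counts $m(5,8)=24$ and $n(5,8)=8$, and your identification of the anomalous torsion jump at $\tb=pq-p-q=27$ in $\xi_{-27}$ and $\xi_0$ (handled in the paper via Lemma~\ref{standardstructures} and Proposition~\ref{shownotor}) all match. The only difference is one of emphasis: you organize the bookkeeping around the closed-form counting theorems as a cross-check, whereas the paper directly enumerates the twelve rotation vectors and sorts the decorations by $d_3$-invariant.
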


\begin{figure}[htbp]{\footnotesize
  \vspace{0.2cm}
  \begin{overpic}{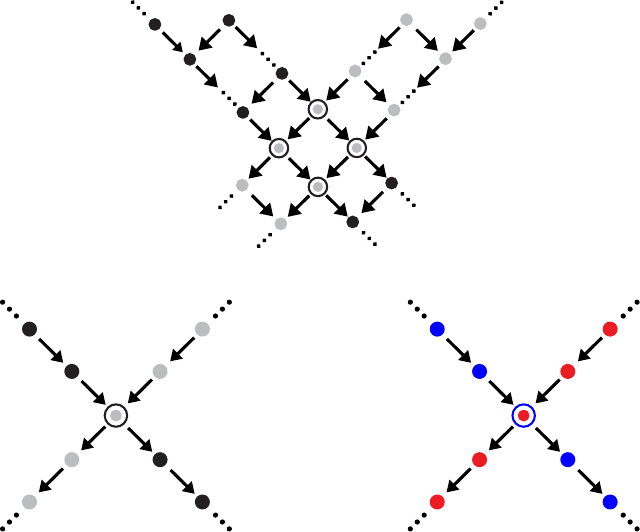}
    \put(65, 255){$-21$}
    \put(100, 255){$-19$}
    \put(191, 255){$19$}
    \put(226, 255){$21$}
    
    \put(40, 240){$40$}
    \put(40, 182){$20$}

    \put(5, 114){$-2$}
    \put(25, 114){$-1$}
    \put(50, 114){$0$}
    \put(70, 114){$1$}
    \put(90, 114){$2$}

    \put(205, 114){$-2$}
    \put(225, 114){$-1$}
    \put(250, 114){$0$}
    \put(270, 114){$1$}
    \put(290, 114){$2$}
        
    \put(-15, 53){$t$}
    \put(173, 53){$t$}
  \end{overpic}}
  \vspace{0.1cm}
  \caption{The top is the mountain range for the non-loose Legendrian $(5,8)$-torus knots in $\xi_{-1}$. The bottom left is in $\xi_{-3}$ and $\xi_{-7}$. In the first case, $t=13$ and in the second case, $t=3$. The bottom right is in $\xi_{-9}$, $\xi_{-15}$, $\xi_{-19}$, $\xi_{-27}$, $\xi_{-8}$, $\xi_{-4}$, $\xi_{-2}$, and $\xi_0$. The values of $t$ in those cases are $-1,-11,-17,-27, 1, 11, 17,$ and $27$, respectively. Each black and gray dot or circle represents a unique non-loose representative, while the colored ones represent infinitely many distinct Legendrian representatives.}
  \label{fig:r85-2}
\end{figure}

We now turn to the Legendrian $(5,-8)$-torus knots.

\begin{theorem}\label{leg5-8}
The $(5,-8)$-torus knots has non-loose Legendrian representatives only in $\xi_1$, $\xi_2$, $\xi_7$, $\xi_8$, $\xi_{14}$, and $\xi_{28}$. The classification in each of these contact structures is as follows. See Figure~\ref{fig:5-8}. 
\begin{figure}[htbp]{\footnotesize
  \vspace{0.1cm}
  \begin{overpic}{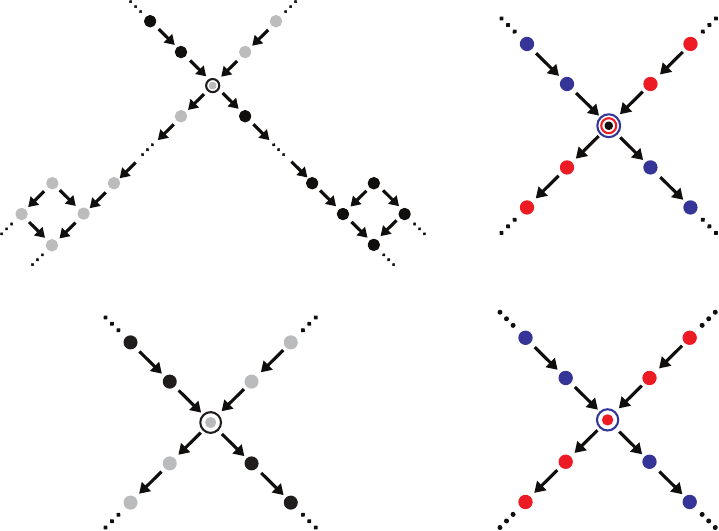}
    \put(330, 192){$27$}
    \put(-10, 211){$-25$}
    \put(-10, 165){$-40$}
      
    \put(15, 255){$-17$}
    \put(178, 255){$17$}

    \put(55, 109){$-2$}
    \put(75, 109){$-1$}
    \put(100, 109){$0$}
    \put(118, 109){$1$}
    \put(138, 109){$2$}

    \put(246, 109){$-2$}
    \put(266, 109){$-1$}
    \put(291, 109){$0$}
    \put(309, 109){$1$}
    \put(329, 109){$2$}
      
    \put(246, 248){$-2$}
    \put(266, 248){$-1$}
    \put(291, 248){$0$}
    \put(310, 248){$1$}
    \put(330, 248){$2$}

    \put(81, 255){$-1$}
    \put(100, 255){$0$}
    \put(115, 255){$1$}

    \put(155, 49){$-5$}
    \put(330, 50){$t$}
  \end{overpic}}
  \vspace{0.1cm}
  \caption{The upper left is the mountain range for non-loose $(5,-8)$-torus knots in $\xi_2$ and the upper right is in $\xi_{28}$. The bottom left is in $\xi_{8}$ and the bottom right is in $\xi_1$, $\xi_7$ and $\xi_{14}$ where $t$ is $27$, $7$, and $-7$, respectively. Each black and gray dot or circle represents a unique non-loose Legendrian knot while the colored ones represent infinitely many distinct Legendrian knots.}
  \label{fig:5-8}
\end{figure}

\begin{enumerate}
  \item In $(S^3,\xi_{28})$, there are non-loose Legendrian $(5,-8)$-torus knots $L_\pm^{i, k}, i\in \Z, k\in\N\cup\{0\}$ and $L_e$ with
  \[
    \tb(L_\pm^{i,k})=i,\,\, \rot(L_\pm^{i,k})=\mp(i-27), \text{ and } \tor(L_\pm^{i,k}) = k,
  \]
  \[
    \tb(L_e)=27,\,\, \rot(L_e)=0, \text{ and } \tor(L_e)=0,
  \]
  \[ 
    S_\pm(L_\pm^{i,k}) = L_\pm^{i-1,k},\,\,  S_\pm(L_e)=L_\pm^{2n-2,0} \text{ and } S_\mp(L_\pm^{i,k}) \text{ is loose.}
  \]
  \item In $(S^3,\xi_{2})$, there are non-loose Legendrian $(5,-8)$-torus knots $L_\pm^{i}$ for $i\in \Z$ and $L_{2,\pm}^i$ for $i\leq -40$ having 
  \[
    \tb(L_\pm^{i})=\tb(L_{2,\pm}^i)=i, \text{ and } \tor(L_\pm^{i})=\tor(L_{2,\pm}^i)=0, 
  \]
  \[
    \rot(L_\pm^{i})=\mp(i + 25), \text{ and } \rot(L_{2,\pm}^{i})=\mp(i + 23),
  \]
  \[
    S_\pm(L_\pm^{i})=L_\pm^{i-1}, \text{ and } S_\pm(L_{2,\pm}^i)=L_{2,\pm}^{i-1}, 
  \]
  \[
    S_\mp(L_{2,\pm}^i)=L_\pm^{i-1}, \text{ and } S_\mp(L_\pm^{i}) \text{ is loose.}
  \]
  \item In $(S^3,\xi_{8})$ there are non-loose Legendrian $(5,-8)$-torus knots $L_\pm^i$ for $i\in \Z$ with 
  \[
    \tb(L_\pm^i)=i,\,\, \rot(L_\pm^i)=\mp(i+5), \text{ and } \tor(L_\pm^i) = 0,
  \]
  \[
    S_\pm(L_\pm^i)=L_\pm^{i-1} \text{ and } S_\mp(L_\pm^i) \text{ is loose.}
  \]
  \item In $(S^3,\xi_{14})$ there are non-loose Legendrian $(5,-8)$-torus knots  $L_\pm^{i,k}$ for $i\in \Z$ and $k\in \N\cup \{0\}$ satisfying 
  \[
    \tb(L_\pm^{i,k})=i,\,\, \rot(L_\pm^{i,k})=\mp(i - 7), \text{ and } \tor(L_\pm^{i,k}) = k, 
  \]
  \[
    S_\pm(L_\pm^{i,k})=L_\pm^{i-1,k} \text{ and } S_\mp(L_\pm^{i,k}) \text{ is loose.}
  \]
  \item In $S^3$ with the contact structures $\xi_1$ and $\xi_7$ there are non-loose Legendrian $(5,-8)$-torus knots  $L_\pm^{i,k+\scriptscriptstyle\frac 12}$ satisfying $\tb(L_\pm^{i,k+\scriptscriptstyle\frac 12})=i$, $\tor(L_\pm^{i,k+\scriptscriptstyle\frac 12})=k+1/2$, 
  \[
    \rot(L_\pm^{i,k+\scriptscriptstyle\frac 12})=\begin{cases} 
    \mp(i + 27) & \text{ in } \xi_1,\\
    \mp(i + 7)& \text{ and } \xi_7,
    \end{cases}
  \]
  \[
    S_\pm(L_\pm^{i,k+\scriptscriptstyle\frac 12})=L_\pm^{i-1,k+\scriptscriptstyle\frac 12} \text{ and } S_\mp(L_\pm^{i,k+\scriptscriptstyle\frac 12}) \text{ is loose.}
  \]
\end{enumerate}
\end{theorem}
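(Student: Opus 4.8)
The strategy is to specialize the general classification algorithm of Section~\ref{thealgorithm} to the pair $(p,q)=(5,-8)$, running it exactly as for the positive case of Theorem~\ref{leg58}. The first step is to record the continued-fraction data. Since $pq=-40<0$ we have $q/p=-8/5=[-2,-3,-2]$ and $(q/p-\lceil q/p\rceil)^{-1}=(-3/5)^{-1}=-5/3=[-2,-3]$, so the invariants of Section~\ref{oldclassification} are $m(5,-8)=4\cdot 3=12$ and $n(5,-8)=2\cdot 2=4$, while the special value is $|pq|-|p|-|q|=27$. These numbers already pin down the coarse shape of the answer: by Theorem~\ref{numbersupportingnonloose} there are at most $n(5,-8)=4$ overtwisted structures carrying $\tor=0$ representatives and at most $4+|(-1)(-2)|\cdot|(-1)|=6$ carrying any non-loose representative, matching the six structures $\xi_1,\xi_2,\xi_7,\xi_8,\xi_{14},\xi_{28}$ in the statement.

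Next I would assemble the $\tor=0$ classification. By Theorem~\ref{gen1} every non-loose representative except those at $\tb=pq=-40$ (and the single exceptional knot) is a (de)stabilization of a $\tb=pq$ representative, so the whole mountain range is determined once the $\tb=pq$ list and the stabilization behavior are understood. Theorem~\ref{thm:>pq} provides the $2n(5,-8)=8$ knots at each $\tb=i>-40$ together with the extra knot $L_e$ at $\tb=27$ living in $\xi_{|pq|-|p|-|q|+1}=\xi_{28}$, the exceptional structure of Figure~\ref{fig-exceptionMR}; Theorem~\ref{thm:<=pq} gives the $m(5,-8)-2|\lceil q/p\rceil|=12-2=10$ representatives at $\tb=-40$. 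To glue these into mountain ranges I would analyze the pairs of decorated paths in the Farey graph from $\infty$ to $-8/5$ and then to $0$ (Section~\ref{pathsinFG}) and apply Propositions~\ref{merge} and~\ref{alldiamonds} to decide which representatives become equivalent or loose under stabilization. For $(5,-8)$ this yields a single extra wing family, the $L_{2,\pm}^i$ of $\xi_2$, with the remaining $\tor=0$ structures carrying bare infinite $\textsf{X}$'s apart from $\xi_{28}$, which additionally carries $L_e$.

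The sorting of these knots into the individual contact structures is carried out by computing rotation numbers and $d_3$-invariants through Lemma~\ref{computer} and the recipe of Section~\ref{htpclasses}; this distributes the $\tor=0$ knots among $\xi_2,\xi_8,\xi_{14},\xi_{28}$, whose even $d_3$-values are forced by Theorem~\ref{parity} for a negative torus knot with integer torsion. The positive-torsion families are then produced from Theorem~\ref{thm:torsion}: each such representative is obtained by inserting a convex Giroux torsion layer onto a unique $\tb=pq$, $\tor=0$ knot. Here I would use Lemma~\ref{lem:staytight} and its companions to attach half- and full-torsion layers while keeping the complement tight, and Lemma~\ref{postorsion} to separate the results by rotation number; the half-integer families land in the two odd-$d_3$ structures $\xi_1,\xi_7$, again consistent with Theorem~\ref{parity}.

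The main obstacle is the Farey-graph bookkeeping in the second step. The number and placement of the wing families, and the precise merge-or-become-loose pattern of their stabilizations, is exactly what the decorated-path analysis computes, and any miscount there would alter the stated mountain ranges. A secondary delicate point is confirming through Lemma~\ref{postorsion} that the half- versus full-integer torsion layers really do realize the claimed rotation numbers, and hence genuinely distinct knots in the advertised structures, since the classical surgery-diagram computation of rotation numbers is unavailable once Giroux torsion is present.
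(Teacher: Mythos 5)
Your proposal is correct and follows essentially the same route as the paper: specialize the algorithm of Section~\ref{thealgorithm} to $(5,-8)$, enumerate the decorated pairs of paths in the Farey graph, compute the $d_3$-invariants and rotation numbers (Section~\ref{htpclasses}, Lemma~\ref{computer}) to sort the $\tor=0$ knots into $\xi_2,\xi_8,\xi_{14},\xi_{28}$, and then obtain $\xi_1,\xi_7$ by adding half Giroux torsion to the totally $2$-inconsistent families. One small correction: for $pq<0$ the merging/loosening analysis of stabilizations rests on Propositions~\ref{wings} and~\ref{merge} only --- Proposition~\ref{alldiamonds} concerns diamonds for positive torus knots in $\xi_1$ and plays no role here --- but this does not affect the validity of your approach.
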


With the classification of non-loose Legendrian $(5,\pm 8)$-torus knots, one may easily classify non-loose transverse knots. 

\begin{theorem}\label{trans58}
  The $(5,8)$-torus knot has non-loose transverse representatives only in $\xi_{-1}$, $\xi_{-3}$, $\xi_{-7}$, $\xi_{-9}$, $\xi_{-15}$, $\xi_{-19}$, $\xi_{-27}$, $\xi_{-8}$, $\xi_{-4}$, $\xi_{-2}$, and $\xi_0$. The classification in each of these contact structures is as follows.
  \begin{enumerate}
  \item In $(S^3,\xi_{-1})$ there are two non-loose transverse knots $T$ and $T'$ with $sl(T)=-19$, $sl(T')=-21$, and the stabilization of $T$ is equivalent to $T'$. Neither knot has Giroux torsion in its complement. 
  \item In $S^3$ with the contact structure $\xi_{-3}$ or $\xi_{-7}$ there is exactly one non-loose transverse knot $T$ and it has $sl(T)=13$ in $\xi_{-3}$ and $-3$ in $\xi_{-7}$. Neither knot has Giroux torsion in its complement. 
  \item In $S^3$ with the contact structure $\xi_{-7}, \xi_{-9}, \xi_{-15}, \xi_{-19}$ or $\xi_{-27}$ there is a family of non-loose transverse knots $T^k$ for $k\geq 1$ with $\tor(T^k) = k$ and
  \[
    sl(T^k)=\begin{cases}
    -1& \text{ in } \xi_{-9}\\
    -11& \text{ in } \xi_{-15}\\
    -17& \text{ in } \xi_{-19}\\
    -27& \text{ in } \xi_{-27}.
    \end{cases}
  \]
  \item In $S^3$ with the contact structures $\xi_{-8},\xi_{-4},\xi_{-2},$ or $\xi_0$ there is a family of non-loose transverse knots $T^{k+\scriptscriptstyle\frac 12}$ for $k\geq 0$ with $\tor(T^{k+\scriptscriptstyle\frac 12}) = k+1/2$ and
  \[
    sl(T^{k+\scriptscriptstyle\frac 12})=\begin{cases}
    1& \text{ in } \xi_{-8}\\
    11& \text{ in } \xi_{-4}\\
    17& \text{ in } \xi_{-2}\\
    27& \text{ in } \xi_{0}.
    \end{cases}
  \]
  \end{enumerate}
\end{theorem}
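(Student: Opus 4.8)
The plan is to read the transverse classification off of the Legendrian one in Theorem~\ref{leg58}, using the correspondence recalled just before Theorem~\ref{gentransverse}: coarse equivalence classes of transverse knots are in bijection with coarse equivalence classes of Legendrian knots \emph{up to negative stabilization}, via the transverse push-off $T$ of a Legendrian knot $L$, whose self-linking number is $\self(T)=\tb(L)-\rot(L)$. Since negative stabilization sends $(\rot,\tb)$ to $(\rot-1,\tb-1)$, the quantity $\tb-\rot$, and hence $\self$, is constant along a negative-stabilization orbit, and the convex Giroux torsion of the complement is unchanged; so $\tor(T)=\tor(L)$ for any Legendrian approximation $L$ (compatibly with Theorem~\ref{thm:torsion}). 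Thus each non-loose transverse knot will be recorded by a negative-stabilization orbit of Legendrian knots together with the pair $(\self,\tor)$ that is constant along it.

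The crux is the criterion for non-looseness of $T$. Because $T$ can be isotoped into an arbitrarily small neighborhood of any of its Legendrian approximations $L$, we have $S^3\setminus\nu(T)\supseteq S^3\setminus\nu(L)$ up to isotopy; hence if a single approximation $L$ is loose then $S^3\setminus\nu(T)$ is overtwisted and $T$ is loose. As the Legendrian approximations of $T$ are exactly the downward orbit $L,\,S_-(L),\,S_-^2(L),\ldots$, I conclude that $T$ is non-loose if and only if this \emph{entire} descending orbit consists of non-loose knots; equivalently, if and only if the corresponding ray of the mountain range descends forever without meeting a loose representative. The non-loose transverse knots therefore correspond bijectively to the negative-stabilization orbits all of whose members are non-loose, and these are precisely the infinite non-loose descending rays of the mountain ranges in Theorem~\ref{leg58}.

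With this criterion the proof is bookkeeping against the five cases of Theorem~\ref{leg58}. In each overtwisted structure I list the descending rays, keep those that never become loose, and compute $(\self,\tor)$ along each. The torsion-free families of cases~(2) and~(3) give the finitely many torsion-free transverse knots of items~(1) and~(2): in $\xi_{-1}$ the two rays $L_{2,-}^i$ and $L_-^i$ descend without becoming loose (while $S_\mp(L_\pm^i)$ is loose), they are joined by the off-ray stabilization $S_\mp(L_{2,\pm}^i)=L_\pm^{i-1}$, and their self-linking numbers differ by $2$, so one is a single transverse stabilization of the other; in $\xi_{-3}$ and $\xi_{-7}$ only the ray $L_-^i$ survives, giving one non-loose transverse knot each. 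The Giroux-torsion families of cases~(4) and~(5), indexed by $\tor=k$ and $\tor=k+\tfrac12$, produce the infinite torsion-distinguished families $T^{k}$ and $T^{k+\frac12}$ of items~(3) and~(4), since each fixed torsion value is its own infinite descending ray; computing $\self=\tb-\rot$ along these rays reproduces the listed self-linking numbers.

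The feature worth isolating, and the main obstacle to stating the theorem correctly rather than merely plausibly, is the \emph{absence} of any non-loose transverse representative in $\xi_1$ even though $\xi_1$ carries infinitely many non-loose Legendrian $(5,8)$--torus knots (case~(1) of Theorem~\ref{leg58}). This is exactly where the looseness criterion bites: the mountain range in $\xi_1$ is bounded below, an upward $\textsf{V}$ with vertex $L^{29}$ together with the finite $\tb=40$ decorations, so every descending negative-stabilization orbit reaches $L^{29}$ (or leaves the $\textsf{V}$) and then becomes loose, since $S_\pm(L^{29})$ and $S_\mp(L_\pm^i)$ are all loose. No ray descends forever, so $\xi_1$ supports no non-loose transverse $(5,8)$--torus knot, precisely as for the unknot in $\xi_1$. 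Finally, to rule out coincidences I check that no two surviving rays are identified: rays in different contact structures cannot be identified, and distinct descending rays within one structure are distinguished by the already-established Legendrian equivalence together with the constant pair $(\self,\tor)$ computed above, so the listed transverse knots are pairwise inequivalent.
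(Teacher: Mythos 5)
Your proposal is correct and takes essentially the same route as the paper, whose entire proof of Theorem~\ref{trans58} is the one-line observation that it follows from Theorem~\ref{leg58} together with the fact that transverse knots are classified by Legendrian knots up to negative stabilization. You simply make explicit the bookkeeping (which descending negative-stabilization rays survive, why $\xi_1$ contributes nothing, and the invariants $\self=\tb-\rot$ and $\tor$ along each ray) that the paper leaves implicit.
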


Finally we can consider the transverse $(5,-8)$-torus knots.

\begin{theorem}\label{trans5-8}
  The $(5,-8)$-torus knot has non-loose transverse representatives only in $\xi_1, \xi_2, \xi_7,\allowbreak \xi_8, \xi_{14},$ and $\xi_{28}$. The classification in each of these contact structures is as follows.
  \begin{enumerate}
  \item In $(S^3,\xi_{2})$ there are two non-loose transverse knots $T$ and $T'$ with $sl(T)=27$, $sl(T')=25$, and the stabilization of $T$ is contactomorphic to $T'$. Neither knot has Giroux torsion in its complement.  
  \item In $(S^3, \xi_8)$ there is exactly one non-loose transverse knot $T$ and it has $sl(T)=-7$ and $\tor(T) = 0$. 
  \item In $S^3$ with the contact structure $\xi_{14}$ and $\xi_{28}$ there is a family of non-loose transverse knots $T^k$ for $k\geq 0$ with $\tor(T^k) = k$ and  
  \[
    sl(T^k)=\begin{cases}
    -7& \text{ in } \xi_{14}\\
    27& \text{ in } \xi_{28}
  \end{cases}
  \]
  \item In $S^3$ with the contact structures $\xi_{1}$ or $\xi_7$ there is a family of non-loose transverse knots $T^{k+\scriptscriptstyle\frac 12}$ for $k\geq 0$ with $\tor(T^{k+\scriptscriptstyle\frac12})=k+1/2$ and
  \[
    sl(T^{k+\scriptscriptstyle\frac 12})=\begin{cases}
    27& \text{ in } \xi_{1}\\
    7& \text{ in } \xi_{7}
    \end{cases}
  \]
  \end{enumerate}
\end{theorem}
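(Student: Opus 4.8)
The plan is to obtain this classification as a direct consequence of the Legendrian classification in Theorem~\ref{leg5-8}, in the manner already outlined for Theorem~\ref{gentransverse}. The only external input is the standard transverse--Legendrian dictionary \cite[Theorem~2.10]{EtnyreHonda01}: every positively transverse knot is the positive transverse push-off $T_+(L)$ of a Legendrian approximation $L$; two Legendrian knots have coarsely equivalent push-offs exactly when they admit a common negative stabilization; the push-off is unchanged under negative stabilization; and $\self(T_+(L))=\tb(L)-\rot(L)$. Since negative stabilization alters neither the knot complement nor the convex torus on which Giroux torsion is measured, $\tor$ of a transverse knot agrees with $\tor$ of each of its Legendrian approximations. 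Thus the non-loose transverse $(5,-8)$--torus knots are precisely the negative-stabilization classes of non-loose Legendrian $(5,-8)$--torus knots, and their $\self$ and $\tor$ can be read off from the rotation numbers and torsion already recorded in Theorem~\ref{leg5-8}.

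The one delicate point is detecting looseness, so first I would record the criterion that $T_+(L)$ is non-loose if and only if \emph{every} member of the orbit $\{S_-^{\,j}(L)\}_{j\ge 0}$ is non-loose. The forward implication is immediate: for any approximation $L'$ one has $T\subset N(L')$, so $S^3\setminus N(L')\subset S^3\setminus T$, and tightness of the latter forces tightness of the former. For the converse I would use that, as $j\to\infty$, the standard neighborhood $N(S_-^{\,j}(L))$ shrinks down onto $T$; hence any overtwisted disk in $S^3\setminus T$ eventually lies in $S^3\setminus N(S_-^{\,j}(L))$, exhibiting a loose approximation. Consequently a non-loose transverse knot corresponds to precisely an infinite downward negative-stabilization ray of non-loose Legendrian knots.

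With the criterion in place the argument is bookkeeping on the mountain ranges of Theorem~\ref{leg5-8}. In each supporting $\xi$ I would list the families that remain non-loose under negative stabilization; by the recorded relations these are exactly the ``$L_-$--type'' families, those with $S_-(L_-^{i})=L_-^{i-1}$ and $S_+(L_-^{i})$ loose, whereas the ``$L_+$--type'' families have $S_-(L_+^{i})$ loose and so give loose push-offs. Each surviving family contributes one transverse knot (one per torsion level for the torsion families of Theorem~\ref{thm:torsion}). A wing family $L_{2,-}^{\bullet}$ gives a distinct transverse knot which, through $S_+(L_{2,-}^{i})=L_-^{i-1}$, is a single transverse stabilization above the knot coming from the main $\textsf{X}$; this produces the finite zero-torsion chains (two knots with $\self=27,25$ in $\xi_2$, a single knot in $\xi_8$). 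For $\tor>0$ each family produces an infinite tower $T^{k}$ or $T^{k+\frac12}$ distinguished by $\tor$, the half-integer values being those dictated by Theorem~\ref{parity} and the Legendrian torsion data. The exceptional knot $L_e$ in $\xi_{28}$ contributes nothing new, since $S_-(L_e)$ lies on the zero-torsion family and hence $T_+(L_e)$ is its $k=0$ member. Finally, every contact structure in Theorem~\ref{leg5-8} carries an infinite downward $S_-$--ray, so the transverse supporting set coincides with the Legendrian one, giving $\xi_1,\xi_2,\xi_7,\xi_8,\xi_{14},\xi_{28}$, and the self-linking numbers follow from $\self=\tb-\rot$ together with the rotation numbers of Theorem~\ref{leg5-8} and Lemma~\ref{computer}.

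The main obstacle is the coincidence-and-merging analysis: deciding exactly when two Legendrian families determine the \emph{same} transverse knot, and when one transverse knot is a stabilization of another. This is governed by how the $S_-$--rays sit inside the $\textsf{X}$ and its wings, and I expect to invoke the Farey-graph path analysis behind Propositions~\ref{merge} and~\ref{alldiamonds}. Keeping the integer versus half-integer torsion straight across the contact structures of Theorem~\ref{parity}, and confirming that the Legendrian stabilization relations of Theorem~\ref{leg5-8} transport correctly to transverse stabilizations, is the remaining careful bookkeeping.
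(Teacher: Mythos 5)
Your proposal is correct and takes essentially the same route as the paper: the paper's entire proof is a one-sentence reduction to Theorem~\ref{leg5-8} via the fact that transverse classification equals Legendrian classification up to negative stabilization \cite[Proof of Theorem~2.10]{EtnyreHonda01}, exactly the dictionary you invoke. Your version simply makes explicit the bookkeeping the paper leaves implicit (which $S_-$--rays survive, $\self=\tb-\rot$, the role of the wings and of $L_e$), all of which is consistent with how the paper handles the analogous transverse theorems for the $(2,\pm(2n+1))$-- and $(5,8)$--torus knots.
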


\subsection{Qualitative features of non-loose Legendrian knots: known and new results}
Very little was known about the qualitative behavior of non-loose Legendrian knots, but we greatly illuminate their nature in this paper. The first most basic result about non-loose Legendrian and transverse knots is a Bennequin type inequality concerning their classical invariants. 

\begin{theorem}[\'Swi\c{a}tkowski, see \cite{Dymara01, Etnyre13}]\label{looseLegbound}
Let $(M,\xi)$ be an overtwisted contact $3$-manifold and $L$ a non-loose Legendrian knot in $\xi$. Then 
\[
  -|\tb(L)|+|\rot(L)|\leq -\chi(\Sigma)
\]
for any Seifert surface $\Sigma$ for $L$. For non-loose transverse knots we have
\[
  sl(T)\leq -\chi(\Sigma). 
\]
\end{theorem}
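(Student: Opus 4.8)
The plan is to reduce both inequalities to the one hypothesis available, namely that a non-loose knot has a \emph{tight} complement, and then to run the classical Eliashberg--Bennequin argument relative to that tight piece.

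I would treat the transverse case first, as it is the model for everything. Let $T$ be a non-loose transverse knot with Seifert surface $\Sigma$, and perturb $\Sigma$ so that its characteristic foliation is Morse--Smale with only nondegenerate elliptic and hyperbolic singularities and the standard behavior along the positively transverse boundary. The usual Poincar\'e--Hopf bookkeeping then yields
\[
  \self(T) = -\chi(\Sigma) + 2\,(e_- - h_-),
\]
where $e_-$ and $h_-$ count the negative elliptic and hyperbolic singular points of the foliation, so the inequality $\self(T)\le -\chi(\Sigma)$ is equivalent to $e_- - h_- \le 0$. Suppose instead that $e_- > h_-$. Applying Giroux's elimination lemma to cancel negative elliptic/hyperbolic pairs and then Eliashberg's construction, the surviving excess of negative elliptic points is enclosed by a limit cycle bounding an overtwisted disk. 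Every interior singularity, and hence this disk, lies in $\interior \Sigma$, so it can be pushed off $\partial\Sigma = T$ and placed in $S^3 \setminus \nu(T)$. This contradicts the tightness of the complement, so $e_- - h_- \le 0$ and $\self(T)\le -\chi(\Sigma)$.

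For the Legendrian bound I would \emph{not} pass through the transverse push-offs: when $\tb(L)>0$ (which genuinely occurs for non-loose knots) the push-offs are loose, and invoking the transverse inequality on them would produce the too-strong tight bound $\tb(L)+|\rot(L)|\le -\chi(\Sigma)$. Instead I work directly inside the tight complement $V = S^3 \setminus \interior \nu(L)$, whose convex boundary carries two dividing curves of slope $\tb(L)$ in the (meridian, Seifert-longitude) basis. Passing to $V$ is precisely what resolves the central obstruction: when $\tb(L)>0$ the Seifert surface $\Sigma$ cannot itself be made convex, since a convex Seifert surface would force $\tb(L)\le 0$, whereas this obstruction vanishes once we cut out $\nu(L)$ and realize the boundary longitude as a Legendrian curve on $\partial V$.

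Concretely, set $\Sigma' = \Sigma \cap V$, a once-punctured copy of $\Sigma$ with $\chi(\Sigma')=\chi(\Sigma)$ whose boundary is the Seifert longitude on $\partial V$. Legendrian-realizing $\partial\Sigma'$ and making $\Sigma'$ convex in the tight manifold $V$, the longitude meets the two dividing curves of $\partial V$ in $2|\tb(L)|$ points, so the dividing set $\Gamma_{\Sigma'}$ has exactly $|\tb(L)|$ boundary arcs. I would then relate the relative Euler number $\langle e(\xi|_V),[\Sigma']\rangle$ to $\rot(L)$, and use that tightness of $V$ forbids contractible dividing curves (Giroux's criterion) to bound $\chi(\Sigma'_\pm)$ in terms of the $2|\tb(L)|$ boundary intersections; running this estimate on both $\Sigma'_+$ and $\Sigma'_-$ produces the two absolute values and gives $|\rot(L)| \le |\tb(L)| - \chi(\Sigma)$, which is exactly $-|\tb(L)|+|\rot(L)| \le -\chi(\Sigma)$. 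The hard part is this last bookkeeping: getting the constants relating the number of dividing arcs to $\tb(L)$ and $\rot(L)$ correct under a fixed orientation/framing convention, and separately handling the borderline disk case $\chi(\Sigma)=1$ (the unknot), where the bound is sharp and the relative adjunction inequality must be applied in its refined tight form.
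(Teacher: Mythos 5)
The paper does not actually prove this theorem; it is imported from the literature (\'Swi\c{a}tkowski, see \cite{Dymara01, Etnyre13}), so there is no in-house argument to compare against. Your proposal is a correct reconstruction of the standard proofs and I see no gap in the approach. The transverse case is exactly Eliashberg's singularity-elimination argument localized to the tight complement: the identity $\self(T)=-\chi(\Sigma)+2(e_--h_-)$ is right, and the overtwisted disk produced when $e_->h_-$ lives in a neighborhood of a compact piece of $\interior\Sigma$, hence misses $T$. You are also right to reject the push-off shortcut for the Legendrian bound, and the underlying reason is worth recording: since $\mathrm{lk}(T_\pm(L),L)=\tb(L)$, every Seifert surface of a push-off meets $L$ whenever $\tb(L)\neq 0$, so the elimination argument cannot be confined to the complement of $L$ (and indeed push-offs of non-loose knots with $\tb>0$ are typically loose, as for non-loose unknots). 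Working in the tight complement $V$ with $\Sigma'=\Sigma\cap V$ is the right move, and the bookkeeping you defer does close up: realizing $\partial\Sigma'$ as a Seifert-longitude ruling curve $\lambda$ on $\partial V$, one has $\rot(\lambda)=\rot(L)$ by the Etnyre--Honda formula for rotation numbers of ruling curves (the same formula this paper invokes in the proof of Lemma~\ref{computer}), the relative Euler class gives $\rot(\lambda)=\chi(\Sigma'_+)-\chi(\Sigma'_-)$, and $\chi(\Sigma'_+)+\chi(\Sigma'_-)=\chi(\Sigma)+|\tb(L)|$ because $\Gamma_{\Sigma'}$ has exactly $|\tb(L)|$ boundary arcs. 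Giroux's criterion excludes contractible closed dividing curves, so each disk component of $\Sigma'_\pm$ must contain one of the $|\tb(L)|$ arcs of $\partial\Sigma'$ lying in $\Sigma'_\pm$; this gives $\chi(\Sigma'_\pm)\le|\tb(L)|$ uniformly (including $\tb(L)=0$, where it yields $\chi(\Sigma'_\pm)\le 0$), hence $\pm\rot(L)\le|\tb(L)|-\chi(\Sigma)$. The only point needing separate care is $\tb(L)=0$, where $\lambda$ has twisting zero relative to $\partial V$ and the convexity set-up is slightly degenerate; the disk case $\chi(\Sigma)=1$ does not otherwise require the refined adjunction input you mention.
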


The theorem above gives restrictions on the mountain range for non-loose representatives of $K$, see Figure~\ref{geogfig}. 

\begin{figure}[htbp]{\footnotesize
  \vspace{0.1cm}
  \begin{overpic}{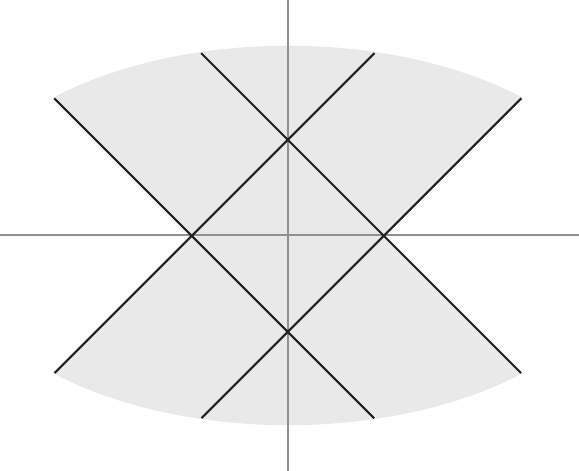}
    \put(290, 112){$\rot$}
    \put(135, 232){$\tb$}
    \put(126, 188){$1$}
    \put(126, 33){$2$}
    \put(126, 120){$3$}
    \put(80, 160){$4$}
    \put(185, 160){$5$}
    \put(80, 60){$6$}
    \put(185, 60){$7$}
  \end{overpic}}
  \vspace{0.1cm}
  \caption{The mountain range of non-loose Legendrian representatives of a knot type $K$ is contained in the shaded region defined by the four lines $l_1, l_2, l_3,$ and $l_4$. The lines divide the shaded region into the 7 parts shown.}
  \label{geogfig}
\end{figure}

Recall that $\Phi:\mathcal{L}(K)\to \Z^2$ is a map that sends $L\in \mathcal{L}(K)$ to $(\rot(L), \tb(L))$. If one considers the non-loose unknots discussed in Section~\ref{sec:prior}, we see that their image under $\Phi$ is an infinite $\textsf{V}$ with vertex at $(0,1)$.  Our classification of non-loose torus knots and \cite[Theorem~1.12 and 1.13]{Etnyre13} shows that the non-loose mountain range frequently \dfn{contains an $\textsf{X}$}, that is Legendrian knots whose invariants fill the integral points on a line of slope $1$ and a line of slope $-1$. Given our current knowledge it is reasonable to ask if all mountain ranges have such a feature.

\begin{question}
If $\xi$ admits non-loose representatives of a knot type $K$, then does the mountain range of non-loose Legendrian knots of the knot type $K$ in $\xi$ always contain a $\textsf{V}$ or $\textsf{X}$? Does it always contain an $\textsf{X}$ if $\xi$ is not $\xi_1$?
\end{question}

Looking back at \cite[Theorem~1.13]{Etnyre13}, a reasonable place to start looking for a knot where the answer was NO, would be to consider a knot type $K$ for which $-K$ is not smoothly isotopic to $K$. 

In \cite{Etnyre13}, there were many questions asked as to what the mountain ranges of non-loose Legendrian knots could look like. At the time the only known examples showed you could have $\textsf{X}$s and $\textsf{V}$s and that was all. In particular it was asked if there could ever be any non-loose Legendrian knots in Regions~1 or~2 in Figure~\ref{geogfig}, and if so could there be multiple (maybe infinitely many) representatives mapping to a fixed point in one of those regions. If one considers, for example, the $(n-1,n)$-torus knot in $\xi_1$ we can see that there are indeed non-loose Legendrian knots with invariants in Region~1, but at most one Legendrian representative can map to any point there. We do not know if Region~2 can be populated. So we as the following question. 

\begin{question}
Are there any non-loose Legendrian knots with invariants in $R_2$? Is the cardinality of the pre-image of any point in $R_1$ under the geography map $\Phi$ on non-loose Legendrian knots bounded by 1?
\end{question}

It was also asked in \cite{Etnyre13} if for any knot $K$, there are at most finitely many contact structures in which it could have non-loose representatives and if there were finitely many contact structures in which there could be infinitely many Legendrian representatives mapping to a fixed point $(\rot,\tb)$. (It was suggested that there might just be two such contact structures.) All our examples point to the answer being YES to both these questions, but our examples of the $(2,-(2n+1))$-torus knots show that this finite number of contact structures can be arbitrarily large. 

In \cite{BakerOnaran15}, Baker and Onaran defined three invariants that quantify ``how tight" the complements of non-loose Legendrian knots are.  Our results give quite a bit of new information about two of them, so we discuss those. Given a non-loose knot $L$, they were the \dfn{depth of $L$}, $\dep(L)$, defined to be the minimal number of times an overtwisted disk intersects $L$ and the \dfn{tension of $L$}, $\ten(L)$, defined to be the minimal number of stabilizations needed to make $L$ loose. Previously examples only show that $\ten$ can be $1$ and $\dep$ can be $2$. So, in \cite[Problem~6.1]{BakerOnaran15}, Baker and Onaran asked for constructions of non-loose Legendrian knots with arbitrarily large tension and depth. We can show the following.
\begin{corollary}
Given any integer $n$ there is a non-loose Legendrian torus knot $L$ with $\ten(L)>n$ and $\dep(L)>n$. 
\end{corollary}
\begin{proof}
Theorem~\ref{bigwings} shows there are Legendrian torus knots with arbitrarily large wings. The peaks in these wings that are farthest from the infinite $\textsf{X}$ give the desired Legendrian knots. Similarly, using the algorithm in Section~\ref{thealgorithm} we see that there are positive torus knots with arbitrarily large diamonds, given such examples in the fixed contact structure $\xi_1$. 

It was shown in \cite{BakerOnaran15} that $t(L)\leq d(L)$, so the examples above show that $\dep(L)$ can be arbitrarily large. 
\end{proof}


Baker and Onaran also noted that tension can be refined to consider only positive or negative stabilizations, so we set $\ten_\pm(L)$ to be the minimal number of $\pm$ stabilizations that are required to make $L$ loose. From prior work, {\em e.g.}~\cite{Etnyre13}, it is clear that $t_\pm(L)$ can be infinite. In \cite[Question 6.7]{BakerOnaran15} it was asked if there was an $L$ such that both $\ten_+(L)$ and $\ten_-(L)$ can be infinite. Similarly, Baker and Onaran \cite[Question 6.7]{BakerOnaran15} asked if one could have both $t_\pm(L)$ larger than $t(L)$. 

\begin{corollary}
Each negative torus knot contains a non-loose Legendrian representative $L$ such that both $\ten_+(L)$ and $\ten_-(L)$ are both infinite, while $\ten(L)=2$. 
\end{corollary}
\begin{proof}
The ``extra" Legendrian $L_e$ for negative torus knots given in Theorem~\ref{thm:>pq} is an example of such a non-loose Legendrian knot. 
\end{proof}

In \cite[Problem~6.6]{BakerOnaran15} it was asked if $\dep-\ten$ can be arbitrarily large. While we cannot answer this question, it seems likely that the knot $L_e$ from the previous proof will give a non-loose Legendrian knot where this difference is large. In particular, it would be interesting to see how $L_e$ intersects overtwisted disks. 


\subsection*{Acknowledgements}
The authors thank Rima Chatterjee for reminding us of interesting questions on non-loose knots, and Irena Matkovi\v c for a helpful discussion, in particular the idea of Theorem~\ref{parity}. We thank Shunyu Wan for pointing out an error in the first version of Theorem~\ref{parity}. We also thank B\"{u}lent Tosun for a useful conversation. The authors are grateful to the referee for many insightful comments on improving the manuscript. The first and third authors were partially supported by NSF grants DMS-1906414 and 2203312.

\section{Background and preliminary observations}

We assume the reader is familiar with basic Legendrian and transverse knot theory as discussed in \cite{EtnyreHonda01} and the convex surface theory in \cite{Honda00a}. 
In Section~\ref{oldclassification}, we review the classification of contact structures on simple manifolds that will be needed in the remainder of the paper. 
Before that, we recall the definition of the Farey graph and some important properties about paths in the Farey graph that are needed in those classifications and their applications. In Section~\ref{lutzsection} we recall the definition of Lutz twists and, for future use, relate them to Legendrian knots. 
In the following section, we discuss when Legendrian knots on the boundary of thickened tori are related by stabilization. 
Then in Section~\ref{subsec:pathsinFG} we discuss the construction of contact structures on $S^3$ using pairs of decorated paths in the Farey graph. 
We then show how to translate this description of the contact structure into a contact surgery diagram in $(S^3,\xi_{std})$. Then in Section~\ref{htpclasses}, we see how to compute the $d_3$-invariant of these contact structures as well as the rotation numbers of some Legendrian knots in them. We end this section by classifying contact structures, with certain boundary conditions, on $P\times S^1$ where $P$ is a pair of pants (that is, a disk with two disjoint sub-disks removed). 

\subsection{Continued fractions and paths in the Farey graph}\label{fareygraph}
We will keep track of curves on a torus using the Farey graph. The Farey graph is constructed as follows. Consider the unit disk in the $xy$-plane. Label the point $(0,1)$ with $0=0/1$ and $(0,-1)$ with $\infty=1/0$. Connect $0$ and $\infty$ with a straight line. Now if a point on the boundary of this disk with positive $x$-coordinate lies half way between two points labeled $a/b$ and $c/d$ that are connected by an edge, then label it ${(a+c)/(b+d)}$ and connect this point to each of the other two by a hyperbolic geodesic (put the hyperbolic metric on the interior of the unit disk). We call this the \dfn{Farey sum} of $a/b$ and $c/d$, and denote it by $\frac ab \oplus \frac cd$. (We will also use $\frac ab \ominus \frac cd$ to represent ${(a-c)/(b-d)}$.) If we iterate this construction, then all the positive rational numbers will appear. We can repeat this construction for points on the boundary with negative $x$-coordinate, but when we do we let $\infty={-1/0}$, so we will get all the negative rational numbers. 

\subsubsection{Curves on tori}
Recall that if one fixes a basis $\lambda, \mu$ for $H_1(T^2)$, then embedded curves on $T^2$ are in one-to-one correspondence with $\Q\cup \{\infty\}$, where $a/b$ is associated to the embedded curve on $T^2$ in the homology class $a\mu+b\lambda$. One may easily check that two curves associated to the numbers $r$ and $s$ form a basis for $H_1(T^2)$ if and only if there is an edge between $r$ and $s$ in the Farey graph. 

We also introduce the dot product of two rational numbers: $\frac ab \bigcdot \frac cd = ad - bc$ and note that $\left|\frac ab \bigcdot \frac cd\right|$ is the minimal number of times curves associated to $\frac ab$ and $\frac cd$ can intersect.

We end this section by setting up some notation that will be used frequently in the rest of the paper. 
Given two numbers $r$ and $s$ in $\Q\cup\{\infty\}$ we let $[r,s]$ denote the elements in  $\Q\cup\{\infty\}$ that are clockwise of $r$ in the Farey graph and anti-clockwise of $s$. 

\subsubsection{Paths in the Farey graph}\label{generalpathsinfg}
We have the following well-known lemma about continued fractions, see for example \cite[Lemma~2.1]{EtnyreLaFountainTosun12}.

\begin{lemma}\label{adjacentvertices}
  Suppose $q/p<-1$. Given $q/p=[a_1,\ldots, a_n]$, let $\left(q/p\right)^c= [a_1,\ldots, a_n+1]$ and $\left(q/p\right)^a=[a_1,\ldots, a_{n-1}]$. There will be an edge in the Farey graph between each pair of numbers $q/p$, $\left(q/p\right)^c$, and $\left(q/p\right)^a$. Moreover, $\left(q/p\right)^c$ will be the farthest clockwise point from $q/p$ that is larger than $q/p$ with an edge to $q/p$, while $\left(q/p\right)^a$ will be the farthest anti-clockwise point from $q/p$ that is less than $q/p$ with an edge to $q/p$.  
\end{lemma}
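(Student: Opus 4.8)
The plan is to split the statement into the three claimed edges and the two extremality (``moreover'') assertions, and to run everything through the convergents of the negative continued fraction. Writing $q/p=[a_1,\ldots,a_n]$ with all $a_i\le -2$, I would set $h_{-1}/k_{-1}=1/0$, $h_0/k_0=a_1/1$, and recursively $h_j=a_{j+1}h_{j-1}-h_{j-2}$, $k_j=a_{j+1}k_{j-1}-k_{j-2}$, so that $[a_1,\ldots,a_{j+1}]=h_j/k_j$; thus $q/p=h_{n-1}/k_{n-1}$ and $(q/p)^a=[a_1,\ldots,a_{n-1}]=h_{n-2}/k_{n-2}$, with the convention that $(q/p)^a=\infty=h_{-1}/k_{-1}$ when $n=1$. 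A one-line induction on the recursion gives the constant determinant $h_jk_{j-1}-h_{j-1}k_j=-1$. Since replacing $a_n$ by $a_n+1$ sends $h_{n-1}\mapsto h_{n-1}+h_{n-2}$ and $k_{n-1}\mapsto k_{n-1}+k_{n-2}$, I get $(q/p)^c=(h_{n-1}+h_{n-2})/(k_{n-1}+k_{n-2})=(q/p)\oplus(q/p)^a$, the Farey sum of $q/p$ with its penultimate convergent.

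For the three edges I would just evaluate the dot product from the previous subsection. Using the determinant identity, $(q/p)\bigcdot(q/p)^a=h_{n-1}k_{n-2}-k_{n-1}h_{n-2}=-1$, and a direct expansion gives $(q/p)\bigcdot(q/p)^c=h_{n-1}k_{n-2}-k_{n-1}h_{n-2}=-1$ together with $(q/p)^a\bigcdot(q/p)^c=h_{n-2}k_{n-1}-k_{n-2}h_{n-1}=+1$. As each dot product is $\pm1$, the three numbers are pairwise Farey-adjacent, so $\{q/p,(q/p)^c,(q/p)^a\}$ spans a triangle of the Farey tessellation.

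For the extremality claims I would first record that every Farey neighbor of $q/p=h_{n-1}/k_{n-1}$ (in lowest terms) has the form $v_k=(h_{n-2}+k\,h_{n-1})/(k_{n-2}+k\,k_{n-1})$ for $k\in\Z$, since the integer solutions of $h_{n-1}d-k_{n-1}c=\pm1$ give a single family of fractions; note $v_0=(q/p)^a$ and $v_1=(q/p)^c$ are consecutive and $v_k\to q/p$ as $k\to\pm\infty$. After checking the orientation conventions --- that clockwise agrees with the increasing circular order on $\Q\cup\{\infty\}$, that $(q/p)^c$ lies on the clockwise (larger) side of $q/p$, and that $(q/p)^a$ lies on the anticlockwise side --- the key geometric input is that distinct edges of the Farey tessellation do not cross in the disk. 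Given the triangle above, any $x$ on the open arc running clockwise from $(q/p)^c$ to $(q/p)^a$ (the arc not containing $q/p$) is separated from $q/p$ by the geodesic $[(q/p)^c,(q/p)^a]$, so the geodesic $[q/p,x]$ would have to cross it; hence no such $x$ is a neighbor. Consequently $(q/p)^c$ is the farthest-clockwise neighbor larger than $q/p$, and symmetrically $(q/p)^a$ is the farthest-anticlockwise neighbor smaller than $q/p$.

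The routine part is the determinant bookkeeping behind the edges. The main obstacle I expect is the second half: pinning down the circular-order and orientation conventions cleanly --- in particular correctly placing $(q/p)^c$ on the ``larger'' side and $(q/p)^a$ on the ``smaller'' side, and handling the degenerate case $n=1$ where $(q/p)^a=\infty$ --- and phrasing the non-crossing argument so that ``farthest clockwise/anticlockwise'' is unambiguous despite the wraparound through $\infty$. One clean way to finish, avoiding delicate arc bookkeeping, is to verify directly from the formula for $v_k$ that the family $(v_k)_{k\in\Z}$ is monotone in the circular order with its only gap between $v_0$ and $v_1$, which yields both extremality statements at once.
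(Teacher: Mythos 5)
The paper offers no proof of this lemma to compare against: it is stated as ``well-known'' with a pointer to \cite{EtnyreLaFountainTosun12}, so your argument is supplying something the paper omits. Your proof is correct. The convergent recursion, the constant determinant $h_jk_{j-1}-h_{j-1}k_j=-1$, and the resulting dot products of absolute value $1$ do establish the three edges, and parametrizing all Farey neighbours of $q/p$ as $v_k=(h_{n-2}+kh_{n-1})/(k_{n-2}+kk_{n-1})$ and invoking the non-crossing of Farey geodesics across the edge from $(q/p)^a$ to $(q/p)^c$ gives both extremality claims cleanly. Two points are worth tightening. First, the identification $(q/p)^c=(q/p)\oplus(q/p)^a$ is representative-dependent: the recursion produces $k_j$ of alternating sign, so $(h_{n-1}+h_{n-2})/(k_{n-1}+k_{n-2})$ is the mediant of those particular representatives, which in the paper's notation (standard representatives, and with $\oplus$ reserved for the point \emph{between} two adjacent vertices in the disk construction) is really $\frac qp\ominus\left(\frac qp\right)^a$ --- e.g.\ for $q/p=-3/2$ one gets $(q/p)^c=-1$, which is not between $-3/2$ and $-2$. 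This does not affect your dot-product computations, which are sign-independent, but the wording should be adjusted. Second, the deferred orientation checks --- that $(q/p)^a<q/p<(q/p)^c$, and that the pole of $k\mapsto v_k$ lies in $(0,1)$ so that the only ``gap'' in the neighbour family is between $v_0$ and $v_1$ --- all reduce to the fact that the $k_j$ alternate in sign with $|k_j|$ strictly increasing, which follows by induction from $k_j=a_{j+1}k_{j-1}-k_{j-2}$ and $a_{j+1}\le -2$; recording that one computation closes every loose end, including the degenerate case $n=1$ where $(q/p)^a=\infty$ must be read as lying anticlockwise of $q/p$ in the circular order.
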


We note that when $a_n+1=-1$ the rational number $[a_1,\ldots, a_n+1]$ is the same as $[a_1,\ldots, a_{n-1}+1]$. Thus, by convention, if $a_n+1=-1$, then $[a_1,\ldots, a_n+1]$ is considered to be $[a_1,\ldots, a_{n-1}+1]$. 


A path in the Farey graph is a sequence of elements $p_1,\ldots, p_k$ in $\Q\cup\{\infty\}$ moving monotonically clockwise or anti-clockwise, such that each $p_i$ is connected to $p_{i+1}$ by an edge in the Farey graph, for $i<k$.

An immediate corollary of the above lemma is the following useful result. 
\begin{lemma}\label{lem:pathsinFG}
The minimal monotone path $\{q_1,\ldots q_l\}$ in the Farey graph going from $q_1=q/p<-1$ clockwise to $q_l=-1$ can be computed from the continued fraction expansion of $q/p$ as follows: if $q_i=[c_1,\ldots, c_j]$ then $q_{i+1}=[c_1,\ldots, c_j+1]$. 

Similarly a minimal monotone path $\{p_1,\ldots, p_k\}$ from $p_1=q/p$ anti-clockwise to $p_k=\lfloor q/p \rfloor$ can be computed as follows: if $p_i=[b_1,\ldots, b_j]$ then $p_{i+1}=[b_1,\ldots, b_{j-1}]$.

In particular, we can inductively compute all the $p_i$ and $q_j$. Also, $k=n$ and $l=|a_n|-n-1+\sum_{i=1}^{n-1} |a_i+1|$. 
\end{lemma}


We will now explore paths in the Farey graph further. The basic building blocks for such paths are {continued fraction blocks}. 

\begin{definition}
A path $\{p_0,\ldots, p_k\}$ in the Farey graph is a \dfn{continued fraction block} if there is some change of basis for $\Z^2$ such that $\{p_0,\ldots, p_k\}$ becomes the path $\{0,\pm1, \ldots, \pm k\}$. In other words, if we express a fraction $a/b$ as a vector $\vects{b}{a}$, then there is an element of $SL(2,\Z)$ that takes the vectors corresponding to $\{p_0,\ldots, p_k\}$ to $\{0,\pm1,\ldots, \pm k\}$. The sign is determined by whether the path is clockwise or anti-clockwise. 
\end{definition}

We will give another useful way to describe continued fraction blocks. Given points $p_0>p_1$ in the Farey graph that are connected by an edge, there will be a unique point $t$ that is not in $[p_1,p_0]$ and has an edge to $p_1$ and $p_0$. We define $p_l=p_0\oplus l t$ (this means we have ``Farey summed" $t$ to $p_0$, $l$ times). See Figure~\ref{CFB}.
\begin{figure}[htb]{\tiny
\begin{overpic}
{figures/CFB}
\put(286, 4){$p_0$}
\put(146, 4){$p_1$}
\put(75, 4){$p_2$}
\put(39, 4){$p_3$}
\put(5, 4){$t$}
\end{overpic}}
\caption{A continued fraction block}
\label{CFB}
\end{figure}
Note that $p_0$ with this definition agrees with the original $p_0$. Fixing any $k$, the path $\{p_0,p_1,\ldots, p_k\}$ is an anti-clockwise continued fraction block. One may also check that the element of $SL(2,\Z)$ that takes this path to $0,-1,\ldots, -k$ has positive determinant. Similarly, if $p_0< p_1$ and $t$ is the unique point outside of $[p_1,p_0]$ with an edge to $p_0$ and $p_1$, then we can consider the points $p_l=p_0\oplus l t$ and obtain a clockwise continued fraction block $\{p_0,p_1,\ldots, p_k\}$.

Using Lemma~\ref{adjacentvertices} we can relate continued fraction blocks to continued fractions. Suppose $q/p=[a_1,\ldots, a_n]<-1$ and $a_n\leq -2$ for all $i$, then we can see that 
\[
\left\{ [a_1,\ldots, a_n], [a_1,\ldots, a_n+1],\ldots, [a_1,\ldots, a_{n-1}, -1] \right\}
\]
is a continued fraction block. Indeed, setting $p_0=[a_1,\ldots, a_{n-1}-1]$ and $t=[a_1,\ldots, a_{n-1}]$ one may check that the points above are of the form $p_0\oplus l t$. 

We now continue our discussion of minimal paths by seeing that any minimal path is a concatenation of continued fraction blocks. To this end, we examine how to continue a path after finishing a continued fraction block. We will consider anti-clockwise paths, but as above, a similar discussion holds for clockwise paths. Let $p_0$ and $t$ be vertices in the Farey graph that share an edge with $t<p_0$. Suppose $p_l= p_0\oplus l t$. We consider the continued fraction block $p_0,\ldots, p_k$ for some $k$. If we wish to continue this path (but not as part of a continued fraction block) then the next vertex must be in $(p_{k+1},p_k)$  since the edge from $p_{k+1}$ to $t$ prevents any vertex in $[t,p_{k+1})$ from having an edge to $p_k$ and the edge from $p_k$ to $t$ prevents any vertex in $[p_k,t]$ from having an edge to $p_k$. See Figure~\ref{NewCFB}. 
\begin{figure}[htb]{\tiny
\begin{overpic}
{figures/NewCFB}
\put(335, -2){$p_k$}
\put(171, -2){$p_{k+1}$}
\put(12, -2){$t$}
\put(253, -2){$w^1$}
\put(293, -2){$w^2$}
\put(312, -2){$w^3$}
\end{overpic}}
\caption{Transitioning between continued fraction blocks. The blue is the original continued fraction block. The red path is the continued fraction block $1$ down from the blue. The green path is $2$ down from the blue, and the orange is $3$ down from the blue. For the red and green paths we show two edges in the continued fraction block.}
\label{NewCFB}
\end{figure}
Now the vertices with an edge to $p_k$ in $[p_{k+1},p_k]$ are $w^i=p_{k+1}\oplus i p_{k}$. If the next jump in the path is to $w^n$, then we say we are starting a new continued fraction block that is \dfn{$n$-down} from the previous continued fraction block. Then the vertices in the next part of the path will be $w^n_l= w^n \oplus l w^{n-1}$ for $l=0,\ldots, m$ for some $m\geq 0$. (If $m=0$, then there is only one edge in this continued fraction block.) We can now repeat this process of adding ``continued fraction blocks down from the previous continued fraction block" until we have completed our path.

\subsubsection{Shortening paths in the Farey graph:}\label{shorteningpaths} We now discuss shortening of a non-minimal path in the Farey graph. Suppose $P'$ is a minimal path in the Farey graph from $r$ anti-clockwise to $s$, and $P''$ is a shortest path in the Farey graph from $s$ anti-clockwise to $t$. We will consider the path $P$ that is the concatenation of $P'$ and $P''$. If $P$ is not a shortest path in the Farey graph, then we can eliminate some vertices of $P$, we call this process \dfn{shortening} the path $P$. 

We now systematically consider this shortening process. Since $P'$ and $P''$ are shortest paths, we cannot eliminate any vertices in those paths, but we will be able to eliminate $s$. There are only two ways this can happen. Either the path $P'$ ends in a continued fraction block with length greater than $1$, or it ends in one with length $1$. In the first case, suppose $p_0,\ldots, p_k$ is the last continued fraction block in $P'$ and $k>1$. With the notation above, we see that the first edge in $P''$ must go from $p_k$ to $t$, since if the edge went to a vertex in $(t, p_k)$ then the path could not be shortened at $p_k$ (and the edge between $p_0$ and $t$ prevents an edge from $p_k$ to anything outside of $[t,p_k]$). See Figure~\ref{CFB}. Thus, we may shorten the path by removing the last edge of $P'$ (from $p_{k-1}$ to $p_k$) and the first edge of $P''$ (from $p_k$ to $t$) and replace them with an edge from $p_{k-1}$ to $t$. We may continue shortening the path until we have removed all the edges in the continued fraction block and replaced them with an edge from $p_0$ to $t$. At this stage, we have two new paths $P'_1$ and $P''_1$, where $P'_1$ is $P'$ with all the edges in the last continued fraction block removed and the edge from $p_0$ to $t$ added, and $P''_1$ is $P''$ with the edge from $t$ to $p_k$ removed. It is clear that both $P'_1$ and $P''_1$ are minimal paths ($P''_1$ since it is a subset of a minimal path, and $P'_1$ because it is $P'$ with the last continued fraction block removed, which is minimal, extended by either an edge extending the second to last continued fraction block, or edge in a continued fraction block that is ``down" from it, see Figure~\ref{NewCFB}). Now $P'_1$ and $P''_1$ are two minimal paths that can be concatenated. If they are minimal, then we have shortened $P'\cup P''$ to a minimal path; if not, we repeat the procedure above. 

In the case that $P'$ ends in a continued fraction block of length $1$, then either $P''$ starts with a continued fraction block of length greater than $1$ or it also has length $1$. In the former case, we can proceed as above but with the roles of $P'$ and $P''$ interchanged. 

In the latter case, we can remove the last edge from $P'$, say from $p$ to $s$, and the first edge from $P''$, say from $s$ to $q$, and then add the edge from $p$ to $q$ to either the first or last path to obtain $P'_1$ and $P''_2$ as above.


\subsubsection{Pairs of paths that can be shortened to a single edge}\label{sortentoone} 
We will see below that a key part of our study will understand when you can concatenate two shortest paths in the Farey graph and obtain a path that will shorten to a single edge in the Farey graph. In this section we study such pairs of paths. 

Since there is an element of $SL(2,\Z)$ that takes any edge in the Farey graph to another, we will consider the edge between two integers, say $n$ and $n-1$. Let $q/p$ be a number in $(n-1,n)$. We will consider the minimal anti-clockwise path $P_1$ from $q/p$ to $n-1$ and the minimal clockwise path $P_2$ from $q/p$ to $n$. Let $A$ be the continued fraction block in $P_1$ starting at $q/p$ and let $B$ be the continued fraction block in $P_2$ starting at $q/p$. The \dfn{length} of a continued fraction block $C$ is the number of points in $C$ minus 1, which is the number of edges in $C$. Denote the length of a continued fraction block $C$ by $|C|$. 

\begin{lemma}\label{morethanone-mustbeone}
  Let $q/p$ be a rational number in $(n-1,n)$, and let $P_1$ be the minimal anti-clockwise path from $q/p$ to $n-1$ and  $P_2$ the minimal clockwise path from $q/p$ to $n$. Let $A$ and $B$ be the continued fraction blocks in $P_1$ and $P_2$ starting at $q/p$, respectively. Then either $A$ or $B$ has length $1$. Moreover, if both have length $1$, then $q/p=(2n-1)/2$.
\end{lemma}

\begin{proof}
Since we know the union of the paths shortens, the discussion of shortening paths at the end of Section~\ref{shorteningpaths} shows that either $A$ or $B$ has length $1$. 

 One may readily check that for ${-(2n+1)/2}$ both $P_1=A$ and $P_2=B$ have length $1$. 
 
 Now if $q/p\not={-(2n+1)/2}$ then 
  the first edge in $P_1$ is from $q/p$ to $(q/p)^a$ and the first edge in $P_2$ is from $q/p$ to $(q/p)^c$. Recall there is an edge in the Farey graph from $(q/p)^a$ to $(q/p)^c$. Now, as any two vertices in the Farey graph that share an edge, also both share an edge to exactly two other vertices, we know that $(q/p)^a$ and $(q/p)^c$ share an edge to $q/p$ and another vertex $v$. Since $q/p \neq {-2(n+1)/2}$, we can assume $(q/p)^a > n-1$ or $(q/p)^c < n$. Since $n$ and $n+1$ have an edge, $v$ must be in $[n,n+1]$ and outside $[(q/p)^a,(q/p)^c]$. If $v > (q/p)^c$, then we see that $v$ is a vertex in $P_2$ and since $\{q/p, (q/p)^c, v\}$ is a continued fraction block, we see that $B$ has length greater than $1$. Similarly if $v$ is less than $(q/p)^a$ then $A$ has length greater than $1$. 
\end{proof}

If $|A|=1$ then we denote by $(A_1,A_3,\ldots, A_{n})$ the subdivision of $P_1$ such that each $A_i$ is a continued fraction block and $A_1 = A$, and denote by $(B_2,B_4 ..., B_{m})$ the subdivision of $P_2$ such that each $B_i$ is a continued fraction block and $B_2 = B$. If $|B|=1$, then we denote the continued faction blocks by $(A_2,A_4, ..., A_{n})$ and $(B_1,B_3,...,B_{m})$. (If $|A|=1=|B|$ then one may choose either numbering convention and we know $q/p={-(2n+1)/2}$ by Lemma~\ref{morethanone-mustbeone}.) 

\begin{example}
  Consider the two paths for $-21/8=[-3,-3,-3]$. In this case we have $P_1=\{-21/8, -8/3,-3\}$ and $P_2=\{-21/8, -13/5, -5/2,-2\}$, and the subdivisions $A_1=\{-21/8, -8/3\}$, $A_3=\{-8/3, -3\}$, $B_2=\{-21/8, -13/5,-5/2\}$, $B_4=\{-5/2,-2\}$. 
\end{example}

\begin{lemma}\label{lengthsofpaths}\label{obs}
There is a unique way to shorten the path $P_1\cup\overline{P}_2$, and the number of continued fraction blocks in $P_1$ and in $P_2$ differ by at most one: $|m-n|\leq 1$.
\end{lemma}

\begin{proof}
  The first statement of the lemma follows from our discussion of path shortening in Section~\ref{shorteningpaths}; however, we revisit it in our specific setting, as we will need the notation and ideas below.

  We recommend considering Figure~\ref{AB2example} while reading the proof. Let $(p_1,...,p_k)$ be the points in $P_1$ where $p_1=q/p$ and $(q_1,...,q_l)$ be the points in $P_2$ where $q_1 = q/p$. Clearly, the first point that can be removed is $p_1 = q_1 = q/p$. To continue, we suppose we are in the case where the continued fraction blocks are labeled 
  \[
    (A_2,\ldots, A_{n}) \text{ and } (B_1,B_3, \ldots, B_{m}),
  \] 
  so $B_1$ has length one. The proof of Lemma~\ref{morethanone-mustbeone} shows that there is an edge in the Farey graph between $q_2 = (q/p)^c$ and all the vertices in the continued fraction block $A_2=\{p_1,\ldots, p_i\}$. Thus we may first remove $p_1=q/p$ from the path, then $p_2$ and continue until we have removed $p_{i-1}$. Notice that there is an edge from $p_i$ to $q_3$, so the next vertex we can shorten is $q_2$, but before we do that, we will analyze the new pair of paths $P_1'$ from $q_2$ to $n-1$ and $P_2'$ from $q_2$ to $n$. We now have two cases to consider: if $B_3$ has length $1$ or greater.
  See Figure~\ref{AB2example}. 

  \begin{figure}[htbp]
    \begin{overpic}{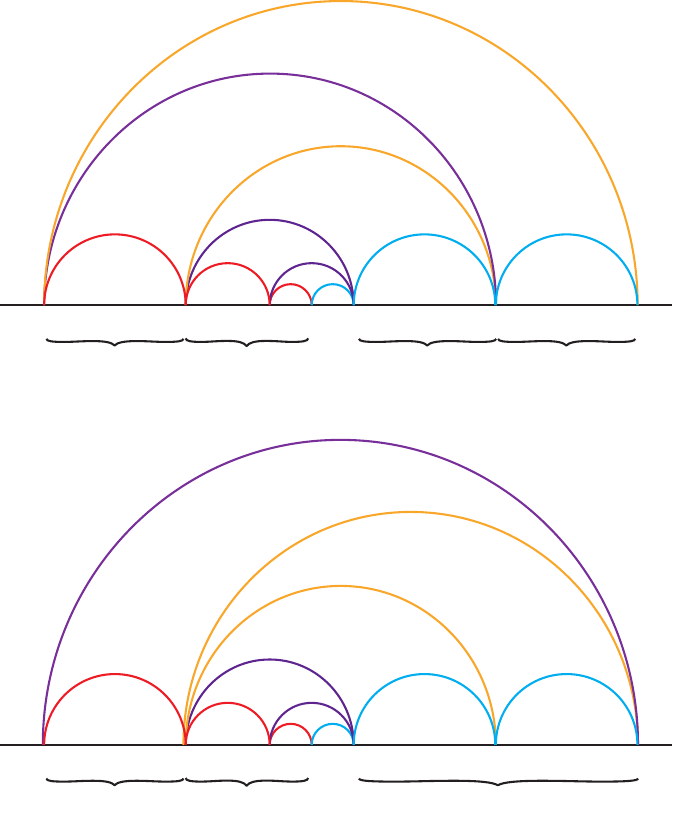}
      \put(15, 26){$p_4$}
      \put(87, 26){$p_3$}
      \put(127, 26){$p_2$}
      \put(144, 26){$q/p$}
      \put(115, 1){$A_2$}
      \put(50, 1){$A_4$}
      \put(158, 1){$B_1$}
      \put(236, 1){$B_3$}
      \put(170, 26){$q_2$}
      \put(237, 26){$q_3$}
      \put(305, 26){$q_4$}
      
      \put(15, 238){$p_4$}
      \put(87, 238){$p_3$}
      \put(127, 238){$p_2$}
      \put(144, 238){$q/p$}
      \put(115, 213){$A_2$}
      \put(50, 213){$A_4$}
      \put(158, 213){$B_1$}
      \put(203, 213){$B_3$}
      \put(270, 213){$B_5$}
      \put(170, 238){$q_2$}
      \put(237, 238){$q_3$}
      \put(305, 238){$q_4$}
    \end{overpic}
    \caption{Two types of paths that behave differently when shortening. The difference is whether or not $B_3$ has length $1$ or not. (The edges are not to scale to fit into the picture.)}
    \label{AB2example}
  \end{figure}

  If $B_3$ has length $1$, then there is an edge from $p_{i+1}$ to $q_3$ and thus the edge from $p_i$ to $q_2$ extends the continued fraction block $A_4$, i.e. $A'_4 = \{q_2, p_i\} \cup A_4$ is a continued fraction block. Thus we have
  \[
    P_1' = (A_4',A_6,\ldots,A_{n}) \text{ and } P_2' = (B_3,\ldots,B_{m})
  \]
  and $P'_1$ is a minimal path from $q_2$ anti-clockwise to $\lfloor q/p \rfloor$, and $P'_2$ is is a minimal path from $q_2$ clockwise to $n-1$. Thus, $(P_1',P_2')$ are a pair of minimal paths whose concatenation can be shortened at $q_2$ and each has one less continued fraction block than $P_1$ and $P_2$, respectively. We can now inductively continue to shorten the path until we have the path of length one from $n-1$ to $n$. 

  In the case when $|B_3|>1$, $A_2' = \{q_2,p_i\}$ replaces $A_2$ and we have 
  \[
    P_1'=(A'_2, A_4 \ldots, A_{2n}) \text{ and } P_2' = (B_3,\ldots,B_{2m-1})
  \]
and $P'_1$ is still a minimal path from $q_2$ anti-clockwise to $n-1$, but now $A'_2$ is its own continued fraction block with length $1$. So the number of continued fraction blocks in $P_1'$ is the same as for $P_1$, while the number in $P_2'$ is one less than in $P_2$. Moreover, numbering the continued fraction block by our convention above will give $P_1'$ the odd indices and $P_2'$ the even. Once again, we can inductively continue to shorten the paths until we have the path of length one from $n-1$ to $n$. 
  
We notice that (1) when the second odd-numbered continued fraction block has length $1$, then, after shortening, the new pairs of paths both have $1$ less continued fraction block and the parity of the numbering of the continued fraction blocks stays the same, and (2) when the second odd-numbered continued fraction block has length greater than one, then the new path with odd numberings has one less continued fraction block while the other has the same number and the parity of the numberings on the paths switches. 
 
This last observation implies that $|n-m|\leq 1$. 
\end{proof}

\subsection{Contact structures on \texorpdfstring{$T^2\times [0,1]$}{T2 x [0,1]}, solid tori, and lens spaces}\label{oldclassification}
Both Giroux \cite{Giroux00} and Honda \cite{Honda00a} classified tight contact structures on $T^2\times[0,1]$, solid tori, and lens spaces. Below, we discuss the classification along the lines of Honda. We note that these manifolds come with a given orientation, and the classification only considers positive contact structures. (Though we do not need this here, we note that the case for negative contact structures can easily be deduced from the statements below.)

\subsubsection{Contact structures on \texorpdfstring{$T^2\times [0,1]$}{T2 x [0,1]}}
Suppose $\xi$ is a tight contact structure on $T^2\times [0,1]$ with convex boundary and the dividing slope on $T^2\times\{i\}$ is $s_i$ for $i=0,1$. We say that $\xi$ is \dfn{minimally twisting} if any convex torus in $(T^2\times [0,1], \xi)$ that is parallel to the boundary has dividing slope in $[s_0,s_1]$.  

We denote by $\Tight^{min}(T^2\times [0,1]; s_0, s_1)$ the minimally twisting tight contact structures, up to isotopy, on $T^2\times[0,1]$ with convex boundary having two dividing curves of slope $s_0$ and $s_1$ on $T^2\times \{0\}$ and $T^2\times \{1\}$, respectively. Given any path $P$ in the Farey graph that starts at $s_0$ and goes clockwise to $s_1$, we say $P$ is a \dfn{decorated path} if its edges have each been labeled with a $+$ or a $-$. When $P$ is a minimal path, we say two decorated paths are \dfn{equivalent} if the number of $+$ signs in each continued fraction block is the same. 

With the discussion of paths in the Farey graph in Section~\ref{generalpathsinfg}, the following theorem is equivalent to \cite[Theorem~2.2]{Honda00a}.

\begin{theorem}\label{T2XIclass}
  The contact structures in $\Tight^{min}(T^2\times [0,1]; s_0, s_1)$ are in one-to-one correspondence with equivalence classes of decorations on the minimal path in the Farey graph from $s_0$ to $s_1$. Let $P$ be a decorated path from $s_0$ to $s_1$. We denote by $\xi_P$ the contact structure associated to $P$. 
\end{theorem}

Notice that if $s_0$ and $s_1$ share an edge in the Farey graph, then there are exactly two tight contact structures in $\Tight^{min}(T^2\times [0,1]; s_0, s_1)$. These are called \dfn{basic slices} and the correspondence in the theorem can be understood in terms of stacking basic slices according to the decorated path. 

Consider $\Tight^{min}(T^2\times [0,1]; q/p, -1)$ with $q/p<-1$ (note that given any $s_0$ and $s_1$ there is a diffeomorphism of the torus taking $s_1$ to $-1$ and $s_0$ to such a $q/p$). Let $q/p=[a_1,\ldots, a_n]$, and recall $a_i\leq -2$. According to the discussion in the previous subsection, we know the number of edges in the continued fraction blocks in the minimal path from $q/p$ to $-1$ is $|a_n+1|$, $|a_{n-1}+2|$, \ldots, $|a_1+2|$. So, according to the theorem above, the number of contact structures in $\Tight^{min}(T^2\times [0,1]; q/p, -1)$ is 
\[
  |(a_1+1) \cdots (a_{n-1}+1)a_n|.
\]

Suppose $P$ is a non-minimal path in the Farey graph, then there will be a vertex $v$ in $P$ such that there is an edge between its neighboring vertices. Let $P'$ be the path obtained by removing $v$ and the edges coming into $v$ and adding the edge between $v$'s neighbors. We say $P'$ is obtained from $P$ by \dfn{shortening} at $v$. We discussed this process thoroughly in Section~\ref{shorteningpaths}. If $P$ is a decorated path that can be shortened, then we say the shortening is \dfn{consistent} if the labels on the edges that were removed are the same, and in this case, the shortened path will be labeled by labeling the new edge with the same sign as the removed edges. 

Given any decorated path in the Farey graph, even a non-minimal one, one can construct a contact structure on $T^2\times [0,1]$ by stacking basic slices. It will be important to know when this contact structure is tight. To this end, we have the following result, which is essentially Part~(2) of \cite[Theorem~2.2]{Honda00a}.

\begin{theorem}\label{shortening}
  Let $\xi$ be a contact structure on $T^2\times [0,1]$ described by a non-minimal clockwise decorated path $P$ in the Farey graph. Then $\xi$ is tight if and only if one may construct a shortest path from $P$ by consistent shortening. When $\xi$ is tight, it will be minimally twisting and is described by the decorated shortest path between the endpoints of $P$ obtained by labeling the added edges in the shortening process with the label of the two replaced edges. 
\end{theorem}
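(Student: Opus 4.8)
The plan is to analyze tightness of the contact structure $\xi$ on $T^2 \times [0,1]$ built from a non-minimal decorated path $P$ by reducing to the case of consecutive basic slices and applying the known classification of tight contact structures on $T^2 \times [0,1]$ (the theorem preceding this one) together with the characterization of overtwistedness via the Giroux torsion / non-minimal twisting criterion. The key structural fact I would exploit is that $\xi$ is assembled by stacking basic slices along the edges of $P$, so it suffices to understand when a \emph{local} segment of three consecutive vertices $v_{i-1}, v_i, v_{i+1}$ (where $v_i$ admits an edge between its neighbors, i.e.\ where shortening is possible) is tight, and then to propagate this local analysis along the whole path.

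First I would establish the \textbf{local model}: if $v_{i-1}, v_i, v_{i+1}$ are three vertices with $v_{i-1}$ and $v_{i+1}$ sharing a Farey edge, then the two basic slices stacked on the edges $v_{i-1}v_i$ and $v_i v_{i+1}$ give a contact structure on a $T^2 \times [0,1]$ whose boundary slopes are $v_{i-1}$ and $v_{i+1}$. Since $v_{i-1}$ and $v_{i+1}$ are Farey-adjacent, the only tight structures with these boundary slopes are the two basic slices (by the remark following the classification theorem). I would show that the two-slice structure is tight precisely when the two edges carry the \emph{same} sign, in which case it equals the basic slice on $v_{i-1}v_{i+1}$ with that sign (this is exactly the statement that shortening at $v_i$ with matching signs produces the correct decorated shorter path), and that with opposite signs the structure contains a convex torus of slope outside $[v_{i-1}, v_{i+1}]$, producing non-minimal twisting and hence, by Honda's criterion, either overtwistedness or Giroux torsion that forces non-tightness when embedded in the full manifold. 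This local computation is essentially the ``tricky'' basic-slice-gluing lemma from Honda~\cite{Honda00a} and is the crux of the argument.

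Next I would \textbf{globalize}. For the forward direction I argue contrapositively: if at \emph{every} admissible vertex the incoming edges have opposite signs, then no shortening preserving tightness is available, and I would produce an overtwisted disk or a Giroux torsion $T^2 \times [0,1]$ inside $\xi$ by locating a convex torus whose slope escapes the interval spanned by a continued fraction block, contradicting minimal twisting. For the converse, if one \emph{can} reach a shortest path by successively shortening at same-sign vertices, then each shortening step replaces a tight two-slice piece by its equivalent basic slice without changing the contact structure up to isotopy, so after finitely many steps $\xi$ is realized by a decorated \emph{minimal} path; by the classification theorem this is tight and minimally twisting, and the surviving decoration is obtained by transferring the common sign to the added edge, which is exactly the asserted description.

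The \textbf{main obstacle} I anticipate is the global propagation: it is not a priori obvious that the \emph{order} in which one shortens does not matter, nor that a single bad (opposite-sign) shortening obstruction anywhere along the path genuinely forces non-tightness of the whole glued structure rather than being repairable by a different sequence of moves. I would handle this by an induction on the number of vertices of $P$ (equivalently on the number of shortening moves needed), using the fact that basic-slice attachments commute up to isotopy when their supporting edges are disjoint in the path, so that the local analysis can always be performed at any admissible vertex independently, and that the existence of even one forced opposite-sign shortening localizes an overtwisted disk which survives under the remaining (tight) gluings. The care needed is precisely in verifying that the equivalence relation on decorated paths (matching the number of $+$ signs in each continued fraction block) is respected by the shortening process, so that the final minimal path's decoration is well-defined regardless of the chosen order of moves.
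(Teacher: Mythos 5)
A preliminary remark: the paper contains no proof of this statement --- it is quoted as background, part of Giroux's and Honda's classification of tight contact structures on $T^2\times[0,1]$ (\cite{Giroux00,Honda00a}) --- so your attempt can only be measured against the standard argument in that literature. Your outline has the right overall shape (reduce to the two-basic-slice local model, then propagate), and the ``if'' direction is fine: successive same-sign shortenings realize $\xi$ by a decorated minimal path, which is tight and minimally twisting by the classification theorem. The two genuine gaps are both in the ``only if'' direction.

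First, the local model. You justify overtwistedness of the opposite-sign gluing by producing a convex torus of slope outside $[v_{i+1},v_{i-1}]$ and then invoking ``non-minimal twisting and hence \ldots either overtwistedness or Giroux torsion that forces non-tightness when embedded in the full manifold.'' This step fails as written: non-minimally twisting \emph{tight} structures on $T^2\times[0,1]$ exist (they are exactly the ones carrying convex Giroux torsion, cf.\ Theorem~\ref{twolayers}), and Giroux torsion does not obstruct tightness of a $T^2\times I$ --- the ``full manifold'' here is itself $T^2\times[0,1]$, so there is nothing larger to embed into. To conclude that the opposite-sign union is genuinely overtwisted you must also rule out the torsion structures, for instance by computing the relative Euler class of the two-slice union (which is $v_{i-1}+v_{i+1}$ as a vector, rather than $\pm(v_{i+1}-v_{i-1})$) and checking it against Honda's complete list of tight structures with adjacent boundary slopes, or by exhibiting an overtwisted disk directly. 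That computation is the real content of the local lemma and is absent.

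Second, the global step, which you correctly identify as the main obstacle but do not resolve. The negation of the theorem's hypothesis is not ``some shortenable vertex has edges of opposite signs'': minimal paths routinely carry adjacent edges of opposite signs, and at a shortenable vertex the signs can often be equalized by shuffling within a continued fraction block, or the obstruction can disappear after shortening elsewhere first. So your claim that ``the existence of even one forced opposite-sign shortening localizes an overtwisted disk'' is false as stated. What must actually be shown is that if \emph{every} sequence of same-sign shortenings, performed up to the block-shuffling equivalence, gets stuck before reaching a shortest path, then some shuffling-invariant opposite-sign configuration is forced, and only then does the local model produce the overtwisted disk. The induction you propose would have to carry this quantification over all shortening orders and all representatives of the decoration class; commutativity of disjoint shortenings, while true, does not by itself supply it.
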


A simple corollary of this is the following result we will find useful.

\begin{corollary}\label{lengthen}
  Let $\xi \in \Tight^{min}(T^2\times [0,1]; s_0, s_1)$ be a basic slice represented by a single edge $P = \{s_0, s_1\}$ in the Farey graph labeled by $\pm$. Then any non-minimal path $P'$ from $s_0$ to $s_1$ whose edges are all labeled with the same sign also represents $\xi$.
\end{corollary}
The corollary clearly follows from the pervious theorem as $P'$ can be consistently shortened to obtain $P$. 

We now introduce convex torsion. Consider the contact structure $\xi=\ker(\sin 2\pi z\, dx + \cos 2\pi z\, dy)$ on $T^2\times \R$, where $(x,y)$ are the coordinates on $T^2$ and $z$ is the coordinate on $\R$. Consider the region $T^2\times [0,k]$ for $k\in \frac 12 \N$ and notice that the contact planes twist $k$ times as $z$ goes from $0$ to $k$. We can now perturb $T^2\times\{0\}$ and $T^2\times \{k\}$ so that they become convex with two dividing curves of slope $0$. Let $\xi^k$ be the resulting contact structure on $T^2\times [0,1]$ (after $T^2\times [0,k]$ has been identified with $T^2\times [0,1]$ in the obvious way). Notice that inside $(T^2\times[0,1], \xi^k)$ there is a basic slice with boundary slopes $0$ and $\infty$, one of whose boundary components agrees with $T^2\times\{0\}$. By considering $\xi^k$ and $-\xi^k$, this basic slice can be taken to be either positive or negative; when we wish to distinguish between the two, we write $\pm \xi^k$ accordingly.

We call $(T^2\times[0,1],\xi^k)$ a \dfn{convex $k$-torsion layer} and if we have a contact structure $(M,\xi)$ into which $(T^2\times[0,1],\xi^k)$ embeds, we say $(M,\xi)$ has \dfn{convex $k$-torsion}. We will use the phrase $(M,\xi)$ has \dfn{exactly convex $k$-torsion} to the situation where one can embed  $(T^2\times[0,1],\xi^k)$ into $(M,\xi)$ but not $(T^2\times[0,1],\xi^{k+\scriptscriptstyle\frac 12})$. We say $(M,\xi)$ has no convex torsion, or convex $0$-torsion, if  $(T^2\times[0,1],\xi^k)$ does not embed in $(M,\xi)$ for any $k \in \frac12\N$. The following result is proved in \cite[Lemma~5.2]{Honda00a}.

\begin{theorem}\label{twolayers}
  For $k \in \frac12\N$, there are exactly two contact structures $\pm \xi^k$, up to isotopy, on $T^2\times [0,1]$ with convex boundary having two dividing curves, both of slope $0$, and exactly convex $k$-torsion. The two contact structures are co-orientation reversing contactomorphic. 
\end{theorem}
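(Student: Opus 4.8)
The plan is to deduce the statement from Honda's classification of tight contact structures on $T^2\times[0,1]$ recalled above (the minimal-twisting theorem together with the shortening criterion of Theorem~\ref{shortening}), the only new bookkeeping being that convex Giroux torsion must be made visible. A contact structure $\xi$ on $T^2\times[0,1]$ with convex boundary and both dividing slopes equal to $0$ is recorded by a decorated path in the Farey graph that begins and ends at $0$; the amount of convex Giroux torsion is exactly the number of times this path winds all the way around the circle of slopes $\Q\cup\{\infty\}$. Since this winding is invisible in the quotient Farey graph, I would pass to the induced Farey tessellation on the universal cover $\widetilde{\mathbb{RP}^1}\cong\R$, in which the torsion becomes an honest winding number: in these terms both $\xi^k$ and $-\xi^k$ are represented by the monotone geodesic running from the base lift of $0$ to its image after $2k$ half-turns, with every edge decorated by a single global sign, and the hypothesis of \emph{exactly} $k$ torsion says this winding number equals $2k$ and cannot be reduced.

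Next I would reduce an arbitrary such $\xi$ to a standard torsion core. By definition of having convex $k$ torsion, a copy of $\xi^k$ (or $-\xi^k$) embeds; peeling off a maximal such layer leaves a complement which is a disjoint union of two pieces, each with both dividing slopes $0$ and, by maximality, no convex Giroux torsion. A torsion-free tight piece with equal boundary slopes is represented by a non-winding path from $0$ to $0$, so by Theorem~\ref{shortening} it shortens all the way to the trivial path and is therefore isotopic to an $I$-invariant collar. (One checks separately that the boundary conditions together with exactness of the torsion rule out overtwisted examples, so every $\xi$ in the statement is tight.) Hence $\xi$ is isotopic to its torsion-$k$ core, and the problem is reduced to classifying the possible cores.

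To classify the cores I would represent each by its monotone winding geodesic and apply the shortening criterion edge by edge. At any vertex where the decoration changes sign one has a potential shortening; a same-sign shortening lowers the length but preserves the winding number, whereas an opposite-sign configuration must, if the structure is to remain tight, either cancel a full twist — contradicting that the torsion is exactly $k$ — or else leave a non-shortenable subpath, i.e.\ an overtwisted disk. Excluding both forces the decoration along the winding geodesic to be globally constant, so there are exactly two cores, namely $+$ and $-$, which are $\xi^k$ and $-\xi^k$. For the final assertion I would exhibit an explicit self-diffeomorphism of $T^2\times[0,1]$ covering an orientation-reversing linear automorphism of $T^2$ that fixes the slope-$0$ direction (for example $(x,y,z)\mapsto(x,-y,z)$ in suitable coordinates); this preserves all the boundary data but reverses the sign of every basic slice, hence conjugates $\xi^k$ to $-\xi^k$, giving the contactomorphism. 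It is necessarily not isotopic to the identity, which is consistent with $\xi^k$ and $-\xi^k$ being distinct up to isotopy.

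The main obstacle is the sign analysis in the classification of the core: showing that a non-constant decoration of the winding geodesic cannot yield a tight contact structure whose torsion is exactly $k$. This is precisely the delicate point, since one must separate, at each sign change, the harmless shortenings (which only shorten the path) from those that secretly cancel a full twist and from those that create an overtwisted disk, and this is exactly where Honda's careful treatment of basic slices and non-rotative layers is required.
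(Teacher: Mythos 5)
First, a point of comparison: the paper does not actually prove Theorem~\ref{twolayers}. It is recalled without proof as part of the Giroux--Honda classification of tight contact structures on $T^2\times[0,1]$ in the non-minimally-twisting case \cite{Giroux00, Honda00a}, so there is no in-paper argument to measure yours against; your outline does follow the standard route (factor the layer into basic slices along a loop in the circle of slopes and show all signs must agree). However, the step you yourself flag as delicate is a genuine gap rather than a routine verification. Theorem~\ref{shortening} cannot do the work you ask of it: it concerns paths that do not wind around the circle of slopes, and it explicitly concludes that any tight structure it certifies is \emph{minimally twisting} --- which a torsion layer is not --- so it is silent about winding loops. Concretely, the $\tfrac12$-torsion layer factors minimally into two basic slices along the loop $0\to\infty\to 0$; at the vertex $\infty$ both neighbours are $0$, which span no edge in the Farey graph, so ``shortening'' is not even defined there and your trichotomy (harmless same-sign shortening, cancelling a full twist, or a non-shortenable subpath giving an overtwisted disk) does not engage. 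Proving that the two mixed-sign decorations of this loop are overtwisted, and that the two constant-sign decorations are tight with torsion exactly $k$ and not more, is precisely the content of the non-minimally-twisting part of the classification in \cite{Honda00a}; quoting it would make the argument circular, and you have not supplied a substitute (one needs, e.g., the Imbalance Principle together with a relative Euler class computation). The reduction to the torsion core has a smaller instance of the same problem: the complementary pieces are non-rotative layers containing no basic slices at all, so Theorem~\ref{shortening} again does not apply; what is needed there is monotonicity of dividing slopes to see the pieces are non-rotative, and then Theorem~\ref{thm:non-rotative-trivial} to trivialize them up to isotopy.

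Second, your proposed contactomorphism is the wrong map. With $\xi^k=\ker(\sin 2\pi z\,dx+\cos 2\pi z\,dy)$, the diffeomorphism $(x,y,z)\mapsto(x,-y,z)$ reverses the orientation of $T^2\times[0,1]$ and pulls the contact form back to one with $\alpha\wedge d\alpha<0$, so it does not carry one positive contact structure to another. Since $-\xi^k$ is $\xi^k$ with reversed co-orientation (this is exactly what flips the sign of the innermost basic slice in the paper's normalization), the correct map is $-\mathrm{id}$ on the $T^2$ factor, $(x,y,z)\mapsto(-x,-y,z)$: it preserves the orientation of the ambient manifold and every slope on the boundary, satisfies $\phi^*\alpha=-\alpha$, and therefore carries $\xi^k$ to $-\xi^k$ as co-oriented contact structures.
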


Lastly, we define a new invariant $\tor$ for Legendrian and transverse knots. 
\begin{definition}\label{defnoftor}
For a Legendrian knot $L$, we define $\tor(L)$ by
\begin{align*}
  \tor(L) = \sup\left\{k \in \tfrac 12\N \mid \begin{array}{l} \xi^k \text{ embeds into the complement of a standard} \\ 
  \text{neighborhood of $L$ along a boundary-parallel torus}
\end{array}
\right\}.
\end{align*}
That is, $\tor(L)=k$ if there is an embedding of a convex $k$-torsion layer $(T^2\times[0,1],\xi^k)$ into the complement of the standard neighborhood of $L$ such that $T^2\times \{0\}$ is smoothly isotopic to the boundary of the neighborhood, but there is no such embedding of a convex $(k+1/2)$-torsion layer.  
For a transverse knot $T$, we denote $\tor(T) = k$ if $(T^2 \times [0,1], \xi^k)$ embeds in the complement of $T$ in a neighborhood of the boundary, but $(T^2 \times [0,1],\xi^{k+\frac 12})$ does not.   
\end{definition}

Note that if knots $L$ and $T$ are loose, we have $\tor(L)=\tor(T)=\infty$.

\subsubsection{Contact structures on solid tori}
Notice we can construct a solid torus from $T^2\times[0,1]$ in two ways. If we choose a rational slope $s$ on $T^2\times\{0\}$ and collapse the linear curves of slope $s$ on this torus, we will get a solid torus $S_s$. We call this the solid torus with lower meridional slope $s$. Similarly, we can collapse the linear curves of slope $s$ on $T^2\times\{1\}$ to get a solid torus $S^s$, and we say it has upper meridian $s$. 

We denote by $\Tight(S_s;r)$ the isotopy classes of tight contact structures on the solid torus $S_s$ with lower meridian $s$ and convex boundary having two dividing curves of slope $r$. Similarly, $\Tight(S^s;r)$ is the isotopy class of tight contact structures on the solid torus $S^s$ with upper meridian $s$ and convex boundary having two dividing curves of slope $r$. Up to normalizing coordinates on the solid torus, the following result can be found in \cite[Theorem~2.3]{Honda00a}.

\begin{theorem}\label{torusclass}
  Let $P$ be a minimal path in the Farey graph from $r$ clockwise to $s$. Then, the elements of $\Tight(S_r;s)$ are in one-to-one correspondence with equivalence classes of decorations on the path $P$ where the first edge is left undecorated. Similarly, the elements of $\Tight(S^s;r)$ are in one-to-one correspondence with equivalence classes of decorations on the path $P$ where the last edge is left undecorated.
\end{theorem}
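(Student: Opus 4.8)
The plan is to reduce the classification on the solid torus to the already-established classification on $T^2\times[0,1]$ by viewing $S_r$ as the result of collapsing the slope-$r$ curves on the $\{0\}$--end of $T^2\times[0,1]$. I will treat $\Tight(S_r;s)$ in detail; the statement for $\Tight(S^s;r)$ follows by the symmetry that exchanges the two ends of $T^2\times[0,1]$ (so that the undecorated edge moves from the first to the last). Throughout, $r^{(1)}$ will denote the first vertex after $r$ on the minimal path $P$ from $r$ clockwise to $s$; by minimality $r^{(1)}$ shares an edge with $r$ in the Farey graph.

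First I would define the map from decorations to contact structures. Given $P$ with its first edge left undecorated and its remaining edges decorated, I pick an arbitrary sign for the edge $r\,r^{(1)}$, stack basic slices along the resulting fully decorated path to obtain a minimally twisting tight contact structure on $T^2\times[0,1]$ with dividing slopes $r$ and $s$, and then collapse the slope-$r$ curves on $T^2\times\{0\}$ to produce a contact structure on $S_r$. The crucial claim is that the output is independent of the chosen sign. This rests on the key lemma that the solid torus with meridian $r$ and convex boundary of dividing slope $r^{(1)}$ adjacent to $r$ carries a \emph{unique} tight contact structure, i.e.\ the uniqueness of the standard neighborhood of a Legendrian core. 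Indeed, after collapsing, the innermost basic slice is exactly such a solid torus; since both sign choices yield tight structures with the same boundary data, the uniqueness lemma forces them to agree, so the two choices give isotopic contact structures on $S_r$. This is precisely the mechanism that ``erases'' the first edge.

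Next I would establish surjectivity and build the inverse. Given a tight $\xi$ on $S_r$ with boundary dividing slope $s$, I make the core Legendrian and thicken it, equivalently locating a convex torus parallel to the boundary with dividing slope $r^{(1)}$ adjacent to $r$. The region it bounds inside is a solid torus with dividing slope adjacent to the meridian, hence the unique tight model above; the region outside is $(T^2\times[0,1],\xi)$ with dividing slopes $r^{(1)}$ and $s$. The established classification of minimally twisting tight structures on $T^2\times[0,1]$ assigns to the outer piece an equivalence class of decorations on the minimal path from $r^{(1)}$ to $s$, and prepending the undecorated edge $r\,r^{(1)}$ recovers a first-edge-undecorated decorated path; one checks this is inverse to the map of the previous paragraph. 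Injectivity follows in the same breath: two such paths producing isotopic contact structures have isotopic outer pieces after cutting along a convex torus of slope $r^{(1)}$, so by the $T^2\times[0,1]$ classification their decorations agree away from the first edge, i.e.\ they are equivalent. Theorem~\ref{shortening} ensures that non-minimal decorated paths yield nothing new, so restricting to minimal $P$ as in the statement loses no contact structures.

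The main obstacle I anticipate is the uniqueness lemma for the solid torus with dividing slope adjacent to the meridian, together with the assertion that collapsing genuinely destroys the first edge's sign. The content here is that there is a diffeomorphism of $S_r$, smoothly isotopic to the identity, realizing the would-be change of sign of the innermost basic slice; equivalently one verifies directly via convex surface theory --- using a convex meridional disk, whose dividing set is a single boundary-parallel arc when the boundary slope is adjacent to the meridian --- that no sign can distinguish two tight structures there. Managing the fact that the collapsing operation is only defined up to isotopy, and checking that the two constructions are mutually inverse at the level of isotopy classes rather than on the nose, is the delicate part; the remainder is bookkeeping with minimal paths in the Farey graph (Lemma~\ref{adjacentvertices}) and an appeal to the $T^2\times[0,1]$ theorem.
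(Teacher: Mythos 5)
The paper does not actually prove Theorem~\ref{torusclass}: it is quoted as background from Honda \cite{Honda00a} and Giroux \cite{Giroux00}, so there is no in-paper argument to compare against. Your sketch follows Honda's standard factorization strategy (cut along a convex torus of slope $r^{(1)}$ adjacent to the meridian, use uniqueness of the inner solid torus, classify the outer layer by the $T^2\times[0,1]$ theorem), and the mechanism you identify for the undecorated first edge --- uniqueness of the tight solid torus whose boundary dividing slope shares a Farey edge with its meridian, i.e.\ the standard neighborhood of a Legendrian core --- is exactly the right one.

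Two steps are genuinely missing. First, your forward map is not yet a map into $\Tight(S_r;s)$: stacking basic slices and capping off with the standard-neighborhood torus produces a contact structure, but nothing in your argument shows the glued-up result is tight (tightness is not preserved by gluing tight pieces, as Theorem~\ref{shortening} itself illustrates). The standard fix, which the paper carries out in a special case in Lemma~\ref{gluingtoriandthickened}, is to realize each candidate as a neighborhood of a suitably stabilized Legendrian core inside a standard tight model, so that tightness is inherited from the ambient manifold. Second, your injectivity argument assumes that the outer $T^2\times[0,1]$ layer is a well-defined invariant of the contact structure on $S_r$: an isotopy carrying $\xi$ to $\xi'$ need not carry your chosen slope-$r^{(1)}$ convex torus to the other one, so you must show that any two such factorizations yield equivalent decorations. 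This is the technical heart of Honda's proof and is usually discharged by showing that the relative Euler class is a complete invariant of tight structures on solid tori (a fact the paper itself invokes in the proof of Lemma~\ref{lem:rotdiff}) and that decorations which are inequivalent after forgetting the first edge have distinct relative Euler classes. With those two ingredients supplied, your outline closes up into Honda's proof.
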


We now consider formulas for the number of tight contact structures on some solid tori. If $r<-1$ and $r=[a_1,\ldots, a_n]$, then we see that 
\begin{equation}\label{numbersolidtorizero}
  |\Tight(S^0;r)|=|(a_1+1)\cdots (a_{n-1}+1)a_n|,
\end{equation}
this is because the minimal path from $r$ to $0$ is the same as the minimal path from $r$ to $-1$ followed by the last edge to $0$. Decorations on this first path from $r$ to $-1$ also characterize elements of $\Tight^{min}(T^2\times [0,1]; q/p, -1)$, as discussed above. 

Notice that there is an orientation preserving diffeomorphism from $T^2\times [0,1]$ to itself that exchanges the two $S^1$ factors of $T^2$ and inverts $[0,1]$. This diffeomorphism identifies $\Tight(S_\infty;r)$ with $\Tight(S^0;r^{-1})$. So if $r \notin \mathbb{Z}$, then $r-\lceil r\rceil \in (-1,0)$ and $\Tight(S_\infty;r)=\Tight(S_\infty;r-\lceil r\rceil)$ via the diffeomorphism that cuts the solid torus along the meridian disk and adds $-\lceil r\rceil$ twists before re-glueing. Thus if $\left(r - \left\lceil r\right\rceil\right)^{-1}=[b_1,\ldots, b_n]$, then
\begin{equation}\label{numbersolidtoriinf}
  |\Tight(S_\infty;r)|=|(b_1+1)\cdots (b_{n-1}+1)b_n|.
\end{equation}
Now if $r>1$, then as above we have 
\[
  \Tight(S^0;r)=\Tight(S_\infty,r^{-1})=\Tight(S_\infty;r^{-1}-1)=\Tight(S^0;(r^{-1}-1)^{-1}),
\]
and $(r^{-1}-1)^{-1}<-1$. So if $(r^{-1}-1)^{-1}=[a_1,\ldots, a_n]$, then the number of tight contact structures in $\Tight(S^0;r)$ is also given by the formula on the right-hand side of Equation~\eqref{numbersolidtorizero}. Lastly, we note that when $r \in \Z$, there is a unique tight contact structure on $(S_\infty;r)$.

We end our discussion of contact structures on solid tori with a simple observation.

\begin{lemma}\label{gluingtoriandthickened}
  Let $\xi$ be the unique tight contact structure in $\Tight(S_\infty;m)$. Given any contact structure $\xi'\in\Tight^{min}(T^2\times [0,1];n,m)$, for $m>n$ integers, there is an embedding of the unique tight contact structure $\xi''\in \Tight(S_\infty;n)$ into $(S_\infty,\xi)$ whose complement is $(T^2\times [0,1],\xi')$. In particular, gluing $(S_\infty, \xi'')$ to $(T^2\times[0,1], \xi')$ along $T^2\times\{0\}$ is tight. 
\end{lemma}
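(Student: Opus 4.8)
The plan is to show that the glued contact manifold is tight; since it is a solid torus with meridian $\infty$ and boundary slope $m$, and $m$ is an integer (hence adjacent to $\infty$ in the Farey graph), the minimal path from $\infty$ to $m$ is a single edge, so by Theorem~\ref{torusclass} there is a unique tight contact structure in $\Tight(S_\infty;m)$, namely $\xi$. Thus once tightness is established, the glued manifold must be $(S_\infty,\xi)$, and the inclusion of the $\xi''$ piece gives the desired embedding with complement $(T^2\times[0,1],\xi')$. So everything reduces to proving that $(S_\infty,\xi'')\cup(T^2\times[0,1],\xi')$ is tight for every $\xi'\in\Tight^{min}(T^2\times[0,1];n,m)$.

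First I would record the relevant Farey-graph geometry. Since $n<m$ are integers, the clockwise (increasing) minimal path from $n$ to $m$ passes through the consecutive integers $n,n+1,\ldots,m$: from an integer $k$ the largest finite vertex joined to it by an edge and lying clockwise of it is $k+1=k\oplus\infty$, so no shortcut is possible. Moreover every integer is adjacent to $\infty$ and $k+1=k\oplus\infty$, so the vertices $n,n+1,\ldots,m$ all pivot about $\infty$ and therefore form a single continued fraction block. Using the classification of $\Tight^{min}(T^2\times[0,1];n,m)$, the contact structure $\xi'$ decomposes as a stack of basic slices $B_1,\ldots,B_{m-n}$, where $B_j$ has boundary slopes $n+j-1$ and $n+j$; each $B_j$ pivots about the meridian $\infty$.

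Next I would glue the basic slices onto $\xi''$ one at a time and argue that the result stays tight at every stage, independent of the signs of the $B_j$. Write $S_k$ for the unique tight solid torus with meridian $\infty$ and boundary slope $k$ (unique because $k$ is adjacent to $\infty$). Attaching $B_{k-n+1}$ to $S_k$ produces a solid torus with meridian $\infty$ described by the non-minimal path $\infty\to k\to(k+1)$, whose first edge (the one meeting the core, coming from $S_k$) is undecorated by Theorem~\ref{torusclass}, while the edge $k\to(k+1)$ carries the sign of $B_{k-n+1}$. This is the solid-torus analogue of the situation in Theorem~\ref{shortening}: we shorten at the vertex $k$, and because the first edge is undecorated we are free to give it the sign of $B_{k-n+1}$, so the shortening is legitimate and yields the minimal path $\infty\to(k+1)$, whose one edge is again a first edge adjacent to the meridian and hence of immaterial sign. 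Thus $S_k\cup B_{k-n+1}=S_{k+1}$ is tight, regardless of the sign of the attached slice.

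Iterating from $k=n$ up to $k=m-1$ absorbs all of $\xi'$ and shows $(S_\infty,\xi'')\cup(T^2\times[0,1],\xi')=S_m=(S_\infty,\xi)$, which is tight; this completes the proof. The one genuinely delicate point is the sign-absorption in the inductive step, i.e.\ that attaching a basic slice of either sign keeps us at the simplest tight solid torus. This is exactly the phenomenon responsible for the ``first edge undecorated'' clause in Theorem~\ref{torusclass}, and it works here precisely because the meridian $\infty$ is the common pivot of the entire continued fraction block $n,\ldots,m$, so each newly created boundary slope $k+1$ is again adjacent to $\infty$ and the shortening can be iterated. If one prefers to avoid appealing to a solid-torus version of Theorem~\ref{shortening}, the same step can be obtained by removing a standard neighborhood of a Legendrian core of $S_k$, turning $S_k\cup B_{k-n+1}$ into a $T^2\times[0,1]$ to which Theorem~\ref{shortening} applies directly, with the undecorated first edge replaced by the freedom in the basic slice adjacent to the core.
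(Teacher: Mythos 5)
Your overall strategy --- peel $\xi'$ into basic slices $B_1,\dots,B_{m-n}$ along the consecutive integers $n,\dots,m$ and attach them to $(S_\infty,\xi'')$ one at a time --- reduces everything to the single claim that $S_k\cup B$ is tight (hence equal to $S_{k+1}$) for a basic slice $B$ of either sign. That claim is true, but it is the entire content of the lemma in the case $m-n=1$, and your justification of it is where the proof breaks down. ``Shortening at $k$'' the path $\infty\to k\to(k+1)$ is not an application of Theorem~\ref{shortening}: that theorem is a statement about contact structures on $T^2\times[0,1]$ built by stacking basic slices, and no solid-torus version of it is available in the paper --- indeed, the solid-torus version of ``shorten at $k$ using the undecorated first edge'' is \emph{exactly} the assertion $S_k\cup B_\pm=S_{k+1}$, so you have restated the claim in the language of decorated paths rather than proved it. Your fallback (remove a standard neighborhood of a stabilized core of $S_k$ and apply Theorem~\ref{shortening} to the resulting $T^2\times[0,1]$) does not repair this: the path $k-1\to k\to k+1$ through consecutive integers is already minimal, so there is nothing to shorten, and even granting tightness of that thickened torus you are left asking whether gluing the core solid torus back in stays tight --- which is the original problem. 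The difficulty the lemma addresses is precisely that tightness is not preserved under gluing, and bookkeeping with paths cannot substitute for an argument here.

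The fix is to run the construction from the outside in rather than the inside out, which is what the paper does. The solid torus $S_{k+1}$ is a standard neighborhood of a Legendrian knot $L$ whose twisting gives boundary slope $k+1$. Stabilize $L$ inside $S_{k+1}$ with either sign; a standard neighborhood $N_\pm$ of $S_\pm(L)$ is a copy of $S_k$, and $S_{k+1}\setminus N_\pm$ is the basic slice with slopes $k$ and $k+1$ whose sign is that of the stabilization. Thus both $S_k\cup B_+$ and $S_k\cup B_-$ are realized as all of $S_{k+1}$ and are tight because $S_{k+1}$ is; iterating the stabilization produces every sign sequence and hence every $\xi'\in\Tight^{min}(T^2\times [0,1];n,m)$. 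With this one-line repair of the inductive step your argument goes through, and at that point it coincides with the paper's proof.
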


\begin{proof}
Notice that $(S_\infty, \xi)$ is a standard neighborhood of a Legendrian knot $L$. Now inside $S_\infty$ we can stabilize $L$. Let $N_\pm$ be the standard neighborhood of $S_\pm(L)$ in $S_\infty$. Notice that the dividing curves on $N_\pm$ have slope $m-1$. 
Thus, $S_\infty\setminus N_\pm$ is $T^2\times [0,1]$ with dividing curves of slope $m-1$ on one boundary component and dividing curves of slope $m$ on the other. In other words, $S_\infty\setminus N_\pm$ is a basic slice. Recall, there are exactly $2$ basic slices, and since there were two possible stabilizations of $L$, one of those stabilizations gives one of the basic slices and the other gives the other basic slice. 
  This establishes the lemma for $m-n=1$; in general, the lemma follows by further stabilizing $L$. 
\end{proof}

\subsubsection{Contact structures on lens spaces}\label{sec:lens}
We can construct a lens space from $T^2\times [0,1]$ as follows: choose a slope $r$ on $T^2\times \{0\}$ and a slope $s$ on $T^2\times\{1\}$ and let $L_r^s$ be the result of collapsing the linear curves of the given slope on the boundary components. We say $L_r^s$ is the lens space with upper meridian $s$ and low meridian $r$. Note that the lens space $L(p,q)$, which is $-p/q$ surgery on the unknot, can also be described at $L_{-p/q}^0$ (this is essentially the definition of $-p/q$ surgery on the unknot) and similarly as $L_\infty^{-q/p}$. This latter expression is because there is an orientation preserving diffeomorphism of $T^2\times [0,1]$ that exchanges the $S^1$ factors of $T^2$ and inverts the interval. 

Let $\Tight(L_r^s)$ be the isotopy classes of tight contact structures on the lens space $L_r^s$. 

After a change of coordinates on the boundary of $L_r^s$, the following theorem is equivalent to \cite[Theorem~2.1]{Honda00a}.

\begin{theorem}\label{lensclass}
  Let $P$ be a minimal path in the Farey graph from $r$ clockwise to $s$. Then the elements of $\Tight(L_r^s)$ are in one-to-one correspondence with equivalence classes of decorations on the path $P$ where the first and last edges are left undecorated. 
\end{theorem}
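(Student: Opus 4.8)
The plan is to realize $L_r^s$ as a union of two solid tori along its Heegaard torus and then bootstrap off the solid-torus classification already established in Theorem~\ref{torusclass}. Write $L_r^s = V_r \cup_T V^s$, where $T$ is the image of $T^2\times\{1/2\}$, $V_r$ is the solid torus obtained by collapsing the linear curves of slope $r$ on $T^2\times\{0\}$ (so $V_r$ has meridian $r$), and $V^s$ is the solid torus obtained by collapsing slope $s$ on $T^2\times\{1\}$ (meridian $s$). The two undecorated edges in the statement should turn out to be exactly the meridional basic slices of these two solid tori, each side contributing one undecorated edge just as a single solid torus does in Theorem~\ref{torusclass}.

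For existence and the count from below, I would start with a decorated minimal path $r=p_0,p_1,\dots,p_k=s$ whose first and last edges are left undecorated. The interior decorated edges determine a contact structure in $\Tight^{min}(T^2\times[0,1];p_1,p_{k-1})$ by stacking basic slices; capping the $p_1$-boundary with the unique tight structure on $V_r$ having boundary slope $p_1$ (unique since the path $r\to p_1$ is a single, undecorated edge) and the $p_{k-1}$-boundary with the unique tight structure on $V^s$ having boundary slope $p_{k-1}$ produces a contact structure on $L_r^s$. Its tightness follows from Lemma~\ref{gluingtoriandthickened} together with Theorem~\ref{shortening}, which shortens at equal-sign vertices any non-minimality introduced in the capping. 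To see that inequivalent decorations yield non-isotopic contact structures, I would distinguish them by the relative Euler class of $\xi$ restricted to the $T^2\times[0,1]$ piece, which records precisely the number of $+$ signs in each continued-fraction block and is an isotopy invariant.

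For the reverse inequality — that every tight $\xi$ arises this way — I would make $T$ convex and, using that $\xi$ is tight so that $L_r^s$ carries no overtwisted disk, arrange that $T$ has two dividing curves of some slope $t$. Restricting $\xi$ to $V_r$ and to $V^s$ and applying Theorem~\ref{torusclass} to each side yields a decorated path $r\to t$ with undecorated first edge and a decorated path $t\to s$ with undecorated last edge; concatenating at $t$ gives a decorated path $r\to s$ whose only undecorated edges are the two end (meridional) edges. The point that forces the result onto a minimal path is that one may isotope $T$ so that $t$ lies on the minimal path $P$ from $r$ clockwise to $s$: if $t$ is off $P$, a bypass/convex-surface argument inside one of the two solid tori produces a thickening of $T$ that reduces $t$ toward $P$, matching the shortening of Theorem~\ref{shortening}.

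The main obstacle is this last normalization step together with its well-definedness: showing that the Heegaard torus can always be isotoped to have dividing slope on the minimal path, and that the decorated path extracted from $\xi$ is independent of the convex representative $T$ (equivalently, of which slope $t$ along $P$ is used). The independence I would handle by observing that sliding $T$ across a single basic slice merely transfers that slice from one solid torus to the other while preserving its sign, so the resulting decorations agree up to the equivalence relation defining the correspondence; the existence of a representative with $t$ on $P$ is the genuinely delicate convex-surface input, and is where Honda's discretized-isotopy and state-traversal machinery enters. Everything else is assembly of the prior results Theorem~\ref{torusclass}, Theorem~\ref{shortening}, and Lemma~\ref{gluingtoriandthickened}.
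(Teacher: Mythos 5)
This theorem is stated in the paper as background and is not proved there; it is quoted from Giroux \cite{Giroux00} and Honda \cite{Honda00a}, so the comparison below is with the standard proof rather than with anything in this paper. Your outline of the upper bound is essentially Honda's: make the Heegaard torus convex, reduce to two dividing curves, split into two solid tori, apply Theorem~\ref{torusclass} to each side, and use the shortening mechanism of Theorem~\ref{shortening} to push the concatenated path onto the minimal one. That part of your sketch is sound in structure (though ``arrange that $T$ has two dividing curves'' itself requires a bypass argument, and one must also check that the dividing slope $t$ is forced to lie in the arc of the Farey graph between $r$ and $s$, since otherwise one of the solid tori would contain a convex boundary-parallel torus with meridional dividing slope and hence an overtwisted disk).

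The genuine gaps are both in the lower bound. First, tightness of the constructed structures does not follow from the ingredients you cite: Lemma~\ref{gluingtoriandthickened} only concerns gluing a solid torus with integral meridian to a thickened torus, and Theorem~\ref{shortening} is a criterion for tightness of a contact structure on $T^2\times[0,1]$, not of a closed manifold obtained by capping off both ends --- gluing tight pieces along a convex torus can perfectly well produce an overtwisted structure (indeed, most decorations of non-minimal paths do exactly that). The standard fix, and the one Honda uses, is to realize each decorated path by Legendrian surgery on a chain of stabilized unknots; the resulting structures are Stein fillable and hence tight by Eliashberg--Gromov. Second, your proposed distinguishing invariant --- the relative Euler class of $\xi$ restricted to the $T^2\times[0,1]$ piece --- is not a priori an invariant of the closed contact manifold, since it depends on the chosen decomposition, and the absolute Euler class $e(\xi)\in H^2(L_r^s)$ is torsion and does not separate all $|(a_1+1)\cdots(a_m+1)|$ structures in general. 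The standard resolution is the theorem of Lisca--Mati\'c \cite{LiscaMatic97}: the Stein fillings produced by the surgery description have pairwise distinct first Chern classes (equivalently, induce distinct $\spinc$ structures), which distinguishes the contact structures on the boundary. Without these two inputs the correspondence you describe is only a surjection from decorations onto a set containing $\Tight(L_r^s)$, not a bijection.
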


Arguing as above to count the number of minimally twisting contact structures on $T^2\times [0,1]$, we can easily compute the well-known formula that 
\[
  \Tight(L_r^0)=|(a_1+1)\cdots(a_m+1)|
\]
if $r<-1$ and $r=[a_1,\ldots, a_m]$. 

\begin{lemma}\label{lensandtorus}
Given $r$ and $s$ rational numbers, let $r'$ be the rational number such that $r'$ is clockwise of $r$ in the Farey graph and as close to $s$ as possible with an edge back to $r$. Similarly, $s'$ is the rational number such that $s'$ is anti-clockwise of $s$ in the Farey graph and the closest point to $r$ with an edge to $s$. Then 
\[
  |\Tight(L_r^s)|=|\Tight(S_r;s')|=|\Tight(S^s; r')|.
\]
\end{lemma}
\begin{proof}
From Theorem~\ref{lensclass} the contact structures in $\Tight(L_r^s)$ are in one-to-one correspondence with decorations (up to equivalence) on all but the first and last edges of the minimal path from $r$ clockwise to $s$. Notice that from the definition of $r'$ and $s'$ the minimal path starts with $r, r'$ and ends with $s',s$. Thus, the contact structures in $\Tight(L_r^s)$ are in one-to-one correspondence with decorations (up to equivalence) on the edges of the minimal path from $r'$ to $s'$. The same reasoning, but using Theorem~\ref{torusclass}, shows that these decorated paths are in one-to-one correspondence with contact structures in $\Tight(S_r;s')$ and $\Tight(S^s; r')$.
\end{proof}

\subsection{Transverse knots and Lutz twists}\label{lutzsection}
In this section, we recall the definition of a Lutz twist (and half Lutz twist) on a transverse knot, how it affects the homotopy class of plane field, and relate it to Legendrian approximations of the transverse knot. 

Let $T$ be a transverse knot in a contact manifold $(M,\xi)$. It is well-known that $T$ has a standard neighborhood $N$ that is contactomorphic to $N_s$, where $N_s=\{(\phi, r,\theta)\in S^1\times D^2: r\leq s\}$ for some small $s$, with the contact structure $\xi_{std}=\ker(\cos r\, d\phi + r\sin r\, d\theta)$. Notice on the torus $\partial N_a$, the characteristic foliation has slope $-\frac 1a \cot a$. Thus $\partial N_s$ has a linear characteristic foliation of slope $-\frac 1s \cot s$. Notice that $-\frac 1a \cot a$ goes to $-\infty$ as $a$ goes to $0$. Thus, inside of $N_s$, one can find other solid tori neighborhoods of $T$ with any characteristic foliation having any slope less than $-\frac 1s \cot s$. 
\begin{definition}
A \dfn{half Lutz twist} on $T$ is the contact structure on $M$ obtained by removing $N_s$ from $M$ and replacing it with $N_t$, where is the smallest number larger than $s$ such that $-\frac 1t \cot t=-\frac 1s \cot s$. Notice that after the half Lutz twist, the core of the solid torus that was glued in is also a transverse knot, thus we may iterate this and perform another half Lutz twist, the result will be a \dfn{(full) Lutz twist}. 
\end{definition}

In \cite{DingGeigesStipsicz05}, Ding, Geiges and Stipsicz showed that if we perform the half Lutz twist on a transverse knot $T$ in $(S^3, \xi)$ and obtain a new contact structure $(S^3, \xi')$, then the relative $d_3$-invariant is
\[
  d_3(\xi',\xi) = d_3(\xi) - d_3(\xi') =\self(T).
\]
In \cite{DingGeigesStipsicz05}, this was only verified when $\xi$ was the standard tight contact structure on $S^3$, but it is true in general, see \cite[Proof of Theorem~4.3.1]{Geiges08}.

For later use, we will now relate Lutz twists to Legendrian knots. Given a transverse knot $T$ and a standard neighborhood $N$ that is contactomorphic to $N_s$, as above, we note that there is some $n_0$ such that for all $n\leq n_0$ there is a standard neighborhood of $T$ in $N_s$ with boundary having linear characteristic foliation of slope $n$. We can perturb this neighborhood to have convex boundary with two dividing curves of slope $n$. Denote the result $N'_n$. This is a standard neighborhood of a Legendrian knot $L_n$ with $\tb(L_n)=n$. We call $L_n$ a \dfn{Legendrian approximation of $T$} and note that the transverse push-off of $L_n$ is $T$. For all these facts, and those below, see \cite{EtnyreHonda01}. We also note that $N'_m\subset N'_n$ for any $m\leq n$ and $L_m$ is the result of negatively stabilizing $L_n$, $m-n$ times. Moreover, $\overline{N'_n\setminus N'_m}$ is a thickened torus, and the contact structure on it is a continued fraction block with all negative basic slices. This discussion is the basis for the well-known fact that the classification of transverse knots is equivalent to the classification of Legendrian knots up to negative stabilization, see \cite[Theorem~2.10]{EtnyreHonda01}.

Turning now to half Lutz twists, we notice that replacing $N_s$ with $N_t$, as in the definition of half Lutz twist, is the same as removing $N'_n$, gluing in a negative basic slice with dividing slopes $\infty$ and $n$, then gluing in a negative basic slice with dividing slopes $0$ and $\infty$, and finally gluing in a solid torus with dividing slope $0$ and meridional slope $\infty$. 

Iterating the above construction, we can perform an \emph{$n$-fold half convex Lutz twist} on any Legendrian knot $L$, (which corresponds to a $n$-fold half Lutz twist on a positive transverse push-off of $L$) by 
\begin{itemize}
  \item removing a standard neighborhood of $L$, 
  \item gluing in a negative basic slice with dividing slopes $\infty$ and $\tb(L)$, 
  \item gluing the negative convex torsion layer $-\xi^{k-1}$,
  \item gluing in a negative basic slice with dividing slopes $\tb(L)$ and $\infty$, and
  \item gluing in a solid torus with dividing slope $\tb(L)$ and meridional slope $\infty$.
\end{itemize}
The last solid torus glued in above is a neighborhood of a Legendrian knot in the manifold obtained from the iterated half convex Lutz twist. By changing the sign of basic slices from negative to positive, we can also define an \emph{$n$-fold positive half convex Lutz twist} on $L$, which corresponds to a $n$-fold half Lutz twist on a negative transverse push-off of $L$. Lastly, we call a $2$-fold half convex Lutz twist simply a \dfn{convex Lutz twist}.  

\subsection{Destabilizing Legendrian knots}\label{seestabsection}
We will need to understand when two Legendrian knots are related by stabilization. More specifically, when given a contact structure on a thickened torus, if the boundary is convex and $L_0$ and $L_1$ are Legendrian knots on the boundary, we would like to know if they are related by stabilization. The following lemma seems well-known, but we could not find a complete proof in the literature, so we provide a proof here. 

\begin{lemma}\label{seestab}
  Let $(T^2\times[0,1], \xi)$ be a $\pm$ basic slice with dividing slopes $s_i$ on $T^2\times\{i\}$ for $i=0,1$. Let $L_0$ be a Legendrian ruling curve of slope $s_1$ on $T^2\times\{0\}$ and $L_1$ a Legendrian divide on $T^2\times \{1\}$. Then $L_0$ is $S_\pm(L_1)$. Moreover, if $L'_0$ is a Legendrian divide on $T^2\times \{0\}$ and $L'_1$ is a ruling curve of slope $s_0$ on $T^2\times\{1\}$, then $L'_1$ is $S_\mp(L'_0)$.

  Let $s$ be a vertex in the Farey graph outside the interval $[s_0,s_1]$ for which there are vertices in the Farey graph in $[s,s_0)$ with an edge to $s_1$.  If $L''_i$ is a ruling curve of slope $s$ on $T^2\times\{i\}$, then $L''_0$ is $S_\pm^k(L''_1)$ where $k=|(s_1 \ominus s_0) \bigcdot s|$. 
  
  Similarly, let $s$ be a vertex outside of $[s_0,s_1]$ for which there are vertices in the Farey graph in $(s_1,s]$ with an edge to $s_0$. If $L''_i$ is a ruling curve of slope $s$ on $T^2\times\{i\}$, then $L''_1$ is $S_\pm^k(L''_0)$ where $k=|(s_1 \ominus s_0) \bigcdot s|$. 
\end{lemma}

\begin{proof}
  We will show how to build a solid torus in $T^2\times [0,1]$ that is a regular neighborhood of $L_1$ and isotope $L_0$ into this neighborhood so that it has a standard neighborhood with dividing slope $\tb(L_1)-1$. This will establish that $L_0$ is a stabilization of $L_1$ and the sign of the stabilization is determined by the relative Euler class. That is, if $A$ is an annulus in $T^2\times [0,1]$ with boundary $L_0\cup L_1$, then $\rot(L_0)-\rot(L_1)$ is the relative Euler class of the basic slice evaluated on $A$, which in turn is $\chi(A_+)-\chi(A_-)$, where $A_\pm$ are the positive and negative regions of $A$ once it is made convex. 

  To construct the claimed solid torus, we take parallel copies $T_i$ of $T^2\times \{i\}$ inside $T^2\times [0,1]$ that are contact isotopic to the respective boundary components. The contact isotopy moves $L_i$ to a curve lying on $T_i$, and we relabel it again by $L_i$. Now we can take $A$ to be the annulus with boundary $L_0\cup L_1$. As the twisting of each component of $\partial A$ is non-positive, we can make $A$ convex. We claim that $A$ can be chosen so that it has a single dividing curve, which is a boundary-parallel arc with both boundary components on $L_0$. See the left-hand side of Figure~\ref{pictureproof}. We moreover claim that we can assume that a neighborhood of $L_1$ in $A$ looks like a half neighborhood of a Legendrian divide (see below for details on this normalization).
    \begin{figure}[htbp]
  \begin{overpic}
  {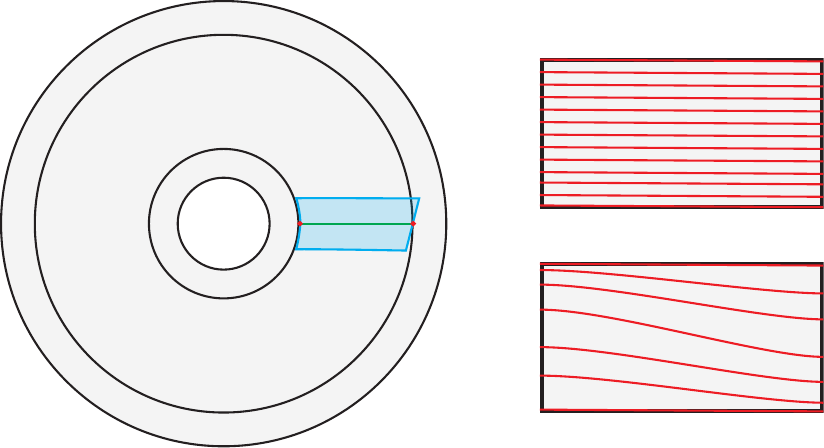}
    \put(130, 140){$T_0$}
    \put(155, 165){$T_1$}
    \put(188, 188){$T^2\times\{1\}$}
    \put(87, 103){$T^2\times \{0\}$}
    \put(165, 110){\color{dgreen}$A$}
    \put(201, 103){\color{red}$L_1$}
     \put(132, 103){\color{red}$L_0$}
     \put(300, 195){$(L_1\times [0,1])_\xi$}
     \put(315, 97){$A''_\xi$}
  \end{overpic}
  \caption{On the left is a cross-section of $T^2\times [0,1]$ with the tori $T_i$ labeled. We also see the Legendrian knots $L_i$ obtained by contact isotopy from the boundary. The green arc represents the annulus $A$ from $L_0$ to $L_1$. The blue shaded region is $N$, an $[-1,1]$-invariant neighborhood of $A$ discussed later in the proof. On the top right, we see $L_1\times [-1,1]\subset N$ and its characteristic foliation, and on the bottom right, we see the perturbation of that annulus making it convex.}
  \label{pictureproof}
\end{figure}
  
We now establish our claim. Since the $\tw(L_0) = -1$ with respect to $T_0$ and $\tw(L_1)=0$ with respect to $T_1$, we know the dividing curves of $A$ intersect $L_0$ twice and $L_1$ zero times. Thus, we know the dividing set is as claimed, except that there might also be some closed dividing curves isotopic to the core of $A$. We must show that $A$ can be chosen so that this is not the case (one must be careful, as there are choices for $A$ where there are such closed curves). To this end, notice that given the slopes $s_0$ and $s_1$ with an edge in the Farey graph connecting them, there will be exactly two slopes that both have an edge to $s_0$ and $s_1$. One will be in $[s_0, s_1]$ while the other $s$ will be outside this interval. We now perturb $T^2\times\{0\}$ and $T^2\times\{1\}$ so that their ruling curves have slope $s$. Let $B$ be an annulus of slope $s$ in $T^2\times [0,1]$ whose boundary consists of two ruling curves, one on $T^2\times \{0\}$ and the other on $T^2\times \{1\}$. We can make $B$ convex, and it must have exactly two dividing curves running from one boundary component to the other. This is because, if not, we could Legendrian realize the core curve in $B$ with contact twisting $0$ with respect to $B$. We could then find a torus $T$ parallel to the boundary of $T^2\times[0,1]$ containing this curve so that it also had twisting $0$ with respect to $T$. This implies that $T$ can be made convex with dividing slope $s$, contradicting the fact that, as a basic slice, $T^2\times[0,1]$ is minimally twisting. Thus, the dividing curves are as claimed. Now we can Legendrian realize a curve $\gamma$ on $B$ that runs from one boundary component to the other and has contact twisting $0$, we can moreover assume that one boundary component of $\gamma$ is on $L_1$ and the other is not on the Legendrian divides on $T^2\times \{0\}$. Now we can isotope $T^2\times \{0\}$, keeping it fixed near $\partial \gamma$ so that its ruling curves have slope $s_1$. This allows us to take an annulus $A$ from $L_0$ to $L_1$ that contains $\gamma$. The twisting of $\gamma$ with respect to this annulus is still $0$, so we can make $A$ convex relative to $\gamma$. Because the twisting of $\gamma$ is $0$ we see that it cannot intersect the dividing curves of $A$. This implies that there can be no closed curves in the dividing set of $A$, and hence the dividing curves of $A$ must be as claimed. The claim about the characteristic foliation on $A$ follows from Giroux flexibility.
  
Now let $N=A\times [-1,1]$ be an $[-1,1]$-invariant neighborhood of $A=A\times \{0\}$. This neighborhood is generated by a contact vector field. Since $L_0$ is a ruling curve in $T_0$, it has an invariant neighborhood in $T_0$, and using this we can assume the contact vector field is tangent to $T_0$. However, $L_1$ does not have an invariant neighborhood in $T_1$, so the contact vector field must be transverse to $T_1$. Thus, as indicated on the left of Figure~\ref{pictureproof}, we see that $L_0\times [-1,1]$ can be taken to be an annulus in $T_0$ while $L_1\times [-1,1]$ cannot be taken to be a subset of $T_1$.

Notice that $\partial N$ consists of four parts, $A_{-1}=A\times \{-1\}, A_1=A\times \{1\}, L_0\times [-1,1],$ and $L_1\times [-1,1]$. The first three are convex surfaces, with the first two having dividing set the same as $A$ while the third having dividing set consisting of two arcs, each running from one boundary component to the other. We can round the two corners between the first three surfaces to get a convex annulus $A'$ with one dividing curve isotopic to its center curve and intersecting $L_0$ twice. We notice that $L_1\times [-1,1]$ is foliated by Legendrian curves parallel to $L_1$ (that is, $L_1\times [-1,1]$ is formed as the image of $L_1$ under the flow of a contact vector field). See the upper right-hand diagram in Figure~\ref{pictureproof}. Thus, by our choice of the characteristic foliation on $A$, we see the characteristic foliation of $A'$ has Legendrian boundary consisting of two copies of $L_1$, and each boundary component looks like a Legendrian divide, by which we mean the boundary components are circles of singularities in the foliation and nearby the foliation is non-singular and transverse to the boundary, moreover one boundary will be an attracting circle of singularities and the other will be repelling (since neighborhoods of the boundary of $A'$ consists of two copies of $A$, but with opposite orientations).

  We now have $A'\cup (L_1\times [-1,1])$ is a torus bounding a solid torus. We now consider $L_1\times [-1,1]$. This has characteristic foliation given by $L_1\times \{t\}$ for $t\in[-1,1]$. See Figure~\ref{pictureproof}. That is, it is a pre-Lagrangian annulus and thus cannot be convex. But we build a standard model for a neighborhood of $L_1\times [-1,1]$. Specifically, in $\R^3/\sim,$ where $(x,y,z)\sim (x+1, y,z)$, with the contact structure $dz-y\, dx$ we find an open set around $S=\{(x,y,z) : y=0, |z|\leq 1\}$ that is contactomorphic to a neighborhood of $L_1\times[-1,1]$ by a contactomorphism taking $S$ to $L_1\times [-1,1]$ and a neighborhood of $\partial A'$ to constant $z$ annuli with, say, positive $y$ coordinate. In this local model, we can deform $L_1\times[-1,1]$ by slightly pushing its interior to have negative $y$ coordinate. The characteristic foliation on this new annulus $A''$ now has Legendrian boundary, and on the interior flow lines that spiral to one boundary component in positive time and the other in negative time. We can finally slightly modify $A'$ in this local model so that the orbits near $\partial A'=\partial A''$ spiral towards the boundary components in the same way that those on $A''$ do. In particular, $A'\cup A''$ is now a convex torus with two dividing curves. One is in the center of $A'$ and the other is in $A''$. In addition, we see that $L_1$ is isotopic to the Legendrian divides on $A'\cup A''$, and again, $L_0$ sits on this torus intersection one of the dividing curves twice. Let $N$ be the solid torus bounded by $A'\cup A''$. This is a standard neighborhood of $L_1$, and we see that $L_0$ has contact twisting one less than $L_1$, and so when it is contact isotoped into the interior of $N$ we see that it has a neighborhood as claimed above. 

  The proof for the analogous case with the Legendrian knots $L'_0$ and $L'_1$ is the same. 

  For the second statement, notice that the annulus $A$ of slope $s$ from $T^2\times \{0\}$ to $T^2\times \{1\}$ with boundary ruling curves cannot have a boundary parallel dividing curve on $T^2\times \{1\}$, since if there were we could attach a bypass to $T^2\times \{1\}$ and get a convex torus of dividing slope outside of $[s_0,s_1]$ contradicting the minimal twisting of a basic slice. Thus the dividing curves on $A$ have some boundary parallel dividing curves on $A\cap (T^2\times \{0\})$ and the rest run across from one boundary component to the other. We can use the bypasses to destabilize $L''_0$ and then isotope it to $L''_1$. The signs of the destabilization are determined by the sign of the bypass and then number has to be as indicated, otherwise there would be a bypass on $T^2\times \{1\}$. 
\end{proof}

\subsection{Pairs of decorated paths}\label{pairsodecorated}
Suppose $P_1$ and $P'_1$ are minimal decorated paths in the Farey graph from $r$ clockwise to $s$, and $P_2$ and $P'_2$ are minimal decorated paths from $s$ to $t$. Below we will be interested when the contact structures $\xi_{P_1}\cup \xi_{P_2}$ and $\xi_{P'_1}\cup \xi_{P'_2}$ are contactomorphic. When $P_1\cup P_2$ cannot be shortened, we can completely understand this using Theorem~\ref{T2XIclass}, so we will focus on when the concatenated path can be shortened. In particular, here we will consider the following situation. Suppose that $q/p\in(n-1,n)$ and $P_1$ is a minimal decorated anti-clockwise path from $q/p$ to $n-1$ and $P_2$ is a minimal decorated clockwise path from $q/p$ to $n$. It is clear that the concatenation $\overline{P}_1\cup P_2$ can be shortened, and the specific shortening process was discussed in Sections~\ref{shorteningpaths} and~\ref{sortentoone}. We now study when $\xi_{P_1}\cup \xi_{P_2}$ is contactomorphic to a contact structure described by different decorations on the two paths. 

We consider the breakdown of $P_1$ and $P_2$ as in Section~\ref{sortentoone} (we will only discuss the case here, with the case of $(A_1,\ldots, A_{2n-1})$ and $(B_2,\ldots, B_{2m})$, and the case when the maximal odd index is smaller, being analogous). 

\begin{definition}\label{defnconsistent}
We will call a pair of decorated paths $(P_1,P_2)$ \dfn{$i$-consistent} if the signs of the decorations on the edges in $A_j$ and $B_j$ with $j\leq i$ are all the same and we call the paths \dfn{$i$-inconsistent} if $(P_1,P_2)$ is $(i-1)$-consistent but not $i$-consistent.
\end{definition}

Let $D_i$ denote $A_i$ if $i$ is even and $B_i$ if $i$ is odd. Suppose $P_1$ and $P_2$ are $i$-inconsistent for some $i>2$, then of course the paths are $(i-1)$-consistent. See the top row in Figure~\ref{compatible} for a $6$-inconsistent pair of decorated paths.
  \begin{figure}[htbp]
  \begin{overpic}{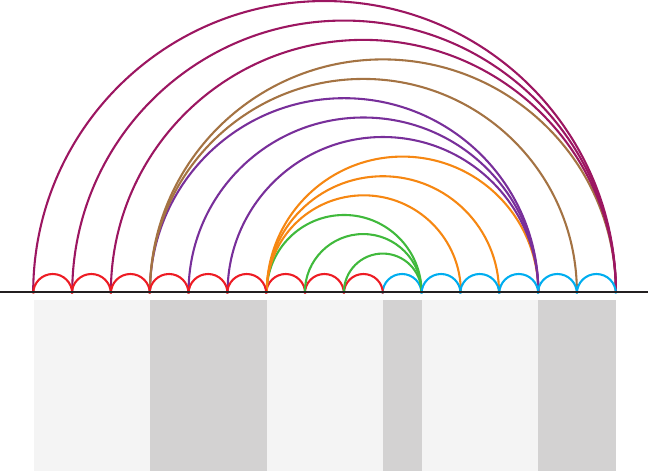}
    \put(188, 70){$B_1$}
    \put(150, 70){$A_2$}
    \put(227, 70){$B_3$}
    \put(93, 70){$A_4$}
    \put(276, 70){$B_5$}
    \put(38, 70){$A_6$}
    \put(11, 30){
      \setlength{\tabcolsep}{6.35pt}
      \begin{tabular}{ccccccccccccccc}
        -& - & + & - &-  &-  &-  &-  &-  &-  &-  &-  &-  & - & -\\
        -& - & - & + &+  &+  &+  &+  &+  &+  &+  &+  &+  & + & -\\
        -& - & - & + &-  &-  &-  &-  &-  &-  &-  &-  &-  & + & +\\
        -& - & - & - &-  &-  &+  &+  &+  &+  &+  &+  &-  & + & +\\
        -& - & - & - &-  &-  &+  &-  &-  &-  &+  &+  &+  & + & +\\
      \end{tabular}}
  \end{overpic}
  \caption{The signs in the top row give a $6$-inconsistent pair of paths. In the next row, we have shuffled signs in a continued fraction block to get a $5$-inconsistent pair of paths. Each of the subsequent rows is obtained from the previous row by shuffling a basic slice in a continued fraction block to get a $4$, then $3$, and finally $2$-inconsistent pair of paths.}
  \label{compatible}
\end{figure}
From the discussion in Sections~\ref{shorteningpaths} and~\ref{sortentoone}, we know that there is an edge in the Farey graph between the first vertex $v_i$ of $D_i$ and the last vertex $v'_i$ in $D_{i-1}$. Moreover, if $v_i''$ is the second-to-last vertex in $D_{i-1}$ then it is the Farey sum of $v_i$ and $v_i'$; in particular, it extends the continued fraction block $D_i$ by one extra jump. Since all the edges between $v_i$ and $v''_i$ have the same sign, Theorem~\ref{shortening} tells us that the contact structure described by the path between $v_i$ and $v''_i$ is a basic slice with sign the common sign of the edges in the path. Now in $D_i$ we know there is an edge with opposite sign, and since $D_i$ is a continued fraction block, Theorem~\ref{T2XIclass} says that one may assume it is the edge adjacent to $v_i$. So we can exchange the sign on the edge between $v''_i$ and $v_i$ and the first edge in $D_i$. By Corollary~\ref{lengthen}, this is equivalent to changing all the signs on the edges in $P_1$ and $P_2$ between $v_i$ and $v''_i$ as well as the sign of the first basic slice in $D_i$. After we have done this, we have a new pair of decorated paths $P_1'$ and $P_2'$. 

\begin{definition}
With the notation above, we say that $(P_1,P_2)$ is \dfn{$i$-compatible} with $(P'_1,P_2')$. 
\end{definition}

Notice, since one edge in $D_{i-1}$ kept its same sign, that $(P_1',P_2')$ is $(i-1)$-inconsistent. 

\begin{lemma}
  The contact structures described by $i$-compatible pairs of paths are isotopic. 
\end{lemma}

\begin{proof}
  The discussion before the definition makes it clear that the contact structures described by $i$-compatible pairs of paths are built by concatenating the same basic slices.
\end{proof}

Of course we can iterate and find decorated paths $(P_1'',P_2'')$ that are $(i-1)$-compatible with $(P_1',P_2')$ and continue until we have $(P_1^{(i-2)}, P_2^{(i-2)})$ which is $2$-inconsistent. 
\begin{definition}
We say two pairs of paths $(P_1,P_2)$ and $(P'_1,P'_2)$ are \dfn{compatible} if they are related by a sequence of pairs of paths where each pair is $i$-compatible with the previous pair for some $i$. 
\end{definition}

Iteratively applying the previous lemma allows us to conclude the following result. 
\begin{lemma}\label{comppathgivesame}
  Compatible pairs of decorated paths define the same contact structure. 
\end{lemma}

We end with a definition we will need later.
\begin{definition}\label{totallyinconsistentdef}
If every decoration of $(A_1, A_3, ..., A_{i-1})$ has the same sign and every decoration of $(B_2, ... ,B_i)$ has the opposite sign, then the paths are called \dfn{totally $i$-inconsistent}. 
\end{definition}
We also notice that if a pair of decorated paths $(P_1,P_2)$ is totally $2$-inconsistent, then it cannot be compatible with any decorated pairs that are $i$-inconsistent for any $i\geq 3$. This is because, in the discussion above, we see that if a $2$-inconsistent pair of paths is compatible with a $3$-inconsistent pair of paths, then $A_2$ (or $B_2$) will have a mixture of signs.

\subsection{Paths in the Farey graph and contact structures on \texorpdfstring{$S^3$}{S3}}\label{subsec:pathsinFG}
When studying non-loose torus knots in $S^3$, we will need to consider $S^3$ as $L_\infty^0$ (that is a lens space with lower meridian $\infty$ and upper meridian $0$, see Section~\ref{sec:lens}). We will describe contact structures on $L_\infty^0$ using paths in the Farey graph. More precisely, given a rational number $q/p$ we will write $L_\infty^0$ as the union of two solid tori: $V_1$ with lower meridian $\infty$ and convex boundary having slope $q/p$ and $V_2$ with upper meridian $0$ and convex boundary having slope $q/p$, where  $p,q$ are coprime integers and $|q| > p > 1$. According to Theorem~\ref{torusclass}, we will need two paths in the Farey graph to specify contact structures on these tori. Let $P_1$ be a path that describes a contact structure in $\Tight(S_\infty; q/p)$ and $P_2$ be a path describing a contact structure in $\Tight(S^0; q/p)$. Given these paths we get a contact structure $\xi_{P_1,P_2}$ on $S^3$. In this section we will see when the contact structures associated to two different decorated pairs of paths correspond to the same contact structure. In \cite{Matkovic18},  Matkovi\v{c} has done the same things for some small Seifert fibered spaces in terms of her ``characteristic vectors", and then in \cite{Matkovic20Pre} used this to understand when negative $(p,q)$-torus knots with $\tb<pq$ are in the same overtwisted contact structure. 

As in Theorem~\ref{torusclass}, we consider a minimal path $P_1$ from $q/p$ anti-clockwise to $\infty$ with all edges decorated by a sign except the last edge from $\lfloor q/p \rfloor$ to $\infty$ (this edge describes the unique tight contact structure on a solid torus with convex boundary having two longitudinal dividing curves). We have a similar discussion for $P_2$. If $q/p<-1$, then we need to consider $P_2$ as a decorated path from $q/p$ clockwise to $-1$ (the jump from $-1$ to $0$ describes the unique tight solid torus with given dividing curves and meridian). If $q/p>1$, then $P_2$ will be a decorated path from $q/p$ clockwise to $\infty$ (the jump from $\infty$ to $0$ describes the unique tight solid torus with given dividing curves and meridian). 

We say the pair $(P_1,P_2)$ is a \dfn{pair of paths representing $q/p$}. We describe the paths $P_1$ and $P_2$ in details below.

Below, we will see that when $pq<0$, the part of $P_2$ from $\lceil q/p \rceil$ to $-1$ plays a very different role in our analysis, and in Section~\ref{thealgorithm}, we mainly consider the part of $P_2$ from $q/p$ to $\lceil q/p \rceil$. Thus we denote by $P_2^\intercal$ the truncated path from $q/p$ to $\lceil q/p \rceil$.

\subsubsection{Case 1: $q/p<-1$}
Notice that $P_1$ is a minimal path in the Farey graph from $q/p$ anti-clockwise to $\lfloor q/p\rfloor$ and $P^\intercal_2$ is a minimal path from $q/p$ clockwise to $\lceil q/p\rceil$. Thus, if we concatenate the path $P_1$ reversed, which we denote $\overline{P_1}$, and $P_2^\intercal$ we will obtain a path going from $\lfloor q/p\rfloor$ clockwise to $\lceil q/p\rceil$. However, this path will not be minimal as there is a single edge going from $\lfloor q/p\rfloor$ to $\lceil q/p\rceil$. Thus, the path may be shortened. 

Before proceeding, we first state a couple of results that indicate why are are considering $P^\intercal_2$ instead of $P_2$.

\begin{lemma}\label{extrablockfornegative}
Let $(P_1,P_2)$ be a pair of decorated paths representing $q/p<-1$. Suppose all the signs of $P_1$ and $P^\intercal_2$ are the same, but some signs in $P_2-P^\intercal_2$ are different. There is no way to ``shuffle" signs to make $(P_1,P_2)$ inconsistent at an earlier stage. 
\end{lemma}

\begin{remark}
  Note that this lemma is in contrast to what we say for the pair $(P_1, P^\intercal_2)$. There, if the pair were $i$-inconsistent, then they would be compatible with other pairs of paths that were $j$-inconsistent for $j<i$. 
\end{remark}

\begin{proof}
  When all the edges in $P_1\cup P^\intercal_2$ have the same sign but some edge in between $\lceil q/p \rceil$ and $-1$ has a different sign, we know that $\overline P_1\cup P^\intercal_2$ describes a basic slice. One can see that ``shuffling" the signs of basic slices, see Theorem~\ref{T2XIclass}, the signs of all of the basic slices in $\overline P_1\cup P^\intercal_2$ will change (just as in our discussion in Section~\ref{pairsodecorated}, and so you will not get a pair of paths that is inconsistent at an earlier stage.  
\end{proof}

\begin{lemma}
  Suppose $P_1$ and $P_2^\intercal$ are totally consistent (that is all their signs are the same) but some of the signs in the path from $\lceil q/p\rceil$ can be different. The contact structure on $S^3$ corresponding to $(P_1,P_2)$ is tight. 
\end{lemma}

\begin{remark}
  Note that when there is a mixture of signs in $(P_1, P^\intercal_2)$, then the contact structure is automatically overtwisted, but this lemma shows that this is not the case for $(P_1, P_2)$. 
\end{remark}

\begin{proof}
  Notice that by Theorem~\ref{shortening} the path $\overline P_1\cup P_2^\intercal$ describes a basic slice and so the path from $\infty$ clockwise to $\lfloor q/p \rfloor$ followed by $\overline P_1\cup P_2$ describes the unique tight contact structure on a solid torus and using Lemma~\ref{gluingtoriandthickened} when one extends the path all the way to $-1$ we will have the unique tight contact structure on a solid torus with dividing slope $-1$. Notice that the complementary solid torus in $S^3$ also has a unique tight contact structure and the union of these tori is the tight contact structure on $S^3$. In other words, such a path does not describe an overtwisted contact structure and is unrelated to non-loose Legendrian knots.  
\end{proof}

Since $\overline P_1\cup P^\intercal_2$ is a path from $\lfloor q/p\rfloor$ to $\lceil q/p\rceil$ we can use the terminology in Section~\ref{pairsodecorated} about $ i$-consistency and compatibility. Specifically we will say $(P_1,P_2)$ are \dfn{$i$-compatible} or \dfn{consistent} if $P_1$ and $P^\intercal_2$ are. Our main observation is the following, which follows  directly from Lemma~\ref{comppathgivesame}.

\begin{lemma}
  Compatible pairs of decorated paths define the same contact structure on $S^3$. 
\end{lemma}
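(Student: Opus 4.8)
The relation of being \emph{compatible} is, by construction, the transitive closure of the single $i$-compatibility moves described above, so the plan is to fix one such move relating an $i$-inconsistent pair $(P_1,P_2)$ to the $(i-1)$-inconsistent pair $(P_1',P_2')$ it produces, and to prove $\xi_{P_1,P_2}=\xi_{P_1',P_2'}$; the general case then follows by induction on the length of a chain of moves joining two compatible pairs.

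First I would localize the move. Let $v_i''$ be the second-to-last vertex of $D_{i-1}$ and let $w$ be the vertex of $D_i$ immediately after $v_i$. Cutting $S^3=V_1\cup V_2$ along the convex tori of slopes $v_i''$ and $w$ decomposes it into three pieces: the part of $V_1$ lying beyond the slope-$v_i''$ torus (a solid torus containing the core of $V_1$), a middle $T^2\times[0,1]$ bounded by the tori of slopes $v_i''$ and $w$ and containing the splitting torus of slope $q/p$ in its interior, and the part of $V_2$ lying beyond the slope-$w$ torus (a solid torus containing the core of $V_2$). By the description of the move, all of its sign changes occur on edges lying strictly between $v_i''$ and $w$, so the two outer solid tori carry exactly the same contact structure before and after the move. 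It therefore suffices to prove that the move does not change the contact structure on the middle $T^2\times[0,1]$ rel boundary.

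On the middle piece I would combine $(i-1)$-consistency with the shortening criterion, Theorem~\ref{shortening}. The subpath from $v_i''$ through $q/p$ to $v_i$ uses only edges of the inner blocks $D_1,\dots,D_{i-2}$, which by $(i-1)$-consistency all carry the common sign; being a non-minimal path all of whose edges have one sign, it shortens, by Theorem~\ref{shortening}, to the single basic slice along the Farey edge from $v_i''$ to $v_i$, of that sign. Thus, up to isotopy rel boundary, the middle piece is described by the two-edge path $v_i''\to v_i\to w$, which is a continued fraction block because $v_i''$ extends the block $D_i$. The two decorations related by the move --- one $+$ and one $-$ edge in the two possible orders --- contain the same number of $+$ signs, so the classification of tight contact structures on $T^2\times[0,1]$ by equivalence classes of decorated paths shows they are isotopic rel boundary. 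Re-expanding the sign-flipped basic slice along the original vertex sequence, now with all edges carrying the new common sign (Theorem~\ref{shortening} in reverse), recovers precisely the decorations of $P_1'$ and $P_2'$ on the inner blocks, so the middle piece is unchanged up to isotopy and the lemma follows.

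The step I expect to be the main obstacle is purely the bookkeeping required to treat the middle region as a genuine $T^2\times[0,1]$ independent of the $V_1/V_2$ splitting: one must verify that the slope-$v_i''$ and slope-$w$ tori sit in the interiors of $V_1$ and $V_2$ (so the outer caps really are solid tori with contact structures untouched by the move), that the shortening and re-expansion of the inner subpath can be carried out rel these two boundary slopes, and that the combinatorics in Observation~\ref{obs} indeed guarantee that $v_i''\to v_i\to w$ completes a continued fraction block with the shortened edge. The geometric heart of the argument --- that a same-signed stack of basic slices is a single basic slice, and that the order of signs within a continued fraction block is immaterial --- is exactly Theorem~\ref{shortening} together with the decorated-path equivalence, which is why, once the region is correctly isolated, the statement reduces to the ``self-evident'' gluing claim asserted in the text.
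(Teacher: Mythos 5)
Your proposal is correct and is essentially a careful write-up of the argument the paper treats as self-evident: the compatibility move is, by construction, a composition of shortening a same-signed stack of basic slices to a single basic slice and reshuffling signs within a continued fraction block, both of which preserve the contact structure, so the glued structures on $S^3$ agree. Your explicit three-piece decomposition along the slope-$v_i''$ and slope-$w$ tori makes this precise (note only that the subpath from $v_i''$ to $v_i$ also contains all but the last edge of $D_{i-1}$, not just the blocks $D_1,\dots,D_{i-2}$; these edges likewise carry the common sign by $(i-1)$-consistency, so nothing changes).
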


\subsubsection{Case 2: $q/p>1$}
%

%
%
 
Our discussion of the paths $P_1$ and $P_2$ follows as in Case~1, except when $pq>1$ we have one extra type of pair of decorated paths to consider. Suppose all the signs of $P_1$ and $P_2$ are the same, say negative. Let $i$ be an integer such that $i < q/p< i+1$. As all the signs in all the paths are the same, we can shorten $\overline P_1\cup P_2$ to a path going from $i$ to $\infty$ and describing a basic slice. Now split this path into $P$ going from $i$ to $i+1$ and $P'$ going from $i+1$ to $\infty$ and decorate the paths with the common sign. Now the path $P$ describes a contact structure on the solid torus $S_\infty$ that is the unique solid torus with longitudinal divides and so may be split into a path going from $\infty$ to $i$ and then to $i+1$ where the first jump corresponds to the unique contact structure on the solid torus with given slope and the second is a basic slice of either sign (see Lemma~\ref{gluingtoriandthickened}). We can choose the sign of the basic slice to be positive and then subdivide the path to $(\overline {P_1}\cup P_2)\cap[i,i+1]$ so that all the basic slices are positive. However, $P_2$ has one more edge going from $i+1$ to $\infty$ that is still negative. The paths with the new signs will be denoted $(P_1^{2m-1}, P_2^{2m-1})$ (here, we assume $2n>2m-1$ without loss of generality). Clearly $\xi_{P_1,P_2}$ and $\xi_{P_1^{2m-1}, P_2^{2m-1}}$ are the same as they are described by gluing together the same contact structures on solid tori. 

Notice that if the paths are broken into their continued fraction blocks 
\[
  (A_2,\ldots, A_{2n}) \text{ and } (B_1,B_3, \ldots, B_{2m-1}),
\] 
as above, this new pair of paths $(P_1^{2m-1}, P_2^{2m-1})$ is $2m-1$-inconsistent. As we saw above we will now get $k$-inconsistent pairs of paths $(P_1^k,P_2^k)$ for $k=2,3,\ldots, 2m-1$ that are all compatible. Notice that all the signs of the basic slices in $P^2_1$ are of a fixed sign, say positive, except the first one which is negative and all the basic slices of $P^2_2$ are negative, except the first one which is positive. 

\subsection{From decorated Farey graphs to contact surgery diagrams} \label{sec:FareytoSurgery}
Let $(P_1,P_2)$ be a pair of paths in the Farey graph representing $q/p$. As discussed in the previous section, once the paths are decorated they give a contact structure $\xi_{P_1,P_2}$; moreover, there is a convex torus $T$ with two dividing curves of slope $q/p$ that separates $S^3$ into two solid tori with contact structures described by $P_1$ and $P_2$.  
We will show in Lemma~\ref{lem:=pq} that a Legendrian divide on this torus will be a non-loose $(p,q)$-torus knot $L_{P_1,P_2}$ and all such torus knots with $\tb=pq$ and $\tor = 0$ will occur in this way, as we will show in Section~\ref{justifyalgorithm}. Here we would like to turn the Farey graph description of $L_{P_1,P_2}$ into a contact surgery diagram in $(S^3,\xi_{std})$. The relation between the surgery construction and paths in the Farey graph was originally observed by Matkovi\v{c} \cite{Matkovic18} in the case of small Seifert fibered spaces and then used to study negative torus knots in her paper \cite{Matkovic20Pre}.

Our main result is the following.
\begin{lemma}
Given a pair of paths $(P_1,P_2)$ in the Farey graph representing $q/p$ then the contact structure $\xi_{P_1,P_2}$ is given by the contact surgery diagram on the left in Figure~\ref{fig:surgery} and in Figure~\ref{fig:torus-knots}, and $L_{P_1,P_2}$ is given by the green curve in either figure. Recall that a contact surgery with a surgery coefficient different from $1/n$ is not unique and is determined by a signed path in the Farey graph. The paths $P_1$ and $P_2$ determine a specific contact surgery. 
\end{lemma}

\begin{figure}[htbp]
\vspace{0.1cm}
\begin{overpic}{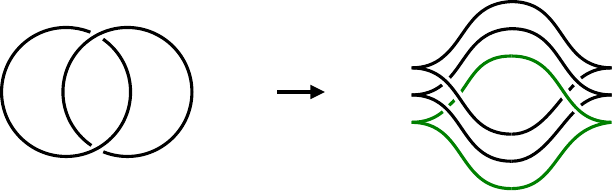}
  \put(-10, 80){$\frac{p'}{p-p'}$}
  \put(81, 80){$\frac{q-q'}{q'}$}
  \put(300, 41){$(\frac{p}{p-p'})$}
  \put(305, 59){$(\frac{q}{q'})$}
\end{overpic}
\caption{Left: smooth surgery diagram of $S^3$. Right: contact surgery diagram of $(S^3,\xi)$}
\label{fig:surgery}
\end{figure}
\begin{proof}
As discussed in Section~\ref{subsec:pathsinFG} the pair $(P_1,P_2)$ determines a contact structure $\xi_{P_1,P_2}$ on $S^3$. We can convert this decorated Farey graph into a contact surgery diagram in $(S^3,\xi_{std})$ for the contact structure $\xi_{P_1,P_2}$. To this end, we first consider a smooth surgery diagram for $S^3$ shown in the left drawing of Figure~\ref{fig:surgery}. 

Here, we denote $(q/p)^c$ by ${q'/p'}$. To see this manifold is $S^3$, think of this manifold as a result of Dehn filling $T^2 \times I$ along a curve on $-T^2\times\{0\}$ of slope ${p'/(p-p')}$ and a curve on $T^2\times\{1\}$ of slope ${q'/(q-q')}$. Let $\phi$ be a diffeomorphism of a torus whose matrix representation is 
\[
  \phi = \begin{pmatrix}
  p' & p'-p\\
  q'& q'-q
  \end{pmatrix}.
\]

After change the coordinates of $T^2 \times I$ using $\phi$, the meridional slopes of the two solid tori glued on $T^2 \times \{0\}$ and $T^2 \times \{1\}$ are $\infty$ and $0$, respectively. Thus, the surgered manifold is diffeomorphic to $S^3$. Next, convert this diagram into a contact surgery diagram as shown in the right drawing of Figure~\ref{fig:surgery}. Observe that the region between the two Legendrian unknots is a thickened torus $T^2\times I$ with an $I$-invariant contact structure having dividing slope $-1$. The green curve in Figure~\ref{fig:surgery} is a Legendrian divide on one of these convex tori. After changing the coordinates using $\phi$, we have
\[
  \begin{pmatrix}
    p' & p'-p\\
    q' & q'-q
  \end{pmatrix}
  \begin{pmatrix}
    1\\-1
  \end{pmatrix}
  =
  \begin{pmatrix}
    p\\q
  \end{pmatrix}.
\]
Thus the dividing slope of the torus is $q/p$ after change of the coordinates. Thus the two solid tori glued on $T^2 \times \{0\}$ and $T \times \{1\}$ are elements of $\Tight(S_\infty;q/p)$ and $\Tight(S^0;q/p)$ respectively. Recall that in \cite{DingGeigesStipsicz04}, Ding, Geiges, and Stipsicz provided an algorithm to convert a general contact surgery diagram to a contact $(\pm1)$-surgery diagram:
\begin{itemize}
	\item contact $(p/q)$-surgery on a Legendrian knot $L$ with $p/q<0$:
	\begin{enumerate}
		\item Stabilize $L,$ $|r_1+2|$ times, where
		\[
      \frac{p}{q} = r_1 + 1 - \frac{1}{r_2 - \frac{1}{r_3 \cdots -\frac{1}{r_n}}}
    \]
    for $r_i \leq -2$. Let the resulting Legendrian knot be $L_1$.
		\item For $i=2,\ldots,n$, let $L_i$ be the Legendrian push-off of $L_{i-1}$ and stabilize it $|r_i+2|$ times.
		\item Then a contact $({p/q})$-surgery on $L$ corresponds to a contact $(-1)$-surgery on a link $(L_1,\ldots,L_n)$.
	\end{enumerate}
	\item contact $({p/q})$-surgery on a Legendrian knot $L$ with ${p/q}>0$:
	\begin{enumerate}
		\item Choose a positive integer $k$ such that $q-kp<0$. Let $r'={p/(q-kp)}$.
		\item Let $L_1,\ldots,L_k$ be k successive Legendrian push-offs of $L$.
		\item Then a contact $({p/q})$-surgery on $L$ corresponds to $(+1)$-surgeries on $L_1,\ldots,L_{k}$ and a contact $(r')$-surgery on $L$.
	\end{enumerate}
\end{itemize}

Applying the second algorithm to the contact surgery diagram in Figure~\ref{fig:surgery}, we obtain the contact surgery diagram shown in Figure~\ref{fig:torus-knots}. To see this notice that since $q/p= (q/p)^a\oplus (q/p)^c$, see Lemma~\ref{adjacentvertices}, we know that $p-p'>0$ and $q-q'>0$ and $q$ and $q'$ will both either be positive or both negative. Choosing $k=1$ in the above algorithm will result in the surgery coefficients shown in Figure~\ref{fig:torus-knots} which can easily be checked to be less than $-1$. 

\begin{remark}
  Notice that the above algorithm actually produces Figure~\ref{fig:torus-knots} with the second and third surgery coefficients interchanged; however, these surgery diagrams produce the same contact structure, as we now explain. Since the two negative surgery coefficients are less than $-1$, the corresponding Legendrian knots must be stabilized in order to perform Legendrian surgery. It was shown in \cite{DaltonEtnyreTraynor21pre} that stabilized components of a $(4,-4)$-torus link (that is, the surgery link in Figure~\ref{fig:torus-knots}) can be arbitrarily permuted among the other components.
\end{remark}

In \cite{DingGeiges04}, Ding and Geiges showed that the choice of stabilizations on $L_1$ with contact surgery coefficient $(p/(p-p'))$ and $L_2$ with contact surgery coefficient $(q/q')$ corresponds to the choice of signs on the basic slices of $V_1$ and $V_2$, respectively. To be more precise, suppose 
\[
  -\frac{p}{p'} = r_1 + 1 - \frac{1}{r_2 - \frac{1}{r_3 \cdots -\frac{1}{r_u}}} \quad\text{and}\quad -\frac{q}{q-q'} = s_1 + 1 - \frac{1}{s_2 - \frac{1}{s_3 \cdots -\frac{1}{s_v}}}
\]
for $r_i \leq -2$ and $s_i \leq -2$. Let $[r_{p_1},...,r_{p_n}]$ be the subsequence of $[r_1, ..., r_u]$ such that $r_{p_i} < -2$. Now the choice of stabilizations on $L_{p_i}$ corresponds to the choice of signs on each basic slice in the continued fraction block $A_{2i-1}$. Similarly, let $[s_{q_1},...,s_{q_m}]$ be the subsequence of $[s_1, ..., s_v]$ such that $s_{q_i} < -2$. Now the choice of stabilizations on $L_{q_i}$ corresponds to the choice of signs on each basic slice in the continued fraction blocks $B_{2i}$. Observe that since $P_2$ represents a contact structure in $\Tight(S^0,{q/p})$, the positive stabilization corresponds to the negative basic slice (respectively, the negative stabilization corresponds to the positive basic slice). See Figure~\ref{fig:LHTsurgery} for examples.
\end{proof}

\subsection{Homotopy class of plane fields and rotation numbers}\label{htpclasses} 
Given a pair $(P_1,P_2)$ of decorated paths in the Farey graph for the $(p, q)$-torus knot, we saw in Section~\ref{sec:FareytoSurgery}  that we define a contact structure $\xi_{P_1,P_2}$ on $S^3$ and a non-loose Legendrian $(p,q)$-torus knot $L_{P_1,P_2} \in (S^3,\xi_{P_1,P_2})$ with $\tor=0$. Moreover, Lemma~\ref{lem:=pq} says that all such Legendrian knots come from this construction. 

To compute the $d_3$-invariant of $\xi_{P_1,P_2}$, we first convert the decorated Farey graph $(P_1,P_2)$ for the $(p,q)$-torus knot into the corresponding contact $(\pm1)$-surgery diagram as described in Section~\ref{sec:FareytoSurgery}. See Figure~\ref{fig:LHTsurgery} for examples. From the surgery diagram we can compute $d_3$-invariant of the contact structure on $S^3$ using \cite[Corollary~3.6]{DingGeigesStipsicz04}. 
\begin{lemma}
Let $L$ be a link that can be written as links $L'$ and $L''$ and $\xi$ be the contact structure obtained from $(S^3,\xi_{std})$ by performing contact $(-1)$-surgery on $L'$ and contact $(+1)$-surgery on $L''$. Then
\begin{align}
  d_3(\xi) = \frac{1}{4}(c^2 - 3\sigma(X) - 2(\chi(X)-1)) + q.\label{formula:d3}
\end{align}
Here $X$ is the  $4$-manifold obtained by attaching $2$-handles to the $4$-ball as indicated in the surgery diagram, and $q$ is the number of components in $L''$. Let $M$ be the intersection matrix of $X$, which is also the linking matrix of the surgery diagram, so the signature $\sigma(X)$ is the signature of this matrix and the Euler characteristic $\chi(X)$ is the rank of this matrix plus $1$. The quantity $c^2$ is $(\mathbf{rot})^{\intercal} M^{-1} \mathbf{rot}$, where $\mathbf{rot}$ is a vector of rotation numbers of each surgery component.  
\end{lemma}

Since $\mathbf{rot}^{\intercal}  M^{-1}  \mathbf{rot} = (-\mathbf{rot}^{\intercal})  M^{-1}  (-\mathbf{rot})$, we see that $\xi_{P_1,P_2}$ is the same as $\xi_{-P_1,-P_2}$.


We give two methods to compute the rotation number of $L_{P_1,P_2}$. The first method for computing $\rot(L_{P_1,P_2})$ involves the surgery diagram used above. In particular, we have the formula from \cite[Theorem~2.2]{DurstKegel16}:

\begin{align}
  \rot(L) = \rot_0 - \mathbf{rot}^\intercal \cdot M^{-1}\cdot \mathbf{lk},\label{formula:rot}
\end{align}
where $\rot_0$ is the rotation number of $L$ in the surgery diagram before surgery and $\mathbf{lk}$ is the vector of linking numbers between each surgery component and $L$. In our surgery diagram, it is clear that
\[
  \mathbf{lk} = \begin{bmatrix}
    -1\\ \vdots \\ -1
  \end{bmatrix}.
\]
In our examples $\rot_0=0$, so we see that $\rot(L_{P_1,P_2})=-\rot(L_{-P_1,-P_2})$.

The second method for computing the rotation number makes the computation directly from the Farey graph. Given a pair $(P_1,P_2)$ of decorated paths in the Farey graph for the $(p, q)$-torus knot, let $p_1=q/p, p_2, \ldots, p_k=\lfloor  q/p\rfloor$ be the vertices in $P_1$ and $q_1=q/p,q_2,\ldots, q_l$ be the vertices in $P_2$. Recall, when $pq<0, q_l=-1$ and when $pq>0, q_l=\infty$. Define 
\[
  r_m=\sum_{i=1}^{k-1} \epsilon_i \left((p_{i+1}\ominus p_{i}) \bigcdot \frac 10\right) 
\]
and
\[
  r_n=\sum_{i=1}^{l-1} \epsilon'_i \left((q_{i+1}\ominus q_{i})\bigcdot \frac 01\right) 
\]
where $\epsilon_i$ is the sign of the edge from $p_i$ to $p_{i+1}$ and $\epsilon'_i$ is the sign of the edge from $q_i$ to $q_{i+1}$. Then define
\[
  R(P_1,P_2)= p\,r_n + q\,r_m 
\]

\begin{lemma}\label{computer}
  The Legendrian knot $L_{P_1,P_2}$ has rotation number 
  \[
    \rot(L_{P_1,P_2})=R(P_1,P_2).
  \]
\end{lemma}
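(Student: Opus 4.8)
The plan is to compute $\rot(L_{P_1,P_2})$ as a relative Euler number and to show that, upon splitting $S^3=V_1\cup_T V_2$ along the convex torus $T$ of slope $q/p$, it breaks into exactly the two weighted sums $q\,r_m$ (coming from $V_1$) and $p\,r_n$ (coming from $V_2$). Concretely, I would fix a Seifert surface $\Sigma$ for $L=L_{P_1,P_2}$, isotope it to meet $T$ transversely, and trivialize $\xi$ over $\Sigma$ so that the trivialization restricts along $L$ to the oriented tangent direction $TL$. Then $\rot(L)=\langle e(\xi),[\Sigma]\rangle$ is the relative Euler number with this boundary condition, and additivity of the relative Euler class under cutting along $T$ gives $\rot(L)=\langle e(\xi|_{V_1}),[\Sigma_1]\rangle+\langle e(\xi|_{V_2}),[\Sigma_2]\rangle$ once the framings are matched along $C=\Sigma\cap T$, where $\Sigma_j=\Sigma\cap V_j$.

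The engine of the computation is the relative Euler class of a single basic slice. For a basic slice $T^2\times[0,1]$ with dividing slopes $s_0,s_1$ on its two boundary components and sign $\epsilon$, writing $v_0,v_1\in H_1(T^2)$ for the primitive integer vectors of these slopes (oriented as in Lemma~\ref{adjacentvertices}, so that consecutive vertices of a path satisfy $v_i\bigcdot v_{i+1}=\pm1$), Honda's computation gives $\PD e(\xi)=\epsilon\,(v_1-v_0)$, with the boundary section taken along the dividing directions. Stacking the basic slices prescribed by the decorated path $P_1$ and using additivity, the relative Euler class of $\xi|_{V_1}$ is represented by the class $E_1=\sum_i\epsilon_i\,(p_{i+1}-p_i)\in H_1(T^2)$, where $p_{i+1}-p_i$ is identified with the vector $p_{i+1}\ominus p_i$; likewise $E_2=\sum_i\epsilon'_i\,(q_{i+1}-q_i)$ represents the relative Euler class of $\xi|_{V_2}$. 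By bilinearity of the dot product these pair to $E_1\bigcdot\frac10=r_m$ and $E_2\bigcdot\frac01=r_n$ against the two meridians $\mu=\frac10$ and $\lambda=\frac01$.

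It then remains to extract the weights. Pushing $L$ off $T$ and analysing the intersection curves $C=\Sigma\cap T$, the requirement that $C$ bound in $V_1$ (whose meridian is $\mu$) while $L-C$ bound in $V_2$ (whose meridian is $\lambda$) forces $[C]=q\,\mu$ in $H_1(T^2)$, using $[L]=q\mu+p\lambda$. Hence $[\Sigma_1]$ is homologous to $q$ meridian disks of $V_1$, and pairing with $e(\xi|_{V_1})$ contributes $q\,(E_1\bigcdot\frac10)=q\,r_m$; the symmetric analysis on the $V_2$ side contributes $p\,(E_2\bigcdot\frac01)=p\,r_n$. Summing gives $\rot(L)=q\,r_m+p\,r_n=R(P_1,P_2)$, as claimed.

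The main obstacle will be the boundary bookkeeping in the middle step: one must match the framings of $\xi$ along $C$ from the two sides so that the two relative Euler numbers genuinely add with no leftover correction term, and one must pin down all orientation and sign conventions so that the basic-slice formula assembles into $E_1,E_2$ with the signs matching the paper's Farey and dot-product conventions. A useful consistency check throughout is that reversing every decoration sends $(P_1,P_2)$ to $(-P_1,-P_2)$, under which $R$ manifestly changes sign; this agrees with $\xi_{P_1,P_2}=\xi_{-P_1,-P_2}$ and $\rot(L_{P_1,P_2})=-\rot(L_{-P_1,-P_2})$ already observed for the surgery formula~\eqref{formula:rot}, which also offers an independent verification route via identifying $M^{-1}\cdot\mathbf{lk}$ with the relevant slope vectors.
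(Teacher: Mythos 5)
Your proposal is correct and its engine --- Honda's relative Euler class computation for the basic slices making up $V_1$ and $V_2$, assembled by additivity --- is exactly the engine of the paper's proof. The difference is in how the weights $p$ and $q$ are extracted. The paper simply cites the formula $\rot(\gamma)=p\,\rot(\lambda)+q\,\rot(\mu)$ from the discussion before Lemma~4.11 of \cite{EtnyreHonda01}, valid for a curve of class $p\lambda+q\mu$ on a convex torus, and then identifies $\rot(\lambda)=r_n$ and $\rot(\mu)=r_m$ with the relative Euler numbers of the two solid tori via the proof of Proposition~4.22 in \cite{Honda00a}; this makes the whole proof two sentences of citations. You instead re-derive that weighting from scratch by cutting the Seifert surface along the Heegaard torus into $q$ meridian disks of $V_1$ and $p$ meridian disks of $V_2$ (plus the bands in a collar of $T$) and invoking additivity of the relative Euler class. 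That route works, but the ``main obstacle'' you correctly flag --- matching the trivializations of $\xi$ along $\Sigma\cap T$ from the two sides, and accounting for the band region, so that the two relative Euler numbers add with no correction term --- is precisely the content of the Etnyre--Honda lemma you would be reproving. So your argument is not wrong, just longer than necessary: the framing bookkeeping you defer is already packaged in the citation the paper uses, and I would recommend citing it rather than redoing it. Your consistency check under $(P_1,P_2)\mapsto(-P_1,-P_2)$ is a good sanity test and agrees with the observation after the surgery formula~\eqref{formula:rot}.
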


Notice that since $R(P_1,P_2)$ is simply the rotation number of $L_{P_1,P_2}$ it can also be computed from the surgery formula above. 

\begin{proof}
Suppose $T$ is a convex torus. Let $\lambda$ and $\mu$ be curves on the torus that form a basis for $H_1(T)$. If $\gamma$ is a curve on $T$ whose  homology class is $p\lambda + q\mu$, then when $T$ is isotoped through convex tori so that $\gamma$ is a Legendrian curve, it was shown in \cite[discussion before Lemma~4.11]{EtnyreHonda01}  that
\[
  \rot(\gamma)=p\, \rot(\lambda) + q\, \rot(\mu),
\]
where $\rot(\lambda)$, respectively $\rot(\mu)$, is computed by isotoping $T$ through convex tori so that $\lambda$, respectively $\mu$, is a Legendrian curve. 

In our setting, $T$ is a Heegaard torus for $S^3$ thought of as a neighborhood of an unknot. Then the standard longitude and meridian for the unknot are exactly $\lambda$ and $\mu$ and one bounds a compressing disk in $V_1$ and the other bounds one in $V_2$. From this, we see that the relative Euler classes of these two Heegaard tori are the rotation numbers of $\lambda$ and $\mu$. From \cite[proof of Proposition~4.22]{Honda00a} we can compute these relative Euler classes and see that $\rot(\lambda)=r_n$ and $\rot(\mu)=r_m$. The result follows. 
\end{proof}

Using this lemma, we can show that $\rot(L_{P_1,P_2})$ differs by the choice of the decorated paths $(P_1,P_2)$.

\begin{lemma} \label{lem:rotdiff}
  If $(P_1,P_2) \neq (P_1',P_2')$, then $\rot(L_{P_1,P_2}) \neq \rot(L_{P'_1,P'_2})$.
\end{lemma}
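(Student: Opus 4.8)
The plan is to run everything off the explicit formula of Lemma~\ref{computer}, $\rot(L_{P_1,P_2})=R(P_1,P_2)=p\,r_n+q\,r_m$, and to show that the single integer $R(P_1,P_2)$ already pins down the decorations on $P_1$ and $P_2$ (up to the shuffling moves of Section~\ref{pathsinFG}, which do not change the underlying contact structure or knot). First I would record the structure of the two sums. Writing the vertices of $P_1$ as $p_i=n_i/d_i$, one computes $(p_{i+1}\ominus p_i)\bigcdot\tfrac10=d_i-d_{i+1}$, so $r_m=\sum_i\epsilon_i(d_i-d_{i+1})$; since the convergent denominators strictly decrease from $d_1=p$ to $d_k=1$ along $P_1$, every weight $d_i-d_{i+1}$ is a positive integer and, when all signs agree, the sum telescopes to $\pm(p-1)$. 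The identical computation with $\tfrac01$ in place of $\tfrac10$ gives $r_n=\sum_j\epsilon'_j(n_{j+1}-n_j)$ with positive weights telescoping to $\pm(|q|-1)$. In particular $|r_m|\le p-1$ and $|r_n|\le|q|-1$. Moreover, within a continued fraction block the weights are constant (for a run of $-2$'s the denominator drops satisfy $d_i-d_{i-1}=d_{i-1}-d_{i-2}$, and likewise for the numerator drops in $P_2$), so $r_m$ and $r_n$ depend only on the number of $+$ signs in each block, consistent with the equivalence of decorated paths.

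Next I would separate the two sums using $\gcd(p,q)=1$. Reducing $R=p\,r_n+q\,r_m$ modulo $q$ and inverting $p$ recovers $r_n$ modulo $q$; since $|r_n|\le|q|-1$, this determines $r_n$ up to the single replacement $r_n\mapsto r_n\mp q,\ r_m\mapsto r_m\pm p$, which is exactly the ambiguity the bounds allow (the identity $p\,r_n+q\,r_m=p(r_n-q)+q(r_m+p)$, together with $|r_m|<p$, shows at most a two-fold collision). Once $r_n$ is chosen, $r_m=(R-p\,r_n)/q$ is forced. I would then identify this remaining $(\pm p,\mp q)$ arithmetic ambiguity with the geometric compatibility move across the convex torus of slope $q/p$: one checks that a shuffle changes $(r_m,r_n)$ by precisely $(\pm p,\mp q)$ and leaves $R$ unchanged. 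Hence, modulo compatibility, the pair $(r_m,r_n)$ is recovered from $R$, and since compatible pairs define the same contact structure and the same knot $L_{P_1,P_2}$, no information is lost.

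Finally, from $(r_m,r_n)$ I would reconstruct the decorations. Grouping terms by continued fraction block gives $r_m=\sum_{A}w_A\,s_A$, where $w_A$ is the common weight of the block $A$ and $s_A=(\text{number of }+)-(\text{number of }-)$ ranges over $\{-|A|,-|A|+2,\ldots,|A|\}$. Because the convergent denominators of a negative continued fraction grow at least geometrically, the block weights $w_A$ form a lacunary (super-increasing) sequence, with the largest weight exceeding the total variation $\sum_{A'}w_{A'}|A'|$ of all blocks of strictly smaller weight. This makes the representation $r_m=\sum_A w_A\,s_A$ unique, so $r_m$ determines every $s_A$, i.e. the number of $+$ signs in each block of $P_1$; the same argument applied to $r_n$ recovers the decoration of $P_2$. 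Thus $R(P_1,P_2)$ determines $(P_1,P_2)$ up to compatibility, which is the assertion.

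The step I expect to be the main obstacle is the lacunary estimate on the block weights, carried out uniformly over all continued fraction blocks (the longer blocks coming from runs of $-2$'s need the most care), together with the verification that the $(\pm p,\mp q)$ arithmetic ambiguity is realized by, and only by, the compatibility move — so that equal rotation numbers genuinely force compatibility and introduce no further collisions.
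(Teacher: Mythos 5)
Your first step (the formula $\rot = p\,r_n + q\,r_m$, the telescoping weights, and the bounds $|r_m|\le p-1$, $|r_n|\le |q|-1$) matches the paper. But the middle step contains a genuine error. You propose to absorb the residual arithmetic collision $p\,r_n+q\,r_m = p(r_n\mp q)+q(r_m\pm p)$ by identifying it with the ``compatibility'' shuffle of Section~\ref{pathsinFG}. That identification cannot be right: compatible pairs $(P_1^k,P_2^k)$ are \emph{distinct} decorated pairs, so the lemma asserts their rotation numbers differ --- indeed this is exactly how the distinct peaks of the wings in Proposition~\ref{merge} are distinguished, and Lemma~\ref{lem:=pq} uses Lemma~\ref{lem:rotdiff} to count $m(p,q)$ \emph{distinct} Legendrian knots. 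A move that preserved $R(P_1,P_2)$ while changing the decoration would be a counterexample to the statement you are proving. The paper kills the $(\pm p,\mp q)$ collision by a parity observation you are missing: since each $r_m$ is a fixed-weight signed sum, $r_m-r'_m=\sum(\epsilon_i-\epsilon_i')(\cdot)$ with $\epsilon_i-\epsilon_i'\in\{0,\pm2\}$ is even, and likewise for $r_n-r'_n$; so $|r_m-r'_m|=p$ and $|r_n-r'_n|=|q|$ would force $p$ and $q$ both even, contradicting $\gcd(p,q)=1$. With parity in hand the only solution of $p(r_n-r'_n)+q(r_m-r'_m)=0$ within the stated bounds is the trivial one.

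Your final step is also on shaky ground, and is in any case unnecessary. The paper does not reconstruct the decoration arithmetically from $(r_m,r_n)$; it simply invokes Honda's theorem that tight contact structures on solid tori are determined by their relative Euler class, so $(P_1,P_2)\ne(P_1',P_2')$ already gives $(r_m,r_n)\ne(r'_m,r'_n)$. Your substitute --- that the block weights are super-increasing because ``the convergent denominators grow at least geometrically'' --- is false as stated: for runs of $-2$'s the denominators along $P_1$ decrease arithmetically (e.g.\ $-14/9=[-2,-3,-2,-2,-2]$ gives denominators $9,7,5,3,1$ with all weights equal to $2$), so the lacunarity you need does not follow from the reason you give and would have to be re-derived from the correct continued-fraction-block structure. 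I would recommend replacing both of these steps: argue by contradiction from $p(r_n-r'_n)=-q(r_m-r'_m)$, use parity plus the bounds to force $(r_m,r_n)=(r'_m,r'_n)$, and then cite Honda for injectivity of the decoration $\mapsto$ relative Euler class map.
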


\begin{proof}
  We compute the rotation numbers using Lemma~\ref{computer}. That is $\rot(L_{P_1,P_2})= pr_n + qr_m$ and $\rot(L_{P'_1,P'_2})= pr'_n+qr'_m$ where $r_m$, $r_n$, $r'_m$ and $r'_n$ are computed in terms of the decorated paths as described in Section~\ref{htpclasses}. The numbers $r_m$ and $r_n$ are the relative Euler numbers for the contact structures on $V_1$ and $V_2$, and similarly for $r'_m$ and $r'_n$. In \cite{Honda00a}, Honda showed that tight contact structures on solid tori are determined by their relative Euler class. Since $(P_1,P_2)\not= (P'_1,P'_2)$, we know that either $r_m\not= r'_m$ or $r_n\not=r'_n$. 

  Arguing by contradiction we assume that $\rot(L_{P_1, P_2})=\rot(L_{P'_1,P'_2})$, so we have that 
  \[
    p(r_n-r'_n)+q(r_m-r'_m)=0.
  \]
  We first notice that $r_m-r'_m$ and $r_n-r'_n$ are both even, since from the formula in Section~\ref{htpclasses} we see that
  \[
    r_m-r'_m = \sum_{i=1}^{k-1} (\epsilon_i-\epsilon'_i)\left((p_{i+1}\ominus p_{i})\bigcdot \frac 10 \right),
  \]
  where the $\epsilon_i$ are the signs in $P_1$ and the $\epsilon'_i$ are the signs in $P'_1$. Thus $(\epsilon_i-\epsilon'_i)$ is even for all $i$ and we have a similar argument for $r_n-r'_n$. Moreover 
  \[
    |r_m|\leq \left|\left(\frac qp \ominus \left\lfloor \frac qp \right\rfloor\right)\bigcdot \frac 10\right| = p-1
  \]
  and similarly for $|r'_m|$. We also have 
  \[
    |r_n|\leq |q|-1
  \]
  since $|r_n|\leq |(\frac qp \ominus \frac 10)\bigcdot \frac 01|$ when $pq>0$ and $|r_n|\leq |(\frac qp \ominus \frac{-1}{1})\bigcdot \frac 01|$ when $pq<0$, and similarly for $r'_n$. 

  Since $\gcd(p,q)=1$, the only integer solutions to $pa+qb=0$ are $a=nq$ and $b=-np$ for $n \in \mathbb{Z}$. But given the above, we see that 
  \begin{align*}
    |r_m - r'_m| &< 2p,\\
    |r_n - r'_n| &< 2|q|.
  \end{align*}
  Thus, $p=|r_m-r'_m|$ and $|q|=|r_n-r'_n|$. However this implies that $p$ and $q$ are both even, contradicting the fact that $\gcd(p,q)=1$. Thus we have $\rot(L_{P_1, P_2})\not =\rot(L_{P'_1,P'_2})$ as claimed.
\end{proof}

\subsection{Contact structures on \texorpdfstring{$S^1 \times P$}{S1 X P}}
Consider $S^1\times P$ where $P$ is a pair of pants (a disk with two disjoint open sub-disks removed). Label the boundary components $T_1, T_2,$ and $T_3$ and consider the basis $S^1\times \{pt\}$ and $\mu_i=T_i\cap (\{\theta\}\times P)$ for $H_1(T_i)$. Let $S^1\times \{pt\}$ have slope $0$ and $\mu_i$ have slope $\infty$. Let $\Tight_0^{\operatorname{free}}(S^1\times P; r_1, r_2, r_3)$ be the set of tight contact structures, up to isotopy (not fixing the boundary point-wise, but preserving it set-wise), on $S^1\times P$ with convex boundary such that $T_i$ has two dividing curves of slope $r_i$ and having no convex torsion. 

The next lemma follows from \cite[Lemmas~10 and~11]{EtnyreHonda01a}, though we give a simple proof for completeness. 

\begin{lemma}\label{all000}
  $|\Tight_0^{\operatorname{free}}(S^1\times P; 0,0,0)|=1$
\end{lemma}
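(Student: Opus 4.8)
The plan is to prove existence and uniqueness separately, in both cases reducing to the classifications of tight contact structures on $T^2\times[0,1]$ and on solid tori recalled in Section~\ref{oldclassification}. For existence, I would write down the $S^1$--invariant contact structure on $S^1\times P$ whose dividing set on the horizontal surface $\{pt\}\times P$ is the unique efficient arc system: three properly embedded arcs, each joining a distinct pair of the boundary circles $\partial D_1,\partial D_2,\partial D^2$, so that each boundary circle meets exactly two arc endpoints. Tracing through the $S^1$--invariant model, each boundary torus $T_i$ inherits exactly two dividing curves of slope $0$ (the fiber slope), so the boundary conditions hold. Since this multicurve has no closed components, Giroux's criterion shows the contact structure is tight, and the absence of closed dividing curves also guarantees there is no convex Giroux torsion. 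This produces at least one element of $\Tight_0^{free}(S^1\times P;0,0,0)$.

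For uniqueness, let $\xi$ be any element. After making the boundary convex I choose a vertical annulus $A=S^1\times\gamma$, where $\gamma$ is a properly embedded arc in $P$ running from $T_1$ to $T_2$, and make it convex. The key point is that $\partial A$ has slope $0$, which is exactly the slope of the dividing curves on $T_1$ and $T_2$; hence $\partial A$ is disjoint from $\Gamma_{T_1}\cup\Gamma_{T_2}$ and $\Gamma_A$ consists only of closed curves. Tightness together with the Giroux criterion lets me discard, by an isotopy of $A$, every closed component that is $\partial$--parallel in $A$ (these bound disks and cannot occur in a tight structure), leaving only curves isotopic to the core of $A$, that is, to the fiber. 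The hypothesis of no convex Giroux torsion is then used to pin the number and signs of these essential curves to the unique standard configuration: any additional parallel pair with the twisting sign pattern would let one locate an embedded convex Giroux torsion layer in the sense of Theorem~\ref{twolayers}, contradicting the hypothesis.

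Having standardized $\Gamma_A$, I would cut $S^1\times P$ along $A$. Topologically $P$ cut along $\gamma$ is an annulus, so the result is $T^2\times[0,1]$, with one boundary the unchanged torus $T_3$ of slope $0$ and the other a torus $T'$ assembled from $T_1$, $T_2$, and the two copies of $A$. A careful edge--rounding computation, using the standardized $\Gamma_A$, shows that $T'$ again has two dividing curves of slope $0$ and that the structure is minimally twisting. By the classification of minimally twisting tight contact structures on $T^2\times[0,1]$ with both boundary slopes equal to $0$ and no torsion, there is a unique such structure, the $[0,1]$--invariant one. Finally, reconstructing $S^1\times P$ by regluing the two copies of $A$ involves no further choices once $\Gamma_A$ is fixed, so $\xi$ is determined up to isotopy and agrees with the $S^1$--invariant model of the existence step. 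Hence $|\Tight_0^{free}(S^1\times P;0,0,0)|=1$.

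The main obstacle is the middle step: controlling the closed dividing curves on the vertical annulus $A$. Because $\partial A$ is parallel to the boundary dividing set, the usual Legendrian realization principle does not apply to $\partial A$ directly, and $\Gamma_A$ is forced to be a (potentially large) system of fiber--parallel closed curves. Extracting from the no--torsion hypothesis that only the minimal standard configuration survives, and then bookkeeping the slopes accurately through edge--rounding so as to certify that $T'$ has slope $0$, is where the real work lies; everything else is a direct appeal to the $T^2\times[0,1]$ and solid--torus classifications.
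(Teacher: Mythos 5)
Your existence step is fine and produces the same model the paper has in mind (the $S^1$--invariant structure whose dividing set on $P$ is the three--arc configuration on the left of Figure~\ref{fig:sdisk}). The uniqueness step, however, has two genuine gaps, both stemming from the decision to cut along a \emph{vertical} annulus $A=S^1\times\gamma$ whose boundary has slope $0$, i.e.\ is parallel to and disjoint from the dividing curves of $T_1$ and $T_2$. First, the claim that any extra parallel pair of essential closed curves in $\Gamma_A$ "would let one locate an embedded convex Giroux torsion layer" is unjustified and, as stated, false: the number of core--parallel dividing curves on a convex annulus with twisting--$0$ Legendrian boundary is not an invariant of its isotopy class and can be increased by folding inside an $I$--invariant (hence torsion--free) neighborhood. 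Extra parallel dividing curves record folds, not $\pi$--rotation of the contact planes, so the no--torsion hypothesis does not pin down $\Gamma_A$; controlling such annuli is precisely the content of Honda's non-rotative--layer analysis (Section~\ref{sec:non-rotative}), which you would need to invoke rather than sidestep. Second, even granting a standardized $\Gamma_A$, the edge--rounding you describe is degenerate: every dividing curve of $T_1$, $T_2$, and the two copies of $A$ is parallel to the corner circles $\partial A$, so the usual edge--rounding lemma (which requires the dividing sets to interleave along the Legendrian corner) does not apply, and the rounded torus $T'$ carries \emph{all} of these curves --- at least six, not two. You therefore cannot appeal to the classification of minimally twisting tight structures on $T^2\times[0,1]$ with two slope--$0$ dividing curves on each boundary; you land instead in the non-rotative setting with $n_1>2$.

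The paper's proof avoids both problems by cutting along the \emph{horizontal} surface $P=\{\theta\}\times P$ after arranging the ruling curves on the $T_i$ to have slope $\infty$, so that $\partial P$ meets each boundary dividing set transversely in two points and $\Gamma_P$ consists of arcs. A boundary--parallel arc of $\Gamma_P$ is then excluded by a bypass--plus--torsion argument (attaching the bypass and a $0$--sloped annulus to $T_3$ produces a convex half Giroux torsion layer), and in the remaining case one quotes Honda's correspondence \cite[Lemma~4.1]{Honda00b} between tight structures on $S^1\times P$ with vertical boundary dividing curves and dividing sets on $P$, the unique admissible configuration being the three--arc one up to boundary twists (absorbed because the boundary is only fixed setwise). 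If you want to keep your vertical--annulus route, you would have to replace your middle step with the disk--equivalence and non-rotative--layer theorems of Section~\ref{sec:non-rotative}, at which point the horizontal cut is simply shorter.
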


\begin{proof}
  Any contact structure $\xi \in \Tight_0^{\operatorname{free}}(S^1\times P; 0,0,0)$ has dividing curves parallel to the $S^1$-fibers on all boundary components. We can make the ruling curves have slope $\infty$ and then arrange for $P=\{\theta\}\times P$ to have its boundary be ruling curves and then make it convex. We need to consider two cases for the dividing curves on $P$.

  Case 1. There is a boundary-parallel dividing curve on $P$ for one of the tori $T_i$ for $1 \leq i \leq 3$. See the right drawing of Figure~\ref{fig:sdisk} for example. Without loss of generality, we can assume that there is a boundary-parallel dividing curve on $P$ for $T_1$. Then we can attach a bypass to $T_1$ to obtain a thickened torus $N_1$ with convex boundary where its back face has slope $\infty$ and its front face ($T_1$) has slope $0$. Now, as $T_3$ is convex, we can take a contact isotopic copy of it $T'_3$, inside of $S^1\times P$. Let $L$ be a Legendrian divide on $T'_3$. We now take a torus $T'_1$ parallel to $\partial N_1$ (but outside of $N_1$) that contains $L$. We make $T'_1$ convex relative to $L$ and since its contact twisting relative to $T'_1$ (which is the same as that relative to $T_3$) is zero, we see that the dividing curves on $T'_1$ must have slope $0$. Let $N'_1$ be the region between $T'_1$ and $T_1$. Since both boundaries of $\partial N'_1$ have dividing curves of slope $0$ and there are convex tori inside of $N'_1$ of with other dividing slopes, we see that $N'_1$ must contain half convex torsion. This contradiction shows there is no boundary-parallel dividing curves on $P$ for any $T_i$. (One small subtlety here is that $T'_1$ might have more than two dividing curves, but since $N_1'$ is rotative, we know we can find a convex torus parallel to $T'_1$ with slope $0$ that has two dividing curves, then we can use this new $T'_1$ to define $N'_1$.)
  

  \begin{figure}[htbp]
  \begin{overpic}{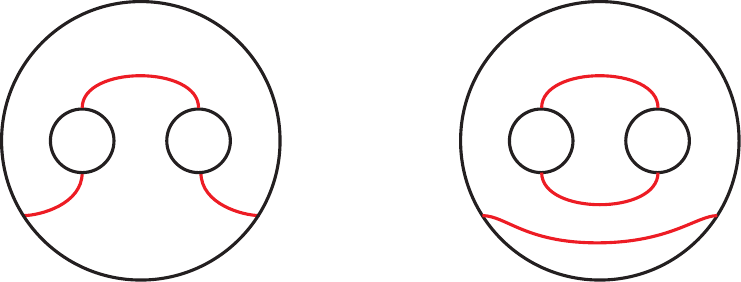}
    \put(10,65){$T_1$}
    \put(67,65){$T_2$}
    \put(20,1){$T_3$}
  \end{overpic}
  \caption{Some possible dividing sets on the pair of pants $P$.}
  \label{fig:sdisk}
  \end{figure}
  
  Case 2. There are no boundary parallel dividing curves on $P$ for any $T_i$. See the left drawing of Figure~\ref{fig:sdisk} for example. Honda showed in \cite[Lemma~4.1]{Honda00b}, that the tight contact structures on $S^{1} \times P$ with $0$ slope dividing curves on every $T_1$, $T_2$ and $T_3$ are in one-to-one correspondence with the choice of dividing set on $P$. There is a unique such configuration of dividing curves on $P$ up to some number of half twists near each boundary component. As we are allowing the boundary components to rotate among themselves, these twists can be undone and we can assume the dividing curves are as given on the left-hand side of Figure~\ref{fig:sdisk}.
\end{proof}

Given any contact structure in $\Tight_0^{\operatorname{free}}(S^1\times P; -1, \infty, 0)$ let $T'_3$ be a copy of $T_3$ that is contact isotopic to $T_3$.  We can take an annulus $A_i$ from a slope $0$ ruling curve on $T_i$ to a Legendrian divide on $T'_3$ and make it convex, for $i=1,2$. As argued in the previous proof one may take a neighborhood $N_i$ of $T_i\cup A_i$ to be a basic slice in $\Tight^{min}(T^2\times [0,1]; r_i,0)$ and the contact structure on the complement of the $N_i$ is the unique contact structure in $\Tight_0^{\operatorname{free}}(S^1\times P; 0,0,0)$. Let $s_i$ be the sign of the basic slice $N_i$ and denote the contact structure by $\xi_{s_1\, s_2}$.

\begin{lemma}\label{basicpants}
  We have 
  \[
    \Tight_0^{\operatorname{free}}(S^1\times P;-1, \infty, 0) = \{\xi_{++}, \xi_{+-}, \xi_{-+}, \xi_{--}\}.
  \]
  In $\xi_{\pm\pm}$ there is a convex annulus $A$ with boundary slope $0$ ruling curves on $T_1$ and $T_2$ that has two dividing curves each running from one boundary component to the other. In $\xi_{\pm\mp}$ the analogous annulus will always have boundary parallel dividing curves. 

  Let $\eta_\pm$ be the $\pm$ basic slice in $\Tight^{min}(T^2\times[0,1];-1,0)$ and $\zeta_\pm$ be the $\pm$ basic slice in $\Tight^{min}(T^2\times[0,1];0,\infty)$. Then $\xi_{\pm\mp}$ is obtained by gluing the front face of $\eta_\pm$ to the back face of $\zeta_\pm$ together and removing a standard neighborhood of a Legendrian divide on the convex torus of slope $0$. Similarly $\xi_{\pm\pm}$ is obtained by gluing the front face of $\eta_\pm$ to the back face of $\zeta_\mp$ together and removing a standard neighborhood of a Legendrian divide on the convex torus of slope $0$.
\end{lemma}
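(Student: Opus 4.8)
The plan is to prove the three assertions of the lemma --- the enumeration into four contact structures, the dichotomy for the annulus $A$, and the two gluing descriptions --- together, since they are logically intertwined. First I would establish the upper bound of four. Given any $\xi\in\Tight_0^{free}(S^1\times P;-1,\infty,0)$, I make the ruling curves on $T_1$ and $T_2$ have slope $0$ and take the convex annuli $A_1,A_2$ from slope-$0$ ruling curves to Legendrian divides on $T_3'$ as in the construction preceding the statement. The key point is that, after applying the Imbalance Principle and edge-rounding, each $A_i$ may be assumed to carry no boundary-parallel dividing arcs: such an arc would yield a bypass whose attachment produces a convex torus of slope strictly between $r_i\in\{-1,\infty\}$ and $0$. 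Since $-1,0$ and $\infty,0$ are Farey neighbours there is no such slope, so the bypass would force convex Giroux torsion (contradicting torsion-freeness) or an overtwisted disk (contradicting tightness). Hence $N_i=\nu(T_i\cup A_i)$ is a genuine basic slice in $\Tight^{min}(T^2\times[0,1];r_i,0)$, determined up to isotopy by its sign $s_i$, and by Lemma~\ref{all000} the complement is the unique structure in $\Tight_0^{free}(S^1\times P;0,0,0)$. Thus $\xi$ is determined by the pair $(s_1,s_2)$, giving at most four possibilities.

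Next I would analyze the convex vertical annulus $A=S^1\times(\text{arc from }\partial_1\text{ to }\partial_2)$ with slope-$0$ ruling-curve boundaries, isotoped so it meets the decomposition in a piece inside $N_1$, a central piece in the core, and a piece inside $N_2$. The core is the standard structure of Lemma~\ref{all000}, whose dividing set is the left configuration of Figure~\ref{fig:sdisk}, so the central sub-annulus of $A$ carries product dividing curves. Crossing the basic slice $N_i$ then inserts a half-twist whose handedness is exactly $s_i$, via the standard relative-Euler-class/bypass computation of the dividing set of an annulus spanning a basic slice \cite{Honda00a}. When $s_1=s_2$ the two half-twists are arranged so that the dividing curves still run from one end of $A$ to the other, and when $s_1\neq s_2$ they combine to create a boundary-parallel dividing curve. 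This proves the annulus dichotomy, and because the presence of a boundary-parallel dividing curve on $A$ is an isotopy invariant it separates $\{\xi_{++},\xi_{--}\}$ from $\{\xi_{+-},\xi_{-+}\}$. Within each pair I distinguish the two members by their relative Euler class, which by Honda's classification determines the tight structure and is negated when both signs are reversed; hence all four are distinct.

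For the gluing descriptions I would use Theorem~\ref{shortening}. Stacking $\eta_\pm$ (the basic slice $-1\to 0$) and $\zeta_{\pm'}$ (the basic slice $0\to\infty$) realizes the $T^2\times[0,1]$ described by the path $-1,0,\infty$; since $-1$ and $\infty$ are Farey neighbours this path is non-minimal, so by Theorem~\ref{shortening} the result is tight precisely when the two edges share a sign, in which case it is the honest basic slice from $-1$ to $\infty$. Thus $\eta_\pm\cup\zeta_\pm$ is rotative, and after excising a standard neighbourhood of a Legendrian divide on the slope-$0$ torus the spanning annulus wraps the excised solid torus and picks up a boundary-parallel dividing curve; comparing with the previous paragraph this is $\xi_{\pm\mp}$. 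Conversely $\eta_\pm\cup\zeta_\mp$ corresponds to the non-shortenable path, which is overtwisted as a product but becomes tight once the Legendrian-divide neighbourhood carrying the overtwisted disk is removed; its spanning annulus is product, so by the previous paragraph this is $\xi_{\pm\pm}$. This simultaneously yields the $\pm\leftrightarrow\mp$ bookkeeping of the statement and, via the explicit models, the existence of all four structures.

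I expect the main obstacle to be two intertwined points. The first is verifying that excising the solid torus from the non-shortenable (and hence a priori overtwisted) product $\eta_\pm\cup\zeta_\mp$ genuinely produces a tight, torsion-free structure; I would confirm this either by realizing it inside a manifold already known to be tight or by checking directly that no overtwisted disk survives in the complement of the Legendrian-divide neighbourhood. The second is correctly tracking the sign conventions, in particular the reversal between the $\Tight(S^0;q/p)$- and $\Tight(S_\infty;q/p)$-basic-slice conventions noted earlier (a positive stabilization corresponding to a negative basic slice), which is exactly the source of the $\pm\leftrightarrow\mp$ flip appearing in the gluing descriptions.
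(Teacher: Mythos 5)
Your overall architecture matches the paper's: the bound of four comes from the decomposition preceding the lemma, the structures $\xi_{\pm\mp}$ are realized by shortening the path $-1,0,\infty$ to the tight basic slice in $\Tight^{min}(T^2\times[0,1];-1,\infty)$ and deleting a neighborhood of a Legendrian divide, and the four structures are separated by relative Euler classes together with the annulus dichotomy. But two steps do not go through as written. First, the "two half-twists" argument for the dichotomy fails at face value: if you cut $A$ along the tori $T_i'$, the sub-annulus $A\cap N_i$ meets the dividing set of $T_i$ in two points but meets the dividing set of $T_i'$ in zero points (that boundary component is a Legendrian divide), so each outer piece carries exactly one boundary-parallel arc and the core piece carries none. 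Naively concatenating these dividing sets yields boundary-parallel dividing curves on $A$ for \emph{every} choice of $(s_1,s_2)$, which contradicts the statement. The dichotomy is not visible piecewise; the paper instead evaluates the relative Euler class of the whole basic slice $-1\to\infty$ on $A$ (its Poincar\'e dual is $\pm(1,-2)$, giving $\pm2=\chi(A_+)-\chi(A_-)$ and hence forbidding arcs running across in the $\xi_{\pm\mp}$ case), and in the same-sign case it \emph{exhibits} an annulus with curves running across by an explicit construction rather than by a cancellation of twists.

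Second, and more importantly, the tightness of $\xi_{\pm\pm}$ --- the structure coming from the non-shortenable stacking $\eta_\pm\cup\zeta_\mp$ --- is the actual content of the existence half of the lemma, and you defer it as an "expected obstacle" with two possible strategies but carry out neither. Checking directly that no overtwisted disk survives is not realistic here; the workable route is the embedding into a known tight manifold, and it requires naming that manifold. The paper takes the $\pm$ basic slice $\xi'_\pm\in\Tight^{min}(T^2\times[0,1];0,\infty)$, chooses a convex torus $T$ in its interior contact isotopic to $T^2\times\{1\}$, and removes a standard neighborhood of a slope-$0$ \emph{ruling curve} (not a divide) on $T$. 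Tightness and absence of torsion are then immediate, the spanning annulus with dividing curves running across comes for free from the contact isotopy, and the identification of this structure with $\eta_\pm\cup\zeta_\mp$ minus a divide neighborhood is made by Dehn filling $T_3$ with a tight solid torus and recognizing the result as a concatenation of basic slices already identified in the $\xi_{\pm\mp}$ case. Without this construction (or an equivalent one) the lemma's existence claim, and hence the gluing description of $\xi_{\pm\pm}$, remains unproved; your sign bookkeeping for the $\pm\leftrightarrow\mp$ flip, on the other hand, is correctly flagged and is resolved exactly as you suspect, by the inward-versus-outward orientation of the two basic slices when viewed from $S^1\times P$.
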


The construction of the contact structures $\xi_{\pm\pm}$ and the annulus in the lemma follows closely the construction in \cite[Lemma~4.13]{GhigginiSchonenberger03}, whereas the classification itself follows from \cite[Lemma~5.4]{GhigginiLiscaStipsicz07}. We give the proof of the lemma here, as we will need all the properties of the contact structures described in the lemma. 

\begin{remark}
  Notice that given a contact structure in $\Tight_0^{\operatorname{free}}(S^1\times P;-1, \infty, 0)$ the $0$ sloped ruling curves on $T_1$ and $T_2$ will be isotopic if and only if there is a convex annulus that they bound that has dividing curves running from one boundary component to the other. 
\end{remark}

\begin{proof}
  From the discussion before the lemma, it is clear that there are at most 4 contact structures, so we are left to see that the given contact structures are indeed tight and satisfy the required properties. 

  First consider the $\pm$ basic slice $\xi_\pm$ in $\Tight(T^2\times [0,1]; -1, \infty)$. We know there is a convex torus $T$ inside of $T^2\times [0,1]$ with dividing slope $0$. Notice that $T$ breaks $\xi_\pm$ into $\eta_\pm$ and $\zeta_\pm$. Remove a neighborhood of a dividing curve on $T$ to get a contact structure on $S^1\times P$. Clearly this contact structure contains no convex torsion and is in $\Tight_0(S^1\times P;-1, \infty, 0)$. Recall when considering $T^2\times[0,1]$ we orient $T^2\times \{0\}$ using the inward pointing normal vector and $T^2\times \{1\}$ with the outward pointing vector. However, when factoring a contact structure in $\Tight_0(S^1\times P;-1, \infty, 0)$ as above both the basic slices with back face $T_1$ and $T_2$ are oriented with the inward pointing vector. Thus the sign of the bypass on $T_2$ is opposite to what one sees when concatenating $\eta_\pm$ and $\zeta_\pm$. Thus the contact structure on $S^1\times P$ coming from $\xi_\pm$ is $\xi_{\pm\mp}$. Also, consider a convex annulus $A$ from a slope $0$ ruling curve on $T_1$ to a slope $0$ ruling curve on $T_2$. This will also be an annulus in $(T^2\times [0,1], \xi_\pm)$. We know the Poincar\'e dual of the relative Euler class of this contact structure is $\pm (1,-2)$ and so evaluates to $\pm 2$ on $A$. Since we know the relative Euler class evaluated on a convex surface is $\chi(A_+)-\chi(A_-)$, where $A_\pm$ is the $\pm$ regions of the convex surface $A$, we see that the dividing curves cannot run across $A$. 

  Now for the other two contact structures consider the $\pm$ basic slice $\xi'_\pm$  in $\Tight(T^2\times[0,1]; 0, \infty)$. Now let $T$ be a convex torus contact isotopic to $T^2\times \{1\}$ on the interior of $T^2\times[0,1]$. Let $L$ be a slope $0$ ruling curve on $T$. Removing a standard neighborhood of $L$ will result in a contact structure on $S^1\times P$. It will again clearly have no convex torsion and be an element in $\Tight_0(S^1\times P; -1, \infty, 0)$. By construction, there is a convex annulus with boundary components being ruling curves of slope $0$ on the boundary components of $S^1\times P$ with dividing slope $-1$ and $\infty$; moreover, this annulus has dividing curves going from one boundary component to the other. Thus the two contact structures on $S^1\times P$ coming from $\xi'_\pm$ are different from the ones coming from $\xi_\pm$ by their relative Euler classes. Thus they must be $\xi_{\pm\pm}$. Notice that by construction $\xi_{\pm\pm}$ is a union of some contact structure on the thickened tori $N_1$ and $N_2$ and the unique contact structure on $S^1\times P'$ (where $P'\subset P$) in $\Tight_0^{\operatorname{free}}(S^1\times P; 0,0,0)$. Notice that if one glues a solid torus $S$ to $T_3$ and extends the contact structure so that it is tight on the solid torus, then $(S^1\times P')\cup S$ will be an $I$ invariant contact structure on $T^2\times I$. Thus $(S^1\times P)\cup S$ will be the result of concatenating a basic slice in $\Tight(T^2\times[0,1];-1,0)$ and one in $\Tight(T^2\times[0,1];0,\infty)$. Since we have already identified $\eta_\pm\cup \zeta_\pm$ above, we see that the current contact structures must come from $\eta_\pm \cup \zeta_\mp$ by removing a $0$ sloped Legendrian divide from a convex torus. This establishes all the claimed properties. 
\end{proof}

\begin{remark}\label{rightbs}
  Notice that in the local model for $\xi_{\pm\pm}$ we see that if we attach a $\pm$ basic slice from $\Tight(T^2\times [0,1]; 0, \infty)$ to $\xi_{\pm\pm}$ we will still have a tight contact manifold, but if we attach the $\mp$ basic slice then it will become overtwisted. 
\end{remark}

\subsection{Non-rotative layers and properties of bypasses}\label{sec:non-rotative}
In this subsection, we will review basic definitions and properties of non-rotative layers. After that, we will review some useful properties of bypasses which will be used to prove Lemma~\ref{lem:staytight}. 

A \dfn{non-rotative $T^2 \times I$ layer}, or a \dfn{non-rotative layer} in short, is a tight $T^2 \times I$ with convex boundary such that any convex torus parallel to the boundary has the same dividing slope. We will denote $T_i = T^2 \times \{i\}$ for $i=0, 1$ and let $n_0$ and $n_1$ be the number of dividing curves on $T_0$ and $T_1$, respectively. Note that $n_0$ and $n_1$ are always even. 

Honda showed that some non-rotative layers are embedded in $I$-invariant neighborhoods. The following result follows from the second paragraph of the proof of \cite[Theorem~1.3]{Honda01}.

\begin{theorem}\label{thm:non-rotative-trivial}
  Let $T^2 \times [0,1]$ be a non-rotative layer with $n_0=2$ and $n_1\geq 2$. Then there is a non-rotative layer $T^2\times [1,2]$ so that $T^2\times \{1\}$ has the same dividing curves and characteristic foliation in both non-rotative layers, such that the result of gluing the two non-rotative layers results in an $I$-invariant neighborhood $T^2\times [0,2]$ where $n_0 = n_2 = 2$.  
\end{theorem}

We say a convex annulus $A$ in a non-rotative layer is \dfn{horizontal} if it has Legendrian boundary and intersects each dividing curve on $T_1$ and $T_2$ exactly once. Let $(M,\xi)$ be a tight contact $3$-manifold with a torus boundary $T$. Then a \dfn{non-rotative outer layer for $T$} is a non-rotative layer $N=T^2 \times I$ in $(M,\xi)$ such that $T_1 = T$, $n_0 = 2$ and $n_1 \geq 2$. Given two such non-rotative outer layers $N_1$ and $N_2$ for the boundary component $T$ of  $(M,\xi)$, let $A_i$ be a horizontal annulus in $N_i$ such that $A_1\cap T=A_2\cap T$ and denote this curve $c$. 
We say that $N_1$ and $N_2$ are \dfn{disk-equivalent} if there exist a disk $D$ and embeddings $\phi_i:A_i \to D$ such that $\phi_i(c) = \phi_i(c) = \partial D$, $(\phi_i)|_{c} = (\phi|_i)|_{c}$, and $\Gamma_1$ is isotopic to $\Gamma_2$ on $D$ where $\Gamma_i$ is obtained from $\phi_i(\Gamma_{A_i})$ by extending over $D- \phi_i(A_i)$ by a single arc (here $\Gamma_S$ denotes the dividing curves on a convex surface $S$). See Figure~\ref{fig:diskequiv} for example.

\begin{figure}[htbp]{\small
  \vspace{0.1cm}
  \begin{overpic}{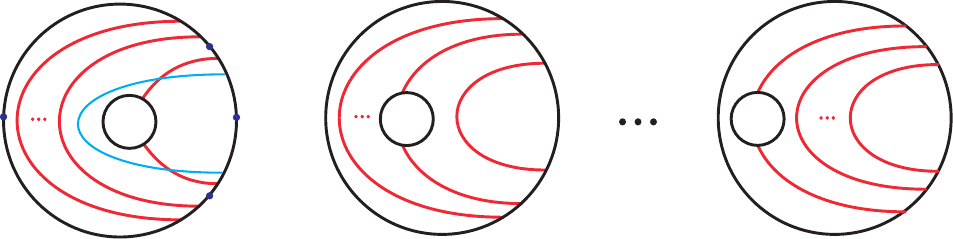}
    \put(-13,57){$p_1$}
    \put(108,17){$p_{k}$}
    \put(120,57){$p_{k+1}$}
    \put(108,92){$p_{k+2}$}
  \end{overpic}}
  \caption{Disk-equivalent annuli. The blue arc is a Legendrian arc.}
  \label{fig:diskequiv}
\end{figure}

The following theorem, which can be found in \cite[Theorem~3.1] {Honda01}, shows that any two non-rotative outer layers for a fixed torus are disk-equivalent.

\begin{theorem} \label{thm:disk-equivalence}
  Let $(M,\xi)$ be a tight contact $3$-manifold with convex boundary and $T$ be a torus boundary component. Then any two non-rotative outer layers for $T$ are disk-equivalent. 
\end{theorem}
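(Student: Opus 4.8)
The plan is to recast disk-equivalence as the statement that the capped dividing set is a well-defined invariant of the pair $(\xi, T)$, and then to compute it inside an $I$--invariant model supplied by Theorem~\ref{thm:non-rotative-trivial}. First I would observe that, since $(M,\xi)$ is tight, each $\Gamma_{A_i}$ consists of disjoint properly embedded arcs: a closed dividing curve on the convex annulus $A_i$ would either be boundary-parallel, producing an overtwisted disk after rounding, or essential, forcing convex Giroux torsion, both contradicting that $A_i$ lives in a non-rotative (hence tight, torsion-free) layer. Capping the inner boundary circle $c_0 \subset T_0$---and here the hypothesis $n_0 = 2$ is what guarantees that exactly one extra arc is needed to join the two points of $c_0 \cap \Gamma_{T_0}$---turns $\Gamma_{A_i}$ into a system $\Gamma_i$ of arcs on $D$ with its $n_1$ endpoints fixed at $c \cap \Gamma_T$ on $\partial D$. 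A dividing set on a disk with no closed components is determined up to isotopy by the non-crossing matching it induces on its boundary points, so it suffices to prove that $N_1$ and $N_2$ induce the \emph{same} matching of these $n_1$ points.

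Next I would apply Theorem~\ref{thm:non-rotative-trivial} to embed each $N_i = T^2 \times [0,1]$ in an $I$--invariant neighborhood $T^2\times[0,2]$ with $n_0 = n_2 = 2$, so that the complementary layer $T^2\times[1,2]$ is itself a non-rotative layer carrying the $n_1$ dividing curves of $T = T_1$ back down to two curves on $T_2$. I would then extend $A_i$ across this layer to a convex horizontal annulus $\hat A_i$ running from $T_0$ to $T_2$ and agreeing with $A_i$ along the common curve $c$. Because the ambient structure is $I$--invariant, its relative Euler class vanishes and both boundary circles of $\hat A_i$ meet the dividing set in two points; Giroux's criterion together with the uniqueness of the tight contact structure on $T^2\times[0,2]$ then forces $\Gamma_{\hat A_i}$ to be the unique torsion-free configuration, namely two arcs running from $T_0$ to $T_2$ with no closed or boundary-parallel components. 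Crucially, this model configuration depends only on the shared dividing data on $T_0$ and $T_2$, not on which outer layer we began with.

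The matching on $c$ is then recovered by reading off how the two arcs of $\Gamma_{\hat A_i}$ cross the convex torus $T_1$ inside the annulus: the portion of $\Gamma_{\hat A_i}$ lying in $A_i$ returns $\Gamma_{A_i}$, while the arc of $\Gamma_{\hat A_i}$ lying in the extension piece $B_i = \hat A_i \setminus A_i$ together with the $c_0$--cap supplies exactly the ``single arc'' used to extend $\phi_i(\Gamma_{A_i})$ over $D \setminus \phi_i(A_i)$ in the definition of disk-equivalence. Since the model configurations on $\hat A_1$ and on $\hat A_2$ agree up to an isotopy of the $I$--invariant neighborhood, the induced data transports to an isotopy $\Gamma_1 \simeq \Gamma_2$ on $D$, which is the asserted disk-equivalence (compare Figure~\ref{fig:diskequiv}).

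The step I expect to be the main obstacle is the last one: one must verify that the isotopy straightening $\hat A_i$ to the product annulus can be chosen to track the intersection points $c \cap \Gamma_{T_1}$, so that the non-crossing matching they define is genuinely transported rather than merely abstractly equal. This is where non-rotativity does the essential work---since no convex torus parallel to the boundary changes slope, no bypass can be rotated past $c$ to alter the matching---and where one needs the uniqueness of $I$--invariant collars of $T$ to guarantee that the two embeddings produced by Theorem~\ref{thm:non-rotative-trivial} are isotopic near $T$. Making this bookkeeping precise, rather than the global existence of the model, is the delicate part of the argument.
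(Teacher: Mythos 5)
The paper does not prove this statement --- it is quoted from Honda's erratum \cite{Honda01} --- so there is no internal proof to compare against; I am evaluating your argument on its own terms. Your opening reduction is fine: since the $\Gamma_{A_i}$ are arc systems with the same $n_1$ endpoints on $c$, and arc systems on a disk with fixed endpoints and no closed components are determined up to isotopy by the non-crossing matching they induce, disk-equivalence is equivalent to the two capped configurations inducing the same matching on $c\cap\Gamma_T$. The problem is the step where you compute that matching from the $I$--invariant model. Knowing that $\hat A_i$ sits in an $I$--invariant $T^2\times[0,2]$ with $n_0=n_2=2$ only tells you that $\Gamma_{\hat A_i}$ is \emph{abstractly} isotopic to two essential arcs; it does not determine $\Gamma_{\hat A_i}$ up to isotopy \emph{rel the middle circle} $c=\hat A_i\cap T_1$, and the matching you need is exactly this rel-$c$ data. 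The two arcs can cross $c$ back and forth in many inequivalent patterns (all straightenable by an ambient isotopy that moves $T_1$), and different non-rotative outer layers genuinely realize different patterns --- that is precisely why disk-equivalence classes form a nontrivial invariant and why Figure~\ref{fig:diskequiv} depicts several distinct configurations. As written, your argument would show that \emph{every} non-rotative outer layer with the given boundary data has the standard capped dividing set, which is false; it proves too much because it never uses that $N_1$ and $N_2$ live in the \emph{same} tight $(M,\xi)$ with the \emph{same} outer torus $T$. (Note also that the ``shared dividing data on $T_0$'' is not shared: the inner tori of $N_1$ and $N_2$ are different convex tori in $M$, and your two $I$--invariant models are unrelated abstract manifolds.)

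You flag this obstacle yourself in the final paragraph, but the sentence ``this is where non-rotativity does the essential work'' is not a proof, and the missing content is the entire theorem. What is actually needed is a direct comparison of $N_1$ and $N_2$ inside $(M,\xi)$: arrange (or use isotopy discretization, Theorem~\ref{thm:discretization}) a sequence of bypass attachments carrying the inner torus of $N_1$ to that of $N_2$ through convex tori parallel to $T$; observe that non-rotativity forces every such bypass to be non-effective (it cannot change the slope, and the inner torus must return to two dividing curves), so each move is a trivial or folding/unfolding bypass as in Theorem~\ref{thm:effective-bypasses}; and then check that each such move changes $\Gamma_{A}$ only by sliding arcs across the capped inner disk --- which is exactly the ambiguity the ``extend by a single arc over $D-\phi_i(A_i)$'' clause in the definition is designed to absorb. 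That bypass-by-bypass bookkeeping is the substance of Honda's argument and is absent from your proposal.
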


Even though $(M,\xi)$ can contain two different (but disk-equivalent) non-rotative outer layers for a fixed torus, the complements of these layers are contactomorphic. This is the content of \cite[Corollary~2.1]{Honda01} which we now quote. 

\begin{theorem} \label{thm:non-rotative-contactomorphic}
  Let $(M,\xi)$ be a tight contact $3$-manifold with convex boundary and $T$ be a torus boundary component. Suppose $(M,\xi) = (M_i,\xi_i) \cup N_i$ for $i = 0,1$ where the $N_i$ are non-rotative outer layers for $T$. Then there is a co-orientation preserving contactomorphism between $(M_0,\xi_0)$ and $(M_1,\xi_1)$. 
\end{theorem}

The next theorem easily follows from Theorem~\ref{thm:non-rotative-trivial}.

\begin{theorem} \label{thm:solid-torus}
  Let $V$ be a tight solid torus having convex boundary $T$ with two dividing curves of slope $s$. Let $N$ be a non-rotative layer of slope $s$ with $n_0 =2$ and $n_1>2$. Then $V \cup N$ is tight. 
\end{theorem}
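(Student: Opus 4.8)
The plan is to realize $V \cup N$ as a compact codimension-zero subdomain of a contact manifold that is manifestly contactomorphic to $V$, and then to use the elementary fact that tightness passes to subdomains. This sidesteps any direct analysis of the non-rotative layer $N$ itself, which would be awkward since $N$ carries more than two dividing curves on one boundary and so is not a product.

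First I would apply Theorem~\ref{thm:non-rotative-trivial} to $N$. Since $n_0 = 2$, that theorem embeds $N$ into an $I$-invariant neighborhood $C = T^2 \times [0,2]$ with $n_0 = n_2 = 2$, in such a way that the boundary $T_0$ of $N$ is carried to $T^2 \times \{0\} \subset C$, which carries exactly two dividing curves of slope $s$. Next I would check that the gluing data matches: $V$ has convex boundary $T$ with two dividing curves of slope $s$, which is precisely the dividing configuration on $T^2 \times \{0\} \subset C$. Hence $V$ may be glued to the full collar $C$ along $T = T^2 \times \{0\}$. The resulting manifold $V \cup C$ is obtained from $V$ by attaching an $I$-invariant collar of its convex boundary, so it is contactomorphic to $V$ and therefore tight. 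Because $N \subset C$ and the two boundary identifications agree, $V \cup N$ sits inside $V \cup C$ as a compact codimension-zero contact submanifold; any overtwisted disk in $V \cup N$ would then be an overtwisted disk in $V \cup C \cong V$, contradicting tightness of $V$. Thus $V \cup N$ is tight.

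The computational content is essentially nil, since everything rests on the embedding furnished by Theorem~\ref{thm:non-rotative-trivial}. The step I expect to require the most care is the verification that attaching $C$ to $V$ genuinely reproduces $V$ up to contactomorphism, i.e. that $C$ really is an $I$-invariant collar with boundary data matching $\partial V$, so that the collar attachment is a contactomorphism rather than merely a homeomorphism. The conceptual subtlety worth flagging is that $N$ is not $I$-invariant when $n_1 > 2$, so tightness of $V \cup N$ cannot be seen by staring at $N$; it is the ambient collar $C$, not $N$, that supplies the product structure driving the argument.
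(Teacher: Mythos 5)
Your argument is correct and is exactly the route the paper intends: the paper gives no written proof, stating only that the result is ``clear'' from Theorem~\ref{thm:non-rotative-trivial}, and your proposal is the natural fleshing-out of that remark (embed $N$ in an $I$-invariant collar $C$, note $V\cup C\cong V$ is tight, and conclude for the subdomain $V\cup N$). The one point you flag --- matching the germ of the contact structure along the gluing torus --- is handled as usual by Giroux flexibility, since both sides are convex with the same dividing set.
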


Consider a tight solid torus $V$ with convex boundary $T$. Let $n$ be the number of dividing curves on the boundary and $q/p$ be the slope of the dividing curves. We can find (after perturbation) a Legendrian meridian $c$ on $T$, which intersects the dividing curves in $2k$ points.  Label these intersection points as $p_0, \ldots, p_{2k-1}$ consecutively. Let $D$ be a convex meridian disk in $V$ bounded by $c$. Clearly, there exist $2k$ dividing curves on $D$. We say a bypass on $D$ is a \dfn{bypass for $p_i$} if there exists a bypass with attaching arc containing $p_{i-1}$, $p_i$, $p_{i+1}$ (indices are considered as an element in $\mathbb{Z}_{2k}$). We also say a bypass is \dfn{effective} if the attaching arc of the bypass passes three dividing curves and the center dividing curve is different from the others. Recall that Honda \cite{Honda00a} showed that attaching an effective bypass to a torus will decrease the number of dividing curves or change the dividing slope. He also showed that if a bypass in a tight solid torus is not effective, then it is contained in an $I$-invariant neighborhood of $T$. The following theorem follows from \cite[Proposition~5.8]{Honda00a}. Note the result only needs the existence of a non-rotative outer layer, but not its uniqueness, which is not true, see \cite{Honda01}. We recall that a bypass that, when attached to a torus, increases the number of dividing curves is called a \dfn{folding bypass}. And such a bypass can only increase the number of dividing curves by $2$. 

\begin{theorem}\label{thm:effective-bypasses}
  Let $V$ be a tight solid torus with convex boundary $T$ having two dividing curves. If a bypass in $V$ whose attaching arc is on $T$ is not effective, then the bypass is contained in an $I$-invariant neighborhood of $T$. In fact, it is either a trivial bypass or a folding bypass. 
\end{theorem}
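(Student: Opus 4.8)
The plan is to read the effect of the bypass off the local picture of $\Gamma_T$ along the attaching arc, split the non-effective hypothesis into two combinatorial cases, and in each case exhibit an $I$--invariant collar of $T$ that contains the bypass half-disk, using Giroux flexibility in one case and Theorem~\ref{thm:non-rotative-trivial} in the other. To set up, let $\delta\subset T$ be the attaching arc, meeting $\Gamma_T$ in three consecutive points $x_1,x_2,x_3$ lying on dividing curves $\gamma_1,\gamma_2,\gamma_3$ (possibly coinciding). By definition the bypass is effective precisely when the central curve $\gamma_2$ is distinct from both $\gamma_1$ and $\gamma_3$; hence in the non-effective case $\gamma_2=\gamma_1$ or $\gamma_2=\gamma_3$, and after reversing $\delta$ I may assume $\gamma_1=\gamma_2=:\gamma$. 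Then the subarc of $\delta$ between $x_1$ and $x_2$ together with a subarc of $\gamma$ cobounds a disk $R\subset T$, and I would organize the argument by whether $\interior(R)\cap\Gamma_T=\emptyset$.

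Next I would apply the bypass attachment lemma of \cite{Honda00a}: pushing $T$ across the bypass produces a convex torus $T'$ cobounding a layer $T\times I$ with $T$. Because $x_1$ and $x_2$ lie on the same dividing curve, the attachment does not change the dividing slope, so $T\times I$ is non-rotative. If $R$ is innermost the move merely isotopes $\gamma$ across $R$, leaving $\Gamma_{T'}$ isotopic to $\Gamma_T$ (a \emph{trivial bypass}); otherwise it pinches off a pair of parallel dividing curves, so $T'$ carries two more dividing curves than $T$ while keeping the same slope (a \emph{folding bypass}).

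It then remains to produce the $I$--invariant collar in each case. In the trivial case the two boundaries of $T\times I$ carry isotopic dividing sets of equal slope, the layer is non-rotative, and, by tightness of $V$, it contains no convex Giroux torsion; Giroux flexibility then upgrades this to a genuine $I$--invariant product containing the bypass half-disk. In the folding case $T\times I$ is a non-rotative layer whose minimal side carries two dividing curves, so Theorem~\ref{thm:non-rotative-trivial} embeds it in an $I$--invariant neighborhood $T^2\times[0,2]$ of $T$ inside which the bypass sits; here the dividing count rises by exactly $2$, as claimed.

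The hard part will be the trivial case: a layer whose two ends carry the same dividing data need not a priori be $I$--invariant, since it could conceal half a Giroux torsion that is invisible to the boundary. Ruling this out is exactly where the tightness of $V$ is indispensable --- it forbids both overtwisted disks and convex Giroux torsion --- and the argument must combine this with Giroux's flexibility theorem to promote ``constant slope, matching dividing sets, non-rotative'' into an honest $I$--invariant contactomorphism. Once this is secured, the innermost versus non-innermost alternative for $R$ is exhaustive under the non-effective hypothesis $\gamma_2\in\{\gamma_1,\gamma_3\}$, which completes the dichotomy and the proof.
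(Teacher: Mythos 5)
The paper does not actually prove this statement --- it is quoted as background from Honda's work (\cite{Honda00a}, and really the relevant arguments are in \cite{Honda01,Honda02}) --- so your proposal can only be judged against what a correct argument requires. Your high-level shape (non-effective implies trivial or fold, then exhibit an $I$-invariant collar in each case) is right, but the dichotomy step has a genuine gap. You propose to distinguish trivial from folding by whether $\interior(R)\cap\Gamma_T=\emptyset$, where $R$ is the disk cut off by the subarc of $\delta$ from $x_1$ to $x_2$ and a subarc of $\gamma$. But that subarc of $\delta$ has interior disjoint from $\Gamma_T$ (its endpoints are consecutive intersections), so it lies in the closure of a single complementary annulus of $\Gamma_T$ and the disk it cuts off has interior contained in that annulus: $\interior(R)$ is \emph{always} disjoint from $\Gamma_T$, and your dichotomy is vacuous --- it would declare every non-effective bypass trivial. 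The actual trivial/fold distinction is governed by finer data: which of the two subarcs of $\gamma$ bounds $R$, where $x_3$ sits relative to it, and the co-orientation of the bypass. Moreover, if one writes out the bypass-attachment surgery on $\Gamma_T$ for the various non-effective configurations, a third a priori outcome appears in which $\Gamma_{T'}$ acquires a homotopically trivial component; excluding it is exactly where the tightness of $V$ enters, via Giroux's criterion, and without that exclusion the claimed dichotomy does not follow. You invoke tightness only later, for the $I$-invariance step, so this part of the argument is missing.

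The $I$-invariance step also leans on tools that do not apply as quoted. In the folding case you cite Theorem~\ref{thm:non-rotative-trivial}, but that theorem requires the non-rotative layer to have $n_0=2$ on one side; the folding layer has $2n$ and $2n+2$ dividing curves on its two sides, so the citation only works when $T$ itself has two dividing curves, whereas the statement (and its use in this paper, e.g.\ for $\widehat{V}_1$, whose boundary carries many dividing curves) is for general $T$. In the trivial case you correctly identify the crux --- a layer with matching convex boundary data need not be $I$-invariant --- but Giroux flexibility cannot close this gap: flexibility only adjusts characteristic foliations within a fixed dividing set, while what is needed is a uniqueness statement for non-rotative layers with the given boundary data, i.e.\ Honda's right-to-life argument and the disk-equivalence machinery of \cite{Honda01} (Theorems~\ref{thm:disk-equivalence} and~\ref{thm:non-rotaive-contactomorphic} here). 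Both halves of the collar-finding step therefore require Honda's factoring results in their full strength rather than the special cases you invoke.
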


Now we review Colin's isotopy discretization, which is the key idea of ``state transition'' (see \cite[Section~2]{Honda02}).

\begin{theorem}[Isotopy Discretization, Colin \cite{Colin97}, see also Honda \cite{Honda02}]\label{thm:discretization}
  Let $\Sigma$ and $\Sigma'$ be two convex surfaces with the same Legendrian boundary. If there is a smooth isotopy between them rel boundary, then there exists a sequence of surfaces $\Sigma_0 = \Sigma, \ldots \Sigma_n = \Sigma'$ with the same boundary and $\Sigma_{i+1}$ is obtained from a single bypass attachment to $\Sigma_i$. 
\end{theorem}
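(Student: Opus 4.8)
The plan is to promote a given smooth isotopy to a \emph{generic} one and then to read off the discretized sequence from the finitely many moments at which convexity fails. Write the isotopy as a smooth family of embeddings $\Phi_t\colon \Sigma \to M$, $t\in[0,1]$, with $\Phi_0=\mathrm{id}$, $\Phi_1(\Sigma)=\Sigma'$, and each $\Phi_t$ fixing $\partial\Sigma$ pointwise, and set $\Sigma_t=\Phi_t(\Sigma)$. Giroux's genericity theorem identifies convexity of a surface with fixed Legendrian boundary with the Morse--Smale condition on its characteristic foliation $\xi\Sigma_t$, and shows this holds for a $C^\infty$-generic surface. First I would perturb the family $\Phi_t$ rel boundary so that it is generic as a one-parameter family; then $\Sigma_t$ is convex for all $t$ outside a finite set $t_1<\cdots<t_k$, and at each $t_i$ the foliation $\xi\Sigma_{t_i}$ fails to be Morse--Smale in exactly one codimension-one way.

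Second, I would classify those failures. In a generic one-parameter family the Morse--Smale condition can only break in two ways: a saddle--node (birth or death of a pair of singularities), or a \emph{retrograde saddle connection}, in which an unstable separatrix of one hyperbolic singularity runs directly into the stable separatrix of another (possibly the same) one. The saddle--node degeneration can be undone by a $C^\infty$-small, boundary-fixing perturbation that leaves the isotopy class of the dividing set $\Gamma_{\Sigma_t}$ unchanged, so it contributes no essential move; only the retrograde connection alters the topology of $\Gamma$.

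Third, I would match the passage across a retrograde saddle connection with a single bypass attachment. Fix a degenerate time $t_i$ of this type and let $\Sigma^-=\Sigma_{t_i-\epsilon}$ and $\Sigma^+=\Sigma_{t_i+\epsilon}$ be the neighboring convex surfaces. The local model for resolving the connection produces precisely a Legendrian arc of attachment on $\Sigma^-$ together with a half-disk across which $\Sigma^-$ is pushed onto $\Sigma^+$; this half-disk is exactly a bypass, and $\Gamma_{\Sigma^+}$ is obtained from $\Gamma_{\Sigma^-}$ by the standard bypass move already used elsewhere in this paper. Since $\Sigma_t$ varies through convex surfaces by an ambient contact isotopy on each open interval $(t_j,t_{j+1})$, all surfaces in such an interval are contactomorphic rel boundary, so I may collapse each interval to its right-hand endpoint. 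Taking $\Sigma_0=\Sigma$, $\Sigma_j=\Sigma_{t_j+\epsilon}$ for $1\le j\le k$, and $\Sigma_n=\Sigma'$ then yields a sequence in which $\Sigma_{j+1}$ is obtained from $\Sigma_j$ by a single bypass attachment.

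I expect the main obstacle to be this third step: making rigorous the assertion that the bifurcation of the characteristic foliation at a retrograde saddle connection is modeled by a bypass half-disk, and simultaneously verifying that the entire construction can be carried out rel the fixed Legendrian boundary, so that no move disturbs $\partial\Sigma$. The other delicate point is in the first step, namely invoking Giroux's genericity for a \emph{one-parameter family} of surfaces with fixed boundary rather than for a single surface; it is exactly this one-parameter genericity that guarantees only finitely many codimension-one degenerations occur and that they take the two standard forms above.
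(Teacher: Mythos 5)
This theorem is quoted in the paper as background, with the proof deferred to Colin and Honda, so there is no internal argument to compare against; your outline is essentially the standard proof from those sources. Namely, Giroux's parametric genericity makes the family convex away from finitely many times, and the only codimension-one degeneracy of the characteristic foliation that changes the isotopy class of the dividing set is the retrograde saddle--saddle connection, whose resolution is precisely a single bypass attachment (this identification, which you correctly flag as the crux, is carried out in Honda's gluing paper). Two minor imprecisions that do not affect the structure of the argument: the codimension-one failures of the Morse--Smale condition also include prograde saddle connections and degenerate closed orbits, not only saddle--nodes, though these likewise leave the dividing set unchanged up to isotopy; and convexity is implied by, but not equivalent to, the Morse--Smale condition on the characteristic foliation.
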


We say a bypass on a disk is \dfn{non-nested} if it is associated to a dividing curve that separates the disk into two components, one of which has no dividing curves. We say there are \dfn{nested bypasses} for a point $p$ if there are consecutive dividing curves parallel to a non-nested bypass for $p$. The number of dividing curves for nested bypasses is called the \dfn{length of the nested bypasses}. See Figure~\ref{fig:dot-balancing} for example.

\begin{figure}[htbp] {\small
  \vspace{0.2cm}
  \begin{overpic}{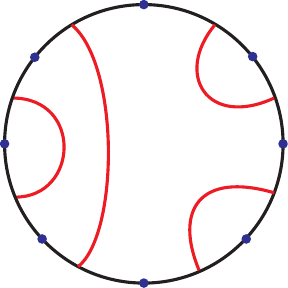} 
    \put(-15, 68){$p_1$}
    \put(5, 15){$p_2$}
    \put(66, -10){$p_3$}
    \put(125, 15){$p_4$}
    \put(145, 68){$p_5$}
    \put(127, 117){$p_6$}
    \put(66, 145){$p_7$}
    \put(4, 117){$p_8$}
  \end{overpic}}
  \vspace{0.1cm}
  \caption{Nested bypasses for $p_1$ with length $2$. There are three non-nested bypasses for $p_1$, $p_4$ and $p_6$.}
  \label{fig:dot-balancing}
\end{figure}

\section{An algorithm to classify non-loose torus knots}\label{thealgorithm}

In this section we give a user's guide to the complete classification of non-loose Legendrian $(p,q)$-torus knots. We prove that this algorithm really gives the complete classification in Section~\ref{justifyalgorithm}. A brief outline of the algorithm is as follows: 

\smallskip
\noindent
{\bf Input:}  A pair of relatively prime integers $(p,q)$ with $|q|>p>1$ (recall that a $(p,q)$-torus knot is isotopic to a $(q,p)$-torus knot).

\smallskip
\noindent
{\bf Output:} A list of overtwisted contact structures supporting a non-loose $(p,q)$-torus knot and a list of all non-loose $(p,q)$-torus knots in these contact structures.

The steps in the algorithm are as follows:
\begin{enumerate}
\setcounter{enumi}{-1}
\item Compute the pair of paths $(P_1,P_2)$ representing $q/p$ as discussed in Section~\ref{subsec:pathsinFG}.
\item Consider all $2$-inconsistent decorations on these paths, construct the associated surgery diagrams (as described in Section~\ref{sec:FareytoSurgery}), and then compute the corresponding $d_3$ invariants associated to these $d_3$ invariants. These, together with $\xi_1$ if $pq>0$, will be the only overtwisted contact structures with non-loose $(p,q)$-torus knots. (We recall that $k$-inconsistent pairs of paths and compatible pairs of paths were defined in Section~\ref{pairsodecorated}.)
\item For each pair $(p,q)$, there will be a unique contact structure supporting non-loose knots with a special mountain range. These mountain ranges are shown in Figure~\ref{fig-exceptionMR}. Compute the invariants of these non-loose knots. (Note for $pq<0$, this is completely done in Figure~\ref{fig-exceptionMR}.
\item For each $2$-inconsistent decorated pair of paths, find all the other decorated pairs of paths that are compatible with this $2$-inconsistent pair. Compute the classical invariants of the non-loose $(p,q)$-torus knots associated to these decorated pairs of paths. These give the non-loose knots with $\tor=0$ in the overtwisted contact structure given by the $2$-inconsistent decorated pair of paths. 
\item Finally, determine which of the above non-loose knots one can add convex torsion to and obtain other non-loose knots. This will give a complete list of non-loose Legendrian knots with $\tor>0$.  
\end{enumerate}

Item (0) was discussed in Section~\ref{subsec:pathsinFG}. Details of Items (1), through (3) are given in the next subsection, while details of Item~(4) are given in the following subsection. 

The proof that the algorithm below classifies all non-loose $(p,q)$-torus knots is given in Section~\ref{justifyalgorithm}, though below we will cite other specific sections where details of the algorithm are established. 

\subsection{The classification of non-loose torus knots without convex torsion}\label{classwogt}
Below is an algorithm to classify non-loose $(p,q)$-torus knots with $\tor = 0$. 

\noindent
\textbf{Step 1.\ Determine the overtwisted contact structures that support non-loose $(p,q)$-torus knots.}\label{step1} 
Given $q/p$ proceed as follows:
\begin{enumerate}
\item  Find all $2$-inconsistent pairs of decorated paths representing $q/p$:
\[
  \{ (\pm P_1^1,\pm P_2^1), \ldots, (\pm P_1^n, \pm P_2^n)\}.
\]
Recall $k$-inconsistent decorated paths were defined in Section~\ref{pairsodecorated}.
\item For each $i\in \{1,\ldots, n\}$ draw a contact surgery diagram for $L_{P_1^i,P_2^i}$ as described in Section~\ref{sec:FareytoSurgery}. 
\item Use the formula~(\ref{formula:d3}) in Section~\ref{htpclasses} to compute the $d_3$-invariant of the contact structure. (We note that there may be some cases where one can identify the $d_3$-invariant without finding the surgery diagram. See, for example, Section~\ref{classofpos22np1}.)
\end{enumerate}
These, and $\xi_1$ when $pq>0$, are the only contact structures supporting non-loose $(p,q)$-torus knots without convex torsion. 

\noindent
\textbf{Step 2.\ Compute the non-loose Legendrian knots with ``exceptional" mountain ranges.}\label{step2} For any $(p,q)$-torus knot, there is an exceptional overtwisted contact structure where the classification is qualitatively different from all the others. These are shown in Figure~\ref{fig-exceptionMR} and defined as follows. 
\begin{definition}
The \dfn{exceptional} contact structure for the $(p,q)$-torus knot is $\xi_1$ if $pq>0$ and $\xi_{|pq|-|p|-|q| + 1}$ if $pq<0$. 
\end{definition}

We will consider each case of exceptional contact structure separately.
\subsubsection{The case of $pq>1$}

In Section~\ref{exceptionalctstr} it will be shown that for $pq>0$ the contact structure $\xi_1$ is described by the pair of paths $(P_1,P_2)$ with all signs the same and also by the $2$-inconsistent pair of paths $(P_1',P_2')$ where the signs of all the basic slices in $P'_1$ are the same, say $\pm$, except for one edge in the first block that is $\mp$, while all blocks $P'_2$ are labeled with $\mp$ except for one edge in the first block that is $\pm$. 

Let $(P_1,P_2)$ be a pair of paths representing $q/p$. 
Recall that we can arrange the continued fraction blocks in $P_1$ and $P_2$ as follows.
\[
  (A_1,A_3,\ldots,A_{m-1}) \text{ and } (B_2,B_4,\ldots,B_m)
\] 
(notice that there are several other cases, but they can be dealt in the same way). Let $s_k$ be the slope in the $k^{th}$ continued fraction block that is farthest from $q/p$. Set $n_k = |s_k \bigcdot \frac qp|$ for $k=2,\ldots,m$. 

In Section~\ref{diamondsforxi1} and~\ref{classgeqpq} we will classify non-loose Legendrian knots in $\xi_1$. Specifically, there is an infinite $\textsf{V}$ with bottom vertex having $r=0$ and $\tb=pq-p-q+2$. That is, there are Legendrian knots $L_\pm^i$ for $i>pq-p-q+2$ and $L^{pq-p-q+2}$ with 
\[
  \tb(L_\pm^i)=i,\,\, \rot(L_\pm^i)=\mp(i-pq + p+q-2), \text{ and } \tor(L_\pm^i) = 0,
\]
\[
  \tb(L^{pq-p-q+2})=pq-p-q+2,\,\, \rot(L^{pq-p-q+2})=0, \text{ and } \tor(L^{pq-p-q+2}) = 0,
\]
such that  
\[
  S_\pm(L_\pm^i)=L_\pm^{i-1}, \text{ for i $\geq pq-p-q+4$, and } S_\pm(L_\pm^{pq-p-q+3})=L^{pq-p-q+2},
\]
and 
\[
  S_\mp(L_\pm^i) \text{ and } S_\pm(L^{pq-p-q+2}) \text{ are loose}.
\]

Moreover, there are Legendrian knots $L_{k,\pm}^{pq}$ with $k=2,\ldots, m$  such that 
\[
  \tb(L_{k,\pm}^{pq})=pq,\,\, \rot(L_{k,\pm}^{pq})=\mp\left(p + q  - 2n_{k}\right), \text{ and } \tor(L_{k,\pm}^{pq})=0
\]
and a stabilization of $L_{k,\pm}^{pq}$ is non-loose if and only if it stays on or above the $\textsf{V}$ described above. Lastly, the Legendrian knots described above are coarsely Legendrian simple, so if stabilizations of any two of them have the same Thurston-Bennequin invariant and rotation number, then they are equivalent. 

\subsubsection{The case of $pq<-1$}

In Section~\ref{exceptionalctstr} it will be shown that when $pq<0$ the contact structure $\xi_{|pq|-|p|-|q|+1}$ is described by the $2$-inconsistent pair of paths $(P_1,P_2)$ where all the signs in $P_1$ are the same and opposite to all the signs in $P_2$. 

In Section~\ref{sec:wings}, \ref{classgeqpq} ,and~\ref{classextra} we will classify non-loose Legendrian knots in $\xi_{|pq|-|p|-|q| + 1}$. Specifically, we show that there are non-loose Legendrian knots $L_\pm^i, i\in \Z$ and $L_e$ with
\[
  \tb(L_\pm^i)=i,\,\,  \rot(L_\pm^i)=\mp(i - |pq| + |p|+|q|), \text{ and } \tor(L_\pm^i) = 0,
\]
\[
  \tb(L_e)=|pq|-|p|-|q|,\,\, \rot(L_e)=0, \text{ and } \tor(L_e) = 0,
\]
such that 
\[
  S_\pm(L_\pm^i)=L_\pm^{i-1} \text{ and } S_\pm(L_e)=L_\pm^{|pq|-|p|-|q|-1}
\]
and
\[
  S_\mp(L_\pm^i) \text{ is loose.}
\]
Notice that all these Legendrian knots are determined by the Thurston-Bennequin invariants and the rotation numbers, except when $\tb=|pq|-|p|-|q|$, there are $3$ distinct Legendrian knots all with the rotation number $0$. 

\noindent
\textbf{Step 3.\ Compute the non-loose Legendrian knots with ``generic" mountain ranges.}\label{stpe3}
All other mountain ranges are as shown in Figure~\ref{fig-genericXwing}. In Section~\ref{sec:wings} and~\ref{classgeqpq} we will establish that this is the classification and the classification is described as follows. 

\begin{enumerate}
\item For each $2$-inconsistent decorated pair of paths $(P_1,P_2)$ representing $q/p$ that is not compatible with the ones discussed in Step~2 do the following.
\item $(P_1,P_2)$ may be compatible with other decorated pairs of paths as discussed in Section~\ref{subsec:pathsinFG}. Let $\{(P_1^i,P_2^i)\}_{i=2}^n$ be the collection of all pairs of paths compatible with $(P_1,P_2)$ where $(P_1^i,P_2^i)$ is $i$-inconsistent and $(P_1^2,P_2^2)=(P_1,P_2)$.
\item Recall the truncated path $P_2^\intercal=P_2\cap \left[q/p,\lceil q/p\rceil\right]$. Let $s_k$ be the slope in the $k^{th}$ continued fraction block of $P_1\cup P_2^\intercal$ that is farthest from $q/p$. Set $n_k=|s_k \bigcdot \frac qp|$ for $k=2,\ldots, n$. 
\item In the contact structure $\xi_{P_1,P_2}$ there are non-loose Legendrian knots $L_+^i$ and $L_-^i$ for $i\in \Z$ with invariants 
\[
  \tb(L_\pm^i)=i, \text{ and } \rot(L_\pm^i)= \begin{cases} \mp(i - pq + |R(P_1,P_2)|)\quad pq>0,\\ \mp(i - pq - |R(P_1,P_2)|)\quad pq<0, \end{cases}
\]
where $R(P_1,P_2)$ is defined in Section~\ref{htpclasses}, such that 
\[
  S_\pm(L_\pm^i)=L_\pm^{i-1} \text{ and } S_\mp(L_\pm^i) \text{ is loose.}
\]
Moreover, if $n \geq 3$, there are Legendrian knots $L_{k,\pm}^{pq}$ with $k=2,\ldots, n-1$  such that 
\[
  \tb(L_{k,\pm}^{pq})=pq,\, \text{ and } \tor(L_{k,\pm}^{pq})=0
\]
and
\[
  \rot(L_{k,\pm}^{pq})= \begin{cases} \mp (|R(P_1,P_2)| - 2(n_{k}-1)) &\,pq>0,\\ \mp (-|R(P_1,P_2)| - 2(n_{k}-1)) &\,pq < 0. \end{cases}
\]
and $S_\pm^iS_\mp^j(L_{k,\pm}^{pq})$ is non-loose if and only if $j\leq n_k-1$.
\item  Lastly, if stabilizations of the $L_{k,\pm}^{pq}$ and $L_{l,\pm}^{pq}$ have the same invariants, then they are equivalent, while non-loose stabilizations of $L_{k,\pm}^{pq}$ and $L_{l,\mp}^{pq}$ are never equivalent. Notice that when $pq<0$ then non-loose stabilizations of $L_{k,\pm}^{pq}$ and $L_{l,\mp}^{pq}$ will never share the same invariants but when $pq>0$ they will. See Figure~\ref{fig-genericXwing}.
\item One subtlety arises when $pq>0$ and all blocks in $P_1$ have the same sign and all blocks $P_2$ have the opposite sign. In this case, we will see in Lemma~\ref{standardstructures} that $\xi_{P_1,P_2}$ is simply $\xi_{-pq+p+q}$ which is obtained from $\xi_{std}$ by a half Lutz twist on the unique maximal self-linking number transverse representative of the $(p,q)$-torus knot. In this case, the knots $L_\pm^{i}$ will have $\tor(L_\pm^i) = 1/2$ when $i\leq pq-p-q$ and otherwise have $\tor(L_\pm^i) = 0$.
\end{enumerate}

\subsection{The classification of non-loose torus knots with convex torsion}\label{classwithtorsion}
We now consider non-loose Legendrian knots with $\tor = n$ for $n \in \mathbb{N}\cup \{0\}$. These results are established in Section~\ref{classmoretorsion}. 

\subsubsection{The case of $pq>1$}
For any pair of paths $(P_1,P_2)$ representing $q/p$ that is totally $2$-inconsistent, the classification of non-loose Legendrian with torsion is as follows. 

\begin{enumerate}
\item When $(P_1,P_2)$ is not the pair of paths such that $P_1$ has only one sign while $P_2$ has only the other sign we have the following.
\begin{enumerate}
\item In the contact structure $\xi_{P_1,P_2}$ there are Legendrian knots $L_\pm^{i,n}$ for $i\in \Z$ with invariants 
\[
  \tb(L_\pm^{i,n})=i \text{ and } \tor(L_\pm^{i,n})=n,
\]
and 
\[
  \rot(L_\pm^{i,n}) = \begin{cases} \mp(i -pq + |R(P_1,P_2)|)\quad pq>0, \\ \mp(i - pq - |R(P_1,P_2)|)\quad pq<0, \end{cases} 
\]
where $R(P_1,P_2)$ is defined in Section~\ref{htpclasses}. We also have
\[
  S_\pm(L_\pm^{i,n})=L_\pm^{i-1,n} \text{ and } S_\mp(L_\pm^{i,n}) \text{ is loose.}
\]
Moreover, $L_\pm^{i,0}$ corresponds to $L_\pm^i$ in the previous section. Notice that the mountain range for non-loose Legendrian knots in $\xi_{P_1,P_2}$ does not contain any extra ``wings" as seen for some contact structures on the previous section.
\item In the contact structure $\xi_{P_1,P_2}'$ with 
\[
d_3(\xi_{P_1,P_2}')=d_3(\xi_{P_1,P_2}) + |R(P_1,P_2)| - pq
\]
 there are non-loose Legendrian knots $L_\pm^{i,n+\scriptscriptstyle\frac 12}$ in $\xi'_{P_1,P_2}$ for $i\in \Z$ with invariants 
\[
  \tb(L_\pm^{i,n+\scriptscriptstyle\frac 12})=i, \text{ and } \tor(L_\pm^{i,n+\scriptscriptstyle\frac 12}) = n + \frac12
\]
and
\[
  \rot(L_\pm^{i,n+\scriptscriptstyle\frac 12}) = \begin{cases} \mp(i + pq-|R(P_1,P_2)|) \,& pq>0, \\ \mp(i + pq+|R(P_1,P_2)|) &pq<0. \end{cases} 
\]
We also have 
\[
  S_\pm^j(L_\pm^{i,n+\scriptscriptstyle\frac 12})=L_\pm^{i-j,n+\scriptscriptstyle\frac 12} \text{ and } S_\mp(L_\pm^{i,n+\scriptscriptstyle\frac 12}) \text{ is loose.}
\]
Notice that the mountain range for non-loose Legendrian knots in $\xi_{P_1,P_2}$ does not contain any extra ``wings" as seen for some contact structures on the previous section. 
\end{enumerate}
\item When $P_1$ has all one sign and $P_2$ has only the other sign, as noted above, $\xi_{P_1,P_2}$ is $\xi_{-pq+p+q}$. In this case the classification of contact structures on $\xi_{-pq+p+q}$ is as stated above except 
\[
  \tor(L_\pm^{i,n}) = \begin{cases} n \quad &i>pq-p-q,\\ n+ 1/2 \quad &i\leq pq-p-q.\end{cases}
\] 
Similarly, in $\xi_0=\xi'_{P_1,P_2}$ the classification is as stated above except
\[
  \tor(L_\pm^{i,n+\scriptscriptstyle\frac 12}) = \begin{cases} n+ 1/2 \quad &i>pq-p-q,\\ n+1 \quad &i\leq pq-p-q.\end{cases}
\] 
\end{enumerate}

\subsubsection{The case of $pq>1$}
When $pq<0$ the classification in all cases is as in the case for $pq>0$ and $(P_1,P_2)$ is not the pair of paths such that $P_1$ has only one sign while $P_2$ has only the other sign, except the $d_3$-invariant of $\xi_{P_1,P_2}'$ is $d_3(\xi_{P_1,P_2}) - |R(P_1,P_2)| - pq$.

\section{Classification of non-loose \texorpdfstring{$(2, \pm(2n+1))$}{(2,(2n+1))}-torus knots}\label{fullclass22np1}

In this section, we apply the algorithm in Section~\ref{thealgorithm} to prove Theorem~\ref{thm:(2,2n+1)},  Theorem~\ref{thm:(2,-(2n+1))}, Theorem~\ref{thm:(2,2n+1)-transverse}, and Theorem~\ref{thm:(2,-(2n+1))-transverse}. We note that the classification of the $(2,2n+1)$-torus knots is quite straightforward and many steps in the algorithm are not necessary. To see the algorithm carried out in its full generality, please see Sections~\ref{secneg22n},~\ref{sec58}, and~\ref{sec58neg}.

\subsection{Non-loose \texorpdfstring{$(2,2n+1)$}{(2,2n+1)}-torus knots}\label{classofpos22np1}
We begin by classifying non-loose Legendrian $(2,2n+1)$-torus knots. 

\begin{proof}[Proof of Theorem~\ref{thm:(2,2n+1)}]
First, we apply Step $1$ of the algorithm. The pair of paths representing $(2n+1)/2$ is  
\[
  P_1=\left\{\frac{2n+1}{2}, n\right\}, \text{ and } P_2=\left\{\frac{2n+1}{2},n+1, \infty\right\},
\]
and the continued fraction blocks in $P_1$ and $P_2$ are
\[
  A_1=\left\{\frac{2n+1}{2}, n\right\}, \text{ and } B_2=\left\{\frac{2n+1}{2},n+1, \infty\right\}.
\]  
Now we can list all non-loose decorations of $(\overline{P_1}\,;\,P_2)$ as follows:
\[
  \pm(+\,;\,+,+),\,\pm(-\,;\,+,-),\,\pm(-\,;\,+,+)
\]
(Since $P_2$ is a continued fraction block, $\pm(-,-,+)$ and $\pm(-,+,-)$ are the same). As stated in the algorithm, we know that when all the signs are the same, the contact structure will be $\xi_1$. We also know that the sign choices $\pm(-)$ on $P_1$ and $\pm(+,-)$ on $P_2$ will also give $\xi_1$ since they are compatible with $\pm(+,+,+)$, see the last two paragraphs of Section~\ref{subsec:pathsinFG}. Thus we only need to compute the $d_3$-invariant of $\pm(-,+,+)$, which is $-pq+p+q=1-2n$ by Lemma~\ref{standardstructures}.

Now applying Step $2$ of the algorithm, we have the Legendrian knots in $\xi_1$ as follows:
\begin{gather*}
  L_\pm^i \,\text{ for $i>2n+1$},\\ L^{2n+1},\\
  L_{2,\pm}^i \,\text{ for } 2n+4 \leq i \leq 4n+2,\\ L_2^{2n+3}
\end{gather*}
with
\begin{gather*}
  \tb(L_\pm^i)=i, \text{ and } \rot(L_\pm^i)=\mp(i-2n-1),\\
  \tb(L^{2n+1})=2n+1, \text{ and } \rot(L^{pq-p-q+2})=0,\\
  \tb(L_{2,\pm}^i)=i, \text{ and } \rot(L_{2,\pm}^i)= \mp (i - 2n-3),\\
  \tb(L_2^{2n+3})=2n+3, \text{ and } \rot(L_2^{2n+3})=0
\end{gather*}
such that 
\begin{gather*}
  S_\pm(L_\pm^i)=L_\pm^{i-1}, \text{ for $i \geq 2n+3$, and } S_\pm(L_\pm^{2n+2})=L^{2n+1},\\
  S_\pm(L_{2,\pm}^i)=L_{2,\pm}^{i-1}, \text{ for $i\geq 2n+5$, and } S_\pm(L_{2,\pm}^{2n+4})= L_2^{2n+3},\\
  S_\mp(L_{2,\pm}^i)=L_\pm^{i-1}, \text{ for } i\geq 2n+4, S_\mp(L_2^{2n+3})=L_\pm^{2n+2},
\end{gather*}
and $S_\mp(L_\pm^i)$ and $S_\pm(L^{2n+1})$ are loose. All these Legendrian knots have $\tor = 0$. See Figure~\ref{fig:RHT-mountain}.

Applying Step $3$ of the algorithm, we obtain the classification of non-loose Legendrian knots in $\xi_{1-2n}$ as follows. Note that the decoration $\pm(-,+,+)$ are totally $2$-inconsistent, so we do not have to consider the ``wings''. We must first compute $R(P_1,P_2)$. One may easily check that $r_m=1$ and $r_n=2n$ and hence 
\[
  R(P_1,P_2)=(2n+1)\cdot 1 + 2\cdot (2n)=6n+1.
\]
Step 3 of the algorithm now gives non-loose Legendrian knots $L_\pm^{i,k}$ for $i \in \Z$ and $k\in \N\cup \{0\}$ such that 
\[
  \tb(L_\pm^{i,k})=i,\, \text{ and } \rot(L_\pm^{i,k})=\mp(i+2n-1)
\]
and
\[
  \tor(L_\pm^{i,k}) = \begin{cases} k \,& i>2n-1,\\ k+\frac12 &i\leq 2n-1. \end{cases} 
\]
We also have
\[
  S_\pm(L_\pm^{i,k})=L_\pm^{i-1,k}  
\]
and $S_\mp(L_\pm^{i,k})$ is loose. See Figure~\ref{fig:RHT-mountain}.

Finally we consider the non-loose Legendrian knots coming from adding half convex torsion to the complements of $L_-^{i,0}$. This will give the contact structure obtained from $\xi_{1-2n}$ by a half Lutz twist on the transverse push-off of $L_-^{i,0}$, which is $\xi_0$. So in $\xi_0$, we have non-loose Legendrian knots $L_\pm^{i,k+\scriptscriptstyle\frac 12}$ for $i \in \Z$ and $k\in \N\cup \{0\}$ such that 
\[
  \tb(L_\pm^{i,k+\scriptscriptstyle\frac 12})=i \text{ and } \rot(L_\pm^{i,k+\scriptscriptstyle\frac 12})=\mp(i-2n+1)
\]
and
\[
  \tor(L_\pm^{i,k+\scriptscriptstyle\frac 12}) = \begin{cases} k+\frac12 \,&i>2n-1,\\ k+1 &i\leq2n-1. \end{cases}
\]
We also have
\[
  S_\pm(L_\pm^{i,k+\scriptscriptstyle\frac 12})=L_\pm^{i-1,k+\scriptscriptstyle\frac 12} 
\]
and $S_\mp(L_\pm^{i,k+\scriptscriptstyle\frac 12})$ is loose. See Figure~\ref{fig:RHT-mountain}.
\end{proof}
We now turn to the classification of non-loose transverse $(2,2n+1)$-torus knots.
\begin{proof}[Proof of Theorem~\ref{thm:(2,2n+1)-transverse}]
Since the classification of transverse knots is equivalent to the classification of Legendrian knots up to negative stabilization \cite[Proof of Theorem~2.10]{EtnyreHonda01}, the theorem follows immediately from Theorem~\ref{thm:(2,2n+1)}. In particular, then $T^k$ and the $T^{k+\scriptscriptstyle\frac 12}$ are transverse push-offs of $L_-^{0,k}$ and $L_-^{0,k+\scriptscriptstyle\frac 12}$, respectively. 
\end{proof}

\subsection{Non-loose \texorpdfstring{$(2,-(2n+1))$}{(2,-(2n+1))}-torus knots}\label{secneg22n}
We begin with the classification of non-loose Legendrian $(2,-(2n+1))$-torus knots. 

\begin{proof}[Proof of Theorem~\ref{thm:(2,-(2n+1))}] 
According to Step $1$ of the algorithm in the previous section, we first find all decorated pair paths $(P_1,P_2)$ representing $-(2n+1)/2$. We have 
\[
  P_1=\{-\frac{2n+1}{2}, -n-1\}, \text{ and } P_2=\{-\frac{2n+1}{2}, -n, -n+1, \ldots, -1\}.
\] 
Moreover, the breakdown into continued fraction blocks is 
\[
  A_2=\{-\frac{2n+1}{2}, -n-1\}, \text{ and } B_1=\{-\frac{2n+1}{2}, -n\},\, B_3=\{-n,\ldots -1\}.
\] 
Now we will list all non-loose decorations for $(\overline{P_1}\,;\,P_2)$. Since $(\pm,\pm,\ldots,\pm)$ describes $\xi_{std}$ by Lemma~\ref{extrablockfornegative}, we have $2n$ non-loose decorations 
\[
  \pm(-,+\,;\,\overbrace{+,\ldots,+}^k,\overbrace{-,\ldots,-}^{n-1-k}) 
\]
for $0 \leq k \leq n-1$. See the top drawing of Figure~\ref{fig:LHTsurgery} (notice that we can shuffle the signs in $B_3$). We now convert these pairs of decorated paths into contact surgery diagrams as discussed in Section~\ref{sec:FareytoSurgery}. To this end notice that
\[
  -\frac{p}{p'} = -2 = [-2], \text{ and } -\frac{q}{q-q'} = -\frac{2n+1}{n+1} = [-2,-n-1].
\] 
We thus obtain the diagrams in Figure~\ref{fig:LHTsurgery}.

\begin{figure}[htbp]{\tiny
  \vspace{0.1cm}
  \begin{overpic}{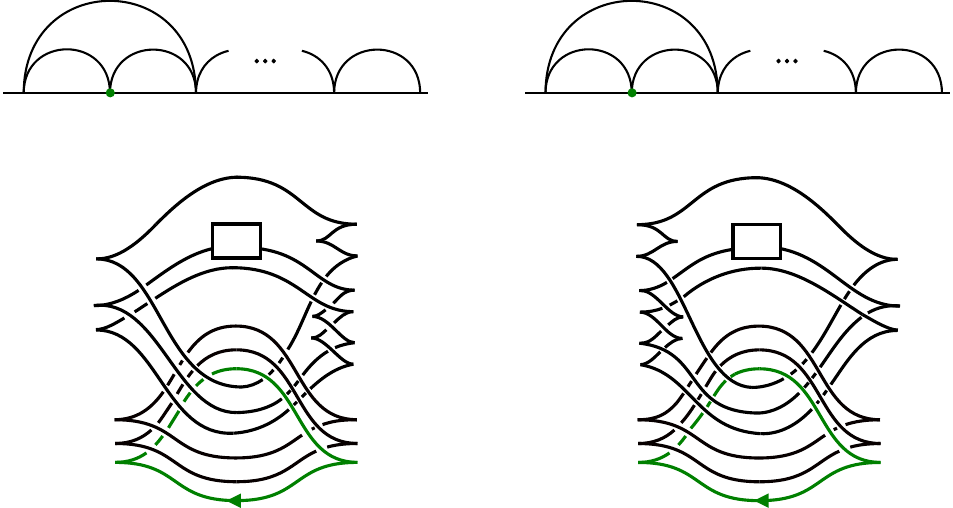}
    \put(174, 135){$(-1)$}
    \put(174, 105){$(-1)$}
    \put(174, 65){$(-1)$}
    \put(174, 42){$(+1)$}
    \put(174, 30){$(+1)$}
    \put(154, 8){$L_-$}
    \put(105, 127){$n-1$}

    \put(435, 118){$(-1)$}
    \put(435, 96){$(-1)$}
    \put(435, 83){$(-1)$}
    \put(429, 44){$(+1)$}
    \put(429, 28){$(+1)$}
    \put(406, 8){$L_+$}
    \put(355, 127){$n-1$} 
    
    \put(-4, 188){$-n-1$}
    \put(37, 188){$-\frac{2n+1}2$}
    \put(93, 188){$-n$}
    \put(154, 188){$-2$}
    \put(195, 188){$-1$}
    \put(28, 208){$+$}
    \put(71, 208){$-$}
    \put(179, 208){$\pm$}
    
    \put(248, 188){$-n-1$}
    \put(289, 188){$-\frac{2n+1}2$}
    \put(339, 188){$-n$}
    \put(405, 188){$-2$}
    \put(446, 188){$-1$}
    \put(280, 208){$-$}
    \put(322, 208){$+$}
    \put(430, 208){$\pm$}
  \end{overpic}}
  \caption{Surgery diagrams for the non-loose $(2,-(2n+1))$ torus knots with $\tb = -4n-2$. In the upper box there are $n-1$ stabilizations. The signs of the stabilizations depend on the signs in the continued fraction block $A_2$.}
  \label{fig:LHTsurgery}
\end{figure}

We now use the formula~(\ref{formula:d3}) in Section~\ref{htpclasses} to compute $d_3$-invariants. In particular, we have the linking matrix $M$ for the diagram
\[
  M =
  \begin{bmatrix}
    -3 & -1 & -1 & -1 & -1 \\
    -1 & -n-2 & -2 & -1 & -1 \\
    -1 & -2 & -3 & -1 & -1 \\
    -1 & -1 & -1 & 0 & -1 \\
    -1 & -1 & -1 & -1 & 0\\
  \end{bmatrix},
\] 
from which we can compute that $\sigma(X)=-1$ and $\chi(X)=6$.  There exist $2n$ rotation vectors:
  \[
    \pm\mathbf{rot} =
    \begin{bmatrix}
      -1\\-(l+1)\\-1\\0\\0
    \end{bmatrix},
  \]
  for $l \in \{-n+1, -n+3, ..., n-3, n-1\}$. From these, we can compute 
    \[
    d_3(\xi) = n + l + 1.
  \]
One may also use the surgery diagram to compute 
\[
  |R(P_1,P_2)|=4n + 2l + 3.
\]
In Section~\ref{htpclasses} we also gave a formula for $R(P_1,P_2)$ in terms of the decorated pair of paths, 
since relating this computation to the one done in terms of the surgery diagram is a little involved, we give details. First, notice that
\begin{gather*}
  p_1=-\frac{2n+1}{2},\; p_2=-n-1,\\
  q_1=-\frac{2n+1}{2},\\ 
  q_i = -n+i-2\; \text{ for }\; 2 \leq i \leq n+1.
\end{gather*}
In $P_1$, the signs of blocks are all negative. In $P_2$, there exist $k+1$ positive blocks, which are exactly $[q_{1},q_{2}],\ldots,[q_{k+1},q_{k+2}]$. Thus we can calculate $r_m$ and $r_n$ as follows:
\begin{gather*}
  r_m = (-1)\left((p_2 \ominus p_1) \bigcdot \frac10\right) = -1,\\
  r_n = (q_{k+2} \ominus q_1) \bigcdot \frac01 + (-1)\left((q_{n+1} \ominus q_{k+2})\bigcdot \frac01\right) = 2k+2.
\end{gather*}
We could now easily compute $R(P_1,P_2)$ in terms of $k$, but we would like to make the computation in terms of the rotation numbers in the surgery diagram. The rotation number of the second link component of the surgery diagram in Figure~\ref{fig:LHTsurgery} is equal to the difference between the number of negative blocks and positive blocks (notice that $P_2$ corresponds to the solid torus with the upper meridian).  Thus we have 
\begin{align*}
  -(l+1) &= (n-k-1) - (k+1)\\ 
  &= n -2k -2,
\end{align*}
and this implies that
\[
  k = (n+l-1)/2.
\]
Finally, we have
\begin{align*}
  R(P_1,P_2) &= 2r_n + (-2n-1)r_m\\ 
  &= 4n + 2l +3.
\end{align*}

Now we apply Step 2 of the algorithm and consider the exceptional contact structure corresponding to the decoration $(\pm,\mp,\ldots,\mp)$, where $P_1$ has edges all of one sign and $P_2$ has edges of the opposite sign. This is the contact structure $\xi_{2n}$ (since $P_1$ and $P_2$ are totally $2$-inconsistent we also obtain non-loose knots with convex torsion and we list them now). According to the algorithm in Section~\ref{thealgorithm}, in this contact structure, we have the following non-loose Legendrian knots $L_{n-1,\pm}^{i, k}$ for  $i\in \Z, k\in\N\cup\{0\}$ and $L_e$ such that 
\begin{gather*}
  \tb(L_{n-1,\pm}^{i,k})=i,\, \rot(L_{n-1,\pm}^{i,k})=\mp(i - 2n + 1) \text{ and } \tor(L_{n-1,\pm}^{i,k}) = k \\
  \tb(L_e)=2n-1,\, \rot(L_e)=0 \text{ and } \tor(L_e)=0.
\end{gather*}
We also have
\[
  L_{n-1,\pm}^{i,k}=S_\pm(L_{n-1,\pm}^{i-1,k}) \text{ and } S_\pm(L_e)=L_{n-1,\pm}^{2n-2,0},
\]
\[
  S_\mp(L_{n-1,\pm}^{i,k}) \text{ is loose.}
\]
See Figure~\ref{fig:LHT-mountain}. 

Now we apply Step $3$ of the algorithm and consider the other decorations. Since all these decorated paths are totally $2$-inconsistent, they will also contribute to non-loose Legendrian knots with $\tor > 0$. 
 
Now in $\xi_{n + l + 1}$, except for $l\not=n-1$ which was handled above, we have non-loose Legendrian knots $L_{l,\pm}^{i,k}$ with 
\[
  \tb(L_{l,\pm}^{i,k})=i,\, \rot(L_{l,\pm}^{i,k})=\mp(i - 2l-1) \text{ and } \tor(L_{l,\pm}^{i,k}) = k,
\]
\[
  S_\pm(L_{l,\pm}^{i,k})=L_{l,\pm}^{i-1,k} \text{ and } S_\mp(L_{l,\pm}^{i,k}) \text{ is loose.}
\]
See Figure~\ref{fig:LHT-mountain}.

In addition, when attaching half convex torsion to the complements of standard neighborhoods of the Legendrian knots above, we get non-loose Legendrian knots in the contact structures $\xi_{n-l}$ for $l\in \{-n+1, -n+3, ..., n-3, n-1\}$. In $\xi_{n-l}$, we have the non-loose Legendrian knots $L_{l,\pm}^{i,k+\scriptscriptstyle\frac 12}$ for $i\in \mathbb{Z}, k \in \mathbb{N} \cup \{0\}$ with 
\[
  \tb(L_{l,\pm}^{i,k+\scriptscriptstyle\frac 12})=i,\, \rot(L_{l,\pm}^{i,k+\scriptscriptstyle\frac 12})=\mp(i+2l+1) \text{ and } \tor(L_{l,\pm}^{i,k+\scriptscriptstyle\frac 12})= k + \frac12,
\]
\[
  S_\pm(L_{l,\pm}^{i,k+\scriptscriptstyle\frac 12})=L_{l,\pm}^{i-1,k+\scriptscriptstyle\frac 12} \text{ and } S_\mp(L_{l,\pm}^{i,k+\scriptscriptstyle\frac 12}) \text{ is loose}.
\]
See Figure~\ref{fig:LHT-mountain}.
\end{proof}

The classification of transverse $(2,-(2n+1))$-torus knots now follows.

\begin{proof}[Proof of Theorem~\ref{thm:(2,-(2n+1))-transverse}]
  Since the classification of transverse knots is equivalent to the classification of Legendrian knots up to negative stabilization \cite[Proof of Theorem~2.10]{EtnyreHonda01}, the theorem follows immediately from Theorem~\ref{thm:(2,-(2n+1))}. In particular, $T_l^k$ is the transverse push-off of $L_{l,-}^{0,k}$ in $\xi_{n+l+1}$ and  $T_l^{k+\scriptscriptstyle\frac 12}$ are transverse push-offs of  $L_{l,-}^{0,k+\scriptscriptstyle\frac 12}$ in $\xi_{n-l}$. 
\end{proof}

\section{Classification of non-loose \texorpdfstring{$(5,\pm8)$}{(5,8)}-torus knots}\label{fullclass58}

In this section, we apply the algorithm in Section~\ref{thealgorithm} to prove Theorem~\ref{leg58}, Theorem~\ref{leg5-8}, Theorem~\ref{trans58} and Theorem~\ref{trans5-8}. This will demonstrate the use of our algorithm in a more complicated setting than considered in the previous section. 

\subsection{Non-loose \texorpdfstring{$(5,8)$}{(5,8)}-torus knots}\label{sec58}
In this section we will classify non-loose Legendrian and transverse $(5,8)$-torus knots. We begin with the Legendrian representatives. 

\begin{proof}[Proof of Theorem~\ref{leg58}]
  The pair of paths representing $8/5$ are 
  \[
    P_1=\left\{\frac85, \frac32, 1\right\}, \text{ and } P_2=\left\{\frac85, \frac53, 2, \infty\right\},
  \] 
  and the continued fraction blocks in $P_1$ and $P_2$ are 
  \[
    A_1=\left\{\frac85, \frac32\right\},\, A_3=\left\{\frac32, 1\right\}, \text{ and } B_2=\left\{\frac85, \frac53, 2\right\},\, B_4=\left\{2, \infty\right\}. 
  \]
  Since we have
  \[
    -\frac{p}{p'} = -\frac53 = [-2, -3] \text{ and } -\frac{q}{q-q'} = -\frac83 = [-3, -3], 
  \]
  the linking matrix $M$ of the surgery diagram is
  \[
    M = \begin{bmatrix} -4 & -2 & -1 & -1 & -1 & -1 \\ -2 & -3 & -1 & -1 & -1 & -1 \\ -1 & -1 & -5 & -3 & -1 & -1 \\ -1 & -1 & -3 & -4 & -1 & -1 \\ -1 & -1 & -1 & -1 & 0 & -1 \\ -1 & -1 & -1 & -1 & -1 & 0 \end{bmatrix} 
  \]
  and from this we can compute $\sigma(X) = -4$ and $\chi(X) = 7$. The rotation vectors $\pm\mathbf{rot}$ gotten by different choices of stabilizations are  
  \[
    \begin{bmatrix} 0 \\ -1 \\ 1 \\ 0 \\ 0 \\ 0 \end{bmatrix},\, \begin{bmatrix} 0 \\ 1 \\ -1 \\ -2 \\ 0 \\ 0 \end{bmatrix},\, \begin{bmatrix} -2 \\ -1 \\ 1 \\ 2 \\ 0 \\ 0 \end{bmatrix},\, \begin{bmatrix} 2 \\ 1 \\ -3 \\ -2 \\ 0 \\ 0 \end{bmatrix},\, \begin{bmatrix} 0 \\ -1 \\ -1 \\ 0 \\ 0 \\ 0 \end{bmatrix},\, \begin{bmatrix} 0 \\ 1 \\ -3 \\ -2 \\ 0 \\ 0 \end{bmatrix},\, \begin{bmatrix} -2 \\ -1 \\ 1 \\ 0 \\ 0 \\ 0 \end{bmatrix},\, \begin{bmatrix} -2 \\ -1 \\ -1 \\ 0 \\ 0 \\ 0 \end{bmatrix},\, \begin{bmatrix} 0 \\ -1 \\ -1 \\ -2 \\ 0 \\ 0 \end{bmatrix},\, \begin{bmatrix} 0 \\ -1 \\ -3 \\ -2 \\ 0 \\ 0 \end{bmatrix},\, \begin{bmatrix} -2 \\ -1 \\ -1 \\ -2 \\ 0 \\ 0 \end{bmatrix},\,  \begin{bmatrix} -2 \\ -1 \\ -3 \\ -2 \\ 0 \\ 0 \end{bmatrix}.
  \]
  We can compute the $d_3$-invariant of each decoration of $(\overline{P_1}\,;\,P_2)$. The first $4$ rotation vectors give $\xi_1$ and correspond to the decorations on the path $(\overline{P_1}\,;\,P_2)$
  \[
    \pm(+,-\,;\,+,-,-),\, \pm(-,+\,;\,+,+,-),\, \pm(-,-\,;\,-,-,+),\, \pm(+,+\,;\,+,+,+)
  \]
  (note that $\pm(+,-\,;\,-,+,-)$ and $\pm(+,-\,;\,+,-,-)$ are the same). These decorations are, respectively, $2$, $3$, $4$-inconsistent and the last is totally consistent. 
  The next two rotation vectors give $\xi_{-1}$ and correspond to the decorations 
  \[
    \pm(+,-\,;\,+,-,+) \text{ and } \pm(-,+\,;\,+,+,+)
  \]
  (note that $\pm(+,-\,;\,+,-,+)$ and $\pm(+,-\,;\,-,+,+)$ are the same). 
  The first being $2$-inconsistent while the second is $3$-inconsistent. The remaining $6$ rotation vectors give distinct $d_3$-invariants and correspond to $2$-inconsistent pairs of decorated paths. In particular, in the order of the rotation vectors above we have 
  the decorations for $\xi_{-3}$ are 
  \[
    \pm(-,-\,;\,+,-,-)
  \]
  (note that $\pm(-,-\,;\,-,+,-)$ and $\pm(-,-\,;\,+,-,-)$ are the same). The decorations for $\xi_{-7}$ are
  \[
    \pm(-,-\,;\,+,-,+)
  \]
  (note that $\pm(-,-\,;\,-,+,+)$ and $\pm(-,-\,;\,+,-,+)$ are the same). The decorations for $\xi_{-9}$ are
  \[
    \pm(+,-\,;\,+,+,-).
  \]
  The decorations for $\xi_{-15}$ are
  \[
    \pm(+,-\,;\,+,+,+).
  \]
  The decorations for $\xi_{-19}$ are
  \[
    \pm(-,-\,;\,+,+,-).
  \]
  The decorations for $\xi_{-27}$ are
  \[
    \pm(-,-\,;\,+,+,+).
  \]

  We begin with the exceptional contact structure $\xi_1$. According to Step $2$ of the algorithm in Section~\ref{thealgorithm}, there are non-loose Legendrian knots $L_\pm^i$ for $i>29$ and $L^{29}$ such that 
  \[
    \tb(L_{\pm}^i)=i, \rot(L_\pm^i)=\mp(i-29), \tb(L^{29})=29, \rot(L^{29})=0,
  \]
  \[
    S_\pm(L_\pm^i)=L_\pm^{i-1}\,\text{ for $i>30,$ } \text{ and } S_\pm(L_\pm^{30})=L^{29}, 
  \]
  \[
    S_\mp(L_\pm^i) \text{ and } S_\pm(L^{29}) \text{ are loose.}
  \]

  To determine the other non-loose Legendrian $(5,8)$-torus knots in $\xi_1$, we note that $s_2=2$, $s_3=1$, $s_4=\infty$ and that $n_2=2$, $n_3=3$, $n_4=5$.
  Thus we have Legendrian knots $L_{k,\pm}^{40}$ for $k=2,3,4$ with $\tb=40$ and 
  \[
    \rot(L_{2,\pm}^{40})=\mp 9, \rot(L_{3,\pm}^{40})=\mp 7, \text{ and } \rot(L_{4,\pm}^{40})=\mp 3.
  \]
  Stabilizations of these Legendrian knots and the $L_\pm^i$ and $L^{29}$ with the same invariants will be equivalent and they will remain non-loose until they are stabilized outside the $\textsf{V}$ defined by the $L_\pm^i$ and $L^{29}$. All these knots have no convex torsion. 

  Now we consider the contact structure $\xi_{-1}$. First, notice that $\pm(+,-,+,-,+)$ are $2$ inconsistent and $\pm(-,+,+,+,+)$ are $3$-inconsistent, and they are compatible. Moreover, one easily computes that for the first decorations that $|R(P_1,P_2)|=19$ and for the second decorations that $|R(P_1,P_2)|=21$. Thus Step $3$ of the algorithm gives us non-loose Legendrian knots $L_\pm^i$ for $i\in \Z$ and $L_{2,\pm}^i$ for $i\leq 40$ such that
  \[
    \tb(L_\pm^i)=\tb(L_{2,\pm}^i)=i,\, \rot(L_\pm^i)=\mp (i-21),\, \rot(L_{2,\pm}^i)=\mp(i-19),
  \]
  \[
    S_\pm(L_\pm^i)=L_\pm^{i-1},\, \text{ and } S_\pm(L_{2,\pm}^i)=L_{2,\pm}^{i-1}, 
  \]
  \[
    S_\mp(L_{2,\pm}^i)=L_\pm^{i-1}, \text{ and } S_\mp(L_\pm^i) \text{ is loose.}
  \] 
  None of $L_+^i$ or $L_{2,+}^i$ is equivalent to $L_-^j$ or $L_{2,-}^j$ for any $i,j\in\mathbb{Z}$. All these Legendrian knots have no convex torsion. 

  In $\xi_{-3}$ and $\xi_{-7}$, we have following classification. First, notice that $\pm(-,-,+,-,-)$ and $\pm(-,-,+,-,+)$ are $2$-inconsistent, so in each contact structure the mountain range is an infinite $\textsf{X}$. Since they are not totally $2$-inconsistent, there are no non-loose Legendrian knots with $\tor>0$ in these contact structures. One can also check that $|R(P_1,P_2)| = 27$ and $|R(P_1,P_2)| = 37$, respectively. Thus there are Legendrian knots $L_\pm^i$ with $\tb(L_\pm^i)=i$ and $\tor(L_\pm^i)=0$ in each contact structure and
  \[
    \rot(L_\pm^i)=\begin{cases}
    \mp(i-13) & \text{ in } \xi_{-3},\\
    \mp(i-3)& \text{ in } \xi_{-7},
    \end{cases}
  \]
  \[
    S_\pm(L_\pm^i)=L_\pm^{i-1} \text{ and } S_\mp(L_\pm^i) \text{ is loose.}
  \]

  All the decorations in $\xi_{-9}$, $\xi_{-15}$, $\xi_{-19}$ and $\xi_{-27}$ are totally $2$-inconsistent, so these contact structures can have non-loose Legendrian representatives with convex torsion. One may compute that $|R(P_1,P_2)|$ for these four pairs of paths is $41, 51, 57,$ and $67$, respectively. So in each contact structure we have $L_\pm^{i,k}$ where $\tb(L_\pm^{i,k})=i$ and  
  \[
    \rot(L_\pm^{i,k})=\begin{cases}
    \mp(i+1) & \text{ in } \xi_{-9},\\
    \mp(i+11)& \text{ in } \xi_{-15},\\
    \mp(i+17)& \text{ in } \xi_{-19},\\
    \mp(i+27)& \text{ in } \xi_{-27},
    \end{cases}
  \]
  \[
    S_\pm(L_\pm^{i,k})=L_\pm^{i-1,k} \text{ and } S_\mp(L_\pm^{i,k}) \text{ is loose.}
  \]
  Moreover, $\tor(L_\pm^{i,k}) = k$ if it is not in $\xi_{-27}$, in that case we have 
  \[
    \tor(L_\pm^{i,k}) = \begin{cases}k \,& i>27,\\ k+\frac12 & i\leq 27.  \end{cases}
  \] 

  Finally we can add half convex torsion to these latter four contact structures. This results in the contact structures $\xi_{-8}$, $\xi_{-4}$, $\xi_{-2}$, and $\xi_0$, and we have non-loose Legendrian $(5,8)$-torus knots $L_{\pm}^{i,k+\scriptscriptstyle\frac 12}$ where $\tb(L_\pm^{i,k+\scriptscriptstyle\frac 12})=i$ and 
  \[
    \rot(L_\pm^{i,k+\scriptscriptstyle\frac 12})=\begin{cases}
    \mp(i-1) & \text{ in } \xi_{-8},\\
    \mp(i-11)& \text{ in } \xi_{-4},\\
    \mp(i-17)& \text{ in } \xi_{-2},\\
    \mp(i-27)& \text{ in } \xi_{0},
    \end{cases}
  \]
  \[
    S_\pm(L_\pm^{i,k+\scriptscriptstyle\frac 12})=L_\pm^{i-1,k+\scriptscriptstyle\frac 12} \text{ and } S_\mp(L_\pm^{i,k+\scriptscriptstyle\frac 12}) \text{ is loose.}
  \]
  Moreover, $\tor(L_\pm^{i,k+\scriptscriptstyle\frac 12}) = k + 1/2$ if it is not in $\xi_{0}$, and in that case we have
  \[
    \tor(L_\pm^{i,k+\scriptscriptstyle\frac 12}) = \begin{cases} k + \frac12 \,& i>27,\\ k+1 & i \leq 27. \end{cases}
  \]  
\end{proof}
We now turn to the transverse $(5,8)$-torus knots.

\begin{proof}[Proof of Theorem~\ref{trans58}]
  This theorem follows directly from Theorem~\ref{leg58} given that the classification of transverse knots is the same as the classification of Legendrian knots up to negative stabilization \cite[Proof of Theorem~2.10]{EtnyreHonda01}.  
\end{proof}

\subsection{Non-loose \texorpdfstring{$(5,-8)$}{(5,-8)}-torus knots}\label{sec58neg}
In this section we will classify non-loose Legendrian and transverse $(5,-8)$-torus knots. We begin with the Legendrian representatives. 

\begin{proof}[Proof of Theorem~\ref{leg5-8}]
  The pair of paths representing $-8/5$ is 
  \[
    P_1=\left\{-\frac85, -\frac53, -2\right\}, \text{ and } P_2=\left\{-\frac85, -\frac32, -1\right\}
  \] 
  and the continued fraction blocks in $P_1$ and $P_2$ are
  \[
    A_2 = \left\{-\frac85, -\frac53, -2\right\}, \text{ and } B_1 = \left\{-\frac85, -\frac32\right\},\,B_3 = \left\{-\frac32, -1\right\}.
  \] 
  Since we have
  \[
    -\frac{p}{p'} = -\frac52 = [-3, -2], \text{ and } -\frac{q}{q-q'} = -\frac85 = [-2, -3, -2].
  \]
  the linking matrix $M$ of the surgery diagram is 
  \[
    M = \begin{bmatrix} -4 & -3 & -1 & -1 & -1 & -1 & -1 \\ -3 & -4 & -1 & -1 & -1 & -1 & -1 \\ -1 & -1 & -4 & -3 & -2 & -1 & -1 \\ -1 & -1 & -3 & -4 & -2 & -1 & -1 \\ -1 & -1 & -2 & -2 & -3 & -1 & -1 \\ -1 & -1 & -1 & -1 & -1 & 0 & -1 \\ -1 & -1 & -1 & -1 & -1 & -1 & 0 \end{bmatrix}
  \]
  and from this we can compute $\sigma(X) = -3$ and $\chi(X) = 8$. Here, we list all rotation vectors depending on the choice of stabilizations:
  \[
    \pm\mathbf{rot} = \begin{bmatrix} 0 \\ 0 \\ 0 \\ 0 \\ -1 \\ 0 \\ 0 \end{bmatrix},\, \begin{bmatrix} -2 \\ -2 \\ 0 \\ 0 \\ 1 \\ 0 \\ 0 \end{bmatrix},\, \begin{bmatrix} 0 \\ 0 \\ -2 \\ -2 \\ -1 \\ 0 \\ 0\end{bmatrix},\,  \begin{bmatrix} -2 \\ -2 \\ 0 \\ 0 \\ -1 \\ 0 \\ 0 \end{bmatrix},\, \begin{bmatrix} -2 \\ -2 \\ -2 \\ -2 \\ -1 \\ 0 \\ 0 \end{bmatrix}.
  \]
  We can compute the $d_3$-invariant of each decoration of $(\overline{P_1}\,;\,P_2)$. The first $2$ rotation vectors above give $\xi_2$ and correspond to the decorations on the paths $(\overline{P_1}\,;\,P_2)$ given by 
  \[
    \pm(+,-\,;\,+,-) \text{ and } \pm(-,-\,;\,-,+)
  \]
  (note that $\pm(-,+\,;\,+,-)$ and $\pm(+,-\,;\,+,-)$ are the same). The decorations are, respectively, $2$ and $3$-inconsistent. The remaining rotation vectors give distinct $d_3$-invariants and correspond to $2$-inconsistent pairs of paths. The third rotation vector gives  $\xi_8$ and corresponds to the decorations 
  \[
    \pm(+,-\,;\,+,+)
  \]
  (note that $\pm(-,+\,;\,+,+)$ and $\pm(+,-\,;\,+,+)$ are the same). The fourth rotation vector gives  $\xi_{14}$ and corresponds to the decorations
  \[
    \pm(-,-\,;\,+,-).
  \]
  The last rotation vector gives  $\xi_{28}$ and corresponds to the decorations
  \[
    \pm(-,-\,;\,+,+).
  \]

  We begin with the exceptional contact structure corresponding to the path $P_1$ having all one sign and $P_2$ having the other. This is the contact structure $\xi_{28}$. In this contact structure, we have the following non-loose Legendrian knots $L_\pm^{i, k}, i\in \Z, k\in\N\cup\{0\}$ and $L_e$ such that 
  \[
    \tb(L_\pm^{i,k})=i,\, \rot(L_\pm^{i,k})=\mp(i - 27) \text{ and } \tor(L_\pm^{i,k}) = k,
  \]
  \[
    \tb(L_e)=27 \text{ and } \rot(L_e)=0,
  \]
  \[
    L_\pm^{i,k}=S_\pm(L_\pm^{i-1,k}),\, S_\pm(L_e)=L_\pm^{26,0} \text{ and } \tor(L_\pm^{i-1,k}) = k,
  \]
  \[
    S_\mp(L_\pm^{i,k}) \text{ is loose.}
  \]

  Now we consider $\xi_2$. First, notice that the decorations $\pm(+,-\,;\,+,-)$ are $2$-inconsistent and $\pm(-,-\,;\,-,+)$ are $3$-inconsistent, and they are compatible. Also, $|R(P_1,P_2)| = 15$ and $17$ for each decoration, respectively. In addition, the $2$-inconsistent decorations are not totally $2$-inconsistent, so none of the non-loose knots in $\xi_2$ will have convex torsion. Thus the algorithm yields the following non-loose Legendrian knots $L_\pm^i$ for $i\in \Z$ and $L_{2,\pm}^i$ for $i\leq -40$ such that 
  \[
    \tb(L_\pm^i)=\tb(L_{2,\pm}^i)=i, \text{ and } \tor(L_\pm^i)=\tor(L_{2,\pm}^i)=0,
  \]
  \[
    \rot(L_\pm^i)=\mp(i+25), \text{ and } \rot(L_{2,\pm}^i)=\mp(i+23),
  \]
  \[
    S_\pm(L_\pm^i)=L_\pm^{i-1}, \text{ and } S_\pm(L_{2,\pm}^i)=L_{2,\pm}^{i-1}, 
  \]
  \[
    S_\mp(L_{2,\pm}^i)=L_\pm^{i-1}, \text{ and } S_\mp(L_\pm^i) \text { is loose.}
  \]

  Next, we consider $\xi_8$. Because the decorations for $\xi_{8}$ are not totally $2$-inconsistent and $|R(P_1,P_2)| = 35$, we have non-loose Legendrian knots $L_\pm^i$ with 
  \[
    \tb(L_\pm^i)=i,\, \rot(L_\pm^i)=\mp(i+5) \text{ and } \tor(L_\pm^i) = 0,
  \]
  \[
    S_\pm(L_\pm^i)=L_\pm^{i-1} \text{ and } S_\mp(L_\pm^i) \text{ is loose.}
  \]

  The decorations for $\xi_{14}$ are totally $2$-inconsistent and $|R(P_1,P_2)| = 47$, so $\xi_{14}$ contains the non-loose Legendrian knots $L_\pm^{i,k}$ for $i\in \Z$ and $k\in \N\cup \{0\}$ satisfying 
  \[
    \tb(L_\pm^{i,k})=i,\, \rot(L_\pm^{i,k})=\mp(i-7) \text{ and } \tor(L_\pm^{i,k})=k,
  \]
  \[
    S_\pm(L_\pm^{i,k})=L_\pm^{i-1,k} \text{ and } S_\mp(L_\pm^{i,k}) \text{ is loose.}
  \]

  Finally, to the totally $2$-inconsistent pairs of paths (that are the ones for $\xi_{28}$ and $\xi_{14}$), we can also add half convex torsion. As described in Section~\ref{classwithtorsion}, this yields the contact structures $\xi_1$ and $\xi_{7}$. In each of these contact structures we have non-loose Legendrian knots $L_\pm^{i,k+\scriptscriptstyle\frac 12}$  satisfying
  \[
    \tb(L_\pm^{i,k+\scriptscriptstyle\frac 12})=i \text{ and } \tor(L_\pm^{i,k+\scriptscriptstyle\frac 12})=k + \frac12,
  \]
  \[
    \rot(L_\pm^{i,k+\scriptscriptstyle\frac 12})=\begin{cases} 
    \mp(i + 27) & \text{ in } \xi_1,\\
    \mp(i + 7)& \text{ in } \xi_7,
    \end{cases}
  \]
  \[
    S_\pm(L_\pm^{i,k+\scriptscriptstyle\frac 12})=L_\pm^{i-1,k+\scriptscriptstyle\frac 12} \text{ and } S_\mp(L_\pm^{i,k+\scriptscriptstyle\frac 12}) \text{ is loose.}
  \]
\end{proof}

We end with the classification of non-loose transverse $(5,-8)$-torus knots. 

\begin{proof}[Proof of Theorem~\ref{trans5-8}]
  This theorem follows directly from Theorem~\ref{leg5-8} given that the classification of transverse knots is the same as the classification of Legendrian knots up to negative stabilization \cite[Proof of Theorem~2.10]{EtnyreHonda01}.  
\end{proof}

\section{Tight contact structures on torus knot complements}\label{classificationoncomplement}

In this section, we investigate the tight contact structures on the complements of torus knots, which are Seifert fibered spaces over the disk with two singular fibers. The first classification results on such spaces were obtained in \cite{DingLiZhang13} and used in \cite{GeigesOnaran20a} to give their classification results for non-loose torus knots and expanded upon in \cite{Matkovic20Pre}. Thus, several of the classification results below were already known, but as observed in \cite[Section~4.2]{GeigesOnaran20a}, most non-loose torus knot complements are not covered by the results of \cite{DingLiZhang13}. We also note that in \cite{Ghiggini06b} some tight contact structures on these spaces were also constructed. 

This section is a long section that is the heart of our classification results. So we start the section by giving an overview of the whole strategy for the classification of non-loose torus knots. Hopefully, this will motivate the details carried out in the rest of this section. 

\subsection{Legendrian knots and their complements}\label{overview}
Before discussing torus knots, we consider a general approach to understanding Legendrian knots. 

Let $K$ be a null-homologous knot in a closed oriented connected $3$-manifold $M$. We can consider its complement $C_K=\overline{M-N_K}$ where $N_K$ is a tubular neighborhood of $K$. If $L$ is a Legendrian representative of $K$ in a contact manifold $(M,\xi)$, we can consider its standard neighborhood $N_L$. This is a neighborhood with convex boundary having two dividing curves of slope $\tb(L)$. Let $C_L=\overline{M-N_L}$ be the complement of the standard neighborhood of $L$. If $L$ is non-loose or $\xi$ is tight, then $\xi$ restricted to the complement $C_L$ is tight. So we have a tight contact structure on $C_K$ (which can be identified with $C_L$) with convex boundary having two dividing curves of slope $\tb(L)$. 

Reversing this construction, if we have any tight contact structure $\xi'$ on the complement  $C_K$ with convex boundary and dividing slope $n\in\Z$, then gluing in a solid torus to recover $M$ and extending the contact structure so it is tight on the solid torus will result in a Legendrian knot $L_{\xi'}$  that has this solid torus as its regular neighborhood and $\tb(L_{\xi'})=n$. The knot $L_{\xi'}$ will either be in a tight contact structure on $M$ or be a non-loose knot in an overtwisted contact structure on $M$.

Thus, we see that understanding non-loose Legendrian knots in the knot type $K$ in overtwisted contact structures on $M$ or Legendrian knots in the knot type $K$ in tight contact structures on $M$ is equivalent to understanding tight contact structures on $C_K$ with convex boundary having dividing slope $n\in\Z$. 

Moreover, if $\xi_L$ on $C_K$ is the contact structure on the complement of a standard neighborhood of $L$, then the contact structure on the complement of the stabilization $S_\pm(L)$ is obtained from $\xi_L$ by attaching a $\pm$-bypass to $\xi_L$ with dividing slopes $\tb(L)-1$ and $\tb(L)$. Similarly, $L$ will destabilize if inside of $(C_K,\xi_L)$ we can find a convex torus $T$ that is isotopic to the boundary of $C_K$ with two dividing curves of slope $\tb(L)+1$. Notice that $T$ separates $C_K$ into two pieces, one of which is a basic slice. The sign of this basic slice corresponds to the sign of the destabilization.  

With the above understood, we now turn to our strategy for understanding non-loose torus knots (and torus knots in the tight contact structure on $S^3$). For any $|q|>p>0$, the classification is built around two classes of knots:
\begin{enumerate}
\item non-loose knots with $\tb=pq$ and
\item an ``extra" non-destabilizable knots with $\tb=|pq|-|p|-|q|$.
\end{enumerate}

We first discuss the knots of Type~(1). Any such knot is constructed as follows. In Section~\ref{subsec:pathsinFG} we saw that given any decorated pair of paths $(P_1,P_2)$ that represent $q/p$, there is a contact structure $\xi_{P_1,P_2}$ on $S^3$ and inside of this contact structure there is a distinguished convex torus with dividing slope $q/p$. We let $L_{P_1,P_2}$ be a Legendrian divide on this convex torus. One may easily see that $\tb(L_{P_1,P_2})=pq$. All such knots come from this process. Below, we will show:
\begin{enumerate}[label=(\roman*)]
  \item If $(P_1,P_2)$ is not $2$-inconsistent, any convex torus in the complement of a standard neighborhood of $L_{P_1,P_2}$ that is isotopic to the boundary will have dividing slope $pq$. Thus $L_{P_1,P_2}$ does not destabilize.

  \item If $(P_1,P_2)$ is $2$-inconsistent then there is a basic slice embedding in the complement of the standard neighborhood of $L_{P_1,P_2}$ with one boundary component having dividing slope $pq$ and agreeing with the boundary of the standard neighborhood and the other having dividing slope $\infty$. This will prove that $L_{P_1,P_2}$ will destabilize infinitely often with one sign. 

  \item For any $(P_1,P_2)$, there exists a sign such that $L_{P_1,P_2}$ admits arbitrarily many stabilizations of that sign and remains non-loose, except in $\xi_1$ when $pq>0$. If we stabilize $L_{P_1,P_2}$ with the opposite sign, it immediately becomes loose.

  \item If $(P_1,P_2)$ is totally $2$-inconsistent, then the contact structure on the complement of a standard neighborhood of $L_{P_1,P_2}$ remains tight after adding arbitrary amounts of convex torsion (of one fixed sign). This will give us non-loose Legendrian knots with convex torsion in their complements.

  \item If $(P_1,P_2)$ is not totally $2$-inconsistent, then the contact structure on the complement of a standard neighborhood of $L_{P_1,P_2}$ will become overtwisted when adding a basic slice with dividing slopes $\infty$ and $pq$. Thus, these will not contribute to Legendrian knots with convex torsion in their complements. 

  \item We will not do this in this section, but in Section~\ref{nonloosetorusknots} we will see that if $(P_1,P_2)$ is $i$-inconsistent for $i>2$, then one can add basic slices of the opposite sign to the ones mentioned in Item~(iii). This will show that one can stabilize $L_{P_1,P_2}$ until it agrees with a stabilization of a Legendrian knot coming from a $2$-inconsistent decorated pair of paths. These will account for ``wings" and ``diamonds" discussed in the introduction. 
\end{enumerate}

We now turn to the knots of Type~(2). For each $(p,q)$, there will be one such knot. If $pq>0$, then this knot will be in the tight contact structure on $S^3$ and its stabilizations will give all Legendrian $(p,q)$-torus knots in $(S^3,\xi_{std})$. In addition, one may add convex torsion to these knots to obtain non-loose knots. It is this exceptional knot that is responsible for the anomalous behavior concerning Giroux torsion seen in Theorem~\ref{parity}. When $pq<0$, this extra knot will be in an overtwisted contact structure that also has Legendrian representatives coming from decorated pairs of paths. In addition, after a single stabilization, the extra Legendrian will become isotopic to a stabilization of the Legendrian knot coming from the decorated pairs of paths. In this section, we will establish the existence of the ``extra" knot complements, while in the next section the rest of the fact just mentioned will be established. 

\subsection{Torus knot complements}\label{knotcomp}
We start by building a topological model of the complement of a torus knot by following \cite[Section~3.1]{EtnyreLaFountainTosun12}. 
Let $F_1 \sqcup F_2$ be the Hopf link in $S^3$ and $V_1$ and $V_2$ neighborhoods of $F_1$ and $F_2$, respectively. Then there is a torus $T$ in the complement of the Hopf link that separates $S^3$ such that $S^3 = V_1 \cup (T \times [0,1]) \cup V_2$. We may take the $(p,q)$-torus knot $T_{p,q}$ to sit on $T \times \{1/2\}$. Let $N$ be a neighborhood of $T_{p,q}$ in $T\times [0,1]$ and $C=S^3\setminus N$. If we set $A'=(T\times \{1/2\})\cap C$, then we can consider $T \times [0,1]$ as the union of $N$ and $N(A')$, a neighborhood of the annulus $A'$ in $C$.  See the left drawing of Figure~\ref{fig:knot-complement}. In $N(A')$, we can find an annulus $A$ for which each of the boundary components is a $(p,q)$-curve, one on $\partial{V_1}$ and the other on $\partial{V_2}$. See the right drawing of Figure~\ref{fig:knot-complement}. Here we use the coordinate system on any torus parallel to $T$ coming from the Seifert framing of $V_1$ (so the meridian of $V_1$ has slope $\infty$ and the meridian of $V_2$ has slope $0$). We denote this coordinate system $\mathcal{F}_1$. Since we can also think of $N(A')$ as a neighborhood of $A$, we have the following model for $C$,
\[
  C = V_1 \cup N(A) \cup V_2.
\]
We notice that $C$ is a Seifert fibered space over a disk with two singular fibers. The regular fiber is a $(p,q)$-torus knot in $S^3 = C \cup N$ and this will be called a \dfn{vertical curve}, as will any curve isotopic to it. 

\begin{figure}[htbp]{\footnotesize
  \vspace{0.1cm}
  \begin{overpic}
  {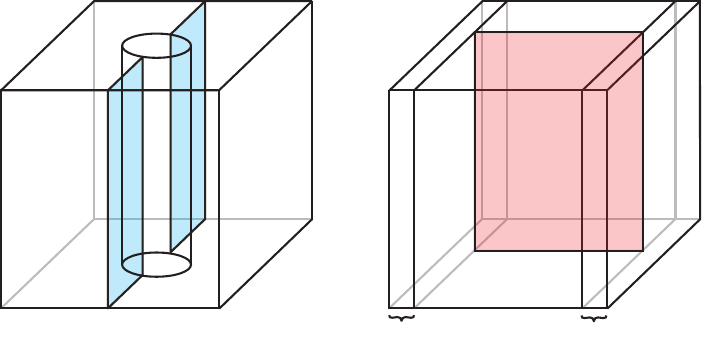}
  \put(35, 88){$A' \rightarrow$}
  \put(71, 32){$N$}
  \put(260, 88){$A$} 
  \put(189,-3){$V_1$}
  \put(282,-3){$V_2$}
  \end{overpic}}
  \caption{The complement of a neighborhood of a $(p,q)$-torus knot. Each cube is $T^2\times [0,1]$ (the top and bottom are identified, as are the front and back) with curves on the right and left collapsed to give $S^3$. On the left, we see that annulus $A'$ in $C$ that separates $C$ into two solid tori. On the right is the annulus $A$ going from $\partial V_1$ to $\partial V_2$.}
  \label{fig:knot-complement}
\end{figure}

We will use two different framing conventions for the torus $-\partial{C} (= \partial{N})$. One, which we denote $\mathcal{C}_1$, is the Seifert framing of $T_{p,q}$, and the other one, which we denote $\mathcal{C}_2$, comes from the torus $T$ on which $T_{p,q}$ sits. We can convert from the first framing to the second framing by using the coordinate change map
\[
  \psi = 
  \begin{pmatrix}
    1 && 0\\
    -pq && 1 
  \end{pmatrix},
\]
where $(a,b)^\intercal = a\lambda + b\mu$ and $\lambda$ is a longitude from the given framing and $\mu$ is a meridian of $N$.  

There is another convenient topological model for $C$. Notice that if we take the neighborhood $N$ of $T_{p,q}$  to be contained in the interior of $T^2\times [0,1]$, then $(T^2\times [0,1])\setminus N$ is $S^1\times P$ where $P$ is $D^2$ with two disjoint open disks removed. Now $C=V_1\cup (S^1\times P)\cup V_2$. We have $\partial (S^1\times P)=T_1\cup T_2\cup T_3$ where $T_i$ is identified with $\partial V_i$ for $i=1,2$ and $T_3=\partial C$. For each $T_i$, we can take coordinates so that $S^1\times\{pt\}$ has slope $0$ and $(\{\theta\}\times P)\cap T_i$ has slope $\infty$. Notice that on $T_3$ this framing agrees with the framing $\mathcal{C}_2$ that comes from the torus $T$. On the other $T_i$, this gives a coordinate system $\mathcal{F}_2$, and one can convert from the Seifert coordinates, $\mathcal{F}_1$, to this one by the map 
\[
  \phi =
  \begin{pmatrix}
    q' && -p'\\
    -q && p
  \end{pmatrix}
\]
where $q'/p'= (q/p)^c$, the largest rational number satisfying $pq'-p'q = 1$. See Section~\ref{fareygraph} for this notation. 

\subsection{Contact structures on \texorpdfstring{$C$}{C}} \label{contactonCsec}
To classify non-loose torus knots, we first classify tight contact structures on $C$ having convex boundary with dividing slope $n \in \mathbb{N}$ (here we use the Seifert coordinates $\mathcal{C}_1$ on $-\partial C=\partial N$) and without convex torsion. This will be done in Lemma~\ref{lem:>pq}, but we begin by considering tight contact structures on $C$ having convex boundary with any dividing slope $s$ and without convex torsion.

\subsubsection{Minimally twisting contact structures on \texorpdfstring{$C$}{C}}
We note that as we are thinking of $C$ as the complement of a knot, we will orient its boundary as $-\partial C$, in particular, the slopes of dividing curves of tori parallel to $-\partial C$ will change in a clockwise direction as they move into $C$.  We will always assume that $|q|>p>0$. Also, recall that we use the notation $\frac ab \bigcdot \frac cd$ to denote the quantity $ad-bc$. 
\begin{definition}
When considering a vertical curve $\gamma$ in $C$ it has a natural framing coming from the Seifert fibered structure (nearby regular fibers frame $\gamma$). If $\gamma$ is Legendrian, then we call the contact framing relative to this Seifert framing the \dfn{twisting of $\gamma$} or the \dfn{twisting along $\gamma$}. 
\end{definition}

Given the complement $C$ of a $(p,q)$-torus knot $T_{p,q}$ and a slope $s$ on the boundary of $C$, we denote by 
\begin{align*}
\Tight_i(C; s) = \left\{ \begin{aligned}
&\text{Tight contact structures on $C$ up to isotopy, with convex boundary having} \\
&\text{two dividing curves of slope $s$ and convex $i$-torsion for } i \in \tfrac{1}{2}\mathbb{N} \cup \{0\}.
\end{aligned} \right\}
\end{align*}
We begin with positive torus knots. 

\begin{lemma}\label{lem:thickening-positive}
  Consider the complement $C$ of a positive $(p,q)$-torus knot. For any rational number $s$ with $s > pq$ and any contact structure $\xi\in \Tight_0(C;s)$, there is a contact structure $\xi'\in \Tight_0(C;\infty)$ and a contact structure $\xi''\in \Tight^{min}(T^2\times [0,1]; s, \infty)$ such that $\xi$ is isotopic to $\xi'\cup \xi''$ under the natural identification $C\cong C\cup (T^2\times [0,1])$.  
\end{lemma}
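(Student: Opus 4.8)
The plan is to peel a minimally twisting collar off the boundary of $C$ until the dividing slope drops to $\infty$. First I would make $\partial C$ convex with two dividing curves of slope $s$. The essential geometric input is the location of the fiber slope of the Seifert fibration of $C$: since $T_{p,q}$ is a singular fiber of the Seifert fibration of $S^3$ coming from the decomposition $S^3 = V_1\cup (T\times[0,1])\cup V_2$, its regular fiber has slope $pq$ in the Seifert coordinates $\mathcal{C}_1$ on $\partial C$ (equivalently slope $0$ in the coordinates $\mathcal{C}_2$, as the map $\psi$ shows). Because the dividing slopes of tori parallel to $-\partial C$ move clockwise as one goes into $C$, and because $s>pq$, the clockwise arc in the Farey graph from $s$ to $\infty$ does not contain the fiber slope $pq$. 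This is precisely the property that will allow thickening toward $\infty$ without meeting the obstruction posed by the singular fibers.

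The main step is to produce a convex torus $T_\infty$ parallel to $\partial C$, lying in the interior of $C$, with dividing slope $\infty$. To do this I would work in the model $C=V_1\cup N(A)\cup V_2$, take a convex vertical annulus with one Legendrian boundary curve on $\partial C$, analyze its dividing set, and use the resulting boundary-parallel bypasses (via the Imbalance Principle and edge-rounding) to isotope $\partial C$ inward through convex tori whose dividing slope decreases monotonically clockwise from $s$. Since this process can only stall when the dividing slope reaches the fiber slope $pq$, and $pq$ does not lie on the clockwise arc from $s$ to $\infty$, the thickening continues until the dividing slope is exactly $\infty$, producing $T_\infty$; equivalently, one may invoke Honda's analysis of tight contact structures on Seifert fibered spaces over the disk to realize every slope on that arc. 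I expect this to be the main obstacle, as it is the step where both the positivity of the torus knot and the hypothesis $s>pq$ are used.

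Once $T_\infty$ is in hand, let $R$ be the region of $C$ between $\partial C$ and $T_\infty$ and let $C'=C\setminus R$; under the collar identification $C\cong C\cup(T^2\times[0,1])$ we have $C'\cong C$. The region $R$ is a $T^2\times[0,1]$ with dividing slopes $s$ and $\infty$ on its two boundary tori. It is minimally twisting because $\xi$ has no convex Giroux torsion: a non-minimally twisting $T^2\times[0,1]$ with these boundary slopes would contain a convex Giroux torsion layer, which would then embed in $\xi$ and contradict $\xi\in\Tight_0(C;s)$. Hence $\xi'':=\xi|_R\in\Tight^{min}(T^2\times[0,1];s,\infty)$. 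Finally, $\xi':=\xi|_{C'}$ is tight as a subdomain of the tight $\xi$, has convex boundary of slope $\infty$, and has no convex Giroux torsion (such a torsion layer would again embed in $\xi$); therefore $\xi'\in\Tight_0(C;\infty)$. By construction $\xi$ is isotopic to $\xi'\cup\xi''$ under the natural identification, which is the claim.
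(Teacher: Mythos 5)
Your overall strategy --- produce a convex torus parallel to $\partial C$ with dividing slope $\infty$ and split $C$ along it --- is the same as the paper's, and your closing paragraph is fine: minimal twisting of the collar and the absence of convex Giroux torsion in $C'$ do both follow from $\xi\in\Tight_0(C;s)$. The gap is in the main step, the existence of $T_\infty$. Your argument rests on the claim that pushing $\partial C$ inward ``can only stall when the dividing slope reaches the fiber slope $pq$.'' That principle is unjustified, and it is false as a statement about tight contact structures on $C$: the complement of the maximal Thurston--Bennequin Legendrian representative of $T_{p,q}$ in $(S^3,\xi_{std})$ is a tight contact structure on $C$ with boundary dividing slope $pq-p-q$ in which no parallel convex torus has slope $pq$ (otherwise a Legendrian divide on it would violate the Bennequin bound), so thickening can stall at a slope that is neither $\infty$ nor the fiber slope. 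There is likewise no off-the-shelf classification of tight structures on these Seifert fibered complements you can invoke to ``realize every slope on that arc''; producing that control is exactly what this lemma is for.

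What actually governs where the process stalls is the interaction of the two singular-fiber solid tori $V_1$ and $V_2$: after reducing to the case with no $0$-twisting vertical Legendrian curve (if there is one, a slope-$pq$ torus and hence a slope-$\infty$ torus exist immediately), one applies the Imbalance Principle to the annulus $A$ joining $\partial V_1$ to $\partial V_2$ and solves the resulting intersection-number equations $|q-pn|=|p-qm|$, $|qm_1-pn_1|=|pn_2-qm_2|$, etc. The outcome is that $V_1\cup N(A)\cup V_2$ has boundary slope either $\infty$ (when the dividing slopes are $(q/p)^a$ and $(q/p)^c$) or $s_k=(pq-p-q)/(1-k)$. The lemma survives the second possibility only because every such $s_k$ satisfies $s_k\le pq-p-q<pq<s$, so $\infty$ lies on the clockwise arc from $s$ to $s_k$ and a slope-$\infty$ torus still sits in the collar $C\setminus\bigl(V_1\cup N(A)\cup V_2\bigr)$ --- a conclusion of the case analysis, not something that follows from the position of the fiber slope alone. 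Without carrying out that analysis (or an equivalent determination of all non-thickenable slopes), the proof is incomplete.
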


\begin{proof}
 In this proof, slopes of curves on $\partial C$ are measured with respect to the Seifert framing. A `$0$-twisting curve' is one with $0$-twisting with respect to the framing induced by the Seifert fibered structure of $C$. Suppose  $(C,\xi)$ contains a $0$-twisting vertical Legendrian curve $\gamma$. Let $T$ be a torus parallel to the boundary of $C$ that contains this $0$-twisting vertical Legendrian curve. Recall that a vertical curve is one isotopic to a regular fiber of the Seifert fibered structure on $C$; hence, on $T$ it is a $pq$-curve. Since the twisting of $\gamma$ is $0$, we know that the torus framing and the contact framing agree. Thus, we may make the torus convex relative to $\gamma$, see \cite[Proposition~6.5]{Kanda98}. Since the contact framing and the torus framing agree, the dividing curves must be parallel to $\gamma$. Hence, $T$ can be perturbed into a convex torus with two dividing curves of slope $pq$. Thus $T$ cuts $C$ into two pieces, one being $T^2\times [0,1]$ with dividing slopes $s$ and $pq$. Note that this $T^2\times[0,1]$ is minimally twisting, since otherwise $\xi \notin \Tight_0(C;\infty)$.  Inside of $T^2\times[0,1]$ there will be a convex torus $T'$ with dividing slope $\infty$. This torus gives the claimed splitting of the contact structure $\xi$. 
  
  Thus, we will consider the case when $(C,\xi)$ does not contain a $0$-twisting vertical Legendrian curve. This implies that the dividing slope on any convex realization of $\partial V_1$ is less than $q/p$ and the dividing slope on any convex realization of $\partial V_2$ is bigger than $q/p$ or negative. Here, we are using $\mathcal{F}_1$, the Seifert coordinates of $V_1$.  

  Now we perturb $\partial{V_1}$ and $\partial{V_2}$ so that the ruling curves become $(p,q)$-curves, and perturb $A$ so that it becomes convex and one boundary component is a ruling curve of $\partial{V_1}$ and the other a ruling curve of $\partial{V_2}$. We change the coordinate system of $\partial V_1$ from $\mathcal{F}_1$ to $\mathcal{F}_2$ using $\phi$ from the previous section. Suppose that the dividing slopes of $V_1$ and $V_2$ are ${n_1/m_1}$ and ${n_2/m_2}$ respectively. Then $\phi$ maps $(m_i,n_i) \mapsto (q'm_i-p'n_i, -qm_i+pn_i)$ for $i=1,2$. If $|qm_1-pn_1| \neq |qm_2 -pn_2|$, then the twisting numbers of the boundary components of $A$ are different. Thus, by the Imbalance Principle \cite[Proposition~ 3.17]{Honda00a}, there is a bypass on $A$ along one of the boundary components, and we can thicken $V_1$ or $V_2$ using this bypass. Thus we can keep thickening the $V_i$ until $|qm_1-pn_1| = |qm_2 -pn_2|$.

  Recall if the dividing slope of $V_1$ is $r$, then when a bypass is attached to $\partial V_1$, the dividing curves of the resulting torus will have slope $r'$, which is clockwise of $r$ and the closest point to $q/p$ with an edge to $r$. Since we can start with $V_1$ as a standard neighborhood of a Legendrian knot with very negative Thurston-Bennequin invariant, then the possible dividing slopes on $\partial V_1$ will be
  
  \begin{itemize}
    \item $(1,n)$ for $n \leq \lfloor q/p\rfloor$,
    \item $(m_1,n_1)$ in the shortest path from $\lfloor q/p\rfloor$ clockwise to $q/p$ in the Farey graph.
  \end{itemize}

  Similarly, if $\partial V_2$ has dividing slope $s$, then when a bypass is attached to $\partial V_2$ the dividing curves will have slope $s'$ which is anti-clockwise of $s$ and the closest point to $q/p$ with an edge to $s$. So if we start with $V_2$ being a  standard neighborhood of a Legendrian knot with very negative $\tb$, then the possible dividing slopes on $\partial V_2$ will be
  
  \begin{itemize}
    \item $(m,1)$ for $m \leq 0$,
    \item $(m_2,n_2)$ in the shortest path from $q/p$ clockwise to $\infty$ in the Farey graph.
  \end{itemize}

  We denote $(q/p)^c$ by $q'/p'$ and $(q/p)^a$ by $q''/p''$ and recall that there is an edge in the Farey graph between all three points $(p,q), (p',q'),$ and $(p'',q'')$ (See Section~\ref{fareygraph}). 
  
  As explained above, we can keep thickening $V_1$ and $V_2$ until $|qm_1-pn_1| = |qm_2 -pn_2|$. In this situation, we can still thicken $V_1$ and $V_2$ if $A$ contains a boundary parallel dividing curve. Hence we may assume $A$ does not contain boundary parallel dividing curves. That is, the dividing curves on $A$ run from one boundary component to the other. There are three cases to consider:  

  \noindent
  {\bf Case 1: $\mathbf {|q - pn| = |p - qm|}$.} We begin by assuming that $q-pn=p-qm$. In \cite[Section~3.1]{EtnyreLaFountainTosun12}, the first author, LaFountain, and Tosun considered this case when $m, n \leq -1$ and $ {q - pn = p - qm}$, and showed that the solutions are $m = pk-1$ and $n = qk-1$ for $k \leq 0$. They determined in these cases that the dividing curves on the boundary of $C'=V_1 \cup N(A) \cup V_2$ had slope $s_k={(pq-p-q)/(1-k)}$ (please note that the slope convention in \cite{EtnyreLaFountainTosun12} is the reciprocal of the one used here). Thus $C\setminus C'$ is a thickened torus $T^2\times [0,1]$ with a contact structure that rotates from $s$ clockwise to $s_k$. In particular, there will be a torus in $T^2\times [0,1]$ with dividing slope $\infty$, and this torus provides the desired decomposition of $C$. Notice that when $k>0$ then $m$ is positive, the formula for $s_k$ still holds and we can still find a torus of slope $\infty$ in $C\setminus C'$.

  Now in the case that $q-pn=qm-p$, one can show that $n$ and $m$ will not satisfy the conditions in the bullet point above. 

  \noindent
  {\bf  Case 2:  $\mathbf{|qm_1 - pn_1| = |p - qm|}$ or $\mathbf{|q - pn| = |pn_2 - qm_2|}$.} We will show that there is no solution to these equations. Consider the first equation and the case when $qm_1 - pn_1 = p - qm$. Let $a_0 = \lfloor q/p\rfloor, a_1, \ldots, a_k=q/p$ be a shortest path in the Farey graph from $\lfloor q/p\rfloor,$ clockwise to $q/p$. As discussed in \cite[Remark~2.13]{ChakrabortyEtnyreMin20Pre} we see that $|\frac qp \bigcdot a_i|<|\frac qp\bigcdot a_{i-1}|$, and hence the maximum value for $qm_1 - pn_1$ is $q-a_0p$. If $m \leq -1$, we have $q-a_0p\geq qm_1 - pn_1 = p - qm\geq p+q$, but we are taking $p>0$ and so there is no solution to this equation. We will deal with the case $m = 0$ in Case 3. Now consider the case that $pn_1-qm_1=p-qm$. One may check that the left-hand side is negative, while the right-hand side is positive when $m\leq -1$ and again the case of $m=0$ will be handled in Case 3. 

  For the second equation in the case that $q - pn = pn_2 - qm_2$, we can similarly argue that the maximal value of $pn_2 - qm_2$ is $p$ and so for $n< \lfloor q/p\rfloor$ we have $p \geq  pn_2 - qm_2 = q - pn > p$; and this contradiction shows that there is no solution to the second equation. We will deal with the case of $n=\lfloor q/p\rfloor$ in Case 3. We may dispense with the case  $q - pn = qm_2-pn_2$ as above. 

  \noindent
  {\bf  Case 3. $\mathbf{|qm_1 - pn_1| = |pn_2 - qm_2|}$.}  We must have that $qm_1 - pn_1 = pn_2 - qm_2$, since both the right and left-hand sides are positive. We will show that the only solution is $(m_1,n_1) = (p'',q'')$ and $(m_2,n_2) = (p',q')$. Observe that if  $qm_1 - pn_1 = pn_2 - qm_2$, then ${(n_1+n_2)/(m_1+m_2)} = q/p$. We know from our choice of $(p',q')$ and $(p'',q'')$ that $q=q'+q''$ and $p = p'+p''$, so $qp''-pq''=pq'-qp'$. 

  As above let $a_1=\lfloor q/p\rfloor,a_1,\ldots, a_k=q/p$ be a shortest path in the Farey graph from $\lfloor q/p\rfloor$ clockwise to $q/p$ and similarly $b_0=q/p, \ldots b_l=\infty$ be the shortest path from $q/p$ clockwise to $\infty$. Notice that $q''/p''=a_{k-1}$ and $q'/p'=b_1$. The path $a_0,\ldots,a_{k-1}, a_k=b_0,b_1,\ldots , b_l$ can be shortened to a single jump from $\lfloor q/p\rfloor$ to $\infty$. In the process of shortening the path we first remove $q/p=a_k=b_0$ and the edges adjacent to it, and add the edge from $a_{k-1}$ to $b_1$, we will then remove one of $a_{k-1}$ or $b_1$, the edges adjacent to the removed vertex and add another edge in the Farey graph. Notice each vertex removed is the Farey sum of the two adjacent vertices. Thus the size of the numerator and denominator of the vertices $a_i$ and $b_j$ get smaller as we move out from $q/p$. Thus we see that if $(m_1,n_1) \neq (p'',q'')$ or $(m_2, n_2) \neq (p',q')$, then $m_1 < p''$ and $n_1 < q''$ or $m_2 < p'$ and $n_2 < q'$. Thus $m_1 + m_2 < p$ and $n_1 + n_2 < q$. But we observed above that ${(n_1 + n_2)/(m_1 + m_2)} = q/p$ which contradicts the fact that $\gcd(p,q) = 1$, so we must have that $(m_1,n_1) = (p'',q'')$ and $(m_2,n_2) = (p',q')$.

  When the dividing slopes of $V_1$ and $V_2$ are ${q''/p''}$ and ${q'/p'}$, there are two dividing curves on $A$, running from one boundary component to the other since we assume that there is no $0$-twisting vertical Legendrian curve in $C$. 

  Since $\phi$ maps $(p',q') \mapsto (0,1)$ and $(p'',q'') \mapsto (1,-1)$, we can compute the boundary slope of $C$ under the coordinates $\mathcal{C}_2$ after rounding the edges of $\partial{V_1} \cup \partial{N(A)} \cup \partial{V_2}$ to be 
  \[
    \frac{1}{0 - 1 + 1} = \frac{1}{0}.
  \]  
  Now we use the coordinate change map $\psi$ to compute the slope using Seifert coordinates $\mathcal{C}_1$ of $T_{p,q}$. Since $\psi^{-1}$ maps $(0,1) \mapsto (0,1)$, the slope of $\partial{V_1} \cup \partial{N(A)} \cup \partial{V_2}$ is $\infty$ and we have our desired splitting of the contact structure on $C$. 
\end{proof}

Now we will consider negative $(p,q)$-torus knot with $-q > p > 1$, but first recall that $r^a$ is defined in Lemma~\ref{adjacentvertices} as the farthest anti-clockwise point from $r$ in the Farey graph that is less than $r$ with an edge to $r$

\begin{lemma}\label{lem:thickening-negative}
  Consider the complement $C$ of a negative $(p,q)$-torus knot. Consider the slopes $e_k= {(|pq|-|p|-|q|)/k}$ with $k\geq1$ and $\gcd(|pq|-|p|-|q|,k) = 1$. For any $s>pq$ and $\xi\in \Tight_0(C;s)$ with $s\not\in(e_k^a,e_k]$, there is a contact structure $\xi'\in \Tight_0(C;\infty)$ and a contact structure $\xi''\in \Tight^{min}(T^2\times [0,1]; s, \infty)$ such that $\xi$ is isotopic to $\xi'\cup \xi''$ under the natural identification $C\cong C\cup (T^2\times [0,1])$. 

  If $s\in (e^a_k,e_k]$, then either $\xi$ is as above or there is a contact structure $\xi'_k\in \Tight_0(C; e_k)$ and a contact structure $\xi''_k\in \Tight^{min}(T^2\times [0,1]; s, e_k)$ such that $\xi$ is isotopic to $\xi'_k\cup \xi''_k$ under the natural identification $C\cong C\cup (T^2\times [0,1])$. Also, the contact structures $\xi'_k$ have the property that any convex torus in $C$ parallel to the boundary has dividing slope $e_k$. 
\end{lemma}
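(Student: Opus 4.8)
The plan is to mimic the proof of Lemma~\ref{lem:thickening-positive}, isolating the single place where negative torus knots differ. First I would split on whether $(C,\xi)$ contains a $0$-twisting vertical Legendrian curve. If it does, the argument from the positive case applies unchanged: the curve yields a convex torus parallel to $\partial C$ with dividing slope $pq$, this torus bounds a $T^2\times[0,1]$ with boundary slopes $s$ and $pq$ that contains a convex torus of slope $\infty$, and we obtain the standard splitting $\xi=\xi'\cup\xi''$ with $\xi'\in\Tight_0(C;\infty)$ and $\xi''\in\Tight^{min}(T^2\times[0,1];s,\infty)$, independent of which arc contains $s$. So I may assume $(C,\xi)$ has no $0$-twisting vertical Legendrian curve.

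Under that assumption I would rerun the thickening scheme: perturb $\partial V_1$ and $\partial V_2$ so their ruling curves are $(p,q)$--curves, make $A$ convex, and thicken $V_1$ and $V_2$ by the Imbalance Principle \cite{Honda00a} until $\left|\frac qp\bigcdot \frac{n_1}{m_1}\right|=\left|\frac qp\bigcdot \frac{n_2}{m_2}\right|$. The Farey-graph case analysis (Cases~1--3 of Lemma~\ref{lem:thickening-positive}) goes through identically except for the balanced solutions in the analog of Case~1. Invoking the computation of \cite{EtnyreLaFountainTosun12} for negative $(p,q)$, the balanced configurations in which the dividing curves of $A$ run from one boundary component to the other are $(m,n)=(pk-1,qk-1)$ and give $C'=V_1\cup N(A)\cup V_2$ with boundary slope $s_k=(|pq|-|p|-|q|)/k$, while the generic Case~3 configuration, with slopes $(p'',q'')$ and $(p',q')$, again yields boundary slope $\infty$ and hence the standard splitting.

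The new point is the fate of $C\setminus C'$, a minimally twisting $T^2\times[0,1]$ running clockwise from $s$ to $s_k$. I would show this region contains a convex torus of slope $\infty$, collapsing to the standard splitting, exactly when $\infty$ lies on the minimal Farey-graph path from $s$ to $s_k$; by Lemma~\ref{adjacentvertices} this is precisely the condition $s\notin(s_k^a,s_k]$. When $s\in(s_k^a,s_k]$, the outer slope $s$ lies inside the continued-fraction block terminating at $s_k$, so the minimal path from $s$ to $s_k$ never meets $\infty$ and the thickening genuinely stops at $s_k$, giving $\xi=\xi'_k\cup\xi''_k$ with $\xi'_k\in\Tight_0(C;s_k)$ and $\xi''_k\in\Tight^{min}(T^2\times[0,1];s,s_k)$. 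For the non-thickenability of $\xi'_k$, I would note that in the $s_k$--configuration $A$ carries only dividing curves running across (no boundary-parallel arcs, by the standing hypothesis), so no Imbalance bypass is available; any effective bypass along $\partial C$ would change the slope or reduce the number of dividing curves, and by Theorem~\ref{thm:effective-bypasses} the only remaining bypasses are trivial, forcing every convex torus parallel to $\partial C$ to have slope $s_k$.

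The main obstacle will be this last combinatorial control: verifying that the solutions $(pk-1,qk-1)$ are exactly the balanced configurations yielding boundary slope $s_k$, that no other balanced configuration arises for negative $(p,q)$, and that the $s_k$--configuration truly admits no slope-changing bypass. This is exactly where the positive and negative cases part ways, since for positive torus knots \cite{EtnyreLaFountainTosun12} provides a slope-$\infty$ torus in $C\setminus C'$ in every balanced case, whereas here the exceptional slopes $s_k$ are precisely the geometric source of the extra non-loose representative $L_e$ and the exceptional contact structure $\xi_{|pq|-|p|-|q|+1}$.
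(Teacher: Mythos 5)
Your setup (splitting on the existence of a $0$-twisting vertical Legendrian, thickening $V_1,V_2$ via the Imbalance Principle, and the three-case Farey analysis of the balanced configurations) matches the paper's proof. But the pivotal step — deciding when the process escapes the slope-$s_k$ configurations — rests on a mechanism that is wrong. You propose to find the slope-$\infty$ convex torus inside the outer layer $C\setminus C'$ running from $s$ to $s_k$, and to characterize when this happens by whether $\infty$ lies on the minimal path from $s$ to $s_k$. A minimally twisting layer with boundary slopes $s$ and $s_k$ only contains convex tori whose slopes lie in the clockwise arc $[s,s_k]$, and for every $s$ anticlockwise of $s_k$ — in particular every integer $s$ with $pq<s<|pq|-|p|-|q|$, e.g.\ $s=0$ for the $(2,-3)$--torus knot where $s_1=1$ — this arc misses $\infty$ entirely. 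Yet Remark~\ref{mostZthicken} places all such integer $s\neq|pq|-|p|-|q|$ in the first case of the lemma, so your criterion would either fail to produce the asserted splitting or misclassify these slopes into the ``stuck at $s_k$'' case. (The combinatorial equivalence you assert also fails: the minimal path from $0$ to $1$ is a single edge not meeting $\infty$, while $0\notin(s_1^a,s_1]=(0,1]$.) The actual mechanism in the paper is different and is the real content of the negative case: when $s\notin(s_k^a,s_k]$, the outer layer supplies additional bypasses for $\partial V_1$ or $\partial V_2$ (this is the citation to \cite[Proposition~3.10]{EtnyreLaFountainTosun12}), which destroys the Case~1 configuration and restarts the thickening; iterating, one either reaches Case~3 — where the slope-$\infty$ torus is produced by edge-rounding the \emph{inner} configuration $V_1\cup N(A)\cup V_2$ with slopes $(q/p)^a$ and $(q/p)^c$, not found in the outer layer — or gets trapped at some $s_{k'}$ with $s\in(s_{k'}^a,s_{k'}]$. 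The subtlety this handles is genuine: the complement of $S_\pm(L_e)$ factors naturally as $\xi'_1$ plus a basic slice from $0$ to $1$, and only a further re-factorization reveals its slope-$\infty$ torus.

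Three secondary issues. First, the balanced solutions you quote, $(m,n)=(pk-1,qk-1)$, are the ones from the positive case; for negative torus knots the paper solves $pn-q=p-qm$ to get $m=-pk+1$, $n=qk+1$ for $k\geq1$, and must separately treat $\gcd(|pq|-|p|-|q|,k)>1$, where $\partial C'$ acquires extra dividing curves and one thickens again — you do not address this. Second, your argument that $\xi'_k$ admits no further thickening via Theorem~\ref{thm:effective-bypasses} does not apply: that theorem concerns bypasses in a solid torus, whereas $C'$ is a Seifert fibered space over a pair of pants; the paper instead proves the rigidity of $\xi'_k$ by the universal-cover and twisting-number argument of \cite[Lemma~3.3]{EtnyreLaFountainTosun12}. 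Third, even in the ``either/or'' clause for $s\in(s_k^a,s_k]$, the two alternatives are not exclusive, which your path-based dichotomy would incorrectly rule out.
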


\begin{remark}\label{mostZthicken}
  Notice that if $s$ is an integer larger than $pq$ then $s\not\in (e_k^a,e_k]$ for any $k$ unless $s=|pq|-|p|-|q|$. Thus all such integer values of $s\not=|pq|-|p|-|q|$ are in the first case of the lemma. 
\end{remark}

\begin{proof}
  As in the positive case, if there is a $0$-twisting vertical Legendrian in $(C,\xi)$, then we have the desired splitting. So we assume there is no $0$-twisting vertical Legendrian in $(C,\xi)$. As in the proof of Lemma~\ref{lem:thickening-positive} we can take $V_1$ and $V_2$ to be neighborhoods of Legendrian knots with very negative Thurston-Bennequin invariant and then consider the annulus $A$ in $C$ between $V_1$ and $V_2$. We can make $A$ convex and if the dividing curves on $A$ do not all run from one boundary component to the other, we may attach a bypass to $\partial V_1$ or $\partial V_2$. As argued in Lemma~\ref{lem:thickening-positive}, we know that the dividing slope of $\partial V_1$ is less than $q/p$ and of the form

  \begin{itemize}
    \item $(1,n)$ for $n \leq \lfloor q/p \rfloor$,
    \item $(m_1,n_1)$ where $(m_1,n_1)$ is in the shortest path from $\lfloor q/p \rfloor$ clockwise to $q/p$ in the Farey graph.
  \end{itemize}

  The dividing curves of $\partial V_2$ are greater than $q/p$ but less than zero and of the form 

  \begin{itemize}
    \item $(m,1)$ for $m \leq -1$,
    \item $(m_2,n_2)$ where $(m_2,n_2)$ is in the shortest path from $q/p$ clockwise to $-1$ in the Farey graph.
  \end{itemize}

  When the dividing curves on $A$ all run from one boundary component to the other, we have three cases to consider. 

  \noindent
  {\bf  Case 1. $\mathbf{|pn - q| = |p - qm|}$.} We must have that $pn-q = p - qm$, since both the right and left-hand sides are negative. The solutions are $m = -pl + 1$ and $n = ql + 1$ for $l \geq 1$. Now change the coordinates using $\phi$ and we have $(m,1)\mapsto(-pq'l-p'+q', pql+p-q)$ and $(1,n) \mapsto (-p'ql-p'+q', pql+p-q)$. 
  Now we round the edges of $C'=V_1\cup N(A)\cup V_2$, and its dividing slope will be 
  \[
    \frac{pql+p-q}{(-pq'l - p' + q') - (-p'ql - p' + q') + 1} = \frac{pql+p-q}{-l+1}
  \] 
  and there will be $2\gcd(pql+p-q,l-1)$ dividing curves. 
  Now using $\psi^{-1}$ we can express the slope under the Seifert coordinates $\mathcal{C}_1$; $(-l+1, pql+p-q)\mapsto (-l+1, pq+p-q)$. Relabel $l-1$ as $k$, we have
  \[
    \frac{-pq-p+q}{k} = \frac{|pq|-|p|-|q|}{k}
  \]
  for $k \geq 0$. If $\gcd(|pq|-|p|-|q|,k)=1$, we can identify $\xi|_{C'}$ with an element of $\Tight_0(C;e_k)$. The argument in \cite[Lemma~3.3]{EtnyreLaFountainTosun12} shows that any convex torus in $C'$ that is parallel to $\partial C'$ is contact isotopic to $\partial C'$ when $\gcd(|pq|-|p|-|q|,k) = 1$. Now $C\setminus C'$ is $T^2\times [0,1]$ and $\xi$ on this thickened torus is an element of $\Tight^{min}(T^2\times [0,1]; s, e_k)$. If $s\not\in (e_k^a, e_k]$, then as shown in \cite[Proposition~3.10]{EtnyreLaFountainTosun12}, we may find bypasses for $\partial V_1$ or $\partial V_2$. We can continue to thicken these tori until we again have the annulus $A$ having no bypasses. In which case we will be in Case 1, 2, or 3 again, but if in Case 1, the new dividing slope $s_{k'}$ on $\partial C'$ will be larger than $e_k$. We may continue as above, until either $s\in (e_k^a, e_k]$, and we have a splitting as in the theorem, or we are in Case 2 or 3.
  
  If $\gcd(|pq|-|p|-|q|, k) > 1$, $\partial C'$ will have more than two dividing curves. Then as indicated in \cite[Remark~3.8]{EtnyreLaFountainTosun12} and since $\partial C$ has fewer dividing curves, we can thicken $V_1$ and $V_2$ and, as above, come back to Case~1, 2, or~3 and eventually find the desired splitting. 

  We note that the contact structure on $C$ is obtained by taking standard neighborhoods $V_1$ and $V_2$ of knots with Thurston-Bennequin invariants $-pk-1$ and $-qk-1$ and attaching an $I$ invariant neighborhood $N(A)$ of $A$. The tight contact structure will be denoted by $\xi'_k$ and has the property that any convex torus in $C$ parallel to the boundary has dividing slope $e_k$. This follows an argument identical to that of the proof of \cite[Lemma~3.3]{EtnyreLaFountainTosun12}, which we briefly recall for the convenience of the reader. This lemma is about positive torus knots, but the same argument works for negative torus knots. First, $C$ has a $|pq|$-fold cover $\widetilde C$ unwrapping the meridian $|pq|$-times which is diffeomorphic to $S^1\times \Sigma$ where $\Sigma$ is a Seifert surface for the $(p,q)$-torus knot. Using the similar argument in \cite[Lemma~3.3]{EtnyreLaFountainTosun12}, one can show that the pullback contact structure $\widetilde{\xi}_k$ on $\widetilde{C}$ can be isotoped so that the $S^1$-fibers are all Legendrian with twisting number $(pq(k + 1) + p - q)$ (relative to the framing on the fibers coming from the product structure $S^1\times \Sigma$). One can show that any Legendrian knot in $(\widetilde{C},\widetilde{\xi}_k)$ which is smoothly isotopic to a $S^1$-fiber of $S^1\times \Sigma$ must have twisting number less than or equal to $(pq(k + 1) + p - q)$, see \cite[Lemma~3.6]{Giroux01}. Now, suppose that there is a convex torus $T'$ in $C$ that has dividing slope larger than $e_k$. From our discussion above that means we can find another torus in $C$ that has dividing slope $e_{l}$, for some $l<k$, and hence $(C,\xi_{l})$ embeds in $(C,\xi_k)$ but then in the $pq$-fold cover of $(C,\xi_k)$ we have the $pq$-fold cover of $(C,\xi_l)$ and hence an $S^1$-fiber with twisting number $(pq(l+1)+p-q)$ which contradicts the claim above.

  \noindent{\bf  Case 2. $\mathbf{|q - pn| = |pn_2 - qm_2|}$ or $\mathbf{|qm_1 - pn_1| = |p - qm|}$.} As in the proof of Lemma~\ref{lem:thickening-positive}, we can see there are no solutions to these equations.    

  \noindent{\bf  Case 3. $\mathbf{|qm_1 - pn_1| = |-qm_2 + pn_2|}$.} As in the proof of Lemma~\ref{lem:thickening-positive}, we can see that the only solution is $(m_1,n_1) = (p'',q'')$ and $(m_2,n_2) = (p',q')$ and the boundary of $V_1\cup N(A) \cup  V_2$ has $2$ dividing curves with slope $\infty$ after edge rounding.
\end{proof}

The next lemma tells more about the exceptional contact structures $\xi'_k\in \Tight_0(C,e_k)$.

\begin{lemma}\label{extraisUT}
  The contact structures $\xi_k' \in \Tight_0(C,e_k)$ from Lemma~\ref{lem:thickening-negative} are universally tight and remain so after gluing any amount of convex torsion on $T^2\times [0,1]$ to $(C,\xi_k')$. 
\end{lemma}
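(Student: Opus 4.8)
The plan is to show that $\xi_k'$ is assembled entirely from universally tight pieces glued along incompressible convex surfaces whose dividing sets have no contractible or boundary-parallel components, so that universal tightness is preserved at each gluing. First I would recall the construction of $\xi_k'$ from the proof of Lemma~\ref{lem:thickening-negative}: it is built by attaching an $I$-invariant neighborhood $N(A)$ of the vertical annulus $A$ to the standard neighborhoods $V_1$ and $V_2$ of Legendrian knots with Thurston-Bennequin invariants $-pk-1$ and $-qk-1$. Cutting $C$ along the essential vertical annulus $A$ produces two solid tori $W_1, W_2$, where each $W_i$ is an $I$-invariant thickening of the standard neighborhood $V_i$ and is therefore again a universally tight solid torus with two dividing curves. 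The defining property of $\xi_k'$---that every convex torus parallel to $\partial C$ has dividing slope $s_k$, equivalently that there is no $0$-twisting vertical Legendrian curve in $C$---guarantees that the dividing set of the convex annulus $A$ consists of two arcs running from one boundary component to the other, with no boundary-parallel arcs. Since $A$ is incompressible in $C$ and its dividing set has no trivial components, the gluing results of Colin \cite{Colin97} for universally tight contact structures along incompressible convex surfaces show that $(C,\xi_k')$ is universally tight.

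For the second assertion, I would glue a convex Giroux torsion layer $(T^2\times[0,1],\xi^n)$ onto $(C,\xi_k')$ along $\partial C$. By Theorem~\ref{twolayers} this torsion layer is itself universally tight, and because $\xi_k'$ is non-rotative both faces of the torsion layer carry two dividing curves of slope $s_k$, so the gluing is along the incompressible torus $\partial C$ with dividing set two essential parallel curves of slope $s_k$. The same gluing result then shows the enlarged contact manifold is universally tight. Taking the layer with arbitrary $n\in\frac12\N$ gives the statement for any amount of convex Giroux torsion.

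The main obstacle will be verifying the hypotheses of the gluing step carefully rather than the underlying topology: gluing universally tight pieces can in general produce virtually overtwisted structures, and adding Giroux torsion is precisely the standard mechanism for destroying fillability, so one must be certain that no finite cover develops an overtwisted disk. The two points to nail down are the incompressibility of $A$ and of $\partial C$ in the relevant manifolds, and the fact that the dividing curves on these surfaces are genuinely essential; here the non-rotative property of $\xi_k'$, i.e.\ the constancy of the boundary-parallel dividing slope $s_k$, is exactly what rules out the contractible dividing curves that would otherwise allow an overtwisted bypass in a cover. An equivalent and perhaps cleaner way to organize the argument is to pass directly to finite covers: any putative overtwisted disk is compact and hence lives in a finite cover, which decomposes into finite covers of the tight pieces $W_1$, $W_2$ and of the torsion layer---all of which remain tight since each piece is universally tight---glued along incompressible tori with essential dividing curves, so tightness of every finite cover, and hence universal tightness, follows.
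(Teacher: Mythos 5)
There is a genuine gap at the central step. The principle you invoke --- that gluing universally tight pieces along an incompressible convex surface whose dividing set has no contractible or boundary-parallel components automatically yields a universally tight result --- is not a theorem, and it is false in exactly the setting you need. Two basic slices of opposite sign glued along their common convex boundary torus give the standard counterexample: each piece is universally tight, the torus is incompressible, its two dividing curves are essential, and yet the union is overtwisted (this is how the paper detects looseness throughout, e.g.\ in Proposition~\ref{wings}). Colin's gluing theorem applies to incompressible \emph{pre-Lagrangian} tori, not convex ones, and for convex gluing surfaces the Honda--Kazez--Mati\'c machinery only gives a criterion to be verified, not an automatic conclusion. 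Your closing ``cleaner'' reformulation has the same hole: an overtwisted disk in a finite cover need not lie inside one of the pieces --- controlling disks that cross the gluing surface is precisely the content of a gluing theorem, so ``tightness of every finite cover follows'' is a non sequitur. You acknowledge the danger (``gluing universally tight pieces can in general produce virtually overtwisted structures'') but the conditions you then check (incompressibility, essential dividing curves) do not rule it out, as the basic-slice example shows. The same objection applies to your second paragraph, where you glue the torsion layer along the \emph{convex} torus $\partial C$.

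The paper's proof avoids this by working directly in covers rather than gluing formally: following \cite[Lemma~3.3]{EtnyreLaFountainTosun12}, one passes to the $|pq|$-fold cyclic cover $\widetilde C\cong S^1\times\Sigma$ ($\Sigma$ a Seifert surface), isotopes the pulled-back structure so that all $S^1$-fibers are Legendrian with twisting number $pq(k+1)+p-q$, and shows this twisting is maximal among Legendrian curves smoothly isotopic to a fiber; since an overtwisted structure admits Legendrian knots of arbitrarily large twisting, every finite cover is tight. For the torsion statement, the paper compares $\xi_k'$ in a common finite cover with the complement $C_+$ of the maximal Thurston--Bennequin positive torus knot in $(S^3,\xi_{std})$, which is known to stay tight under adding convex Giroux torsion by \cite{EtnyreVelaVick10}; the two pullbacks are isotopic because both are fibered by Legendrian circles of equal twisting, and one then concludes with \cite[Theorem 4.7]{HondaKazezMatic02} using that a basic slice is rotative. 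If you want to salvage a gluing-style argument for the torsion layer alone, you would at least need to isotope the gluing torus to a pre-Lagrangian one inside the torsion layer before citing Colin --- but that still presupposes universal tightness of $(C,\xi_k')$, which your first step does not establish.
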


\begin{proof}
  To prove $\xi_k'$ is universally tight, we recall the proof of \cite[Lemma~3.3]{EtnyreLaFountainTosun12}.  We continue our discussion of the $pq$-cover of $(C,\xi_k')$ from just before the start of Case~2. The bound on the twisting of any fiber of $(\widetilde{C}, \widetilde{\xi'_k})$ implies that the contact structure on $\widetilde C$ is tight (recall if a contact structure is overtwisted, there is no upper bound on the twisting number of any smooth knot type). Any further finite cover of $\widetilde C$ will be diffeomorphic to $S^1\times \Sigma'$ for some surface $\Sigma'$ and the $S^1$-fibers can all be made to be Legendrian with some fixed twisting number. Thus we see that they will also have to be tight. Since any finite cover of $(\widetilde{C},\widetilde{\xi}_k)$ is tight, we see that $(C,\xi_k')$ must be universally tight. 

  We now show that $(C,\xi_k')$ remains tight after one adds convex torsion. To this end, let $(C_+,\xi)$ be the contact structure on the complement of the unique Legendrian representative of the $(p,|q|)$-torus knot in $(S^3,\xi_{std})$ with maximal Thurston-Bennequin invariant. Then $(C_+,\xi)$ is universally tight and it remains tight after convex torsion is added, see \cite{EtnyreVelaVick10}. Moreover, its $|pq|$-fold cover $\widetilde C_+\cong S^1\times \Sigma$ is also foliated by Legendrian curves with twisting $-(p+|q|)$. There is a further cover of $\widetilde C_+$ and $\widetilde C$ such that they are both $S^1\times \Sigma$ and both foliated by Legendrian curves with the same twisting number. This implies that the contact structures on this common cover are isotopic (notice that one may cut $S^1\times \Sigma$ open along some annuli to get a contact structure on a solid torus with a unique contact structure, see the proof of \cite[Lemma~3.3]{EtnyreLaFountainTosun12}). Now if one adds a basic slice to $(C,\xi_k')$ and an appropriate contact structure on $T^2\times [0,1]$ to $(C_+,\xi)$, then when pulled back to the common cover, the contact structures will again be the same and hence $(C,\xi_k')$ with a basic slice attached will be universally tight. Since a basic slice is rotative, we can apply \cite[Theorem 4.7]{HondaKazezMatic02}) and say that adding convex torsion to $(C,\xi_k')$ results in a universally tight contact structure.
\end{proof}

\subsubsection{Minimally twisting contact structures on \texorpdfstring{$C$}{C} with integral dividing slope}
Here we consider the complements of non-loose Legendrian knots with a focus on those with no convex torsion in their complement, though our first result does apply to those as well. 

\begin{lemma}\label{lem:<pq}
  If $L$ is a non-loose Legendrian $(p,q)$-torus knot in an overtwisted contact structure on $S^3$ with $\tb(L)<pq$, then $L$ destabilizes. 
\end{lemma}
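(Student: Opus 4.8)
The plan is to show that the standard neighborhood $N(L)$ can be thickened across a single basic slice, so that $N(L)$ sits inside the standard neighborhood of a Legendrian knot $L'$ with $\tb(L')=\tb(L)+1$ and $L=S_\pm(L')$. Since $L$ is non-loose, $C=S^3\setminus N(L)$ is tight with convex boundary, and in the Seifert coordinates $\mathcal{C}_1$ on $-\partial C=\partial N(L)$ the dividing slope is $s=\tb(L)<pq$. Recall from Section~\ref{knotcomp} that the regular fiber has slope $pq$ and the meridian has slope $\infty$, and that dividing slopes of convex tori parallel to $\partial C$ move clockwise as one goes into $C$, so that $pq$ lies clockwise of $s$. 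It therefore suffices to produce a convex torus $T^\ast$ in $C$, parallel to $\partial N(L)$, with dividing slope $s^\ast$ satisfying $s<s^\ast\leq pq$: the region between $\partial N(L)$ and $T^\ast$ is then a tight, minimally twisting $T^2\times[0,1]$ whose minimal path runs clockwise from $s$ to $s^\ast$. By Lemma~\ref{adjacentvertices} the clockwise neighbor of the integer $s$ is $(s)^c=s+1$, and since $s<pq$ this vertex lies on that minimal path. Hence the thickened region contains a convex torus of slope $s+1$ cobounding a basic slice with $\partial N(L)$, and gluing this basic slice to $N(L)$ produces the desired $N(L')$.

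To produce $T^\ast$ I would mimic the thickening arguments in the proofs of Lemmas~\ref{lem:thickening-positive} and~\ref{lem:thickening-negative}, but pushing toward the fiber slope rather than toward $\infty$. Write $C=V_1\cup N(A)\cup V_2$ as in Section~\ref{knotcomp}, take $V_1,V_2$ to be standard neighborhoods of the singular fibers with very negative twisting, and make $\partial V_1$, $\partial V_2$, and $A$ convex with Legendrian boundary ruling curves. If some dividing curve on $A$ is boundary parallel, the Imbalance Principle yields a bypass along $\partial V_1$ or $\partial V_2$, allowing us to thicken one of the $V_i$; iterating, we may assume the dividing curves on $A$ all run straight across, which is precisely the balanced situation of Cases~1--3 in those lemmas. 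The cleanest outcome is that along the way we find a $0$-twisting vertical Legendrian curve, i.e.\ a convex torus of slope exactly $pq$, which we may take as $T^\ast$; since $s<pq$ this immediately gives the splitting of the previous paragraph.

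The hard part will be ruling out the possibility that the thickening process overshoots the fiber slope — landing on a convex torus of slope $\infty$ (Case~3) or, in the negative case, on one of the exceptional slopes $s_k$ of Lemma~\ref{lem:thickening-negative} — since a $T^2\times[0,1]$ from $s$ directly to $\infty$ could be a single basic slice containing no slope-$(s+1)$ torus. This is exactly where the hypothesis $\tb(L)<pq$ must be used: I would argue that the maximal twisting number of a vertical Legendrian fiber in $(C,\xi)$ is forced to be $0$ whenever the boundary slope is strictly below the fiber slope $pq$ (so that the relevant convex torus has slope precisely $pq$ and not further clockwise), and that the exceptional slopes $s_k$ all satisfy $s_k>pq$ for negative knots, hence cannot obstruct a one-step thickening from $s<pq$. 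In other words, below the fiber slope the thickening stabilizes at slope $pq$ rather than at $\infty$, and the remaining content is the convex-surface bookkeeping — Legendrian realization of a vertical curve, edge-rounding, and the sign of the resulting basic slice — to certify that $N(L)\cup(\text{basic slice})$ really is a standard neighborhood of a Legendrian $L'$ with $\tb(L')=\tb(L)+1$, giving $L=S_\pm(L')$.
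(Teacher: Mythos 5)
Your proposal is correct and follows essentially the same route as the paper's proof: reduce destabilization to finding a convex torus of slope $\tb(L)+1$ parallel to $\partial N(L)$, and produce it by running the thickening arguments of Lemmas~\ref{lem:thickening-positive} and~\ref{lem:thickening-negative}, using $\tb(L)<pq$ to guarantee that the fiber slope $pq$ (or the terminal slopes $\infty$ and $s_k$, which lie clockwise of it) is realized, so that the intervening $T^2\times I$ contains a convex torus of slope $\tb(L)+1$. The paper additionally dispatches the easy case $\tor(L)>0$ in one line and dismisses the exceptional $s_k$-structures by noting they already occur in $(S^3,\xi_{std})$, but these are cosmetic differences from what you wrote.
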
 

\begin{proof}
By Lemma~\ref{seestab}, it is sufficient to find a convex torus parallel to the boundary of the knot complement with dividing slope one larger than the $\tb$ of the knot. The contact structure on the complement $C = S^3 \setminus N(L)$ is in $\Tight_i(C;\tb(L))$ for $i \geq 0$. If $i>0$, we can clearly destabilize $L$. If $i=0$, a destabilization of $L$ would correspond to finding $C' \subset C$, which is diffeomorphic to $C$ such that $\partial C'$ is convex with dividing slope $\tb(L)+1$. If there is a $0$-twisting vertical Legendrian in $C$, we can clearly find such $C'$. Assume there is no $0$-twisting vertical Legendrian in $C$. For negative torus knots, the argument in the proof of Lemma~\ref{lem:thickening-negative} shows that all contact structures in $\Tight_0(C;\tb(L))$ contain $(C',\xi')$ or $(C',\xi'_k)$, and the dividing slope of $\partial C'$ is $\infty$ or $e_k > pq$. For positive torus knots, the same argument in the proof of Lemma~\ref{lem:thickening-negative} works, and all contact structures in $\Tight_0(C;\tb(L))$ contain some $C'$, whose boundary slope is $\infty$ or $e_k < pq$. It is known that all of these contact structures with boundary slope $e_k$ exist in $(S^3,\xi_{std})$, see \cite[Section~3.1]{EtnyreLaFountainTosun12}, and hence are not of concern when studying non-loose Legendrian knots. 
\end{proof}

\begin{lemma}\label{inftyclass}
  Let $n(p,q)$ be the number of tight contact structures on $L(p,-q) \# L(q, -p)$ and $C$ the complement of the $(p,q)$-torus knot. Then we have 
  \[
    \left| \Tight_0(C;\infty)\right|=n(p,q).
  \]
\end{lemma}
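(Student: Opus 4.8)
The plan is to decompose a contact structure in $\Tight_0(C;\infty)$ along the topological model $C = V_1 \cup N(A) \cup V_2$ of Section~\ref{knotcomp}, reduce the count to a product of tight solid torus counts, and then reinterpret those as lens space counts. First I would show that every $\xi \in \Tight_0(C;\infty)$ can be isotoped into a standard form by running the thickening argument from the proofs of Lemmas~\ref{lem:thickening-positive} and~\ref{lem:thickening-negative}: make $\partial V_1$ and $\partial V_2$ convex with $(p,q)$ ruling curves, make the vertical annulus $A$ convex, and use the Imbalance Principle to attach bypasses to $\partial V_1$ or $\partial V_2$ until every dividing curve of $A$ runs from one boundary component to the other. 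Since the boundary slope is $\infty$, Case~3 of those proofs is the only possibility (Cases~1 and~2 either produce a boundary slope different from $\infty$ or have no solution), so $V_1$ acquires dividing slope $(q/p)^a$, $V_2$ acquires dividing slope $(q/p)^c$, and the region $N(A)$ carries the unique product (that is, $I$-invariant) structure. Thus $\xi$ is recovered from a tight contact structure on the solid torus $V_1$ (meridian $\infty$) with boundary slope $(q/p)^a$ together with one on $V_2$ (meridian $0$) with boundary slope $(q/p)^c$.

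Next I would upgrade this to a bijection $\Tight_0(C;\infty) \to \Tight(S_\infty;(q/p)^a) \times \Tight(S^0;(q/p)^c)$ given by $\xi \mapsto (\xi|_{V_1}, \xi|_{V_2})$. Injectivity follows because a tight contact structure on a solid torus is determined by its relative Euler class, so the pair of restrictions determines $\xi$ up to isotopy, exactly as in the argument for Lemma~\ref{lem:rotdiff}; the product region $N(A)$ contributes no additional invariant. For surjectivity I would glue an arbitrary pair to the product structure on $N(A)$ and argue tightness: each such glued manifold is universally tight and embeds in a tight lens space connected sum, so no overtwisted disk can appear, reasoning as in Lemma~\ref{extraisUT}.

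Finally I would compute the two factors using Remark~\ref{lensandtorus}. Taking $r=\infty$ and $s=q/p$ in that remark gives $|\Tight(S_\infty;(q/p)^a)| = |\Tight(L_\infty^{q/p})| = |\Tight(L(p,-q))|$, and taking $r=q/p$ and $s=0$ gives $|\Tight(S^0;(q/p)^c)| = |\Tight(L_{q/p}^0)| = |\Tight(L(q,-p))|$, where I have used the identifications $L_\infty^{-q/p}=L(p,q)$ and $L_{-p/q}^0 = L(p,q)$ from Section~\ref{sec:lens}. Since tight contact structures on a connected sum are in bijection with pairs of tight contact structures on the summands, we get $|\Tight(L(p,-q)\#L(q,-p))| = |\Tight(L(p,-q))| \cdot |\Tight(L(q,-p))|$, which is $n(p,q)$ by definition. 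Combining the three steps yields $|\Tight_0(C;\infty)| = n(p,q)$.

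The hardest part will be the structural input of the first two steps. Establishing that the standard form is truly forced requires ruling out the configurations coming from a $0$-twisting vertical Legendrian curve, which a priori allow a convex torus of slope $pq$ and hence a competing splitting; and verifying surjectivity requires showing that every glued pair produces a genuinely tight structure rather than only a candidate. Both points lean on the full convex-surface analysis of Lemmas~\ref{lem:thickening-positive}--\ref{lem:thickening-negative} together with a universal-tightness argument in the spirit of Lemma~\ref{extraisUT}, and carrying these out carefully is where the real work lies.
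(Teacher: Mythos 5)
Your overall strategy is the same as the paper's: put every element of $\Tight_0(C;\infty)$ into the standard form $V_1\cup N(A)\cup V_2$ with dividing slopes $(q/p)^a$ and $(q/p)^c$ and an $I$-invariant $N(A)$, and match the count with $|\Tight(S_\infty;(q/p)^a)|\cdot|\Tight(S^0;(q/p)^c)|=n(p,q)$ via Remark~\ref{lensandtorus}. But there are genuine gaps. First, you never establish the lower bound, i.e.\ that distinct pairs $(\eta,\eta')$ glue up to non-isotopic contact structures on $C$. Your ``injectivity'' argument runs in the wrong direction: showing that the pair of restrictions determines $\xi$ yields (together with the existence of the standard form) only the upper bound, and does not rule out that two different pairs become isotopic on $C$ through an isotopy that does not respect the splitting --- this is precisely what could make the assignment $\xi\mapsto(\xi|_{V_1},\xi|_{V_2})$ ill-defined. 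The paper resolves this by Dehn filling $C$ along the $pq$-slope with the unique tight solid torus: the filling is $L(p,-q)\# L(q,-p)$, the filled structure decomposes along a sphere into the tight lens space structures determined by $\eta$ and $\eta'$, and Colin's one-to-one correspondence between tight structures on a connected sum and pairs of tight structures on the summands gives simultaneously that the filling is tight (hence the glued structure on $C$ is tight, your surjectivity) and that distinct pairs give distinct structures. You invoke Colin's theorem only to compute $n(p,q)$ as a product, not to distinguish the structures.

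Second, your tightness argument asserts that ``each such glued manifold is universally tight,'' which is false: most of the relevant contact structures on $V_1$ and $V_2$ are virtually overtwisted, so the glued structure on $C$ is not universally tight, and Lemma~\ref{extraisUT} (which concerns the exceptional universally tight structures $\xi_k'$) does not apply. The salvageable part of that sentence is the embedding into a tight manifold, but that requires first proving that the Dehn filling is tight via the connected-sum decomposition just described, which you do not carry out. A smaller point: ruling out the configurations containing a $0$-twisting vertical Legendrian curve is not merely a rerun of Lemmas~\ref{lem:thickening-positive} and~\ref{lem:thickening-negative}; one needs the additional input that attaching a basic slice in $\Tight^{min}(T^2\times[0,1];\infty,pq)$ to a $2$-consistent element of $\Tight_0(C;pq)$ produces an overtwisted structure (Remark~\ref{nothickot}), and that in the remaining case one locates a convex half Giroux torsion layer, contradicting membership in $\Tight_0(C;\infty)$.
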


\begin{proof}
We begin by proving $ \left| \Tight_0(C;\infty)\right|\leq n(p,q)$.  

Recall that our model for $C$ from Section~\ref{knotcomp} is $V_1\cup N(A)\cup V_2$, that is the union of two solid tori $V_1$ and $V_2$ and a neighborhood $N(A)=A\times [-1,1]$ of an annulus. In the proofs of Lemmas~\ref{lem:thickening-positive} and~\ref{lem:thickening-negative}, we saw that any contact structure $\xi\in \Tight_0(C,\infty)$ either 

  \begin{enumerate}
    \item contains a $0$-twisting vertical Legendrian curve, or
    \item $pq>0$ and there is a subset $C'$ of $C$ diffeomorphic to $C$ such that $C\setminus C'$ is $T^2\times [0,1]$, $\xi|_{C'}$ is in $\Tight_0(C, e_k)$ and $\xi|_{T^2\times [0,1]}$ is in $\Tight^{min}(T^2\times [0,1]; \infty, e_k)$ where $e_k={(pq-p-q)/k}$ and $\gcd(k, pq-p-q)=1$, and similarly for $pq<0$, or 
    \item $\xi|_{V_1}\in \Tight(V_1, (q/p)^a)$, $\xi|_{V_2} \in \Tight(V_2, (q/p)^c)$, and on $N(A)$ is an $I$-invariant neighborhood of a convex annulus $A$ with two dividing curves. 
  \end{enumerate} 

  Given $\xi\in \Tight_0(C;\infty)$, suppose that there exists a $0$-twisting vertical Legendrian curve in $C$. Then there is a copy $C'$ of $C$ sitting inside $C$ such that $C\setminus C'=T^2\times [0,1]$, $\xi|_{C'}\in \Tight_0(C;pq)$, and $\xi_{T^2\times [0,1]}\in \Tight^{min}(T^2\times[0,1]; \infty, pq)$. If all convex tori in $C'$ parallel to the boundary have dividing slope $pq$, then $\xi$ is overtwisted by Remark~\ref{nothickot} below. If $\xi|_{C'}$ contains tori parallel to the boundary with slopes different from $pq$, then by Lemmas~\ref{lem:thickening-positive} and~\ref{lem:thickening-negative}, it contains $C''$ with $\partial C''$ having dividing slope $\infty.$ Thus $C\setminus C''$ will be a convex torsion layer. That is, $\xi$ is not in $\Tight_0(C;\infty)$. So we know that $(C,\xi)$ does not contain a $0$-twisting vertical Legendrian curve.

  Now suppose that $\xi$ is of the form given in Item~(2), we will show that it is also of the form given in Item~(3) so that we will know that all $\xi \in \Tight_0(C;\infty)$ are of the form given in Item~(3). To this end we recall that there is an annulus $A'$ in $C$ with boundary slope $pq$ curves on $\partial C$, such that if we remove a neighborhood of $A'$ from $C$ we are left with $V_1$ and $V_2$, see Section~\ref{knotcomp}.  We can assume the ruling curves on $\partial C$ are $pq$ curves and $\partial A'$ consists of two ruling curves. Since the dividing slope of $\partial C$ is $\infty$ we know that each component of $\partial A'$ intersects the dividing set twice. Note that when we make $A'$ convex it must have two dividing curves that each run from one boundary component to the other, since if not we could Legendrian realize the core curve of $A'$ giving a $0$-twisting vertical Legendrian curve in $C$ but we have already ruled this out above. Now consider cutting $C$ along $A'$ and rounding corners. The result will be the solid tori $V_1$ and $V_2$. Notice that the $pq$ curve on $\partial C$ with respect to the coordinate system $\mathcal{C}_1$ will become the $q/p$ curve on $V_1$ and $V_2$ with respect to the coordinate system $\mathcal{F}_1$; moreover, we know the dividing curves will intersect the $q/p$ curves at most two times. In fact, it must be two times, since if not we again could Legendrian realize a $0$-twisting vertical Legendrian curve in $C$. Thus we know that the slope of the dividing curves has an edge in the Farey graph to the $(p,q)$-curve. We claim that we can assume that the slope on $\partial V_1$ is $(q/p)^a$ and on $\partial V_2$ is $(q/p)^c$.  To see this suppose that the slope is $r$ on $\partial V_1$, then inside of $V_1$ we can realize tori with dividing slopes larger than $-\infty$ and less than $r$. Thus there is a torus in $V_1$ parallel to the boundary with dividing slope $(q/p)^a$. Similarly we can take the dividing slope on $\partial V_2$ to be $(q/p)^c$. Now consider the convex annulus $A$ used in the proof of Lemma~\ref{lem:thickening-positive} that connects $\partial V_1$ to $\partial V_2$. We know that the dividing curves on $A$ must run from one boundary component to the other or else we could Legendrian realized the core curve of $A$ giving a $0$-twisting vertical Legendrian curve in $C$ which we have ruled out above. Thus as in the proof of Case~3 in the proof of Lemma~\ref{lem:thickening-positive} we see that the boundary of $C'=V_1\cup N(A)\cup V_2$ is convex with dividing slope $\infty$ and $C\setminus C'$ must be an $I$ invariant contact structure on $T^2\times I$. 

  So all the contact structures $\xi\in \Tight_0(C;\infty)$ are as described in Item~(3). Thus the number of contact structures in $\Tight_0(C;\infty)$ is bounded above by the number of tight contact structures on $(V_1; (q/p)^a)$ times the number of tight contact structures on $(V_2; (q/p)^c)$. Recall that when discussing slopes on $\partial V_1$ or $\partial V_2$ we are using longitude-meridian coordinates coming from $V_1$ and as such $V_1$ has lower meridian $-\infty$ and $V_2$ has upper meridian $0$ (see Section~\ref{oldclassification} for terminology). Thus as noted in Lemma~\ref{lensandtorus} we know 
  \[
    |\Tight(L_\infty^{q/p})|=|\Tight(S_\infty; (q/p)^a)| \text{ and } |\Tight(L_{q/p}^0)|=|\Tight(S^0;(q/p)^c)|. 
  \]
  From Section~\ref{oldclassification} we also know that $L_{q/p}^0=L(q,-p)$ and $L_\infty^{q/p}=L(p,-q)$. Thus we see that 
  \[
    n(p,q)=|\Tight(S_\infty; (q/p)^a)|\cdot|\Tight(S^0;(q/p)^c)|
  \] 
  is an upper bound on the number of contact structures in $\Tight_0(C;\infty)$. 

We now prove that $ \left| \Tight_0(C;\infty)\right|\geq n(p,q)$. To this end we first consider a topological Dehn filling. That is, if we glue a solid torus $S$ to $C$ so that the meridian goes to a $pq$ curve on $\partial C$ then the result is $L(p,-q)\#L(q,-p)$. The reason for this is that one can use two meridian disks in $S$ to cap off the boundary components of $A'$ to get a $2$-sphere that will divide the resulting manifold into two pieces. Once piece $V_1'$ is $V_1$ with a $2$-handle $h_1$ attached along the $q/p$ curve and the other $V_2'$ is $V_2$ with a $2$-handle attached $h_2$ to $V_2$ along the $q/p$ curve (notice $S$ minus the two meridian disks can be thought of as two $2$-handles, $h_1\cup h_2$). Now, if one glues a $3$-ball to each of these pieces, one gets the claimed lens spaces. 

  Now we return to contact geometry. Given a contact structure  $\eta\in \Tight(V_1, (q/p)^a)$ and $\eta' \in \Tight(V_2, (q/p)^c)$ we can connect them with an $I$ invariant contact structure on $N(A)=A\times I$ to get a contact structure on $C$. We first show that this is indeed tight. To this end, we Dehn fill $C$ with the solid torus $S$ as above. Notice that the solid torus glued in has meridional slope $pq$, and we are gluing it to a convex torus with dividing slope $\infty$. Thus, there is a unique tight contact structure on $S$ with these boundary conditions. We claim this contact structure on $L(p,-q)\#L(q,-p)$ is tight, and hence the one on $C$ must have been too. To see this, notice that each of the $2$-handles $h_1$ and $h_2$ can be thought of an $I$-invariant neighborhood of a meridian disk. Note that gluing a tight $3$-ball to $V_1'=V_1\cup h_1$ is the same as gluing a tight solid torus to $V_1$.  The meridian of this torus goes to the $q/p$ curve. In other words, one gets the tight contact structure on $L(p,-q)$ determined by the path in the Farey graph corresponding to the path determined by the contact structure $\eta$ on $V_1$. Similarly, we get a tight structure on $L(q,-p)$ determined by $\eta'$. Thus, the contact structure on the Dehn filling of $C$ is indeed a tight contact structure on $L(p,-q)\#L(q,-p)$. Colin \cite{Colin97} showed there is a one-to-one correspondence between tight contact structures on $M_1\# M_2$ and pairs of contact structures, one on $M_1$ and one on $M_2$. Thus, the lower bound is $n(p,q)$. 
\end{proof}

\begin{lemma}\label{lem:>pq}
  Let $n(p,q)$ be the number of tight contact structures on $L(p,-q) \# L(q, -p)$ then for the complement $C$ of the $(p,q)$-torus knot and any integer $n>pq$, we have 
  \[
    |\Tight_0(C; n)|=2n(p,q),
  \]
  unless $pq<0$ and $n = |pq| - |p| - |q|$, in which case we have 
  \[
    |\Tight_0(C;n)|=2n(p,q) + 1.
  \]
\end{lemma}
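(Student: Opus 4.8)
The plan is to reduce the count to the already-established value $|\Tight_0(C;\infty)|=n(p,q)$ of Lemma~\ref{inftyclass} by peeling a single basic slice off the boundary. The first observation is that, since $n$ is an integer, $n$ and $\infty$ are joined by an edge in the Farey graph, so $\Tight^{min}(T^2\times[0,1]; n, \infty)$ consists of exactly two contact structures, the positive and negative basic slices. The heart of the argument is then to show that in the generic case the gluing map
\[
  \Tight_0(C;\infty) \times \Tight^{min}(T^2\times[0,1]; n, \infty) \longrightarrow \Tight_0(C;n), \qquad (\xi',\xi'')\mapsto \xi'\cup\xi''
\]
is a bijection, which immediately yields $|\Tight_0(C;n)|=2\,n(p,q)$.

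Surjectivity is exactly the content of the thickening lemmas: for $pq>0$ every $\xi\in\Tight_0(C;n)$ splits as $\xi'\cup\xi''$ with $\xi'\in\Tight_0(C;\infty)$ and $\xi''$ a minimally twisting (hence, by Farey-adjacency, basic) slice, by Lemma~\ref{lem:thickening-positive}; for $pq<0$ the same holds by Lemma~\ref{lem:thickening-negative} together with Remark~\ref{mostZthicken}, which guarantees that an integer $n>pq$ with $n\ne |pq|-|p|-|q|$ never lies in any interval $(s_k^a,s_k]$ and so falls into the first (generic) case of that lemma. For injectivity I would recover the pieces from $\xi$: because $\xi$ has no convex Giroux torsion, the convex torus of slope $\infty$ is unique up to isotopy, so restricting to the inner copy of $C$ returns $\xi'$ while the complementary layer returns $\xi''$, and the two basic slices are further distinguished by their signs (equivalently by their relative Euler classes, in the spirit of Lemma~\ref{lem:rotdiff}). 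That each glued structure genuinely lands in $\Tight_0(C;n)$ — tight and torsion-free — follows because the single Farey-adjacent basic slice introduces no new parallel dividing slopes; I would verify tightness either through Lemma~\ref{gluingtoriandthickened}, or by Dehn filling the slope-$n$ boundary and reducing to the tight connected sum $L(p,-q)\#L(q,-p)$ exactly as in the proof of Lemma~\ref{inftyclass}.

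It remains to treat the exceptional case $pq<0$ with $n=|pq|-|p|-|q|=s_1$. Here Lemma~\ref{lem:thickening-negative} shows that, besides the $2\,n(p,q)$ structures containing a convex torus of slope $\infty$ (which are counted by the bijection above), there is the distinguished structure $\xi'_1\in\Tight_0(C;s_1)$ in which \emph{every} convex torus parallel to the boundary has slope $s_1=n$. By Lemma~\ref{extraisUT} this $\xi'_1$ is universally tight, and since it contains no slope-$\infty$ torus it cannot be written as $\xi'\cup\xi''$, so it is genuinely new; I would argue it is unique (it is self-conjugate, consistent with the vanishing rotation number of the associated extra Legendrian $L_e$ from Theorem~\ref{thm:>pq}), accounting for exactly one extra contact structure and giving $|\Tight_0(C;n)|=2\,n(p,q)+1$.

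The step I expect to be the main obstacle is pinning down that the gluing map is a genuine bijection onto $\Tight_0(C;n)$: proving that gluing either basic slice preserves tightness and adds no Giroux torsion, and, for injectivity, that the slope-$\infty$ splitting torus is unique up to isotopy so that distinct pairs $(\xi',\xi'')$ cannot collapse. The uniqueness of this splitting rests squarely on the absence of Giroux torsion in $\Tight_0$, and the cleanest route to tightness is likely the Dehn-filling reduction to $L(p,-q)\#L(q,-p)$ already carried out in Lemma~\ref{inftyclass}.
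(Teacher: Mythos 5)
Your proposal is correct and follows essentially the same route as the paper: peel a single Farey-adjacent basic slice off the boundary, use Lemmas~\ref{lem:thickening-positive} and~\ref{lem:thickening-negative} (with Remark~\ref{mostZthicken}) for surjectivity of the gluing map onto $\Tight_0(C;n)$, verify tightness of the glued structures by Dehn filling to $L(p,-q)\#L(q,-p)$ as in Lemma~\ref{inftyclass}, and account for the one extra universally tight structure $\xi_1'$ at $n=|pq|-|p|-|q|$ by noting it contains no slope-$\infty$ torus. The only real divergence is in the injectivity step: you propose to recover the pair $(\xi',\xi'')$ from $\xi$ by arguing that the splitting torus of slope $\infty$ is unique up to contact isotopy (which, while plausible via the absence of torsion forcing any layer between two such tori to be non-rotative, requires a discretization-type argument you do not supply), whereas the paper sidesteps this entirely by a detection argument: it Dehn fills the slope-$n$ boundary with a universally tight solid torus of a fixed sign, so that gluing the matching basic slice yields a tight $L(p,-q)\#L(q,-p)$ while the opposite sign yields an overtwisted one, and then appeals to the classification of Lemma~\ref{inftyclass} to separate the $\Tight_0(C;\infty)$ factors. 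Note also that when $n=pq+1$ the filling solid torus has a unique tight structure, so the sign-detection trick degenerates and the paper falls back on exactly the relative Euler class computation (evaluated on a Seifert surface) that you mention parenthetically; you would need to make that case explicit for your argument to close.
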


\begin{proof}
We begin with positive $(p,q)$-torus knots. 

\noindent{\bf An upper bound.}   
By Lemma~\ref{lem:thickening-positive} and the proof of Lemma~\ref{inftyclass} we know that for any $n > pq$ and $\xi\in \Tight_0(C; n)$ there is a subset $C'$ of $C$ diffeomorphic to $C$ such that $\xi|_{C'}\in \Tight_0(C;\infty)$ and $C\setminus C'$ is $T^2\times [0,1]$ and $\xi|_{T^2\times [0,1]}\in \Tight^{min}(T^2\times [0,1]; n,\infty)$. Since $\Tight^{min}(T^2\times [0,1]; n,\infty)$ contains exactly two elements, we see that an upper bound on $|\Tight_0(C; n)|$ is $2n(p,q)$. 

\noindent{\bf A lower bound.}   
  We now show that the obvious map 
  \[
    \Tight_0(C;\infty)\times  \Tight^{min}(T^2\times [1,2]; n,\infty) \to \Tight_0(C; n)
  \] 
  is injective and thus $|\Tight_0(C; n)|=2n(p,q)$. We first show that it is well defined, that is given $\eta \in \Tight_0(C;\infty)$ and $\zeta\in  \Tight^{min}(T^2\times [1,2]; n,\infty)$ the contact structure $\eta\cup \zeta$ on $C\cong C\cup (T^2\times[0,1])$ is in $\Tight_0(C;n)$. Suppose that $\zeta$ is a positive basic slice. Recall in the proof of Lemma~\ref{inftyclass} we showed that the result of gluing a solid torus $S$ to $(C,\eta)$ with meridional slope $pq$ and extending $\eta$ by the unique tight contact structure on $S$, was a tight contact structure on $L(p,-q)\# L(q,-p)$. Notice that $S$ is a standard neighborhood of a Legendrian knot $L$ in $L(p,-q)\# L(q,-p)$. If we stabilize $L$ positively and let $S'$ be a standard neighborhood of that knot, then $S\setminus S'$ is a positive basic slice $\zeta'\in \Tight^{min}(T^2\times [0,2]; pq+1,\infty)$. We may find a convex torus $T=T^2\times \{1\}$ in $S\setminus S'$ with two dividing curves of slope $n$ and this torus splits $\zeta'$ into a $\zeta$ on $T^2\times [1,2]$ and some other contact structure on $T^2\times[0,1]$. Thus we see that $\eta\cup \zeta$ is sitting in the contact Dehn filling of $(C,\eta)$ which is tight. Thus $\eta\cup \zeta$ is tight. To see that it is actually in $\Tight_0(C;n)$ we note that it cannot contain a convex torus parallel to the boundary with dividing slope $pq$ or else the contact structure on $L(p,-q)\# L(q,-p)$ would not be tight. If $\zeta$ were a negative basic slice then we could similarly show $\eta\cup \zeta$ is in $\Tight_0(C;n)$ by negatively stabilizing $L$. 

  To see that the above map is injective, we take $\eta$ and $\eta'$ in $\Tight_0(C;\infty)$ and distinct $\zeta$ and $\zeta'$ in $\Tight^{min}(T^2\times [1,2]; n,\infty)$. Let $\zeta$ be the positive basic slice and $\zeta'$ the negative one. We first note that $\eta\cup \zeta$ and $\eta'\cup \zeta'$ must be distinct for any two $\eta$ and $\eta'$ (even if they are the same). To do this, we consider Dehn filling $C\cup (T^2\times [1,2])$ by a solid torus $S''$ with meridional slope $pq$. When $n>pq+1$, there are two ways we can extend the contact structures over the added torus as a universally tight contact structure (there is a unique contact structure on $S''$ when $n=pq+1$, we will discuss this case next). One extension will have only positive basic slices $\xi$, and the other extension will have all negative basic slices $\xi'$. Suppose we choose the one with positive basic slices, then on $(T^2\times [0,1])\cup S''$ the contact structure $\zeta\cup \xi$ is tight and $\zeta'\cup \xi$ is overtwisted. But $\zeta\cup \xi$ is simply the unique tight contact structure on the solid torus $S$ from the previous paragraph. That is $\eta\cup \zeta\cup \xi$ is a tight contact structure on $C\cup (T^2\times [0,1])\cup S''=L(p,-q)\# L(q,-p)$ while $\eta'\cup \zeta'\cup \xi$ is overtwisted. Thus $\eta\cup\zeta$ and $\eta'\cup \zeta'$ are distinct. 

  We are left to see that if $\eta\cup \zeta$ is isotopic to $\eta'\cup \zeta$ then $\eta$ is isotopic to $\eta'$ (and similarly for $\zeta'$).  This is clear since gluing $S''$ with the contact structure $\xi$ will result in the same contact structures on $L(p,-q)\# L(q,-p)$ and these will also be the result of Dehn filling $(C,\eta)$ and $(C,\eta')$. From the proof of Lemma~\ref{inftyclass} we know that this implies $\eta$ is isotopic to $\eta'$. (Notice this argument also works when $n=pq+1$.)

  We must now see that $\eta\cup \zeta$ and $\eta\cup \zeta'$ are not isotopic when $n=pq+1$. This follows as the relative Euler classes of these two contact structures evaluate differently on a $\Sigma$ where $\Sigma$ is a minimal genus Seifert surface for the $(p,q)$-torus knot in $C$. 

  We now turn to the case of negative torus knots $-q>p>0$. 
  
  If $n\not=|pq|-|p|-|q|$ then the argument is identical to the above argument using Lemma~\ref{lem:thickening-negative}, the proof of Lemma~\ref{inftyclass}, and Remark~\ref{mostZthicken}. When $n=|pq|-|p|-|q|$ then we get the upper bound of $2n(p,q)+1$ since Lemma~\ref{lem:thickening-negative} gave an extra contact structure on this case. From Lemma~\ref{lem:thickening-negative} we know that this extra contact structure $\xi$ is tight and any convex torus parallel to the boundary of $C$ has dividing slope $n$. Thus $\xi \in \Tight_0(C;n)$ and it is distinct from the other contact structures since they all contain convex tori parallel to the boundary with dividing slope $\infty$. 
\end{proof}

Before our next result we establish some notation:
\begin{align*}
  \mathcal{L}(p,q)=\{&\text{Legendrian realizations of $(p,q)$-torus knot with $\tb=pq$ in some contact}\\ 
  & \text{structure on $S^3$ whose complement is tight without convex torsion}\}.
\end{align*}

\begin{lemma}\label{lem:=pq}
  Let $m(p,q) =  |\Tight(S^1\times D^2; p/q)| \cdot |\Tight(S^1\times D^2; q/p)|$. Then
  \[
    |\mathcal{L}(p,q)|=m(p,q). 
  \]
\end{lemma}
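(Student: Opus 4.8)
The plan is to count the Legendrian $(p,q)$--torus knots with $\tb=pq$ and tight complement by showing each such knot is determined by the contact structure on its complement, and that these complements are exactly the tight contact structures on $C$ with boundary slope $pq$ (equivalently, after the coordinate change $\psi$, dividing slope $\infty$ in the $\mathcal{C}_2$ coordinates) that contain a $0$--twisting vertical Legendrian curve. First I would recall from Section~\ref{knotcomp} the model $C=V_1\cup N(A)\cup V_2$ together with the coordinate change $\psi=\matrixs{1}{0}{-pq}{1}$ that converts the Seifert framing $\mathcal{C}_1$ to the framing $\mathcal{C}_2$. Since $\tb(L)=pq$, in the $\mathcal{C}_1$ coordinates the boundary of a standard neighborhood $N(L)$ has dividing slope $pq$; applying $\psi$, the dividing slope on $\partial C$ becomes $\psi(pq\lambda+\mu)$, which one checks is the longitudinal slope coming from the Heegaard torus, i.e. the slope $\infty$ in the $\mathcal{C}_2$ coordinates where the vertical fiber is a ruling/Legendrian divide.

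The key geometric input is that for a knot with $\tb=pq$, the complement contains a $0$--twisting vertical Legendrian curve: a Legendrian divide on the convex torus $\partial C$ realized as a $(p,q)$--curve. I would establish a bijection between $\mathcal{L}(p,q)$ and the set of contact structures $\xi\in\Tight_0(C;pq)$ that arise from a genuine knot complement --- i.e. those having a convex torus parallel to $\partial C$ on which the $(p,q)$--curve can be Legendrian realized with $0$ twisting. Two such Legendrian knots with tight complement are equivalent if and only if their complements are contactomorphic, which is the content of the coarse classification; the nontrivial direction (that a contactomorphism of complements extends to one of $S^3$ carrying $L$ to $L'$) follows because $N(L)$ is a standard neighborhood and the gluing is determined once the dividing slope and number of dividing curves on the boundary are fixed.

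The main computation is then to count these complements. The plan is to decompose any such $(C,\xi)$ along the convex torus of slope $(q/p)$ (Case~3 of the proofs of Lemmas~\ref{lem:thickening-positive} and~\ref{lem:thickening-negative}), writing $\xi=\xi|_{V_1}\cup(\text{$I$--invariant }N(A))\cup\xi|_{V_2}$ where $\xi|_{V_1}\in\Tight(V_1;(q/p)^a)$ and $\xi|_{V_2}\in\Tight(V_2;(q/p)^c)$, with the annulus $A$ carrying exactly two dividing curves running between the boundary components (this last condition being precisely the existence of the $0$--twisting vertical Legendrian curve forcing the product structure on $N(A)$). I would then invoke Remark~\ref{lensandtorus} together with the identifications in Section~\ref{oldclassification}, exactly as in Lemma~\ref{inftyclass}, but using the solid torus coordinates built from the $\mathcal{F}_1$ framing so that $V_1$ is a solid torus with boundary slope $q/p$ (lower meridian $\infty$) and $V_2$ is one with boundary slope $p/q$ (upper meridian $0$). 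The count becomes
\[
  |\mathcal{L}(p,q)|=|\Tight(S^1\times D^2; q/p)|\cdot|\Tight(S^1\times D^2; p/q)|=m(p,q).
\]

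The hard part will be verifying that the $I$--invariant structure on $N(A)$ contributes no extra freedom and that the two solid-torus factors genuinely split off independently --- i.e. that the correspondence $(\xi|_{V_1},\xi|_{V_2})\mapsto\xi$ is both well-defined (yields a tight complement with no convex Giroux torsion and a $0$--twisting vertical Legendrian) and injective. Well-definedness requires ruling out that gluing introduces Giroux torsion or an overtwisted disk, which I would handle by Dehn filling $\partial C$ by a solid torus of meridional slope $pq$ (as in the proof of Lemma~\ref{inftyclass}) to obtain a tight contact structure on $L(p,-q)\#L(q,-p)$ and appealing to Colin's connect-sum classification \cite{Colin97}; injectivity follows from the same filling, since distinct pairs $(\xi|_{V_1},\xi|_{V_2})$ produce distinct tight structures on the connected sum. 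The subtle point to check carefully is that requiring $A$ to carry exactly two dividing curves (no boundary-parallel arcs) is equivalent to the $\tb=pq$ condition and is automatic for all tight structures in $\Tight_0(C;pq)$ realizing a genuine knot complement, so that no tight contact structure on $C$ is over- or under-counted; here I expect to reuse the argument from Lemma~\ref{inftyclass} that a boundary-parallel dividing arc on $A$ would let us Legendrian realize a lower-twisting vertical curve, contradicting that $\tb$ is maximal among the destabilization class.
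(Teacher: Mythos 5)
Your upper-bound count is essentially the paper's: split $C$ into $V_1$, $V_2$ and a rigid middle piece, and count $|\Tight(V_1;q/p)|\cdot|\Tight(V_2;q/p)|$. (The paper uses the pair-of-pants model $V_1\cup(S^1\times P)\cup V_2$ and the uniqueness statement of Lemma~\ref{all000}, which cleanly disposes of the boundary-rotation ambiguity you flag as ``the hard part''; your annulus model would work too once that is checked.) The genuine gap is in your lower bound. You propose to prove that each glued-up $\xi$ is tight, and that distinct pairs give distinct $\xi$, by Dehn filling $\partial C$ with a solid torus of meridional slope $pq$ and invoking Colin's connected-sum theorem as in Lemma~\ref{inftyclass}. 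But in Lemma~\ref{inftyclass} the dividing slope on $\partial C$ is $\infty\neq pq$, so the filling torus carries a unique tight structure; here the dividing slope on $\partial C$ is $pq$, \emph{equal} to the meridional slope of the filling. A solid torus whose convex boundary has dividing slope equal to its meridian admits no tight contact structure (a convex meridian disk would have Legendrian boundary disjoint from the dividing set, violating Giroux's criterion), so every such filling is overtwisted and yields no information about $C$. This breaks both your well-definedness and your injectivity arguments.

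The paper routes around this differently: tightness of the complement of $L_{P_1,P_2}$ is established by performing Legendrian surgery (smooth slope $pq-1$, which is transverse to the dividing set) via the explicit contact surgery diagram of Figure~\ref{fig:torus-knots} --- the surgery cancels one of the $(+1)$-components and produces a tight manifold, so the complement is tight --- and distinctness of the $L_{P_1,P_2}$ for distinct decorated pairs is proved by computing rotation numbers and invoking Lemma~\ref{lem:rotdiff}. You would need to substitute arguments of this kind (or a filling at some slope other than $pq$) for your slope-$pq$ filling before the proposal is complete.
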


\begin{proof}
  We will try to understand contact structures in $\Tight_0(C;pq)$. While we will not quite get a classification of these, we will come close enough to identify all Legendrian realizations of $(p,q)$-torus knots. 

  Recall from Section~\ref{knotcomp} that we can take $V_1\cup (S^1\times P)\cup V_2$ as a model for $C$ and we will use the notation established there for $\partial (S^1\times P)=T_1\cup T_2\cup T_3$ and the coordinates on the $T_i$ discussed there as well. 

  Since any $\xi\in \Tight_0(C;pq)$ has dividing curves of slope $pq$ on $\partial C$, we know they are isotopic to $S^1\times \{pt\} \subset S^1\times P$. We can then use convex annuli between $\partial C$ and $\partial V_i$ to thicken the solid tori $V_i$ until they have dividing slope $q/p$. 

  The contact structure $\xi$ on $S^1\times P$ has dividing curves on all boundary components isotopic to the $S^1$-fibers. We can make the ruling curves have $\infty$ slope and arrange them for $\partial P=\{\theta\}\times \partial P$ to be ruling curves and then make $P$ convex. According to Lemma~\ref{all000}, there is a unique contact structure on $S^1\times P$ up to isotopy (not fixing the boundary point-wise). But notice that when $V_1$ and $V_2$ are glued back into $S^1\times P$, the fact that the isotopy did not fix $T_1$ or $T_2$ is irrelevant because the rotation of these $T_i$ can be extended to the interior by rotating the $V_i$'s. So the contact structure on $C$ is uniquely determined, up to isotopy (not fixing the boundary of $C$ point-wise) by the contact structures on $V_1$ and $V_2$. Now notice that when the neighborhood $N$ of the $(p,q)$-torus knot is glued to $\partial C$, we again do not need to be concerned about the fact that the classification above allowed $\partial C$ to move. 

  Thus when studying contact structures on $C$ that come from the complement of non-loose Legendrian $(p,q)$-torus knots, there is a unique tight contact structure on $S^1\times P$ that we need to consider and a model for it comes from taking a neighborhood $T^2\times [0,1]$ of a convex torus with two dividing curves of slope $q/p$ and then removing a neighborhood $N$ of a dividing curve on $T^2\times \{1/2\}$. Now gluing $V_1$ and $V_2$ into this model, we see that an upper bound on $|\mathcal{L}(p,q)|$ comes from the number of tight contact structures on $V_1$ and $V_2$. Since the dividing slope on $\partial V_1=\partial V_2$ is $q/p$ we see that $|\Tight(V_1)|=|\Tight(S_\infty;q/p)|=\Tight(S^1\times D^2; q/p)|$ and $|\Tight(V_2)|=|\Tight(S^0;q/p)|=|\Tight(S^1\times D^2; p/q)|$ (see Section~\ref{oldclassification} for notation about upper and lower meridians). This shows that $m(p,q)$ is the upper bound of $|\mathcal{L}(p,q)|$. Now we will describe all the possible elements in $\mathcal{L}(p,q)$ and find the lower bound. 

  \noindent{\bf Construction of contact structures in $\Tight_0(C;pq)$.}
  Consider a decorated pair of paths $(P_1,P_2)$ in the Farey graph representing $q/p$ (see Section~\ref{subsec:pathsinFG}). Recall that the union $P_1\cup P_2$ gives a contact structure $\xi_{P_1,P_2}$ on $S^3$ (it might or might not be the tight) and inside of it there is a convex torus $T$ with dividing slope $q/p$. Let $L_{P_1,P_2}$ be a Legendrian divide on $T$. 

  We now claim that all of these Legendrian knots are indeed in $\mathcal{L}(p,q)$, that is, the complement of $L_{P_1,P_2}$ is tight and has no convex torsion. This will follow by showing that Legendrian surgery on $L_{P_1,P_2}$ results in a tight contact structure. To see this, recall that any $L_{P_1,P_2}$ can be realized as a Legendrian knot shown in Figure~\ref{fig:torus-knots}, see Section~\ref{sec:FareytoSurgery}. The Legendrian surgery on $L_{P_1,P_2}$ cancels one of the $(+1)$-surgery components, and we obtain a tight contact structure.
  From Lemma~\ref{lem:rotdiff}, we know that $L_{P_1,P_2}$ and $L_{P'_1,P'_2}$ have different rotation numbers and thus are not Legendrian isotopic if $(P_1,P_2) \neq (P_1',P_2')$. This gives the lower bound of $|\mathcal{L}(p,q)|$ and completes the proof. 
\end{proof}
\begin{remark}
The non-looseness of the knots in Lemma~\ref{lem:=pq} is seen by showing that Legendrian surgery on them is tight, but we note that the proof of Proposition~\ref{wings} can also be applied here to show non-looseness using convex surfaces and state transition. 
\end{remark}

Referring to the construction of contact structures in $\Tight_0(C;pq)$ above, each one will be of the form $\xi_{P_1,P_2}$ where $P_1$ is a minimal path in the Farey graph that goes from $\infty$ clockwise to $q/p$ and decorated so that it represents an element in $\Tight(S_\infty;q/p)$ and $P_2$ is a minimal path from $q/p$ clockwise to $0$ and decorated so that it represents an element in $\Tight(S^0;q/p)$. Recall that we defined $i$-consistent and $i$-inconsistent decorated pairs of paths in Definition~\ref{defnconsistent}. We also recall that in the previous proof we defined the Legendrian $(p,q)$-torus knot $L_{P_1,P_2}$ as a Legendrian divide on the convex torus in $\xi_{P_1,P_2}$ that separates the solid torus described by $P_1$ from the solid torus described by $P_2$. 

\begin{lemma}\label{lem:thickenornot}
  Let $\xi'_{P_1,P_2} \in \Tight_0(C;pq)$ be the complement of $L_{P_1,P_2}$. In $\xi'_{P_1,P_2}$, all convex tori parallel to $\partial C$ have slope $pq$ if and only if $P_1$ and $P_2$ are $2$-consistent. Moreover, if $P_1$ and $P_2$ are $2$-inconsistent, then there is a subset $C'$ of $C$ that is isotopic to $C$ and $(\xi'_{P_1,P_2})|_{C'} \in \Tight_0(C;\infty)$.  
\end{lemma}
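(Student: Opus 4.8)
The plan is to analyze $\xi'_{P_1,P_2}$ on the knot-complement model $C = V_1 \cup N(A) \cup V_2 = V_1 \cup (S^1\times P) \cup V_2$ from Section~\ref{knotcomp}, reducing the slope question to the pair-of-pants classification of Lemma~\ref{basicpants}. First I would peel off the innermost basic slices: by Theorem~\ref{torusclass} the first edge of $P_1$ runs from $q/p$ to $(q/p)^a$ and the first edge of $P_2$ runs from $q/p$ to $(q/p)^c$, so (using Lemma~\ref{adjacentvertices}) removing these basic slices produces sub-solid-tori $V_1^a \subset V_1$ and $V_2^c \subset V_2$ with convex boundaries of slopes $(q/p)^a$ and $(q/p)^c$. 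The region $N_1 \cup (S^1\times P) \cup N_2$ lying between $\partial V_1^a$, $\partial V_2^c$ and $\partial C$ is then a copy of $S^1\times P$ with boundary slopes $(q/p)^a,(q/p)^c,pq$. After normalizing these three mutually adjacent slopes to $-1,\infty,0$ by a diffeomorphism, Lemma~\ref{basicpants} identifies it with one of $\xi_{++},\xi_{+-},\xi_{-+},\xi_{--}$, where the two signs are those of the peeled basic slices $N_1$ and $N_2$, subject to the orientation-reversal convention (recorded in the proof of Lemma~\ref{basicpants} and in Section~\ref{sec:FareytoSurgery}) that flips the sign contributed by $V_2$ since it carries the upper meridian.

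The key translation is the following. By Lemma~\ref{mustbeone} one of the first blocks $A_1,B_2$ has length one, and since signs may be freely shuffled within a continued fraction block, $(P_1,P_2)$ is $2$-inconsistent exactly when the two peeled basic slices can be arranged to have opposite path-signs, which after the orientation flip is exactly the case $\xi_{\pm\pm}$. By Lemma~\ref{basicpants} and the remark following it, this is precisely the case in which the connecting annulus $A$ has two dividing curves running from one boundary component to the other (equivalently, the slope-$0$ ruling curves on $\partial V_1^a$ and $\partial V_2^c$ are isotopic, yielding a $0$-twisting vertical Legendrian curve); the $2$-consistent case gives $\xi_{\pm\mp}$, in which every such annulus has boundary-parallel dividing curves and there is no $0$-twisting vertical Legendrian curve.

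With this dictionary both halves follow. In the $2$-inconsistent case the through-running annulus lets me edge-round $V_1^a \cup N(A) \cup V_2^c$ exactly as in Case~3 of the proof of Lemma~\ref{lem:thickening-positive} (and its analogue Lemma~\ref{lem:thickening-negative}) to obtain a copy $C'\cong C$ whose convex boundary has slope $\infty$; its complement $C\setminus C' = (S^1\times P)\setminus N(A)$ is a collar $T^2\times I \in \Tight^{min}(T^2\times [0,1]; pq,\infty)$, and $(\xi'_{P_1,P_2})|_{C'}$ is tight with no convex Giroux torsion since $\xi'_{P_1,P_2}\in \Tight_0(C;pq)$ is (Lemma~\ref{lem:=pq}). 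This $C'$ exhibits a parallel convex torus of slope $\infty\neq pq$ and simultaneously proves the \emph{moreover} statement, giving the contrapositive of the forward implication. In the $2$-consistent case I must instead show that \emph{every} parallel convex torus has slope $pq$: a parallel torus of slope $s'\neq pq$ would create a rotative $T^2\times I$ collar on $\partial C$ and, pushing $A$ across it, would force a through-running convex annulus between $V_1^a$ and $V_2^c$, contradicting the assertion in Lemma~\ref{basicpants} that in $\xi_{\pm\mp}$ every such annulus is boundary-parallel; equivalently, by Theorem~\ref{thm:effective-bypasses} the only bypasses available along $\partial C$ from a boundary-parallel annulus are trivial or folding, and so leave the dividing slope equal to $pq$.

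I expect this last step, the slope-rigidity in the $2$-consistent case, to be the main obstacle, since it requires excluding all parallel convex tori of other slopes rather than merely producing one; and the sign bookkeeping relating the path decorations on $V_1,V_2$ to the pair-of-pants structures $\xi_{\pm\pm}$ and $\xi_{\pm\mp}$ will also require care to get right.
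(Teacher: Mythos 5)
Your forward direction (the $2$-inconsistent case) follows the paper's own proof essentially step for step: thin $V_1$ and $V_2$ to dividing slopes $(q/p)^a$ and $(q/p)^c$, identify the intermediate $S^1\times P$ piece with one of the structures of Lemma~\ref{basicpants} after the sign flip for the upper-meridian solid torus, and edge-round along the through-running annulus to produce $C'$ with convex boundary of slope $\infty$. That half, including the sign bookkeeping, is correct and is the paper's argument.

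The gap is in the converse, where you must show that in the $2$-consistent case \emph{no} convex torus parallel to $\partial C$ has slope other than $pq$. Your claimed contradiction --- that such a torus would, after ``pushing $A$ across'' the rotative collar, force a through-running convex annulus between $V_1^a$ and $V_2^c$ --- is not an argument: a convex torus parallel to $\partial C$ of slope $s'\neq pq$ need not lie in $S^1\times P$ and has no direct interaction with the fixed annulus $A$. Extracting a through-running annulus from its existence is precisely the content of the Imbalance Principle analysis in Lemmas~\ref{lem:thickening-positive} and~\ref{lem:thickening-negative} (thickening $V_1$ and $V_2$ until the twistings balance and running through Cases 1--3), which your proposal would have to reproduce rather than bypass. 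Your fallback via Theorem~\ref{thm:effective-bypasses} is a misapplication: that theorem controls bypasses attached to the boundary of a tight \emph{solid torus} from inside it, whereas the bypasses that could change the slope of a torus parallel to $\partial C$ are attached from inside $C$, which is not a solid torus. Finally, for $pq<0$ the balancing analysis does not always terminate in the through-running configuration: Case~1 of Lemma~\ref{lem:thickening-negative} produces the exceptional structures in which parallel tori stabilize at slope $s_k=(|pq|-|p|-|q|)/k$ rather than $\infty$, so one must separately rule out that a $2$-consistent $\xi'_{P_1,P_2}$ contains such a piece; the paper does this by passing to a torus of slope $|pq|-|p|-|q|-1$ and invoking Proposition~\ref{prop:extra} to find a slope-$\infty$ torus anyway, a step your proposal does not address.
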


\begin{remark}\label{2inconsistentandinfty}
  Notice that by the proof of Lemma~\ref{inftyclass} any element in $\Tight_0(C;\infty)$ is described by a pair of paths $P_1', P_2'$ where $P_1'$ is a path from $(q/p)^a$  anti-clockwise to $\infty$ and $P_2'$ is a path from $(q/p)^c$ clockwise to $0$. One can extend $P_i'$ to start at $q/p$ and then add a $\pm$ sign to one and a $\mp$ to the other edges added. This will result in two different $2$-inconsistent paths corresponding to the element in $\Tight_0(C;\infty)$ that give two elements in $\Tight_0(C;pq)$ described by $2$-inconsistent paths. This observation, coupled with the above lemma implies that the number of $2$-inconsistent elements in $\Tight_0(C;pq)$ is $2|\Tight_0(C,\infty)|$. 
\end{remark}

\begin{proof}
  We first show that if $P_1$ and $P_2$ are $2$-inconsistent, then we can find $C'$ such that $(\xi'_{P_1,P_2})|_{C'} \in \Tight_0(C;\infty)$. As discussed in Section~\ref{knotcomp} we write $C$ as $V_1\cup (S^1\times P)\cup V_2$ and use the notation there for $\partial (S^1\times P)=T_1\cup T_2\cup T_3$ and the coordinates on the $T_i$ given there as well. 

  We can arrange that the $\partial V_i$ are convex and the slope of the dividing curves on $\partial V_1$ is $(q/p)^a$ and the slope on $\partial V_2$ is $(q/p)^c$. Using the coordinates on $T_i=\partial V_i$ coming from $S^1\times P$, we see that the slope of the dividing curves on $T_1, T_2,$ and $T_3$ is $-1, \infty,$ and $0$, respectively. Moreover, there are convex tori $T'_i$ in $S^1\times P$ parallel to $T_i$ with dividing slope $0$, for $i=1,2$. The thickened tori $N_i$ cobounded by $T_i$ and $T'_i$ are basic slices. Because the paths $P_1$ and $P_2$ are $2$-inconsistent, we know we can arrange these basic slices to have the same sign. To see this, notice that $P_i$ describes a contact structure on $V_i$ and we took the outermost basic slice and added it to $S^1\times P$. By assumption we can arrange that the outermost basic slices of $V_1$ and $V_2$ have opposite signs, but recall the contact structure on $V_2$ is described as an element of $\Tight(S^0;q/p)$. Thus this outermost basic slice, when oriented as a basic slice in $S^1\times P$, has the same sign as the one coming from $V_1$. Now Lemma~\ref{basicpants} says there is a convex annulus $A$ going from a $0$ slope ruling curve on $T_1$ to a ruling curve on $T_2$ that has dividing curves running from one boundary component to the other. As in the proof of Lemma~\ref{lem:thickening-positive}, if we round the corners of $T_1\cup T_2\cup N(A)$, the convex torus $T = \partial(T_1\cup T_2\cup N(A))$ will have dividing curves with $\infty$ slope. The torus $T$ is parallel to $T_3=\partial C$ and again as in the proof of Lemma~\ref{lem:thickening-positive}, the dividing slope of $T$ in Seifert coordinates is still $\infty$. Thus $T$ and $T_3$ cobound a thickened torus $N$ that is a basic slice. So $\xi_{P_1,P_2}$ can be written as the union of a contact structure in $\Tight_0(C;\infty)$ and a basic slice in $\Tight_0(T^2\times[0,1];pq,\infty)$. 

  By the proof of Lemma~\ref{inftyclass} we know that if there is a subset $C'$ of $C$ such that $(\xi'_{P_1,P_2})|_{C'} \in \Tight_0(C;\infty)$, then $(\xi'_{P_1,P_2})|_{C'}$ will be as described in the previous paragraph and hence $P_1$ and $P_2$ will be $2$-inconsistent. Now the proof of Lemma~\ref{lem:thickening-positive} shows that for $pq>0$, if $\xi_{P_1,P_2}$ is a contact structure in $\Tight_0(C;pq)$ and there is a torus $T$ parallel to $\partial C$ with slope different from $pq$ then there will be a subset $C'$ of $C$ such that $(\xi'_{P_1,P_2})|_{C'} \in \Tight_0(C;\infty)$ (since if $T$ has slope different from $pq$, then we can assume it has slope slightly larger than $pq$). Thus $P_1$ and $P_2$ are $2$-inconsistent. If $pq<0$, then Lemma~\ref{lem:thickening-negative} says  if $\xi'_{P_1,P_2}$ is a contact structure in $\Tight_0(C;pq)$ and there is a torus $T$ parallel to $\partial C$ with slope different from $pq$ then either there will be a subset $C'$ of $C$ such that $(\xi'_{P_1,P_2})|_{C'} \in \Tight_0(C;\infty)$, or there will be a subset $C''$ that is isotopic to $C$ and $\partial C''$ is convex with dividing slope $|pq|-|p|-|q|$. In the latter case, in $C\setminus C''$ we can find a convex torus $T$ with dividing slope $|pq|-|p|-|q|-1$ and Proposition~\ref{prop:extra} below says that inside the component of $C\setminus T$ that is not a thickened torus, we can find a subset $C'$ such that $(\xi'_{P_1,P_2})|_{C'} \in \Tight_0(C;\infty)$. So in either case, we see again that $P_1$ and $P_2$ are $2$-inconsistent. Thus we have that in $\xi_{P_1,P_2}$ there is a convex torus parallel to $\partial C$ have slope different from $pq$ if and only if $P_1$ and $P_2$ are $2$-inconsistent; moreover, in this case there is a subset $C'$ of $C$ such that $(\xi'_{P_1,P_2})|_{C'} \in \Tight_0(C;\infty)$.  
\end{proof}

We can also see that for almost all contact structures in $\Tight_0(C;\infty)$ any convex tori parallel to $\partial C$ will have dividing slope $\infty$. 

\begin{lemma}\label{infinitythicken}
  If $pq<0$, then for any contact structure in $\Tight_0(C;\infty)$, any convex torus parallel to $\partial C$ will have dividing slope $\infty$. If $pq>0$, the same is true for all contact structures but two. These contact structures are obtained from the complement of a standard neighborhood of Legendrian $(p,q)$-torus knots with maximal Thurston-Bennequin invariant in $(S^3,\xi_{std})$ by adding a $\pm$ basic slice in $\Tight_0(T^2\times[0,1];\infty,pq-p-q)$. 
\end{lemma}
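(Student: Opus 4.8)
The plan is to combine the structural description of $\Tight_0(C;\infty)$ coming from the proof of Lemma~\ref{inftyclass} with the thickening analysis of Lemmas~\ref{lem:thickening-positive} and~\ref{lem:thickening-negative}. The starting observation is that, as established in the proof of Lemma~\ref{inftyclass}, no $\xi\in\Tight_0(C;\infty)$ contains a $0$-twisting vertical Legendrian curve; equivalently, no convex torus parallel to $\partial C$ has dividing slope $pq$. I will show that a parallel convex torus of slope $s\neq\infty$ forces $\xi$ to be one of a small number of universally tight models, and then count them.

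First I would make the reduction. Suppose $\xi\in\Tight_0(C;\infty)$ has a convex torus $T$ parallel to $\partial C$ of slope $s\neq\infty$, and choose $T$ innermost, so the inner piece $C'$ (with $\partial C'=T$ and $C'\cong C$) has every parallel convex torus of slope $s$, while the region between $T$ and $\partial C$ is a $T^2\times[0,1]$. Since $\xi$ has no convex Giroux torsion this region is minimally twisting, and since $\xi$ has no slope-$pq$ torus it avoids slope $pq$. Feeding $\xi|_{C'}$ into Lemma~\ref{lem:thickening-positive} (for $pq>0$) or Lemma~\ref{lem:thickening-negative} (for $pq<0$) shows that such a rigid $C'$ cannot occur at any slope that those lemmas thicken to $\infty$; the only surviving possibilities are the universally tight models, namely the maximal Thurston--Bennequin complement for $pq>0$ and the structures $\xi'_k$ of Lemma~\ref{extraisUT} for $pq<0$.

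For $pq>0$ I would exhibit the exceptions directly. The relevant rigid inner model is the complement of the maximal Thurston--Bennequin representative of the $(p,q)$--torus knot in $(S^3,\xi_{std})$, whose boundary slope is $pq-p-q$ and which carries no slope-$pq$ torus. Attaching a basic slice in $\Tight_0(T^2\times[0,1];\infty,pq-p-q)$ recovers an element of $\Tight_0(C;\infty)$ with $T=\partial C'$ a parallel torus of slope $pq-p-q\neq\infty$. Because the inner model comes from a genuine standard neighborhood in $(S^3,\xi_{std})$, the stabilization argument of Lemma~\ref{gluingtoriandthickened} shows that both signs of basic slice give tight contact structures, and these are distinct (for instance by the relative Euler class on a Seifert surface, exactly as in the $n=pq+1$ step of the proof of Lemma~\ref{lem:>pq}); neither introduces convex Giroux torsion or a slope-$pq$ torus, so both lie in $\Tight_0(C;\infty)$. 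This produces precisely the two exceptional structures in the statement, and the reduction shows there are no others, so every remaining $\xi\in\Tight_0(C;\infty)$ is rigid at $\infty$.

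The hard part, and the place where the asymmetry $pq>0$ versus $pq<0$ enters, is showing that for negative torus knots there are no exceptions at all. Here the maximal Thurston--Bennequin complement sits at slope $pq$ itself, so its Legendrian divide is a $0$-twisting vertical Legendrian curve; any completion of it to slope $\infty$ retains the slope-$pq$ torus and is therefore excluded from $\Tight_0(C;\infty)$. It remains to rule out the exotic universally tight models $\xi'_k$, which live at the positive slopes $s_k=(|pq|-|p|-|q|)/k$. I expect to do this with the sign bookkeeping of Lemma~\ref{basicpants} together with Remark~\ref{rightbs}: writing a neighborhood of $\partial C'$ as one of the contact structures $\xi_{\pm\pm}$ on $S^1\times P$ and tracking the signs of the outermost basic slices of the two solid tori, the only completion of $\xi'_k$ to slope $\infty$ that avoids reintroducing a slope-$pq$ torus requires attaching a basic slice of the sign that, by Remark~\ref{rightbs}, produces an overtwisted disk. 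Carrying out this sign analysis precisely, and thereby confirming that no $\xi'_k$ extends to a genuine element of $\Tight_0(C;\infty)$ carrying a non-$\infty$ parallel torus, is the main obstacle of the proof.
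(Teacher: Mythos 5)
Your overall reduction is the same as the paper's: use the fact (from the proof of Lemma~\ref{inftyclass}) that no element of $\Tight_0(C;\infty)$ contains a convex torus of slope $pq$, and then funnel any non-$\infty$ parallel torus through Lemmas~\ref{lem:thickening-positive}, \ref{lem:thickening-negative} and~\ref{lem:<pq} until only the rigid models survive. However, there is a genuine gap in your $pq>0$ case: the tightness of the two exceptional structures does not follow from ``the stabilization argument of Lemma~\ref{gluingtoriandthickened}.'' That lemma thickens a standard neighborhood inside an ambient structure where the destabilization already exists; here the basic slice in $\Tight_0(T^2\times[0,1];\infty,pq-p-q)$ is attached on the \emph{outside} of the maximal Thurston--Bennequin complement, past the maximal $\tb$, so the resulting contact manifold does not embed in $(S^3,\xi_{std})$ and its tightness is not formal. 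This is precisely the non-trivial input the paper takes from \cite[Lemma~3.1]{EtnyreVelaVick10} (tightness of the complement of the binding of an open book with a basic slice attached); without it your argument only shows these two structures are the sole candidates, not that they actually occur.

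The step you flag as ``the main obstacle'' for $pq<0$ --- ruling out completions of the $\xi'_k$ --- is both unexecuted in your write-up and, in fact, not an obstacle at all: the $\xi'_k$ live at the \emph{positive} slopes $s_k=(|pq|-|p|-|q|)/k$, while $\partial C$ has slope $\infty$ and slopes rotate clockwise going into $C$, i.e.\ from $\infty$ through the negative rationals (in particular through $pq<0$) before reaching any positive slope. So any $T^2\times[0,1]$ layer joining $\partial C$ to a torus of slope $s_k$ contains a convex torus of slope $pq$, contradicting your own starting observation; no sign bookkeeping via Lemma~\ref{basicpants} or Remark~\ref{rightbs} is needed, and it is not clear that route would close the argument anyway. (The paper's proof makes the same point in a slightly different order: a non-$\infty$ torus first forces a negative-integer-slope torus, which by Lemmas~\ref{lem:thickening-negative} and~\ref{lem:<pq} forces a torus of slope $\infty$ --- giving Giroux torsion --- or of slope $pq$, which is then killed by Lemmas~\ref{lem:thickenornot} and~\ref{lem:overtwisted}.) Once you repair the tightness claim for $pq>0$ and replace the proposed sign analysis with the slope-ordering observation, the proof goes through.
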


For context we recall that there is a unique maximal Thurston-Bennequin invariant Legendrian $(p,q)$-torus knot when $pq>0$, see \cite{EtnyreHonda01}. 

\begin{proof}
  Suppose $pq<0$ and $\xi\in \Tight_0(C,\infty)$. If there is a convex torus in $(C,\xi)$ parallel to the boundary that has dividing slope $s$ different from $\infty$ then it separates off a $T^2\times [0,1]$ where the contact structure rotates from $\infty$ to $s$, so there will be a convex torus parallel to $\partial C$ with dividing slope $n$ for some negative integer $n$. Now according to Lemmas~\ref{lem:thickening-negative} and~\ref{lem:<pq} if $n\not = |pq|-|p|-|q|$, then there is another torus $T$ that is parallel to $\partial C$, has dividing slope $\infty$ or $pq$. If the slope of $T$ is $\infty$, then $T$ separates off $T^2\times [0,1]$ that is a half convex torsion layer, contradicting the fact that $\xi\in \Tight_0(C,\infty)$. 

  If the slope of $T$ is $pq$, then by Lemma~\ref{lem:thickenornot} it may happen that all convex tori further from $\partial C$ than $T$ will have dividing slope $pq$. This happens when $T$ separates $C$ into $T^2\times [0,1]$ and $C'$ where $\xi|_{C'}$ is in $\Tight_0(C;pq)$ and corresponds to a pair of $2$-consistent decorated paths $P_1, P_2$. In particular, $\xi|_C'$ is $\xi'_{P_1,P_2}$, which is the complement of $L_{P_1,P_2} \subset (S^3,\xi_{P_1,P_2})$. Thus $(C,\xi)$ is the union of a basic slice in $\Tight(T^2\times [0,1]; \infty, pq)$ and $(C',\xi'_{P_1,P_2})$ and by Lemma~\ref{lem:overtwisted}, $\xi$ is overtwisted, contradicting that $\xi \in \Tight_0(C,\infty)$.

  Now if $pq>0$ and $\xi\in \Tight_0(C,\infty)$, then we can argue as above, using Lemmas~\ref{lem:thickening-positive} and~\ref{lem:<pq}, and see that if there are convex tori in $C$ parallel to $\partial C$ with slope different from $\infty$ then there is one $T$ with slope $pq-p-q$. In this case $T$ separates $C$ into $T^2\times[0,1]$ and $C'$. The contact structure restricted to the former space is simply a bypass with dividing slopes $\infty$ and $pq-p-q$. According to Case 1 of the proof of Lemmas~\ref{lem:thickening-positive} and \cite[Section~3.1]{EtnyreLaFountainTosun12} we see that $\xi|_{C'}$ is simply the complement of the unique maximal Thurston-Bennequin Legendrian representative $L$ of the $(p,q)$-torus knot in $(S^3,\xi_{std})$. 

  We now know the only possibilities for $(C,\xi)$ to have a convex torus of slope different from $\infty$, but need to prove that the contact structure described above is indeed tight and so in $\Tight_0(C;\infty)$. But this was already proven in \cite[Lemma~3.1]{EtnyreVelaVick10}. 
\end{proof}

\subsubsection{Adding torsion to minimally twisting contact structures on \texorpdfstring{$C$}{C}}
The next two lemmas show that $2$-inconsistency also controls when certain basic slices can be attached to a contact structure in $\Tight_0(C;pq)$ while preserving tightness. Before proceeding, we suggest the reader consult the Definition~\ref{totallyinconsistentdef} concerning total $i$-consistency.

The first lemma demonstrates that if the pair of paths $(P_1, P_2)$ is not totally $2$-inconsistent, then $\Tight_n(C; \infty)$ is empty for all $n>0$. Moreover, if $(P_1, P_2)$ are $2$-consistent, then $\Tight_n(C; \infty)$ is empty for all $n\geq0$.

\begin{lemma}\label{lem:overtwisted}
  Let $\xi'_{P_1,P_2}$ be a contact structure in $\Tight_0(C;pq)$ such that $P_1$ and $P_2$ are not totally $2$-inconsistent. Gluing any basic slice in $\Tight_0(T^2\times [0,1]; \infty, pq)$ to $(C,\xi'_{P_1,P_2})$ will result in an overtwisted contact structure. 
\end{lemma}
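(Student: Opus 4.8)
The plan is to reduce everything to the pair-of-pants model of the knot complement and then apply the local tightness criterion of Remark~\ref{rightbs}. First I would write $C = V_1 \cup (S^1\times P)\cup V_2$ as in Section~\ref{knotcomp} and, after making $\partial V_1$ and $\partial V_2$ convex with slopes $(q/p)^a$ and $(q/p)^c$, peel the outermost basic slice of each $V_i$ into the pair of pants. Exactly as in the proof of Lemma~\ref{lem:thickenornot}, the resulting collar of $\partial C$ is a contact structure $\xi_{s_1 s_2}\in\Tight_0^{free}(S^1\times P;-1,\infty,0)$ from Lemma~\ref{basicpants}, where $s_1,s_2$ are read off from the outermost decorations of $P_1$ and $P_2$ (with the $\Tight(S^0;q/p)$ orientation flip for $P_2$). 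Thus $s_1=s_2$ precisely when $(P_1,P_2)$ is $2$-inconsistent and $s_1=-s_2$ precisely when it is $2$-consistent.

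Next I would translate the attached layer into these coordinates. Using the framing change $\psi=\matrixs{1}{0}{-pq}{1}$ from Section~\ref{knotcomp}, the Seifert slope $pq$ on $\partial C$ becomes the pants slope $0$ on $T_3$ while the meridian slope $\infty$ is fixed; hence gluing a basic slice in $\Tight_0(T^2\times[0,1];\infty,pq)$ to $(C,\xi'_{P_1,P_2})$ is exactly gluing a $0\to\infty$ basic slice to the boundary $T_3$ of $\xi_{s_1 s_2}$, which is the situation governed by Remark~\ref{rightbs}. I would then split into cases. In the $2$-consistent case the collar is $\xi_{\pm\mp}$, in which (Lemma~\ref{basicpants}) the convex annulus between $T_1$ and $T_2$ carries boundary-parallel dividing curves; here I would show directly that attaching a $0\to\infty$ slice of either sign is overtwisted, by combining these boundary-parallel arcs with the bypass supplied by the new layer to produce a convex torus exhibiting half Giroux torsion against the rigid slope-$pq$ structure guaranteed by Lemma~\ref{lem:thickenornot} (all parallel tori have slope $pq$, so no $0$-twisting vertical Legendrian is available to absorb the added layer). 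In the $2$-inconsistent case the collar is $\xi_{\pm\pm}$, so the slice of the wrong sign is overtwisted immediately by Remark~\ref{rightbs}, and the content is to rule out tightness for the matching sign.

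The hard part will be this matching-sign subcase when $(P_1,P_2)$ is $2$-inconsistent but not totally $2$-inconsistent. The tempting (but misleading) route is to thicken to some $\eta\in\Tight_0(C;\infty)$ via Lemma~\ref{lem:thickenornot} and observe that the matching attached slice composes with the peeled basic slice into an apparently $I$-invariant layer, which would falsely suggest tightness. The point I would have to make precise is that a non-monochromatic outermost continued-fraction block obstructs this cancellation: peeling one block deeper and re-applying the pair-of-pants criterion, the sign disagreement inside the outer block forces a wrong-sign $0\to\infty$ attachment locally, hence overtwistedness, so that every completion to slope $\infty$ either introduces essential half Giroux torsion over a virtually overtwisted $\eta$ or violates the rigid structure of $\Tight_0(C;\infty)$ from the proof of Lemma~\ref{inftyclass}. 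This is exactly the opposite of the totally $2$-inconsistent situation, where the monochromatic outer blocks let the matching slice be absorbed into a universally tight $\eta$ (via Lemmas~\ref{infinitythicken} and~\ref{extraisUT}) and the attachment survives — the phenomenon recorded in Lemma~\ref{lem:staytight}. Pinning down this absorbed-versus-torsion dichotomy, and confirming its equivalence with the totally $2$-inconsistent condition (cf.\ Remark~\ref{2inconsistentandinfty}), is the crux of the argument; the $2$-consistent case used in Lemma~\ref{infinitythicken} is then the cleanest instance, since there no thickening is possible at all.
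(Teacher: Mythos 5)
Your setup is the paper's: write $C=V_1\cup(S^1\times P)\cup V_2$, peel the outermost basic slices $N_1,N_2$ of the $V_i$ into the pair of pants together with the newly attached slice $N_3$, and analyze the result via Lemma~\ref{basicpants}. But your sign dichotomy is where the argument breaks. You assert that $s_1=s_2$ ``precisely when'' $(P_1,P_2)$ is $2$-inconsistent and $s_1=-s_2$ ``precisely when'' it is $2$-consistent; this ignores that when a first continued fraction block carries mixed signs, the outermost basic slice of that $V_i$ can be taken to have \emph{either} sign. The hypothesis ``not totally $2$-inconsistent'' is exactly the statement that $N_1$ and $N_2$ can be \emph{arranged to have opposite signs} as slices in $S^1\times P$ (after the orientation flip for $V_2$): this is automatic for $2$-consistent pairs, achievable by shuffling within the mixed block otherwise, and fails only in the totally $2$-inconsistent case. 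Once $N_1$ and $N_2$ have opposite signs, the added slice $N_3$ matches one of them no matter what its sign is, and Lemma~\ref{basicpants} supplies a convex annulus from that $T_i$ to $T_3$ with dividing curves running across; cutting $(S^1\times P')\cup N_1\cup N_2\cup N_3$ along it and rounding edges yields a $T^2\times[0,1]$ with both boundary slopes $\infty$ containing a convex torus of slope $0$, i.e.\ a convex half Giroux torsion layer, whose union with the remaining $V_j$ is overtwisted. This single mechanism disposes of every case of the lemma.

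Because you miss this, you are forced to treat the ``matching sign over a $2$-inconsistent collar'' situation as a separate hard case, and the resolution you sketch there --- peeling one block deeper so that ``the sign disagreement inside the outer block forces a wrong-sign $0\to\infty$ attachment locally'' --- is not an argument. After peeling a further block the boundary slopes are no longer $(-1,\infty,0)$, so Lemma~\ref{basicpants} and Remark~\ref{rightbs} do not apply as stated, and your concluding sentence (every completion to slope $\infty$ either introduces essential torsion or violates the rigid structure of $\Tight_0(C;\infty)$) asserts the conclusion rather than deriving it. Likewise, in the $2$-consistent case you lean on the boundary-parallel arcs of the $T_1$--$T_2$ annulus in $\xi_{\pm\mp}$, but that is not the annulus the argument needs: the relevant annulus runs from $T_1$ or $T_2$ to $T_3$, and since the opposite-sign configuration of $N_1,N_2$ is already forced for $2$-consistent pairs, the same across-running annulus and torsion-layer argument applies there verbatim.
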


\begin{remark}\label{nothickot}
  Note that Lemmas~\ref{lem:thickenornot} and~\ref{lem:overtwisted} show that if a contact structure in $\Tight_0(C;pq)$ has all convex tori parallel to $\partial C$ having dividing slope $pq$, then adding a basic slice in  $\Tight_0(T^2\times [0,1]; \infty, pq)$ to $C$ will yield an overtwisted contact structure. 
\end{remark}

\begin{proof}
  Let $\xi$ be the contact structure on $C$ resulting from gluing any basic slice in $\Tight_0(T^2\times [0,1]; \infty, pq)$ to $\xi_{P_1,P_2}$. We can decompose $C$ as two solid tori $V_1$ and $V_2$ and $S^1\times P$ as in Section~\ref{knotcomp} and arrange that the dividing slope of $\partial V_1=T_1, \partial V_2=T_2,$ and $\partial C= T_3$ is $(q/p)^a, (q/p)^c,$ and $\infty$, respectively. Using the coordinates on $T_i$ coming from $S^1\times P$, we see the dividing slope on $T_1, T_2,$ and $T_3$ is $-1, \infty,$ and $\infty$, respectively. Now there is a convex torus $T'_i$ in $S^1\times P$ parallel to $T_i$ that has dividing slope $0$. Let $N_i$ be the thickened tori that $T_i$ and $T'_i$ cobound and set $S^1\times P'=(S^1\times P)\setminus \cup_{i=1}^3 N_i$. Notice that $\xi|_{N_3}$ is the basic slice that was added to $\xi_{P_1,P_2}$. By the hypothesis that $P_1$ and $P_2$ are not totally $2$-inconsistent paths, we know that $N_1$ and $N_2$ can be taken to be basic slices with different signs, so one of them has the same sign as $N_3$ (recall, as in the proof of Lemma~\ref{lem:thickenornot}, a basic slice as seen in $V_2$ has the opposite sign when seen as a basic slice in $S^1\times P$). 

  Suppose that $N_1$ and $N_3$ have the same sign. By Lemma~\ref{basicpants} we know that there is a convex annulus $A$ with boundary $0$ sloped ruling curves on $T_1$ and $T_3$ and the dividing curves on $A$ run from one boundary component to the other. If we now cut $(S^1\times P')\cup N_1 \cup N_2 \cup N_3$ along this annulus we will obtain $T^2\times [0,1]$ with one boundary component $T_2$ and the other boundary component having dividing slope
  \[
    \frac{1}{1 + 0 - 1} = \infty.
  \]  
  Since $N_2$ contains a $0$-twisting vertical Legendrian curve, $T^2\times [0,1]$ is a half convex torsion layer and the union of this and $V_2$ (which is contained in $(C,\xi)$) is overtwisted. 

  Similarly if $N_2$ and $N_3$ have the same sign then we write $N_2$ as the union of two basic slices $N_2'$ and $N_2''$ where $N_2'$ had dividing slopes $-1$ and $0$. Now Lemma~\ref{basicpants} again implies the existence of a $0$ sloped convex annulus $A$ between $T_3$ and one boundary component of $N'_2$ with dividing slope $-1$. The dividing curves on A run from one boundary component to the other. As $N_2''$ has a similar annulus we can extend $A$ to an annulus in $(S^1\times P)\cup N_1 \cup N_2\cup N_3$ between $T_2$ to $T_3$. If we now cut $(S^1\times P')\cup N_1 \cup N_2 \cup N_3$ along this annulus we will obtain $T^2\times [0,1]$ with one boundary component $T_1$ and the other boundary component having dividing slope
  \[
    \frac{1}{0 + 0 - 1} = -1
  \]  
  Again, this $T^2\times [0,1]$ is a half convex torsion layer and the union of this and $V_1$ (which is contained in $(C,\xi)$) is overtwisted. 
\end{proof}

The next lemma considers the case when $(P_1,P_2)$ is a totally $2$-inconsistent pair of paths.

\begin{lemma}\label{lem:staytight}
    Let $(P_1,P_2)$ be a totally $2$-inconsistent pair of paths and $\xi'_{P_1,P_2} \in \Tight_0(C;pq)$. Gluing one basic slice in $(T^2\times [0,1]; \infty, pq)$ to $(C,\xi'_{P_1,P_2})$ will result in a tight contact structure $\xi$ on $C$ with $\infty$ dividing slope on $\partial C$, while gluing the other basic slice will result in an overtwisted contact structure. Moreover, adding any amount of convex torsion (with a correct co-orientation) to $(C,\xi)$ will result in a tight contact structure.  
\end{lemma}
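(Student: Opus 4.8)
The plan is to analyze the contact structure $\xi$ on $C$ obtained by gluing a basic slice from $\Tight_0(T^2\times[0,1];\infty,pq)$ to $\xi'_{P_1,P_2}$ using the decomposition $C = V_1 \cup (S^1\times P) \cup V_2$ from Section~\ref{knotcomp}, exactly as in the proof of Lemma~\ref{lem:overtwisted}. First I would set up the standard picture: arrange $\partial V_1 = T_1$, $\partial V_2 = T_2$, $\partial C = T_3$ to have dividing slopes $(q/p)^a$, $(q/p)^c$, $\infty$ in Seifert coordinates, which become $-1$, $\infty$, $\infty$ in the $S^1\times P$ coordinates. As in Lemma~\ref{lem:overtwisted}, the glued basic slice is recorded as $N_3$, and the outermost basic slices $N_1$, $N_2$ coming from $V_1$, $V_2$ are, by total $2$-inconsistency, \emph{both} of the sign opposite to their natural orientation as basic slices in $S^1\times P$ (recall $V_2$ is an element of $\Tight(S^0;q/p)$, so its sign flips). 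The key dichotomy is the sign of $N_3$: if $N_3$ has the sign making it \emph{disagree} with both $N_1$ and $N_2$ (in the $S^1\times P$ orientation), the argument of Lemma~\ref{lem:overtwisted} breaks down and we should get tightness; if it agrees with one of them, that argument produces a convex half Giroux torsion layer and hence overtwistedness. This immediately gives the ``one basic slice is overtwisted'' half.

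For the tight half, I would use Lemma~\ref{basicpants}. When $N_3$ has the sign opposite to both $N_1$ and $N_2$, the relevant convex annuli $A$ between $T_3$ and $T_1$ (and between $T_3$ and $T_2$) will have boundary-parallel dividing curves rather than curves running across, precisely the $\xi_{\pm\mp}$ versus $\xi_{\pm\pm}$ distinction of Lemma~\ref{basicpants}. The cleanest route is to identify $(S^1\times P')\cup N_1 \cup N_3$ (or the analogous piece) with one of the four model contact structures $\xi_{s_1 s_2}$ on $S^1\times P$ of slopes $-1,\infty,0$, then invoke Remark~\ref{rightbs}, which says exactly that attaching the correct-sign basic slice from $\Tight(T^2\times[0,1];0,\infty)$ to $\xi_{\pm\pm}$ stays tight while the wrong sign goes overtwisted. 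Reassembling: $\xi$ becomes a union of the model $\xi_{\pm\pm}$ (or its relatives) with the tight solid tori $V_1$, $V_2$, and one checks tightness by exhibiting $\xi$ inside a tight filling — e.g.\ Dehn filling $\partial C$ by the meridional solid torus of slope $pq$ as in the proof of Lemma~\ref{inftyclass} and Lemma~\ref{lem:>pq}, which realizes the whole manifold as a tight contact structure on $L(p,-q)\#L(q,-p)$. The boundary slope on $\partial C$ is $\infty$ after edge-rounding, computed just as in Case~3 of Lemma~\ref{lem:thickening-positive}.

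The hard part, and where I expect to spend the most effort, is the final claim that \emph{adding any amount of convex Giroux torsion to $(C,\xi)$ stays tight}. Unlike the universally tight case handled in Lemma~\ref{extraisUT} via covers, here $\xi$ is only virtually overtwisted, so the covering-space trick is unavailable. This is precisely the ``novel application of Honda's work on tricky non-rotative layers'' advertised in the introduction. The plan is to exploit the non-rotative layer machinery of Section~\ref{sec:non-rotative}: a convex Giroux torsion layer attached at $\partial C$ is rotative, but the obstruction to tightness must be detected by a bypass, and by Isotopy Discretization (Theorem~\ref{thm:discretization}) and the disk-equivalence results (Theorems~\ref{thm:disk-equivalence} and~\ref{thm:non-rotaive-contactomorphic}) one reduces any potential overtwisted disk to a controlled local model. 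Concretely, I would show that the torsion layer together with the outermost non-rotative portion of $\xi$ near $\partial C$ can be absorbed into an $I$-invariant neighborhood after passing to a disk-equivalent configuration, so that the tightness of $\xi$ propagates. The technical obstacle is verifying that the bypasses arising from the torsion layer are non-effective (in the sense of Theorem~\ref{thm:effective-bypasses}) relative to this configuration, so that they live in $I$-invariant neighborhoods and cannot create an overtwisted disk; managing the interaction between the nested bypass structure (Figure~\ref{fig:dot-balancing}) and the glued basic slice $N_3$ is where the real work lies, and it is likely this is proved via a sequence of supporting lemmas (``its associated lemmas'') referenced in the introduction.

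Once tightness under Giroux torsion is established, the statement about the dividing slope on $\partial C$ being $\infty$ follows from the edge-rounding computation above, and we are done. I would organize the write-up as: (i) the sign dichotomy giving tight versus overtwisted for the two basic slices, (ii) the slope computation, and (iii) the Giroux-torsion-stability argument via non-rotative layers, treating (iii) as the substantive core.
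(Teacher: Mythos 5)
Your sign dichotomy in (i) is correct and matches the paper's setup, and your part (iii) correctly identifies the non-rotative-layer and isotopy-discretization machinery as the engine of the torsion-stability claim. But there is a genuine gap in (ii): the tightness of $\xi$ after attaching the single basic slice cannot be obtained by embedding $\xi$ into a tight Dehn filling. Writing $\xi'_{P_1,P_2}=\xi'\cup\eta_\pm$ with $\xi'\in\Tight_0(C;\infty)$ and $\eta_\pm$ a basic slice between slopes $\infty$ and $pq$, the correctly-signed $\zeta_\pm$ concatenates with $\eta_\pm$ to a convex half Giroux torsion layer, so $(C,\xi)$ already carries half Giroux torsion in a collar of $\partial C$. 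If you now fill $\partial C$ (dividing slope $\infty$) by the solid torus with meridian $pq$, that solid torus together with $\eta_\pm\cup\zeta_\pm$ is a solid torus of meridional slope $pq$ containing an interior convex torus of dividing slope $pq$; a Legendrian divide on it then violates the Bennequin inequality for the meridian disk, so the filled $L(p,-q)\#L(q,-p)$ is overtwisted and yields no information. (The fillings in Lemmas~\ref{inftyclass} and~\ref{lem:>pq} work precisely because no such half-torsion layer sits between $\xi'$ and the filling torus.) In other words, the first sentence of the lemma is not a soft consequence of Lemma~\ref{basicpants} and Remark~\ref{rightbs} plus a filling: since $\xi'$ is in general only virtually overtwisted, proving that it stays tight after adding half or more torsion is exactly the novel content here, and the paper proves the tightness of $\xi$ and the ``moreover'' clause in a single argument rather than separately.

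Concretely, the paper's proof thickens $V_1,V_2$ to slope $q/p$, invokes Honda's result that $P\times I$ stays universally tight after attaching the rotative layer at $T_3$, and then controls the re-gluing of $V_1$ and $V_2$ by taking the separating annulus $A'$ with boundary $pq$-sloped ruling curves on $\partial C$, applying isotopy discretization to a hypothetical isotopy of $A'$ off an overtwisted disk, and showing inductively that every annulus in the resulting sequence stays in an $I$-invariant neighborhood of $A'$. The induction rests on Lemma~\ref{lem:nobypasses} (the only effective bypasses in $\widehat{V}_1$ are those for $p_1$), which in turn needs the nested-bypass analysis of Lemma~\ref{lem:dijointbypasses} and uses the total $2$-inconsistency hypothesis to rule out an effective bypass for $p_{k+1}$. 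Your sketch of (iii) is compatible with this, but you would need to supply those two supporting lemmas and the disk-equivalence bookkeeping; and, crucially, you must run that argument already to obtain the tightness asserted in the first sentence of the lemma, not only for the additional torsion.
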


\begin{proof}
  According to the proof of Lemma~\ref{lem:thickenornot}, $\xi'_{P_1,P_2}$ is the union of a contact structure $\xi' \in \Tight_0(C;\infty)$ and a $\pm$-basic slice $\eta_\pm \in \Tight_0(T^2\times[0,1];pq,\infty)$. Now let $\zeta_\pm$ be the $\pm$-basic slice in $\Tight_0(T^2\times[0,1];\infty,pq)$. Gluing $\zeta_\mp$ to $\eta_\pm$ is overtwisted, so we see that attaching one of the basic slices to $\xi'_{P_1,P_2}$ will result in an overtwisted contact structure. 

  Suppose $\xi'_{P_1,P_2}$ is the union of $\xi'$ and $\eta_-$. We will show that gluing $\zeta_-$ to $\xi'_{P_1,P_2}$ will result in a tight contact structure, and moreover, it will remain tight after adding any amount of convex torsion (with a correct co-orientation). Let $\xi$ be the contact structure resulting from this gluing. As usual, we consider $(C, \xi)$ as the union of two solid tori $V_1$ and $V_2$ and $S^1\times P$. We will use the notation from Section~\ref{knotcomp} except that we will use the coordinates on $T_1$ and $T_2$ coming from the longitude-meridian coordinates on $\partial V_1$ and the coordinates on $T_3$ coming from the longitude-meridian coordinates on $\partial C$. In particular, we can take the dividing curves on $T_1$ to have slope $(q/p)^a$, on $T_2$ to have slope $(q/p)^c$, and on $T_3$ to have slope $\infty$. Moreover, we can thicken $V_1$ and $V_2$ so that the slopes of $T_1$ and $T_2$ become $q/p$. Notice that in the coordinates on $\partial V_i, i=1,2,$ coming from $S^1\times P$ the slopes are $0$. Honda \cite[Lemma~5.1]{Honda00b} showed that this $P \times I$ is universally tight even after adding a universally tight rotative $T^2\times I$ layer to $T_i$ (with the correct sign). Thus, it is tight after we add convex torsion to $T_3$. After we make $P$ convex, we obtain the dividing curves on $P$ as shown in Figure~\ref{fig:pants} (see the proof of Lemma~\ref{lem:finite} to rule out other possibilities).

  Let $A'$ be an annulus in $S^1\times P$ separating $T_1$ from $T_2$, such that the boundary is two $pq$ slope ruling curves on $T_3$. See the first drawing of Figure~\ref{fig:knot-complement}. We can perturb $A'$ rel boundary so that it becomes convex and the intersection between $P$ and $A'$ is the Legendrian arc as shown in Figure~\ref{fig:pants}. It is not hard to see that $A'$ contains a $0$-twisting Legendrian curve (since $\xi$ has a half convex torsion layer in it), so we obtain two non-rotative layers $N_1$ and $N_2$ once we cut $S^1 \times P$ along $A'$.  By Theorem~\ref{thm:solid-torus}, $\widehat{V}_1 := V_1 \cup N_1$ and $\widehat{V}_2 := V_2\cup N_2$ are tight. By adding a sufficiently large amount of convex torsion to $T_3$, we can assume that $\widehat{V}_1$ and $\widehat{V}_2$ have a large number of dividing curves. 

  \begin{figure}[htbp] {\scriptsize
  \begin{overpic}{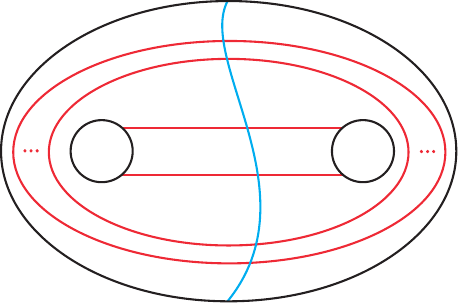}
    \put(55,50){$T_1$}
    \put(155,50){$T_2$}
    \put(20,15){$T_3$}
  \end{overpic}}
  \caption{Dividing curves on the pair of pants $P$. The blue arc is a Legendrian arc, which is the intersection of $A'$ and $P$.}
  \label{fig:pants}
  \end{figure}

  Suppose $(C,\xi)$ was not tight. Then we can smoothly isotope $A'$ so that it would be disjoint from an overtwisted disk. We can then use isotopy discretization (Theorem~\ref{thm:discretization}) to find a sequence of convex annuli $A_1=A',A_2,\ldots, A_n$ such that there is an overtwisted disk in the complement of $A_n$ and each $A_i$ is obtained from $A_{i-1}$ by a bypass attachment. We will inductively show that $A_i$ is contained in an $I$-invariant neighborhood $A \times [1,2]$, where $A$ is an annulus contact isotopic to $A'$ rel boundary. (The annulus $A$ used for each $A_i$ does depend on $i$, but they all have the claimed property, so we have dropped the dependence on $i$ from the notation for $A$ to keep the notation simple.) Then clearly $C \setminus A_i$ is tight for all $i$, and this contradiction will establish the tightness of $\xi$. 

  For each $i$, cutting $C$ along $A_{i}$ will result in two solid tori $\widehat{V}^{i}_1$ and $\widehat{V}^{i}_2$. Clearly, $A_1$ satisfies the inductive hypothesis. Now, assume that $A_{i-1}$ satisfies the inductive hypothesis.  We know $A_i$ is obtained by attaching a bypass to $A_{i-1}$. We assume the bypass was contained in $\widehat{V}^{i-1}_1$. The argument when the bypass is contained in $\widehat{V}^{i-1}_2$ is almost identical, except for one issue that is discussed in Remark~\ref{V2change}. By the inductive hypothesis, $A_{i-1}$ is contained in an $I$-invariant neighborhood $A \times [1,2]$. Since $A\times\{2\}$ is contact isotopic to $A'$ we know that $C\setminus (A\times \{2\})$ consists of two solid tori contact isotopic to $\widehat{V}_1$ and $\widehat{V}_2$, so we will think of $C\setminus (A\times \{2\})$ as $\widehat{V}_1 \cup \widehat{V}_2$. We know that $A_i$ is contained in $\widehat{V}_1$ and $A_i$ cuts $C$ into two solid tori, one of which is contained in $\widehat{V}_1$. Denote the boundary of this solid torus by $\widehat{T}_i$. We need to find an annulus $A_i'$ inside the solid torus bounded by $\widehat T_i$ and contact isotopic to $A\times \{2\}$. When we do this, $A_i'$ and $A\times \{2\}$ will co-bound a thickened annulus $A\times[1,2]$ containing $A_i$ on which the contact structure is $I$-invariant, thus completing the inductive step. 
  
  Since $A_{i-1}$ is contained in an $I$-invariant neighborhood $A \times [1,2]$, the number of dividing curves on $A_{i-1}$ is greater than or equal to the number of dividing curves on $A\times\{2\}$. To see this, we notice that if we glue the boundary components of $A\times [1,2]$ together using the identity diffeomorphisms, then we will obtain a tight contact structure on $T^3$. The convex torsion of this contact structure will agree with half the number of dividing curves on $A\times \{2\}$, say it is $n$. In the proof of the classification of tight contact structures on $T^3$, see \cite[Section~7]{Kanda97}, we know that if we take a curve $\gamma$ on $T^2\times \{2\}$ that together with a dividing curve forms a basis for the homology of the torus, then any Legendrian curve isotopic to $\gamma$ will have twisting bounded above by $-n$. However, if there was a torus $T$ in $A\times[1,2]$ with fewer than $2n$ dividing curves, then we could Legendrian realize a curve isotopic to $\gamma$ with twisting larger than $-n$.

Using \cite[Theorem~2.2 Part~(4)]{Honda00a}, one may easily check that if the number of dividing curves is the same, then $A_{i-1}$ and $A\times\{2\}$ are contact isotopic. We claim that the number of dividing curves on $A_i$ is also greater than or equal to the number of dividing curves on $A\times\{2\}$. To prove the claim, we only need to consider the case when the number of dividing curves on $A_{i-1}$ is the same as the number of dividing curves on $A\times \{2\}$ (since the number of dividing curves on $A_i$ can only differ from those on $A_{i-1}$ by $2$). In this case, $A_{i-1}$ is contact isotopic to $A\times\{2\}$ and the torus $\widehat T_{i-1}$ is contact isotopic to $\partial\widehat{V}_1$. Label the points of intersection of the dividing curves on $\widehat T_{i-1}$ (and $\partial\widehat{V}_1$) with $A_{i-1}$ by $p_i$ as shown in first drawing of Figure~\ref{fig:diskequiv} (notice that the annulus in the figure is equivalent to the horizontal annulus for $\widehat{V}_1 \setminus V_1$, which is the subsurface of $\{\theta\} \times P$ as shown in Figure~\ref{fig:pants}). To proceed, we note the following result, which will be established later. 

\begin{lemma}\label{lem:nobypasses}
  Suppose $(P_1,P_2)$ is totally $2$-inconsistent. Then there exist no effective bypasses in $\widehat{V}_1$, except for the ones for $p_1$.
\end{lemma} 
  
Notice that $A_{i-1}$ contains all dividing curves of $\widehat T_{i-1}$ except for the one containing $p_1$. By Lemma~\ref{lem:nobypasses}, there is no effective bypass on $A_{i-1}$, so any bypass cannot reduce the number of dividing curves on $A_{i-1}$ and this completes the claim. Thus the solid torus bounded by $\widehat T_i$ contains a solid torus $\overline{V}_1$ with two dividing curves of the same slope as the dividing curves on $\widehat T_i$ (which is the same slope as the dividing curves on $\partial\widehat{V}_1$). Now $\widehat{V}_1\setminus \overline{V}_1$ and $\widehat{V}_1\setminus V_1$ are both non-rotative outermost layers and so by Theorem~\ref{thm:non-rotative-contactomorphic} we know that $\overline{V}_1$ and $V_1$ are contactomorphic. 

Now consider a horizontal annulus $\widehat A$ for the thickened torus bounded by $\widehat T_i$ and $\partial \widehat{V}_1$ that is isotopic to a subsurface of $\{\theta\}\times P$ and then extend it to $\overline A$, a horizontal annulus for the thickened torus bounded by $\partial \overline{V}_1$ and $\partial \widehat{V}_1$. We can make these annuli convex with Legendrian boundary. By Theorem~\ref{thm:disk-equivalence} and Lemma~\ref{lem:nobypasses}, the horizontal annulus $\overline{A}$ must be disk equivalent to the horizontal annulus for $\widehat{V}_1 \setminus V_1$. Label the points where $\overline{A}$ intersects the dividing curves on $\partial \widehat{V}_1$ as shown in the first drawing of Figure~\ref{fig:diskequiv}.  

Notice that $\partial \widehat{V}_1$ and $\widehat T_i$ agree in a neighborhood of the dividing curve corresponding to $p_1$. From this we can see that the dividing set on $\widehat{A}$ does not have any bypasses for $\partial \widehat{V}_1$. This is because the only effective bypasses for $\partial \widehat{V}_1$ in $\widehat{V}_1$ are bypasses for $p_1$, but since $\partial {V}_1$ and $\widehat T_i$ agree near the dividing curve corresponding to $p_1$ there is no bypass here either. Notice this implies every dividing curve on $\widehat{A}$ that starts on $\widehat{A}\cap \partial \widehat{V}_1$ must end on $\widehat{A}\cap\widehat T_i$. Now inside of $\overline{A}\setminus \widehat{A}$, we can find a closed curve parallel to $\partial\overline{A}$ that intersects the dividing set the same number of times that $\overline{A}\cap \partial \widehat{V}_1$ does. Legendrian realize this curve and take a convex torus $T$ parallel to $\widehat T_i$ intersecting $\overline{A}$ in that curve. The thickened torus between $T$ and $\partial\widehat{V}_1$ is an $I$-invariant neighborhood of $\partial\widehat{V}_1$ containing $\widehat T_i$ (and hence $A_i$); note that this follows since both boundary components have the same number of dividing curves and the horizontal annulus connecting them has all the dividing curves running from one boundary component to the other. Again, since $\partial{V}_1$ and $T$ agree near the dividing curve corresponding to $p_1$, we can fix the surface $S = \partial{V}_1 \setminus A \times \{2\}$ in the $I$-invariant neighborhood. This implies that $A\times\{2\}$ and $T \setminus S$ are contact isotopic. This completes the inductive step.  
\end{proof} 

To prove Lemma~\ref{lem:nobypasses}, we need a preliminary observation. Lemma~\ref{lem:nobypasses} and the following lemma both apply to $\widehat{V}_2$ once we use the mirror image of the first drawing in Figure~\ref{fig:diskequiv}.

\begin{lemma}\label{lem:dijointbypasses}
  In $\widehat{V}_1$, the only points which can have an effective bypass are $p_1$ and $p_{k+1}$. If there exists an effective bypass for $p_{k+1}$, then there exist length $k-1$ nested bypasses for $p_1$ which are disjoint from a, possibly different, effective bypass for $p_{k+1}$.
\end{lemma}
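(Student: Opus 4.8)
The plan is to reduce the statement to a combinatorial analysis of the dividing set on a convex meridian disk $D$ of the tight solid torus $\widehat V_1 = V_1\cup N_1$, and then read off its effective bypasses from the resulting arc configuration. First I would fix an explicit model for this dividing set. By Theorem~\ref{thm:disk-equivalence} any two non-rotative outer layers for $\partial\widehat V_1$ are disk-equivalent, so it suffices to compute the dividing curves for one convenient horizontal annulus; I would use the one coming from the subsurface $\{\theta\}\times P\subset S^1\times P$ employed throughout Section~\ref{classificationoncomplement}, which is the annulus drawn in the first picture of Figure~\ref{fig:diskequiv}. The essential input is the total $2$-inconsistency of $(P_1,P_2)$, which fixes the signs on the first continued fraction blocks of $P_1$ and $P_2$, hence the sign of the outermost basic slice of the thickened $V_1$. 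Tracking this sign through the decomposition $\widehat V_1 = V_1\cup N_1$, and through the sign reversal that occurs when one views an outer basic slice of $V_1$ as a basic slice inside $S^1\times P$ (as in the proof of Lemma~\ref{lem:thickenornot}), should pin down the arc pattern on $D$: a single family of boundary-parallel arcs nested about the point $p_1$ together with one arc sitting at $p_{k+1}$, with $p_1$ being the only point where the arcs genuinely nest.

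Step two is to recall the dictionary between boundary-parallel dividing arcs and bypasses. A bypass for $p_i$ corresponds to an attaching arc running through $p_{i-1},p_i,p_{i+1}$, and it is effective exactly when the three dividing curves it crosses have the central one distinct from the two outer ones; by Theorem~\ref{thm:effective-bypasses} every non-effective bypass is trivial or folding and so is swallowed by an $I$-invariant collar, which means I only need to locate the effective ones. Inspecting the model configuration, I would verify that for each index $i\neq 1,k+1$ either no boundary-parallel arc is available at $p_i$ (because $p_i$ lies in the interior of the nested family) or the arc at $p_i$ coincides with one at a neighbor, so the candidate bypass fails the effectiveness criterion. This yields the first assertion, that only $p_1$ and $p_{k+1}$ can carry an effective bypass.

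For the conditional statement I would run a counting, or ``dot-balancing'', argument of the kind illustrated in Figure~\ref{fig:dot-balancing}. The existence of an effective bypass for $p_{k+1}$ forces the dividing arc there to be outermost (non-nested); since the number of dividing arcs on $D$ is fixed and they form a non-crossing family, deleting this outermost arc leaves precisely the arcs needed to nest about $p_1$, and counting the endpoints $p_1,\dots,p_{k+2}$ shows the nesting has length $k-1$. Because the attaching region of an effective bypass for $p_{k+1}$ may be taken in a neighborhood of the dividing curve through $p_{k+1}$, which is disjoint from the consecutive arcs nested about $p_1$, one then obtains the asserted disjointness; the bypass realizing this need not be the one originally produced, which is why the statement allows a ``possibly different'' effective bypass.

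The hard part will be Step one: computing the dividing set on $D$ out of $V_1\cup N_1$ with every sign and coordinate convention kept consistent, since the whole conclusion is very sensitive to which point the nesting concentrates at. Once that explicit model is in hand, Steps two and three amount to inspecting a fixed arc diagram together with an endpoint count, and I expect them to be routine.
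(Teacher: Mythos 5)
Your Step one cannot be carried out as stated, and this undermines both halves of the argument. The dividing set on a convex meridian disk $D$ of $\widehat{V}_1$ is not pinned down by the contact structure: Theorem~\ref{thm:disk-equivalence} only controls the horizontal annulus of a non-rotative outer layer, and only up to the equivalence of Figure~\ref{fig:diskequiv}; when one extends to a full meridian disk, the arcs in the core (a meridian disk of the original $V_1$) are genuinely uncontrolled, and the annular part is a $p$-fold cover of the horizontal annulus, so $\partial D$ meets $p$ copies of each point $p_i$ and carries $pk$ dividing arcs, not one arc per $p_i$. More importantly, even if you had an explicit model $D$ with no effective bypass at $p_i$ for $i\neq 1,k+1$, that would not prove that no effective bypass for $p_i$ exists in $\widehat{V}_1$: an effective bypass is attached along some convex surface, not necessarily yours. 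The actual proof of the first assertion runs the logic in the other direction: a hypothetical effective bypass at $p_i$ (followed by further bypasses down to a torus with two dividing curves) produces a new non-rotative outer layer whose horizontal annulus contains a boundary-parallel arc at $p_i$, contradicting disk-equivalence with the standard annulus coming from Figure~\ref{fig:pants}. Your ``inspect the model'' step is missing exactly this bridge from ``a bypass exists somewhere'' to ``a bypass is visible on my chosen surface.''

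The same gap recurs in the conditional statement. The hypothesis only gives an effective bypass for $p_{k+1}$ on \emph{some} disk, and you must produce length $k-1$ nested bypasses for $p_1$ disjoint from a (possibly different) such bypass. The paper builds a specific meridian disk $D$ out of $p$ copies of the horizontal annulus (with a careful characteristic-foliation perturbation in an $I$-invariant collar), then uses isotopy discretization to interpolate between $D$ and a disk carrying the $p_{k+1}$-bypass, locates the first disk $D_{j+1}$ in the sequence with a non-nested bypass for $p_{k+1}$, and runs a counting argument on $D_j$ --- using that its only boundary-parallel arcs sit at copies of $p_1$ and that the $pk$ arcs are interlaced with $p$ copies of each $p_i$ --- to force a nest of length $k-1$ at some copy of $p_1$; disjointness comes from the fact that $D_j$ and $D_{j+1}$ differ by a single bypass and can be made disjoint. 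Your endpoint count on a single idealized disk, with one arc per $p_i$ and the $p_{k+1}$-bypass assumed to already lie on that disk, skips the discretization step entirely and gets the combinatorics of the meridian disk wrong.
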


\begin{proof}
  According to Theorem~\ref{thm:disk-equivalence}, any two horizontal annuli for $\widehat{V}_1$ are disk-equivalent. The first horizontal annulus for $\widehat{V}_1$ is shown in Figure~\ref{fig:pants} and is obtained by cutting the pair-of-pants along the blue Legendrian arc in the figure. So any other horizontal annulus for a non-rotative outer layer in $\widehat{V}_1$ will be disk equivalent to this one and all such possibilities are shown in Figure~\ref{fig:diskequiv}. If there is an effective bypass for $p_i$ when  $i \neq 1,k+1$, then we can attach it to reduce the number of dividing curves on $\partial \widehat{V}_1$. We can then find further bypasses to get a torus $T$ with just two dividing curves of slope $0$ (the same slope as the dividing cures on $\partial \widehat{V}_1$). Now we can find a horizontal annulus for the region between $\partial \widehat{V}_1$ and $T$ on which the original bypass sits. 
  This annulus cannot be disk-equivalent to the one for $\widehat{V}_1$. Thus there is no effective bypass for $p_i$ when $i\neq 1, k+1$. 

  Now, suppose there is an effective bypass for $p_{k+1}$. To find the claimed bypasses for $p_1$ we will construct a meridian disk for $\widehat{V}_1$ from the horizontal annulus $H$ shown in Figure~\ref{fig:pants}. To this end, recall that the horizontal annulus is for the non-rotative layer $\widehat{V}_1\setminus V_1$, and the meridian for $V_1$ is a $-p/q'$ sloped curve in the coordinates on $\partial P$ coming from $S^1\times P$ (we are using the change of coordinates $\phi$ from Section~\ref{knotcomp}). We want to extend this meridian to a meridian for $\widehat{V}_1$ by using copies of the horizontal annulus $H$ (which has slope $\infty$). Smoothly we can do this by taking $p$ copies of $H$, labeled $H_0, \ldots, H_{p-1}$, cutting each of them $p'$ times by an arc running from one boundary component to the other and then gluing one side of the cut on $H_i$ to the other side of the cut on $H_{i+1}$ (with indices taken modulo $p$). See the blue curves in Figure~\ref{fig:ma} (there $p=5$ and $p'=2$). This will give an annulus in $\widehat{V}_1\setminus V_1$ that can be extended to a meridian disk for $\widehat{V}_1$. 

  We now perform the construction, paying attention to the contact geometry. Consider the torus $T$ formed by taking the product of $S^1$ with the blue arc on the left of Figure~\ref{fig:diskequiv} and the black arc containing $p_{k+1}$ and connecting the end points of the blue arc. Notice that $\partial V_1$ intersected with $H$ is the inner circle in Figure~\ref{fig:diskequiv}, and we see that the region $R$ bounded by $T$ and $\partial V_1$ is a thickened torus with an $I$-invariant contact structure. We denote the outer boundary component by $\partial_oR$ and the inner boundary component by $\partial_i R$. We now take $p$ copies of the convex horizontal annulus $H$, shown on the left of Figure~\ref{fig:diskequiv}, and perform the construction above. The cutting and re-gluing take place in a small neighborhood of a $0$-twisting Legendrian arc in $H$ connecting $p_{k+1}$ to a point in the inner boundary component (here, by `$0$-twisting' we mean that the arc does not intersect any dividing curve on $H$). In particular, we will modify our annuli in the region $R$ where the contact structure is $I$-invariant. Let $H'=H\cap R$, and we can assume this is a horizontal annulus for $R$. In a neighborhood of the dividing curve corresponding to $p_{k+1}$, the characteristic foliation can be assumed to be as shown on the left of Figure~\ref{fig:ma} 
  \begin{figure}[htbp]
    {\scriptsize
    \begin{overpic}{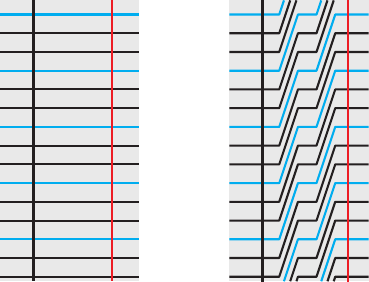}
    \end{overpic}}
    \caption{The gray is an annular neighborhood of the dividing curve corresponding to $p_{k+1}$, shown in red (the top and bottom of the rectangle are identified). On the left, we see the original annulus with a Legendrian divide shown vertically, and the horizontal curves are ruling curves. The blue curves are the intersection of the copies of $H'$ with the region (they are also ruling curves). The annulus can be isotoped relative to its boundary, so the foliation is as shown on the right-hand side. Notice that the blue will now be a single curve on $\partial_o R$.}
    \label{fig:ma}
  \end{figure}
and the copies of $H'$ intersecting this region are also shown. By a small perturbation of $\partial_i R$ we can arrange the characteristic foliation to be as shown on the right of Figure~\ref{fig:ma}. Notice that the green curve (when extended to the rest of $\partial_oR$ by arcs in copies of $H'\cap \partial_o R$) is a $p/p'$ curve $\gamma$ and can be assumed to agree with copies of $H'\cap \partial_o R$ outside of the region shown in the figure. We can make this same perturbation to the torus making up the inner boundary component of $R$. When we have done this to both boundary components of $R$, we can again assume that the contact structure is $I$-invariant on $R$ and $\gamma\times I$ will be an annulus. This annulus must have $2p$ dividing curves running from one boundary component to the other and agrees with copies of $H'$ outside a small neighborhood of the dividing curve corresponding to $p_{k+1}$ times $I$. The copies of $H'$ already had $2p$ dividing curves in the region where they agree with $\gamma\times I$, and so we can take these to be the dividing curves on $\gamma\times I$. Notice that on $\partial_o R$ we can glue copies of $H\setminus H'$ to $\gamma\times I$ to get an annulus $A_m$ for $\widehat{V}_1\setminus V_1$ that can be extended by a meridian disk $D'$ for $V_1$ to a meridian disk of $\widehat{V}_1$. By construction, the dividing set on $A_m$ will be obtained from $H$ by taking a $p$-fold cover, the dividing set on $D'$ will consist of $p$ arcs (of which we have no control). 

  A potential dividing set for the meridian disk $D=A_m\cup D'$ is shown in Figure~\ref{fig:largedisk}. 
  \begin{figure}[htbp]
  \vspace{0.2cm}
  {\scriptsize
  \begin{overpic}{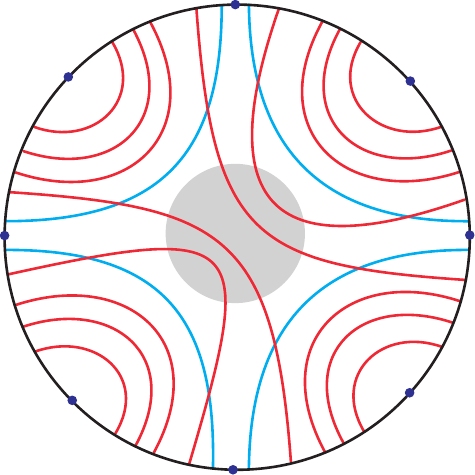}
    \put(20,200){$p_1$}
    \put(20,25){$p_1$}
    \put(200,28){$p_1$}
    \put(200,195){$p_1$}
    \put(-25, 115){$p_{k-1}$}
    \put(235, 115){$p_{k-1}$}
    \put(105, 234){$p_{k-1}$}
    \put(105, -6){$p_{k-1}$}
  \end{overpic}}
  \caption{Some possible meridian disk for $\widehat{V}_1$ constructed from horizontal annuli. The shaded center region is $D'$. Here, $p=k=4$.}
  \label{fig:largedisk}
  \end{figure}
  For some configurations of dividing curves on $D'$ we will immediately see a bypass for $p_{k+1}$. If this happens then we can take the bypass for $p_{k+1}$ to be on this meridian disk and then the length $(k-1)$ nested bypasses for $p_1$ can be found on a parallel copy of the meridian disk. 

  We now suppose that there is not a bypass for $p_{k+1}$ on $D$.  By hypothesis there is an effective bypass for $p_{k+1}$ and as we did in the first paragraph of this proof, we can assume that it is on some meridian disk for $\widehat{V}_1$ and by sliding the boundary of this disk along $\partial \widehat{V}_1$ we can assume that it has the same boundary on $D$. By the isotopy discretization (Theorem~\ref{thm:discretization}), we obtain a sequence of disks $D_0 = D, D_1, \ldots, D_{n-1}, D_n = D'$ such that $D_{i+1}$ can be obtained by a single bypass attachment from $D_i$. Then by the construction of $D$, there is an integer $j$ such that $D_j$ does not contain a non-nested bypass for $p_{k+1}$ and $D_{j+1}$ contains a non-nested bypass for $p_{k+1}$. 

  Consider the maximal nested bypasses for $p_1$. If one of these has length $k-1$, then the proof is done. This is because $D_{j+1}$ is obtained from a single bypass attachment from $D_j$, so we can make $D_j$ and $D_{j+1}$ disjoint after perturbation. Then the bypass for $p_{k+1}$ on $D_{j+1}$ is disjoint from the bypasses on $D_j$. 

  Recall that from the first part of the proof and our assumption that there is no bypass for $p_{k+1}$ on $D_j$, we know that the only boundary parallel dividing curves (by which we mean that it co-bounds with an arc in $\partial D_j$ a disk containing no other dividing curves) are bypasses for $p_1$. We also know from the construction of our meridian disk that there are $pk$ arcs in the dividing set and their end points are interlaced with the $p$ copies of each of the points $p_i$. We claim that the combinatorics of arcs on a disk as described above implies that one of the $p_1$ must have nested bypass of length $k-1$ and so the proof is complete. To see this suppose that all of the nested bypasses for $p_1$ on $D_j$ have length less than $k-1$. Consider a sub-disk $D'_j$ of $D_j$ such that the annulus $D_j\setminus D'_j$ contains all of the nested bypasses for the copies of $p_1$ and the rest of the dividing curves just run from one boundary component to the other. Since we are assuming all the nested bypasses for $p_1$ have length less than $k-1$, there will still be at least $2p$ arcs in $D'_j$. Thus there will be a boundary parallel arc $\gamma$ on $D'_j$. We can extend $\gamma$ across the annulus $D_j\setminus D'_j$ and it will either be boundary parallel on $D_j$ or not. If it is boundary parallel then it gives a bypass for $\partial \widehat{V}_1$ along some $p_i$ with $i\not=1$, which is a contradiction. But if the arc is not boundary parallel, then it will surround one of the nested bypasses for $p_1$, but this is also a contradiction since we said all the nested bypasses for $p_1$ were outside of $D'_j$. Thus $p_1$ must have a nested bypass of length at last $k-1$, as claimed. 
\end{proof}

We are now ready to prove Lemma~\ref{lem:nobypasses}. Since the proof is somewhat long, we give a brief outline. We first show that if there is an effective bypass for $p_{k+1}$ in $\widehat{V}_1$, then its sign must be opposite from the sign of the first continued fraction block of $V_1$. Then by Lemma~\ref{lem:dijointbypasses}, if such a bypass exists, then we can also find nested bypasses for $p_1$ in $\widehat{V}_1$, disjoint from the effective bypass for $p_{k+1}$. Attaching these nested bypasses produces a solid torus contactomorphic to $V_1$, while the effective bypass for $p_{k+1}$ remains effective for this solid torus. However, the sign of this bypass would be opposite to that of the first continued fraction block of $V_1$, which is impossible. Thus there is no effective bypass for $p_{k+1}$ in $\widehat{V}_1$. Combining this with Lemma~\ref{lem:dijointbypasses}, the only effective bypasses in $\widehat{V}_1$ are a bypass for $p_1$.

\begin{proof}[Proof of Lemma~\ref{lem:nobypasses}]
Suppose there is an effective bypass for $p_{k+1}$. We claim that the sign of this bypass is different from the sign of the first continued fraction block of $V_1$ (it has the single sign since $(P_1, P_2)$ is totally $2$-inconsistent). To see this, we first notice that the sign of the bypass will agree with the sign of the region on the convex surface $P$ between the two horizontal dividing curves in Figure~\ref{fig:pants}.  (To see this, we will always orient the attaching arc of such a bypass so that it passes the dividing curves corresponding to $p_{k+1}$ in the same direction that $\partial H$ does. Now the co-orientation on a contact structure orients the dividing curve corresponding to $p_{k+1}$. The sign of a bypass is determined by whether or not the orientation of the dividing curve agrees with the co-orientation on the bypass disk when oriented as above, and so is fixed by the contact structure and orientation on its attaching arc.) 
  
We claim this sign must be opposite to the sign in the first continued fraction block of $V_1$. We assume this is not the case and derive a contradiction. We can do this by replacing $V_1$ and $V_2$ in $C$ with the universally tight solid tori whose sign of the first continued fraction block is the same as $V_1$ and $V_2$, respectively. By Lemma~\ref{standardstructures}, this is contactomorphic (possibly co-orientation reversing) to the complement of the binding of an open book supported by $(p,q)$-torus knot with a convex torsion layer added. By Colin's gluing theorem (originally proven in \cite{Colin99}, but as used here see \cite[Theorem~4.7]{HondaKazezMatic02}), the contact structure is universally tight even after adding any amount of convex torsion. Inside $V_1$ we have another solid torus $V'_1$ with dividing slope $(q/p)^a$, and if we extend the convex pair of pants $P$ across an annulus running between $\partial V_1$ and $\partial V_1'$, then we will see a bypass on this annulus with sign given by the sign of the bypasses in the first continued fraction block of $V_1$. Since we are assuming that it is the same sign as the region between the horizontal lines in Figure~\ref{fig:pants}, we see that it will give a bypass for $V_2$. Attaching this bypass will result in a torus about $V_2$ with dividing slope $(q/p)^a$. Since there are vertical ruling curves on $A'$ disjoint from the attached bypass, we can thicken this torus further to have slope $q/p$. This gives a Giroux torsion layer in a solid torus, so the contact structure would be overtwisted, but we know that is not the case. So the sign of the region bounded by the horizontal dividing curves in Figure~\ref{fig:pants} is opposite from the signs of the first continued fraction block of $V_1$. 

  We now return to the setting where $V_1$ and $V_2$ are given by any decorated pair of paths that is totally $2$-inconsistent. We still know that the sign of the region bounded by the horizontal dividing curves in Figure~\ref{fig:pants} is opposite that of the signs in the first continued fraction block of $V_1$. Thus any effective bypass for $p_{k+1}$ will have sign opposite as well. 
  
  From Lemma~\ref{lem:dijointbypasses}, we can find an effective bypass for $p_{k+1}$ disjoint from the length $k-1$ nested bypasses for $p_1$. Attach these nested bypasses to obtain a solid torus with two dividing curves inside of $\widehat{V}_1$, which is contactomorphic to $V_1$ by Theorem~\ref{thm:non-rotative-contactomorphic}. Thus we just call this solid torus $V_1$. While attaching the nested bypasses, we never modify the dividing curve on $\partial\widehat{V}_1$ passing through $p_{k+1}$, so the bypass for $p_{k+1}$ is also effective for $V_1$. However, the sign of the effective bypass for $p_{k+1}$ is different from the sign of the first continued fraction block of $V_1$. This is impossible since the sign of the first continued fraction block is determined by the sign of effective bypasses in a solid torus. This contradiction implies that there is no effective bypass for $p_{k+1}$ in $\widehat{V}_1$. Combining with Lemma~\ref{lem:dijointbypasses}, the only effective bypasses in $\widehat{V}_1$ are a bypass for $p_1$.
\end{proof}

\begin{remark}\label{V2change}
  Lemmas~\ref{lem:nobypasses} and~\ref{lem:dijointbypasses} also hold for $\widehat{V}_2$ and the proofs are the same except for a part of the proof of Lemma~\ref{lem:dijointbypasses}. Specifically, in that proof we used the fact that we could find the bypass for $V_1\setminus V_1'$ on $P$. This is because in the coordinates on $\partial V_1$ coming from $S^1\times P$ we see that the dividing slope on $\partial V_1$ is $0$ and on $V_1'$ is $\infty$. Thus the annulus in $V_1\setminus V_1'$ that extends $P$, which has slope $\infty$, will contain a bypass for $\partial V_1$. When we consider $\widehat{V}_2$ the relevant slopes on $\partial V_2$ and $\partial V_2'$ are $0$ and $-1$, respectively. Thus we cannot find a bypass for $\partial V_2$ on an annulus of slope $\infty$. To proceed in this case we need to change the section of $S^1\times P$ that we are using. Specifically, if we take $\{\theta\}\times P$ and cut it along an arc connecting $\partial V_1 \cap P$ to $\partial V_2 \cap P$, we can then push one side of the cut along the $S^1$ fibers until it returns and is glued to the other side of the cut. Notice that if one pushed in the correct direction, then the slope of this new section, call it $P'$, on $\partial V_2$ is $-1$. Now running the whole argument with $S^1\times P'$ instead of $S^1\times P$ will prove the lemma for $\widehat{V}_2$.
\end{remark}

We now turn to contact structures on $C$ with convex torsion. 

\begin{lemma}\label{postorsion}
  For any $(p,q)$-torus knot and integer $k$ we have that 
  \[
    |\Tight_l(C;k)|=2|(a_1+1)\cdots(a_{m-1}+1)||(b_1+1)\cdots (b_{n-1}+1)|
  \]
  for any $l\in \frac 12 \N$, where the $a_i$ and $b_i$ are defined in Section~\ref{introgeneral}. This is the same as the number of totally $2$-inconsistent pairs of paths representing $q/p$. 
\end{lemma}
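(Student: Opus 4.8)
The plan is to set up a bijection, valid for every integer $k$ and every $l\in\frac12\N$, between $\Tight_l(C;k)$ and the set of totally $2$-inconsistent decorated pairs of paths representing $q/p$, and then to count the latter. The count is the easy end, so let me record it first. Recall that $P_1$ describes an element of $\Tight(S_\infty;q/p)$ and $P_2$ an element of $\Tight(S^0;q/p)$, so by Equations~\eqref{numbersolidtorizero} and~\eqref{numbersolidtoriinf} the decorations of $P_1$ number $|(b_1+1)\cdots(b_{n-1}+1)b_n|$ and those of $P_2$ number $|(a_1+1)\cdots(a_{m-1}+1)a_m|$ (with the analogous continued fractions when $pq>0$). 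Since decorations multiply over continued fraction blocks, and since the block of $P_1$ adjacent to $q/p$ contributes exactly the factor $|b_n|$ while the block of $P_2$ adjacent to $q/p$ contributes exactly $|a_m|$, forcing these two first blocks to be monochromatic of opposite signs (a single global sign choice, hence a factor of $2$) and leaving all remaining blocks free yields exactly $2\,|(a_1+1)\cdots(a_{m-1}+1)|\cdot|(b_1+1)\cdots(b_{n-1}+1)|$ such pairs. This is the asserted number.

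For the bijection, the forward map sends a totally $2$-inconsistent pair $(P_1,P_2)$ to the contact structure built by forming $\xi'_{P_1,P_2}\in\Tight_0(C;pq)$, gluing the unique basic slice of $\Tight^{min}(T^2\times[0,1];\infty,pq)$ that keeps it tight to reach boundary slope $\infty$, attaching a convex $l$ Giroux torsion layer, and finally gluing a layer in $\Tight^{min}(T^2\times[0,1];k,\infty)$ to reach slope $k$. Lemma~\ref{lem:staytight} guarantees tightness through the torsion layer, and the argument there (via \cite[Theorem~4.7]{HondaKazezMatic02}) shows tightness persists after the final collar; that the torsion is exactly $l$ follows because the core lies in $\Tight_0$ and is universally tight in the sense of Lemma~\ref{extraisUT}. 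Injectivity comes from the fact that distinct totally $2$-inconsistent pairs give distinct cores $\xi'_{P_1,P_2}$, detected by the rotation number of the Legendrian divide (Lemma~\ref{lem:rotdiff}), and the construction is reversed by peeling the torsion and thickening. Surjectivity is the crucial step: given $\xi\in\Tight_l(C;k)$ with $l\ge\frac12$, the torsion layer lets us re-slice so that a copy $C'\subset C$ carries an element of $\Tight_0(C;\infty)$; thickening to slope $pq$ via Lemma~\ref{lem:thickening-positive} or~\ref{lem:thickening-negative} produces some $\xi'_{P_1,P_2}\in\Tight_0(C;pq)$, and Lemma~\ref{lem:overtwisted} together with Remark~\ref{nothickot} forces $(P_1,P_2)$ to be totally $2$-inconsistent, since otherwise reattaching the slope-$\infty$ basic slice carried by the torsion would make $\xi$ overtwisted.

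Independence of $k$ and $l$ is built into this single bijection, but it rests on one flexibility input I must verify carefully: because a convex Giroux torsion layer contains convex tori of every slope and is unique up to contactomorphism (Theorem~\ref{twolayers}), the two choices of basic slice in $\Tight^{min}(T^2\times[0,1];k,\infty)$ become isotopic once glued onto the torsion, and re-slicing the torsion realizes any integer boundary slope $k$ without changing the recorded torsion amount. This is precisely what prevents the count from acquiring a spurious factor of $2$ from the boundary collar and what makes $|\Tight_l(C;k)|$ independent of both $k$ and $l$.

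The main obstacle is the surjectivity and normalization step: proving that every torsion-carrying tight structure on $C$ decomposes, after re-slicing the torsion, as a totally $2$-inconsistent core together with a canonical torsion-and-boundary collar, and in particular that the boundary basic-slice ambiguity and the torsion-absorption ambiguity collapse exactly, so that no over- or under-counting occurs. The combinatorial enumeration and the tightness of the forward construction are essentially packaged in the continued fraction bookkeeping and in Lemmas~\ref{lem:staytight} and~\ref{lem:overtwisted}; the delicate point is controlling how the Giroux torsion interacts with changes of boundary slope so that the bijection, and hence the equality $|\Tight_l(C;k)|=2\,|(a_1+1)\cdots(a_{m-1}+1)|\cdot|(b_1+1)\cdots(b_{n-1}+1)|$, holds uniformly in $k$ and $l$.
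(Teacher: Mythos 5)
Your combinatorial count of the totally $2$-inconsistent pairs is correct and matches the paper's, and your overall strategy (peel the torsion to land in $\Tight_0(C;pq)$, invoke Lemma~\ref{lem:overtwisted} to force total $2$-inconsistency, use Lemma~\ref{lem:rotdiff} for distinctness) is the right skeleton. But the step you yourself flag as the needed ``flexibility input'' is false: the two basic slices in $\Tight^{min}(T^2\times[0,1];k,\infty)$ do \emph{not} become isotopic after gluing onto the torsion layer. Theorem~\ref{twolayers} gives a contactomorphism between $\pm\xi^l$, not an isotopy of the glued-up structures on $C$, and the classification itself refutes your claim: for the knots $L_\pm^{i,k}$ with torsion one has $S_\mp(L_\pm^{i,k})$ loose, i.e.\ exactly one of the two boundary basic slices yields a tight structure and the other yields an overtwisted one (a slope-$k+1$ torus just inside the rotative torsion layer together with the new slice gives a shortenable path $k+1\to\infty\to k$ with mismatched signs). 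The count does not double, but for a different reason than the one you give. Relatedly, your bijection breaks in the case $pq>0$ and $k\le pq-p-q$: by Lemma~\ref{infinitythicken} the two cores with monochromatic $P_1$ and $P_2$ contain no slope-$\infty$ torus beyond slope $pq-p-q$, so extending them to boundary slope $k$ \emph{automatically} creates an extra half unit of torsion; your forward map sends those two cores into $\Tight_{l+1/2}(C;k)$ rather than $\Tight_l(C;k)$, and the count only balances after reindexing as in the last paragraph of the paper's proof.

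The second genuine gap is your justification that the torsion of the result is exactly $l$. You appeal to universal tightness ``in the sense of Lemma~\ref{extraisUT},'' but that lemma concerns the exceptional structures $\xi_k'$ from Lemma~\ref{lem:thickening-negative}, and a general totally $2$-inconsistent core $\xi'_{P_1,P_2}$ is \emph{not} universally tight: only the first blocks of $P_1$ and $P_2$ are forced to be monochromatic, so most of these cores are virtually overtwisted (this is precisely the novelty the paper advertises). Establishing that the torsion does not exceed $l$ is the paper's Claim~A, which first handles the universally tight model via the maximal twisting $\overline{\tw}(\gamma)=-2l$ of an arc on the Seifert surface following \cite{Etnyre13}, and then transfers the bound to the virtually overtwisted cores using the fact that the $S^1\times P$ piece is the same for all totally $2$-inconsistent pairs together with the isotopy-discretization/non-rotative-layer argument of Lemma~\ref{lem:staytight}. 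Without this (and without Claim~B's observation that a half Lutz twist negates the relative Euler class, which is what actually keeps distinct cores distinct after adding half-integral torsion), both the well-definedness and the injectivity of your map are unproven.
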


\begin{proof}
  We begin by considering $\Tight_l(C;pq)$.
  Given a contact structure $\xi\in \Tight_l(C;pq)$, there is an embedding of a convex $l$-torsion layer, i.e. there is an embedding of $T^2\times [0,1]$ such that $T^2\times \{0\}=\partial C$ and $\xi$ restricted to $T^2\times [0,1]$ is a convex $l$-torsion layer. Let $C'=C\setminus T^2\times [0,1]$ and $\xi'$ be $\xi$ restricted to $C'$. Notice that $(C',\xi')$ has no convex torsion, otherwise $(C,\xi)$ would have torsion larger than $l$. Thus $\xi'\in \Tight_0(C;pq)$ and $C\setminus C'$ is a convex $l$-torsion layer. By Lemma~\ref{lem:overtwisted}, the pair of paths describing $\xi'$ must be totally $2$-inconsistent.  Theorem~\ref{twolayers} says there are two possibilities for the contact structure on $T^2\times[0,1]$, but by Lemma~\ref{lem:staytight} we see that only one can possibly result in $\xi$ being tight. Thus for every element in $\xi\in \Tight_l(C;pq)$ there is a unique element $\xi'\in \Tight_0(C;pq)$ corresponding to a totally $2$-inconsistent pair of paths and a unique convex $l$-torsion layer on $T^2\times [0,1]$. Moreover, given an element in $\Tight_0(C;pq)$ described by a totally $2$-inconsistent pair of paths there is a unique convex $l$-torsion layer on $T^2\times [0,1]$ that can be glued to it to give a tight contact structure.

  We will prove the following lemmas later, but finish the proof assuming they are true. 

\begin{lemma}\label{ClaimA}
  If $\xi \in \Tight_l(C;pq)$, then $\tor(\xi) = l$, i.e., there is no embedding of a convex $l'$-torsion layer for any $l'>l$.  
\end{lemma}

  Thus from above we see that $|\Tight_l(C;pq)|$ is bounded above by the number of totally $2$-inconsistent paths describing $q/p$. 

\begin{lemma}\label{ClaimB}
 Adding convex $l$-torsion to two distinct elements of $\Tight_0(C;pq)$ results in distinct contact structures.
 \end{lemma}

  From this lemma, we now know that $|\Tight_l(C;pq)|$ is the number of totally $2$-inconsistent paths describing $q/p$. 

  To see that the number of such paths is given by the formula in the lemma we note that the first continued fraction block in $P_1$ must be all one sign and the first in $P_2$ must be of the opposite sign. Thus there are $2$ possibilities for these two continued fraction blocks. The number of possible decorations on the remainder of $P_1$ is $|(a_1+1)\cdots(a_{m-1}+1)|$ and on $P_2$ is $|(b_1+1)\cdots (b_{n-1}+1)|$.

  Now given $k>pq$ we know by Lemma~\ref{lem:>pq} and Remark~\ref{2inconsistentandinfty} that for each $\xi\in \Tight_0(C;k)$ there is a unique $2$-inconsistent path describing a contact structure $\xi'\in \Tight_0(C;pq)$ such that $\xi$ embeds in $(C,\xi')$. Thus we can clearly add convex torsion to $\xi$ to get an element in $\Tight_l(C;k)$. Moreover, given any element in $\Tight_0(C;pq)$ there will be a unique element in $\Tight_0(C;k)$ embedded in it. Thus $|\Tight_l(C;k)|$ is the number of totally $2$-inconsistent paths describing $q/p$. 

  Suppose that $k<pq$ and if $pq>0$ then assume $k>pq-p-q$. If $\xi\in \Tight_0(C;k)$ then we can glue in a solid torus to get a non-loose Legendrian knot in $S^3$ and by Lemma~\ref{lem:<pq} that knot must destabilize to a knot with $\tb=pq$. In other words, there is a subset $C'$ of $C$ such that $\xi|_{C'}=\xi'$ is in $\Tight_0(C;pq)$. Now if $\xi'$ is described by a $2$-inconsistent pair of paths, then as above we see that we can add convex torsion to $\xi$ to get an element in $\Tight_l(C;k)$. If the Legendrian corresponding to $\xi$ cannot be destabilized to a contact structure in $\Tight_0(C;pq)$ corresponding to a totally $2$-inconsistent pair of paths, then after adding twisting to $\xi$ so that the boundary slope is $\infty$ the contact structure will be overtwisted by Lemma~\ref{lem:overtwisted}. Thus once again we see that $|\Tight_l(C;k)|$ is the number of totally $2$-inconsistent paths representing $q/p$. 

  Finally, if $pq>0$ and $k\leq pq-p-q$, then we can make the same argument as above, except notice that according to Lemma~\ref{infinitythicken} two of the elements in $\Tight_0(C;pq)$ corresponding to totally $2$-inconsistent paths contain a convex torus $T$ with two dividing curves of slope $pq-p-q$ and the rest contain a convex torus $T$ with two dividing curves of slope $\infty$ (and in both cases any torus further from the boundary have the same dividing slope). Thus adding a contact structure on $T^2\times [0,1]$ with dividing slopes $k$ and $pq$ on the boundary to $C$ with the first two contact structures will have convex $1/2$-torsion, while adding the same contact structure to the other contact structures will still have no convex torsion. However, we can add a convex $1/2$-torsion layer to these to get contact structures in $\Tight_{\scriptscriptstyle{1/2}}(C;k)$ and thus the count of such structures is still the same. One can similarly argue for $\Tight_l(C;k)$. 
\end{proof}

\begin{proof}[Proof of Lemma~\ref{ClaimA}]
  We begin with a specific example. Consider the pair of paths $(P_1,P_2)$ representing $q/p$ with all the signs of $P_1$ positive and all the signs of $P_2$ negative. From Lemma~\ref{standardstructures} below, we know that when $pq<0$, the contact structure $\xi_{P_1,P_2}$ is the one supported by the open book with binding the $(p,q)$-torus knot and when $pq>0$, the contact structure is obtained from the tight contact structure on $S^3$ by a half Lutz twist. 

  Let $\xi_l$ be the contact structure obtained from $\xi_{P_1,P_2}$ by performing an $l$-fold Lutz twist on a transverse push-off of $L_{P_1,P_2}$ for $l \in \frac 12 \mathbb{N} \cup \{0\}$. Similarly $\xi_l$ can also be obtained from the complement of $L_{P_1,P_2}$ by attaching a $T^2\times [0,1]$ with convex $l$-torsion and then a tight solid torus that is a neighborhood of a Legendrian knot $L_l$ in $\xi_l$ with $\tb=pq$. In \cite[Section~5]{Etnyre13}, it was shown that $\tor(L_l) = l$. Let $(C,\xi'_l)$ be the contact structure on the complement of $L_l$. Clearly $(C,\xi'_l)$ has convex torsion $l$, but not $l+1$. Thus we have established the claim for the contact structure $\xi_{P_1,P_2}$. 

  The key to proving $\xi_l$ has Giroux torsion $l$ from \cite[Section~5]{Etnyre13} is to consider a specific arc $\gamma$ on a Seifert surface for the torus knot and showing that in $\xi_l$ the maximal twisting $\overline{tw}(\gamma)=-2l$ where $\overline{tw}(\gamma)$ is defined to be the maximum of the twisting of the contact planes along any Legendrian approximation of $\gamma$ (with endpoints fixed) with respect to the framing on $\gamma$ coming from the Seifert surface. It is easy to see that $\overline{tw}(\gamma)\leq -2l$, but to show it is exactly $-2l$ one needs to use that the contact structure on the complement of the torus knot is universally tight. Thus we can only apply this argument to the contact structure considered above (for the universally tightness, see the proof of Lemma~\ref{standardstructures}). 

  To prove Lemma~\ref{ClaimA} for the other contact structures we proceed as follows. Recall that $C$ can be thought of as the union of two solid tori $V_1$ and $V_2$ and $S^1\times P$ (see Section~\ref{knotcomp}). A Seifert surface for the $(p,q)$-torus knot can be constructed by taking $q$ meridian disks in $V_1$, $p$ meridian disks in $V_2$, and $pq$ $1$-handles in $S^1\times P$ that connect the disks. The arc $\gamma$ above can be taken to be a co-core to one of these $1$-handles and hence lives in $S^1\times P$. Also recall there is an annulus $A'$ that has both boundary components on $\partial C$ and when $C$ is cut along $A'$ one obtains two solid tori, one containing $V_1$ and the other $V_2$. The curve $\gamma$ can also be taken to be a curve on $A'$ and the framing given to $\gamma$ by the Seifert surface is the same as the one given by $A'$. Thus we can measure $\overline{\tw}(\gamma)$ using the $A'$ framing. 

  Now since we are considering pairs of decorated paths $P_1$ and $P_2$ that are totally $2$-inconsistent, we know the contact structure on $S^1\times P$ is contactomorphic to $\xi_{\pm\pm}$ and hence (up to switching co-orientations on the contact planes) independent of the choice of totally $2$-inconsistent pairs of paths. Now let $\widetilde\xi_l$ be the result of attaching $l$ convex torsion to $S^1\times P$ along $\partial C$. We have that $\overline{tw}(\gamma)$ in $\widetilde\xi_l$ is $-2l$. This is because it must be greater than or equal to $-2l$ by construction, but it cannot be larger than $-2l$ since if it were, that would contradict the fact that in $\xi'_l$ considered above we have that $\overline{tw}(\gamma)=-2l$ (recall that $\widetilde\xi_l$ is a subset of $\xi'_l$). 

  Finally consider any totally $2$-inconsistent pair of paths $(P_1,P_2)$. We can construct contact structures $\xi'_l$ as above associated to $\xi_{P_1,P_2}$ and inside $\xi'_l$ we have the contact structures $\widetilde\xi_l$ on $S^1\times P$. We can again consider $\overline{tw}(\gamma)$ in $\xi'_l$ and we again clearly have $\overline{tw}(\gamma)\geq -2l$. Suppose $\overline{tw}(\gamma)>-2l$. Then we can smoothly isotope the annulus $A'$ (relative to its boundary), so that it contains a Legendrian realization of $\gamma$ with twisting larger than $-2l$. As in the proof of Lemma~\ref{lem:staytight} we can use the isotopy discretization (Theorem~\ref{thm:discretization}) to get annuli $A_1=A',\ldots A_k$ such that $A'$ is our original annulus (that by construction contains a Legendrian realization of $\gamma$ with twisting $-2l$) and $A_k$ contains a Legendrian realization of $\gamma$ with twisting larger than $-2l$, and $A_i$ is obtained from $A_{i-1}$ by a bypass attachment. But recall, in the proof of Lemma~\ref{lem:staytight} we showed by induction that $A_i$ is contained on $S^1\times P$ with a contact structure contactomorphic to $\widetilde\xi_l$. This contradicts that $\overline{tw}=-2l$ in $\widetilde{\xi}_l$ and completes the proof of the claim. 
\end{proof}

\begin{proof}[Proof of Lemma~\ref{ClaimB}] 
  Suppose that $\xi$ and $\xi'$ are two contact structures in $\Tight_0(C;pq)$ associated to totally $2$ inconsistent pairs of decorated paths $(P_1,P_2)$ and $(P'_1, P'_2)$ representing $q/p$, respectively, such that $(P_1,P_2)\not= (P'_1,P'_2)$. Now let $\xi_l$ and $\xi'_l$ be the result of adding convex $l$-torsion to $\xi$ and $\xi'$, respectively. We first note that by Lemma~\ref{ClaimA} we know that $\xi_l$ is not contactomorphic to $\xi'_k$ if $l\not=k$, so we are left to see that $\xi_l$ and $\xi'_l$ are distinct. 

  From Lemma~\ref{lem:rotdiff}, we know that $\rot(L_{P_1,P_2}) \neq \rot(L_{P'_1,P'_2})$. Recall that the rotation number is the relative Euler class of the contact structure evaluated on a Seifert surface of the knot. Since adding full torsion does not change the relative Euler class and adding half torsion changes the sign of the relative Euler class, we see that the relative Euler class of $\xi_l$ and $\xi'_l$ are distinct for all $l\in \frac12\mathbb{N}$, thus proving Lemma~\ref{ClaimB}. 
\end{proof}

\section{Classification of non-loose torus knots}\label{nonloosetorusknots}

In this section, we begin by identifying some special contact structures and their associated pairs of decorated paths, and end by proving that our algorithm from Section~\ref{thealgorithm} really does give a complete classification of non-loose Legendrian $(p,q)$-torus knots. 

\subsection{Contact structures described by special pairs of decorated paths}\label{exceptionalctstr}
It will be useful to understand explicitly some of the contact structures associated with pairs $(P_1,P_2)$ of decorated paths in the Farey graph for the $(p, q)$-torus knot. The first statement of the following lemma was observed in \cite{Matkovic20Pre} and previously for some negative torus knots in \cite{GeigesOnaran20a}, in terms of contact surgery diagrams.

\begin{lemma}\label{standardstructures}
  Let $(P_1,P_2)$ be a pair of paths representing $q/p$ and decorated such that $P_1$ has only positive signs and $P_2$ has only negative signs. We have the following:

  \begin{enumerate}
  \item If $pq<0$ then $\xi_{\pm P_1,\pm P_2}$ is the contact structure $\xi_{|pq|-|p|-|q|+1}$ and is supported by the open book with binding the $(p,q)$-torus knot $T_{p,q}$.
  \item If $pq>0$ then $\xi_{\pm P_1,\pm P_2}$ is the contact structure $\xi_{-pq+p+q}$ which is obtained from $\xi_{std}$ by a half Lutz twist on the unique maximal self-linking number transverse representative of $T_{p,q}$ in $(S^3,\xi_{std})$.
  \end{enumerate}
\end{lemma}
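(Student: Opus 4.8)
The plan is to identify $\xi_{\pm P_1,\pm P_2}$ for this extremal decoration with an explicit geometric model, and then pin down its homotopy class of plane fields by computing the $d_3$-invariant and invoking Eliashberg's classification of overtwisted contact structures on $S^3$ \cite{Eliashberg89}. First I would analyze the complement $C=S^3\setminus N(L_{P_1,P_2})$, written as $V_1\cup N(A)\cup V_2$ as in Section~\ref{knotcomp}. Because $P_1$ carries only positive signs and $P_2$ only negative signs, the induced contact structures on $V_1\in\Tight(S_\infty;q/p)$ and $V_2\in\Tight(S^0;q/p)$ are the ones all of whose basic slices share a sign; by Honda's classification these are exactly the universally tight solid tori, distinguished among tight structures with the given boundary by having extremal relative Euler class. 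Passing to the $|pq|$-fold cover in which $C$ becomes $S^1\times\Sigma$ foliated by Legendrian fibers (as in the proof of Lemma~\ref{extraisUT}) shows that the glued-up structure $\xi_{P_1,P_2}|_C$ is itself universally tight. The sign ambiguity $\pm$ is harmless, since $\xi_{P_1,P_2}=\xi_{-P_1,-P_2}$ by the symmetry of formula~(\ref{formula:d3}) noted in Section~\ref{htpclasses}.

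For case (1), $pq<0$, I would match this universally tight complement with the complement of the binding in the open book of $T_{p,q}$ whose page is the fiber surface $\Sigma$. That open book structure $\xi_{OB}$ is tangent to the pages near the binding and restricts to a universally tight structure on the mapping-torus complement whose boundary data (the Seifert-framed dividing slope $pq$) and extremal relative Euler class match those of $\xi_{P_1,P_2}|_C$; since both are assembled from universally tight $V_1,V_2$ of the determined slopes and extremal Euler classes glued across an $I$-invariant $N(A)$, they agree, and gluing back the binding neighborhood recovers $\xi_{OB}$. As $T_{p,q}$ is a negative torus knot, its monodromy is a product of left-handed Dehn twists, so $\xi_{OB}$ is overtwisted. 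Computing $d_3(\xi_{P_1,P_2})$ from the surgery diagram of Figure~\ref{fig:torus-knots} via formula~(\ref{formula:d3}) (using $-\chi(\Sigma)=|pq|-|p|-|q|$) gives $d_3=|pq|-|p|-|q|+1$, so Eliashberg's classification identifies $\xi_{\pm P_1,\pm P_2}$ with $\xi_{|pq|-|p|-|q|+1}$.

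For case (2), $pq>0$, the open book of $T_{p,q}$ instead supports $\xi_{std}$, and its binding is the unique maximal self-linking transverse representative $T$ with $\self(T)=pq-p-q$. Here I would show that the extremal decoration differs from $\xi_{std}$ restricted to the binding complement by exactly a convex half Giroux torsion layer: a half Lutz twist along $T$ is, on the level of the complement, the attachment of such a layer, and tracking this through the universally tight complement (again matching relative Euler classes, equivalently via Lemma~\ref{computer}) shows the resulting decorated pair of paths is the totally inconsistent, all-$+$/all-$-$ one. A half Lutz twist along a transverse knot creates an overtwisted disk, so the resulting structure is overtwisted, and the homotopy class follows from the half Lutz formula $d_3(\xi')=d_3(\xi_{std})-\self(T)=-(pq-p-q)=-pq+p+q$ established in Section~\ref{classwithtorsion}. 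Hence $\xi_{\pm P_1,\pm P_2}=\xi_{-pq+p+q}$.

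The main obstacle I expect is the geometric matching of the combinatorial sign data with the open book and half-Lutz descriptions while keeping the orientation and meridian conventions for $P_1$ versus $P_2$ straight: one must verify that the extremal all-$+$/all-$-$ decoration really corresponds to the universally tight complement of the binding, and in case (2) that it differs from $\xi_{std}|_C$ by precisely a half Giroux torsion layer rather than some other amount of torsion, which requires careful bookkeeping of basic-slice signs under the half Lutz twist (recalling from Section~\ref{sec:FareytoSurgery} that positive stabilizations on $V_2$ correspond to negative basic slices). By contrast, the $d_3$-computations are routine once the model is in hand, and the final identification is immediate from Eliashberg's theorem.
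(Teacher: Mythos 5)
There is a genuine gap at the step you yourself flag as the ``main obstacle'': deciding \emph{which} of the four all-one-sign decorations corresponds to the binding complement (resp.\ to the half Lutz twist of $\xi_{std}$). Your proposed mechanism --- matching boundary slopes and ``extremal relative Euler classes'' --- cannot distinguish the totally consistent decoration (all edges of $P_1$ and $P_2$ positive) from the totally $2$-inconsistent one (all of $P_1$ positive, all of $P_2$ negative): in both cases $V_1$ and $V_2$ individually carry the universally tight structures with extremal relative Euler class, and both glued-up structures on $C$ are universally tight. Yet for $pq<0$ the totally consistent decoration yields $\xi_{std}$ (Lemma~\ref{sdt+xi1}), not the overtwisted open book structure, so an argument that does not separate these two cases would prove a false statement. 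What actually separates them in the paper is the behavior under adding convex Giroux torsion: Lemma~\ref{lem:overtwisted} shows that any decoration that is not totally $2$-inconsistent becomes overtwisted after attaching a basic slice with slopes $(\infty,pq)$, while Lemma~\ref{lem:staytight} shows the totally $2$-inconsistent ones remain tight after adding arbitrary torsion; combined with the theorem of \cite{EtnyreVelaVick10} that the complement of the binding of a supporting open book is universally tight and has exactly this persistence property, this pins the structure down to $\xi_{\pm P_1,\pm P_2}$, and the residual sign ambiguity is absorbed by the $d_3$ symmetry together with the stabilization behavior from Proposition~\ref{wings}. Your relative Euler class computation for the binding complement is never carried out, and it is not clear how to do it without already knowing the signs you are trying to determine.

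The same gap recurs in case (2): you assert that the half Lutz twist changes the complement by precisely a half Giroux torsion layer and that the resulting decoration is the totally inconsistent one, but the verification is again deferred to the unexecuted Euler-class matching, and here too the competing all-one-sign decoration gives $\xi_1$ rather than the claimed structure (Lemma~\ref{sdt+xi1}). The parts of your proposal that do go through --- universal tightness of the all-one-sign complements via the $|pq|$-fold cover in the spirit of Lemma~\ref{extraisUT}, the identity $\xi_{P_1,P_2}=\xi_{-P_1,-P_2}$, and the $d_3$ computations --- are fine, but they are not where the difficulty lies.
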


\begin{proof}
 In \cite{EtnyreVelaVick10}, the first author and Vela-Vick showed that for a closed contact $3$-manifold $(M,\xi)$, the complement of a neighborhood of a transverse knot supporting $(M,\xi)$ is universally tight, and after adding convex torsion to its boundary, it remains tight. We will use this result to identify $\xi_{\pm P_1,\pm P_2}$ as the contact structure supported by $T_{p,q}$ when $pq<0$. 

Consider a contact structure $\xi'_{P'_1, P'_2} \in \Tight_0(C;pq)$, which is constructed by gluing two solid tori together with contact structures determined by the decorated paths $(P'_1, P'_2)$ and then removing a neighborhood of a Legendrian divide from the torus. Notice that when pulled back to the universal cover of $C$, the solid tori will completely unwrap (that is, their pre-image under the covering map will consist of copies of the universal cover of the solid tori). Thus, for $\xi'_{P'_1, P'_2}$ to be universally tight, each path can have only one sign. So there are at most four universally tight contact structures. Moreover, if $(P'_1, P'_2)$ are not totally 2-inconsistent then Lemma~\ref{lem:overtwisted} says adding Giroux torsion to $(C,\xi'_{P'_1, P'_2})$ will result in an overtwisted contact structure; while by Lemma~\ref{lem:staytight} if $(P'_1,P'_2)$ are totally 2-inconsistent, then $(C,\xi'_{P'_1, P'_2})$ remains tight even after one adds Giroux torsion to it. Thus $\xi_{\pm P_1, \pm P_2}$ are the only two possible contact structures that are universally tight and remain tight when Giroux torsion is added; in addition, Section~\ref{htpclasses} says $\xi_{P_1,P_2}$ and $\xi_{-P_1,-P_2}$ are isotopic (since they are homotopic as plane fields). 
  
In Proposition~\ref{wings}, we will see that all the negative stabilizations of $L_{P_1,P_2}$ are non-loose and hence the transverse push-off of $L_{P_1,P_2}$ has both these properties, and it is the only Legendrian with these properties (the contact structure $\xi_{-P_1,-P_2}$ also has these properties, but the transverse push-off of $L_{-P_1,-P_2}$ will be loose since a single negative stabilization of it is loose, see Proposition~\ref{wings} below). Thus $\xi_{P_1,P_2}$ is the contact structure supported by $T_{p,q}$ when $pq<0$. The $d_3$-invariant can be computed from \cite[Theorem~1.2]{Hedden08} or \cite[Corollary~1.2]{BakerEtnyreVanHornMorris12} (note that those papers consider the Hopf invariant, which in our context is $-d_3$).

  When $pq>0$, $T_{p,q}$ supports $\xi_{std}$, but adding a half Lutz twist to $\xi_{std}$ along the maximal self-linking number representative of $T_{p,q}$ will also have these properties and contain a non-loose Legendrian realization of $T_{p,q}$ with $\tb=pq$ and $\tor=0$. Thus the contact structure must be $\xi_{P_1,P_2}$. The $d_3$-invariant changes by subtracting the self-linking number of the transverse knot \cite[Proof of Theorem~4.3.1]{Geiges08} thus we see the contact structure is $\xi_{-pq+p+q}$. 
\end{proof}

We explicitly identify another contact structure in terms of decorated paths. This was also observed in \cite{Matkovic20Pre} in terms of contact surgery diagrams. 

\begin{lemma}\label{sdt+xi1}
  Let $(P_1,P_2)$ be a pair of decorated paths in the Farey graph for the $(p,q)$-torus knot. Suppose that the signs of all basic slices in $P_1$ and $P_2$ are the same, then $\xi_{P_1,P_2} = \xi_{std}$ for $pq<0$, and $\xi_{P_1,P_2} = \xi_1$ for $pq>0$.  
\end{lemma}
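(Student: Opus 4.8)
The plan is to exploit the hypothesis that all signs of $P_1$ and $P_2$ agree, so that the concatenated path $\overline{P_1}\cup P_2$ can be freely shortened by Theorem~\ref{shortening}, and then to identify the resulting contact structure on $S^3=L_\infty^0$ directly. The two cases $pq<0$ and $pq>0$ behave differently because of the way the decorated part of $P_2$ reaches the meridian $0$ of $V_2$, and this is exactly what accounts for the two different answers.

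First I would treat $pq<0$, which is essentially the content of Remark~\ref{extrablockfornegative}. Here $q/p<-1$, and the path presenting $\xi_{P_1,P_2}$ runs $\infty\to\lfloor q/p\rfloor\to\cdots\to q/p\to\cdots\to -1\to 0$, with the first edge $\infty\to\lfloor q/p\rfloor$ and the last edge $-1\to 0$ undecorated and every intermediate edge carrying a single sign. Since the signs agree, Theorem~\ref{shortening} shortens the decorated portion to the minimal path, and repeated application of Lemma~\ref{gluingtoriandthickened} shows that capping off by the two unique tight solid tori yields the unique tight contact structure on $S^3$. As $S^3$ carries a unique tight contact structure, $\xi_{P_1,P_2}=\xi_{std}$.

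For $pq>0$ the situation is genuinely different: since $q/p>1$, the decorated part of $P_2$ runs clockwise from $q/p$ all the way up to $\infty$ before the undecorated jump $\infty\to 0$, so the concatenated path is a loop based at $\infty$. As observed in the last paragraph of Section~\ref{pathsinFG}, shortening the consistent path $\overline{P_1}\cup P_2$ collapses it to a single basic slice from $\lfloor q/p\rfloor$ to $\infty$, presenting $\xi_{P_1,P_2}$ as the union of the unique tight $\Tight(S_\infty;\lfloor q/p\rfloor)$, this basic slice, and the unique tight $S^0$ of boundary slope $\infty$. The key observation I would then make is that the core $U$ of $V_1$ is an unknot whose standard neighbourhood is exactly $\Tight(S_\infty;\lfloor q/p\rfloor)$ (an integer slope gives two dividing curves, hence a standard Legendrian neighbourhood), with Seifert framing the $0$-slope, so that $\tb(U)=\lfloor q/p\rfloor\geq 1$; moreover $S^3\setminus V_1$ is described by the single decorated path $\lfloor q/p\rfloor\to\infty\to 0$ and is therefore tight by Theorem~\ref{torusclass}. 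Thus $U$ is a non-loose Legendrian unknot with positive Thurston--Bennequin invariant (in particular $\xi_{P_1,P_2}$ is overtwisted, since otherwise $U$ would violate the tight bound $\tb\leq -1$), and by the Eliashberg--Fraser classification recalled in Section~\ref{sec:prior}, non-loose unknots live only in $\xi_1$; hence $\xi_{P_1,P_2}=\xi_1$.

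The main obstacle will be the bookkeeping in the $pq>0$ case: I must check carefully that after shortening the solid torus $V_1$ is indeed the standard neighbourhood of its core with the stated boundary slope, and that the framing conventions make $\tb(U)$ come out positive rather than negative. Should the framing computation prove delicate, a safe fallback is a direct $d_3$ computation: I would convert the shortened decorated pair into the contact surgery diagram of Section~\ref{sec:FareytoSurgery} and evaluate the formula~\eqref{formula:d3}, expecting the value $1$; since the only tight structure has $d_3=0$, this forces $\xi_{P_1,P_2}=\xi_1$ independently of the unknot argument. I would also note that the answer is consistent with Lemma~\ref{standardstructures}, in which the totally $2$-inconsistent pair produces a half Lutz twist of $\xi_{std}$, whereas the totally consistent pair considered here produces the structure detecting a non-loose unknot.
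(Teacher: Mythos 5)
Your argument is correct. The $pq<0$ case is essentially identical to the paper's: shorten the all-one-sign path, absorb the decorated part into the two unique tight solid tori via Lemma~\ref{gluingtoriandthickened} and Theorem~\ref{torusclass}, and conclude $\xi_{std}$. For $pq>0$ you take a genuinely different route. The paper continues shortening until $S^3$ is split into a solid torus with lower meridian $\infty$ and dividing slope $1$ and one with upper meridian $0$ and dividing slope $1$, recognizes this as a half Lutz twist on the maximal self-linking unknot in $\xi_{std}$, and reads off $d_3=1$. You instead stop at the splitting $V_1'\cup(\text{basic slice from }\lfloor q/p\rfloor\text{ to }\infty)\cup V_2'$, observe that $V_1'$ is a standard neighbourhood of a Legendrian unknot $U$ with $\tb(U)=\lfloor q/p\rfloor\geq 1$ whose complement is the tight solid torus given by the minimal path $\lfloor q/p\rfloor\to\infty\to 0$, and invoke the Eliashberg--Fraser classification (recalled in Section~\ref{sec:prior}) to force the ambient structure to be $\xi_1$; the Bennequin inequality rules out $\xi_{std}$. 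Both arguments are sound; yours has the advantage of not requiring the identification of the split structure as a Lutz twist (nor the relative $d_3$ formula for Lutz twists), at the cost of leaning on the unknot classification as a black box. Your framing bookkeeping is fine: slope $0$ on $\partial V_1$ is the Seifert framing of the core since the meridian of $V_2$ bounds in the complement, so $\tb(U)=\lfloor q/p\rfloor$ as claimed, and the path $\lfloor q/p\rfloor\to\infty\to 0$ is indeed the minimal clockwise path, so Theorem~\ref{torusclass} applies to give tightness of the complement.
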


\begin{remark}
  If $q/p < -2$, then Lemma~\ref{sdt+xi1} remains true even if the basic slices in the last continued fraction block in $P_2$ have any signs. 
\end{remark}

\begin{proof}
  When $pq<0$, the last continued fraction block in $P_2$ is $n+1, n+2, \ldots, -1$ for $n = \lfloor q/p \rfloor$. Given the hypothesis on $(P_1,P_2)$, the concatenated path $\overline{P_1}\cup P_2$ can be shortened to $ n, n+1, \ldots, -1$ where each edge in the path has some sign (notice the edge from $n$ to $n+1$ represents a basic slice since all of the edges that were shortened had the same sign). Now extending this path by adding the edge from $\infty$ to $n$, we may think of this path as describing a contact structure on the solid torus with lower meridian $\infty$ and dividing slope $-1$. According to Lemma~\ref{gluingtoriandthickened}, we know this contact structure is tight and by Theorem~\ref{torusclass} it is unique. Now the path from $0$ to $-1$ also represents the unique tight structure on this solid torus. The union of these two tight contact structures on the solid tori now gives the tight contact structure on $S^3$. 

  When $pq>0$, the same argument shows that $\xi_{P_1,P_2}$ is obtained by gluing a solid torus with lower meridian $\infty$ and dividing slope $1$ to a solid torus with upper meridian $0$ and dividing slope $1$. This is clearly the result of performing a half Lutz twist on the maximal self-linking number unknot in the standard tight contact structure on $S^3$ and hence $\xi_1$. 
\end{proof}

It will be useful to have an explicit description of $\xi_1$ in terms of torus knots. This is given in the following lemma, but we first need another description of $\xi_1$ in terms of pairs of paths. At the end of Section~\ref{subsec:pathsinFG} we saw that $\xi_1$ is also described by paths $P_1'$ and $P_2'$ such that all the signs of the basic slices in $P_1'$ are $\mp$, except the first one which is $\pm$ and all the basic slices of $P'_2$ are $\pm$, except the first one which is $\mp$. 

\begin{lemma}\label{xi1}
  For any positive $(p,q)$-torus knot $T_{p,q}$, the contact structure $\xi_1$ can be described as follows. Let $(C,\xi)$ be the complement of a standard neighborhood of the Legendrian representative of $T_{p,q}$ with $\tb=pq-p-q$ in $(S^3,\xi_{std})$. Attach a basic slice in $\Tight^{min}(T^2\times [0,1]; \infty, pq-p-q)$ to $(C,\xi)$ and then a basic slice in $\Tight^{min}(T^2\times [0,1];pq,\infty)$ with the opposite sign to the result. Finally glue the unique tight contact structure on a solid torus with meridional slope $\infty$ and dividing slope $pq$.

  The final torus is a standard neighborhood of a non-loose $(p,q)$-torus knot with $\tb=pq$ that is described by the pair of decorated paths $(P'_1,P'_2)$ where the first edge in $P'_1$ is $\pm$ and all the others are $\mp$ while the first edge of $P'_2$ is $\mp$ and all the others are $\pm$. 
\end{lemma}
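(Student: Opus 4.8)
The plan is to show that the construction produces, on the complement $C$, exactly the contact structure $\xi'_{P'_1,P'_2}$ attached to the pair of decorated paths $(P'_1,P'_2)$ named in the statement; everything else then follows from results already established. The final gluing attaches the unique tight solid torus with meridional slope $\infty$ and dividing slope $pq$, which is by definition a standard neighborhood of a Legendrian knot, and whose core is the Legendrian divide on the separating convex torus of slope $q/p$, hence smoothly a $(p,q)$--torus knot $L'$ with $\tb(L')=pq$. So once the complement is identified as $\xi'_{P'_1,P'_2}$, Lemma~\ref{lem:=pq} gives $L'=L_{P'_1,P'_2}$, which is non-loose with $\tor=0$ (its complement is tight and carries no convex Giroux torsion), and the closed manifold is $\xi_{P'_1,P'_2}$. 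The latter was already identified with $\xi_1$ (for $pq>0$) at the end of Section~\ref{pathsinFG}; equivalently, $(P'_1,P'_2)$ is compatible with the all--one--sign pair of paths, so the lemma that compatible pairs define the same contact structure, together with Lemma~\ref{sdt+xi1}, identifies it with $\xi_1$.

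The bookkeeping I would run through the classification of $\Tight_0(C;\infty)$. After the first basic slice in $\Tight^{min}(T^2\times[0,1];\infty,pq-p-q)$ is glued to the complement $(C,\xi)$ of the maximal Thurston--Bennequin representative, the boundary slope becomes $\infty$ and, by Lemma~\ref{infinitythicken}, the result is precisely one of the two special elements $\zeta\in\Tight_0(C;\infty)$ whose parallel convex tori do not all have slope $\infty$. By Lemma~\ref{inftyclass} and Remark~\ref{2inconsistentandinfty}, such a $\zeta$ is described by a path from $(q/p)^a$ anti-clockwise to $\infty$ and a path from $(q/p)^c$ clockwise to $0$, and since the maximal--$\tb$ complement is universally tight the solid tori $V_1,V_2$ of $\zeta$ are universally tight, so each of these tails carries a single sign. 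Attaching the second basic slice in $\Tight^{min}(T^2\times[0,1];pq,\infty)$ then extends $\zeta$ to a $2$-inconsistent pair in $\Tight_0(C;pq)$ exactly as in Remark~\ref{2inconsistentandinfty}: the two new edges, from $q/p$ to $(q/p)^a$ and from $q/p$ to $(q/p)^c$, receive opposite signs fixed by the chosen basic slice. The computation then amounts to checking that the two constant tail signs are opposite to one another, $\mp$ on the $P'_1$ side and $\pm$ on the $P'_2$ side, and that the ``opposite sign'' choice for the second slice places $\pm$ on the first edge of $P_1$ and $\mp$ on that of $P_2$; this is precisely the decoration pattern of $(P'_1,P'_2)$.

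The main obstacle is this sign tracking, and two conventions demand care. First is the orientation flip recorded in the proof of Lemma~\ref{lem:thickenornot}: a basic slice of $V_2$, read inside $\Tight(S^0;q/p)$, reverses sign when viewed as a basic slice in $S^1\times P$, so I must confirm that ``opposite sign'' at the level of the glued slices corresponds to the asserted $\pm/\mp$ split on the first edges. Second is the degenerate case, governed by Lemma~\ref{mustbeone}, in which the leading continued fraction block of $P_1$ or $P_2$ has length one, where the single first edge must still be checked to receive the claimed sign. To pin down the constant tail signs I would either read them off the explicit universally tight model of the maximal--$\tb$ complement from Case~1 of the proof of Lemma~\ref{lem:thickening-positive} together with Lemma~\ref{standardstructures}, or more economically use that the two special $\zeta$ of Lemma~\ref{infinitythicken} differ only by the sign of the $\{\infty,pq-p-q\}$ slice, so that both tail--sign patterns occur and the ``opposite sign'' hypothesis selects the one matching $(P'_1,P'_2)$. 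Tightness at the intermediate stages is not a separate burden, since every $\xi'_{P_1,P_2}$ is tight with no convex Giroux torsion by Lemma~\ref{lem:=pq}, so the identification with $(P'_1,P'_2)$ immediately yields that $L'$ is non-loose with $\tb=pq$ and $\tor=0$.
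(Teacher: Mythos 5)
Your overall route is essentially the paper's: the proof in the text also hinges on factoring the relevant contact structures through the two special elements of $\Tight_0(C;\infty)$ (the max-$\tb$ complement with a $\pm$ basic slice attached, as in Lemma~\ref{infinitythicken}) and on pinning down the tail signs by comparison with the all-one-sign pair of Lemma~\ref{standardstructures}. Concretely, the paper decomposes $\xi'_{P'_1,P'_2}$ via the pair-of-pants splitting into an inner piece $C'$ with boundary slope $\infty$ plus an outer basic slice in $\Tight^{min}(T^2\times[0,1];pq,\infty)$, observes that the same decomposition of $\xi'_{P_1,P_2}$ (with $P_1$ all one sign, $P_2$ all the other) exhibits $C'$ as $\xi_\pm$ with a same-sign outer slice, and concludes that the two structures differ only in the sign of that outer slice. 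That is exactly your ``first option'' for the sign bookkeeping.

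The genuine gap is in your ``more economical'' alternative. Knowing that the two special elements $\zeta_+$ and $\zeta_-$ of Lemma~\ref{infinitythicken} differ by the sign of the $\{\infty,pq-p-q\}$ slice only tells you that their tail decorations are global negatives of one another; it says nothing about whether, within a single $\zeta$, the $V_1$-tail and the $V_2$-tail carry \emph{opposite} signs (as $(P'_1,P'_2)$ requires) rather than the same sign. If the tails were $(+,+)$ and $(-,-)$, your argument would run identically but the resulting $2$-inconsistent pairs would not be $\pm(P'_1,P'_2)$; note that single-signed-tail elements of $\Tight_0(C;\infty)$ with matching tail signs do exist among the $n(p,q)$ structures of Lemma~\ref{inftyclass}, so this is not ruled out formally. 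The fact that $\zeta_\pm$ have opposite tails must be extracted from Lemma~\ref{standardstructures} (the half-Lutz-twist structure is $\xi_{P_1,P_2}$ with $P_1$ all positive and $P_2$ all negative, and restricts to $\zeta_\pm$ on $C'$), i.e., from your first option; once you commit to that, the remaining convention-checking (the $V_2$ orientation flip of Lemma~\ref{lem:thickenornot} and the length-one leading block of Lemma~\ref{mustbeone}) goes through as you describe, and the rest of your argument, including the identification with $\xi_1$ via compatibility and Lemma~\ref{sdt+xi1} and the non-looseness via Lemma~\ref{lem:=pq}, is correct.
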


\begin{proof}
  Let $\xi_\pm$ be the contact structure which is the result of attaching a $\pm$-basic slice to $C$ with dividing slopes $\infty$ and $pq-p-q$. Since we discussed that $\xi_1 = \xi_{P'_1,P'_2}$ above, it is sufficient to show that $\xi_{P'_1,P_2'}$ is the result of attaching a $\mp$-basic slice to $\xi_\pm$ with dividing slopes $pq$ and $\infty$.

  Since the basic slices in $(P'_1,P'_2)$ adjacent to $q/p$ are of opposite signs, we may argue as in the proof of Lemma~\ref{lem:thickenornot} that $\xi_{P_1',P_2'}$ may be factored into two solid tori $V_1'$ and $V_2'$ and $S^1\times P$, where $P$ is a pair of pants. Also the contact structure on $S^1\times P$ admits a convex annulus $A$ running from $\partial V_1'$ to $\partial V_2'$ with two dividing curves that run from one boundary component to the other. Moreover, the contact structure on $V_1'$ is described by a path in the Farey graph whose signs are all $\mp$ and the path for $V_2'$ has all signs $\pm$. Notice that adding an $I$-invariant neighborhood of $A$ to $V_1'\cup V_2'$ yields a manifold $C'$ that is isotopic to $C$ and $\partial C'$ is convex with dividing slope $\infty$. Thus $C\setminus C'$ is $T^2\times [0,1]$ and the contact structure $\xi_{P'_1,P'_2}$ restricted to $T^2\times [0,1]$ will be a basic slice with dividing slopes $pq$ and $\infty$. Below we will see that the contact structure on $C'$ is $\xi_\pm$ discussed above and the basic slice has sign $\mp$. 
 
  Now consider the pair of paths $(P_1,P_2)$ representing $q/p$ with $P_1$ having all signs $\pm$ and $P_2$ having all signs $\mp$. From Lemma~\ref{standardstructures} we know that $\xi_{P_1,P_2}$ is obtained from performing a half-Lutz twist on the maximal self-linking transverse representative of $T_{p,q}$ in $\xi_{std}$ and then removing a solid torus with convex boundary having dividing slope $pq$. Notice that this contact structure can be described by taking $\xi_\pm$ on $C'$ and then adding another basic slice with dividing slopes $pq$ and $\infty$ with sign $\pm$. 

  We note that $\xi_{P_1,P_2}$ can be decomposed into pieces as we did for $\xi_{P_1',P_2'}$. In particular inside $(C,\xi_{P_1,P_2})$ we see a submanifold $C'$ isotopic to $C$ such that $C\setminus C'$ is a basic slice with dividing slopes $pq$ and $\infty$, and sign $\pm$. Moreover, we see that $\xi_{P_1,P_2}$ and $\xi_{P_1', P_2'}$ restricted to $C'$ are the same since they are given by attaching a thickened annulus to $V_1'$ and $V_2'$ with the same contact structures on them. Thus the only difference between $\xi_{P_1,P_2}$ and $\xi_{P_1', P_2'}$ is the sign of the bypass added, thus giving the desired result. (Notice that $\xi_{P_1,P_2}$ and $\xi_{P'_1,P'_2}$ cannot be the same contact structure since they are contact structures on the complement of Legendrian knots with different rotation number, see Lemma~\ref{lem:rotdiff}.)
\end{proof}

\subsection{Non-loose torus knots with \texorpdfstring{$\tb\leq pq$}{tb <= pq}}\label{classlesspq}
In this section, we will classify non-loose $(p, q)$-torus knots with $\tb \leq pq$ that are stabilizations of the Legendrian knots $L_{P_1,P_2}$ constructed in the proof of Lemma~\ref{lem:=pq}. 
We will see how each of these $L_{P_1,P_2}$ generates a ``wing'' or a ``diamond'', see Definition~\ref{quantfeatures}, and see how these wings and diamonds for different $L_{P_1,P_2}$ interact. 

\subsubsection{Wings for \texorpdfstring{$i$}{i}-inconsistent paths}\label{sec:wings}
We now consider a pair of decorated paths $(P_1, P_2)$ that is $i$-inconsistent for some $i\geq 2$ that describe a $(p,q)$-torus knot. We assume here that $(P_1,P_2)$ does not describe a positive torus knot in $(S^3,\xi_1)$ or a negative torus knot in the tight contact structure $\xi_{std}$. As above, see the beginning of this section or Section~\ref{subsec:pathsinFG},  we break the truncated paths $(P_1,P_2)$ into the continued fraction blocks  
\[
  (A_2,\ldots, A_{2n}) \text{ and } (B_1,B_3, \ldots, B_{2m-1}).
\] 
(we will only discuss this case here, with the case of $(A_1,\ldots, A_{2n-1})$ and $(B_2,\ldots, B_{2m})$ being analogous). As defined in Steps~2 and~3 of Section~\ref{classwogt}, let $s_k$ be the slope in $A_k$ or $B_k$ which is farthest from $q/p$, $T_k$ the convex torus in $V_1$ or $V_2$ with two dividing curves of slope $s_k$, and $L_k$ a Legendrian ruling curve on $T_k$ of slope $q/p$. Finally set $n_k= |s_k\bigcdot \frac qp|$.

\begin{proposition}\label{wings}
  If $pq>0$, then we assume that the ambient contact structure is not $\xi_1$, and if $pq<0$, we assume that the ambient contact structure is not $\xi_{std}$. 
  
Given an $i$-inconsistent pair of paths $(P_1, P_2)$ for $q/p$ as above. Assume that $i$ is even and all the basic slices in the continued fractions blocks $A_2,\ldots, A_{i-2}, B_1, \ldots, B_{i-1}$ are negative while some in $A_i$ are positive. Then 
 \[
 S_+^kS_-^l(L_{P_1,P_2})
\]
is non-loose for $k < n_{i-1}$ and $l \geq 0$, but $S_+^{n_{i-1}}(L_{P_1,P_2})$ is loose.

Similarly, $S_+^kS_-^l(L_{-P_1,-P_2})$ is non-loose for $k \geq 0$ and $l < n_{i-1}$, but $S_-^{n_{i-1}}(L_{-P_1,-P_2})$ is loose.  See Figure~\ref{wingsfig}.

  When $i$ is odd, the same result holds if all basic slices in the continued fraction blocks $A_2, \ldots, A_{i-1}$, $B_1, \ldots, B_{i-2}$ are positive and some in $B_i$ are negative. 
\end{proposition}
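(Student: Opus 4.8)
The plan is to analyze the non-loose-ness of stabilizations of $L_{P_1,P_2}$ by studying the contact structure on the knot complement $C$ and detecting overtwisted disks via the annulus $A'$ together with state transition. First I would set up the geometric picture as in Section~\ref{knotcomp}: write $C = V_1 \cup (S^1\times P) \cup V_2$, realize $L_{P_1,P_2}$ as a Legendrian divide on a convex torus of slope $q/p$, and recall that a $+$ (respectively $-$) stabilization of $L_{P_1,P_2}$ corresponds, from the complement's point of view, to attaching a basic slice of the appropriate sign onto $\partial C$, decreasing $\tb$. Thus $S_+^k S_-^l(L_{P_1,P_2})$ is non-loose precisely when the contact structure obtained from $(C,\xi'_{P_1,P_2})$ by attaching a $T^2\times[0,1]$ realizing the appropriate path of positive and negative basic slices (down from slope $pq$) remains tight.

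Next I would prove the loose-ness claim, namely that $S_+^{n_{i-1}}(L_{P_1,P_2})$ is loose. The key is that attaching $n_{i-1}$ positive basic slices corresponds to pushing the boundary slope past the slope $s_{i-1}$ where the sign discrepancy first occurs: the continued fraction blocks $B_1,\dots,B_{i-1}$ are negative but the first positive basic slice appears in $A_i$. Because $n_{i-1} = |s_{i-1}\bigcdot \frac qp|$ counts exactly the number of positive stabilizations needed to reach the convex torus $T_{i-1}$ of slope $s_{i-1}$, the $(i-1)$-consistency (all negative) up to that point, followed by attaching a positive basic slice whose sign is opposite to the adjacent negative blocks, will produce a configuration containing a convex Giroux torsion layer or a mismatched basic slice pair. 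I would make this precise by factoring the complement through $S^1\times P$ (using Lemma~\ref{basicpants} and the structure of $\xi_{\pm\pm}$, $\xi_{\pm\mp}$) and showing that the opposite-sign basic slices concatenate to give an overtwisted contact structure, exactly as in the proof of Lemma~\ref{lem:overtwisted}: cutting along a suitable $0$-sloped convex annulus yields a half Giroux torsion layer abutting $V_1$ or $V_2$, forcing overtwistedness.

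Then I would establish that $S_+^k S_-^l(L_{P_1,P_2})$ is non-loose for $k < n_{i-1}$ and all $l \geq 0$. For the negative stabilizations (the $l$ direction) this follows because attaching negative basic slices keeps all the relevant blocks consistent in sign, and the argument of Lemma~\ref{lem:staytight} (isotopy discretization applied to $A'$, together with the bypass-counting Lemmas~\ref{lem:nobypasses} and~\ref{lem:dijointbypasses}) shows the resulting complement stays tight. For the positive direction with $k < n_{i-1}$, the point is that we have not yet pushed past the slope $s_{i-1}$, so the attached basic slices still agree in sign with the adjacent block and no half Giroux torsion layer is created; the same state-transition argument applies, showing no overtwisted disk can be isotoped off the annulus $A'$. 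The symmetric statements for $L_{-P_1,-P_2}$ follow by the co-orientation-reversing symmetry $\xi_{P_1,P_2}\cong \xi_{-P_1,-P_2}$ noted in Section~\ref{htpclasses}, which interchanges the roles of $+$ and $-$ stabilizations. The odd-$i$ case is handled by the analogous bookkeeping with the roles of $P_1$ and $P_2$ (and of positive and negative signs) interchanged.

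The main obstacle I expect is the non-looseness direction, specifically controlling the state transition argument to rule out overtwisted disks when $k < n_{i-1}$. Proving tightness here cannot rely on universal tightness (the contact structures are only virtually overtwisted in general), so I must carefully track that every bypass attachment to the annulus $A'$ stays within an $I$-invariant neighborhood, which forces me to invoke the disk-equivalence machinery (Theorems~\ref{thm:disk-equivalence} and~\ref{thm:non-rotaive-contactomorphic}) and the sharp bypass classification of Lemma~\ref{lem:nobypasses}. Verifying that the sign condition (all earlier blocks negative, first positive in $A_i$) precisely matches the combinatorial bypass obstruction is the delicate step, since it requires identifying the sign of the region between the horizontal dividing curves on the pair of pants and matching it against the sign of the first continued fraction block, exactly as in the proof of Lemma~\ref{lem:nobypasses}.
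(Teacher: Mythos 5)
Your high-level strategy is right in outline --- identify $n_{i-1}$ as the threshold coming from the sign mismatch at the $(i-1)$-st block, prove looseness from that mismatch, and prove non-looseness by state transition --- but the technical machinery you invoke for both halves is not the machinery that actually works here, and in the non-looseness direction this is a genuine gap.

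For looseness, the paper's mechanism is much more direct than the Giroux-torsion-layer argument you sketch: by Lemma~\ref{seestab}, $S_+^{n_{i-1}}(L_{P_1,P_2})$ is a $q/p$-sloped ruling curve on the convex torus $T_{i-1}$ of slope $s_{i-1}$, and the thickened torus between $T_{i-1}$ and $T_i$ is described by the path $\overline{A}_i,\ldots,\overline{A}_2,B_1,\ldots,B_{i-1}$, which can be shortened (Observation~\ref{obs}) but carries basic slices of both signs; by Theorem~\ref{shortening} that layer is overtwisted, and it lies in the complement of the ruling curve. Your proposed route through Lemma~\ref{lem:overtwisted} and the pair-of-pants decomposition is set up for attaching a basic slice with slopes $\infty$ and $pq$ (the destabilizing direction), not for stabilizations, so it does not transfer as stated.

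The more serious problem is the non-looseness step. You propose to run the isotopy-discretization argument of Lemma~\ref{lem:staytight} on the annulus $A'$, controlled by the disk-equivalence theorems and Lemmas~\ref{lem:nobypasses} and~\ref{lem:dijointbypasses}. All of that machinery concerns \emph{non-rotative} outer layers for a fixed dividing slope (the totally $2$-inconsistent / Giroux-torsion situation). In the wings setting the layer between the relevant torus and $\partial C$ is rotative, so none of it applies. What the paper actually does is place $L=S_+^{n_{i-1}-1}S_-^l(L_{P_1,P_2})$ on a convex torus $T$ of slope $s_{i-3}$ (not as a ruling curve), apply isotopy discretization to the annulus $T\cap\overline{S^3\setminus N}$ to get a sequence of tori $T_j$ containing $L$, and inductively show that the dividing slope $t_j$ of $T_j$ stays in $(s_i,s_{i-1})$ with the complementary solid tori described by consistent subdivisions of $\overline{P_1}\cup P_2$ (hence tight). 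The heart of the proof is the Farey-graph estimate $\bigl|v_l\bigcdot \tfrac qp\bigr|-\bigl|v_{l+1}\bigcdot\tfrac qp\bigr|=\bigl|s_{i-1}\bigcdot\tfrac qp\bigr|$ (together with Lemma~\ref{increasing}), which rules out a positive bypass carrying $t_j$ past $s_{i-2}$ and rules out $t_j$ reaching $s_{i-1}$, because either event would force $L$ to be a stabilization of a ruling curve it cannot be a stabilization of. Your proposal contains no substitute for this slope-tracking and intersection-number bookkeeping, which is precisely where the hypothesis $k<n_{i-1}$ enters; without it the induction has no content.
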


\begin{remark}
With the notation from the proposition, notice that Lemma~\ref{seestab} implies that the $q/p$-sloped ruling curve $L_k$ on $T_k$ is Legendrian isotopic to $S_+^{n_k}(L_{P_1,P_2})$ if $k$ is odd and $S_-^{n_k}(L_{P_1,P_2})$ if $k$ is even (since the sign of bypass will change if we consider the basic slice is from $T^2 \times \{1\}$ to $T^2 \times \{0\}$). 
\end{remark}

\begin{definition}\label{wingdef}
We will call the set 
\[
  W_{P_1,P_2}=\{S_+^kS_-^l(L_{P_1,P_2}) \text{ for }  k < n_{i-1} \text{ and } l \geq 0\},
\] 
the \dfn{wing of $L_{P_1,P_2}$}, and similarly for $L_{-P_1, -P_2}$. We think of these as the non-loose Legendrian knots generated by $L_{\pm P_1, \pm P_2}$. See Figure~\ref{wingsfig}.
\end{definition}

When $pq<0$ and $\tb<pq$ this proposition also follows from \cite[Corollary~4.3]{Matkovic20Pre}, though the structure of the wings was not made explicit. The result for the negative trefoil and $\tb<-6$ or equal to $5$ was also established in \cite{GeigesOnaran20a}. 

\begin{figure}[htbp]
\begin{overpic}{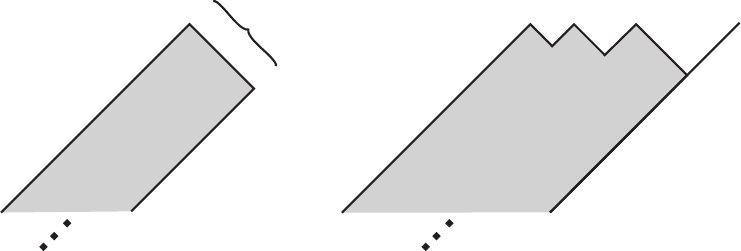}
  \put(125, 110){$n_{i-1}$}
\end{overpic}
\caption{On the left is the wing $W(L_{P_1,P_2})$ from Proposition~\ref{wings}. On the right is the wing $W$ from Proposition~\ref{merge} generated from all the pairs of paths compatible with $(P_1,P_2)$. Each integral point in the shaded region, whose coordinates sum to be odd, is realized by a unique non-loose Legendrian knot with $\tor = 0$. The peaks are at $\tb=pq$ and there are $i-1$ peaks (for an $i$-inconsistent path) each corresponding to a $k$-inconsistent pair of paths $(P_1^k, P_2^k)$ compatible with $(P_1,P_2)$ for $2 \leq k \leq i$ and the distance between the peaks corresponding to $(P_1^k, P_2^k)$ and $(P_1^{k-1}, P_2^{k-1})$ is $2n_{k-1}'$ (see the proof of Proposition~\ref{merge}). Once one computes a rotation number of one of the peaks using Lemma~\ref{computer} the others are determined by the distance between the peaks.  The wings for $(-P_1,-P_2)$ are obtained by reflecting these wings about a vertical line.}
\label{wingsfig}
\end{figure}

\begin{remark}
  We will see in the proof below that stabilizations of the $L_{P_1,P_2}$ become loose because they can be put on a convex torus that allows the path $\overline{P_1}\cup P_2$ to be shortened merging two basic slices with opposite sign. This does not happen if $\xi_{P_1,P_2}$ is $\xi_{std}$ since the ambient contact structure is tight. We will address the case when $pq>0$ and $\xi_{P_1,P_2}$ is $\xi_{-pq+p+q}$ at the end of this section and in Section~\ref{diamondsforxi1} below we will see what is different when $pq>0$ and $\xi_{P_1,P_2}$ is $\xi_1$. 
\end{remark}

We first observe the following results. 

\begin{lemma}\label{increasing} 
  With the notation above, the integers $n_i=|s_i\bigcdot \frac{q}{p}|$ start at $1$ and are strictly increasing (unless $q/p={(2n+1)/2}<0$ in which case $n_1=n_2$).
\end{lemma}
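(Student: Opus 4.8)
The plan is to work with the skew-symmetric bilinear pairing $\bigcdot$ directly. Represent each slope by a primitive integer vector, so that $r\bigcdot s$ is the determinant of the corresponding vectors; then $r$ and $s$ span an edge of the Farey graph precisely when $|r\bigcdot s|=1$, and $n_k=|s_k\bigcdot\tfrac qp|$. Write $\mathbf{v}$ for the vector of $q/p$ and $\mathbf{s}_k$ for that of $s_k$, and set $\mathbf{s}_0=\mathbf{v}$, $n_0=0$. The whole statement will follow from a single three-term recursion
\[
  n_{k+2}=n_k+\ell_{k+2}\,n_{k+1},
\]
where $\ell_{k+2}\ge 1$ is the length of the $(k+2)$-nd continued fraction block; so first I would isolate the three geometric inputs that produce this recursion.

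The inputs I would extract from the block analysis already in the excerpt are: (i) \emph{alternation} — with the indexing convention of Section~\ref{pathsinFG} the global block sequence $D_1,D_2,D_3,\dots$ alternates between $P_1$ and $P_2$ (Observation~\ref{obs}), so consecutive far-endpoints $s_k,s_{k+1}$ lie on opposite sides of $q/p$ and hence $s_k\bigcdot\tfrac qp$ and $s_{k+1}\bigcdot\tfrac qp$ carry opposite signs; (ii) \emph{adjacency} — consecutive far-endpoints satisfy $|\mathbf{s}_k\bigcdot\mathbf{s}_{k+1}|=1$, which is exactly the edge produced when the concatenated path is shortened in Observation~\ref{obs}; and (iii) the \emph{step-direction relation} $\mathbf{s}_{k+2}=\mathbf{s}_k-\ell_{k+2}\mathbf{s}_{k+1}$, where $\ell_{k+2}=|D_{k+2}|$. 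Fact (iii) is the heart of the matter: the block $D_{k+2}$ starts at the previous same-side milestone $s_k$, and the proof of Lemma~\ref{mustbeone} shows that every vertex of a continued fraction block has a common Farey neighbor which is precisely the far-endpoint of the adjacent block on the other side, namely $s_{k+1}$; thus the constant ``step'' of the block $D_{k+2}$ is $\pm\mathbf{s}_{k+1}$, giving (iii) with $\ell_{k+2}\ge 1$ equal to the number of edges of that block.

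Granting (i)--(iii), the recursion is a one-line computation: bilinearity gives
\[
  \mathbf{s}_{k+2}\bigcdot\tfrac qp=\bigl(\mathbf{s}_k\bigcdot\tfrac qp\bigr)-\ell_{k+2}\bigl(\mathbf{s}_{k+1}\bigcdot\tfrac qp\bigr),
\]
and since the two terms on the right have opposite signs by (i), taking absolute values yields $n_{k+2}=n_k+\ell_{k+2}n_{k+1}$; moreover the sign of $\mathbf{s}_{k+2}\bigcdot\tfrac qp$ equals that of $\mathbf{s}_k\bigcdot\tfrac qp$, so the alternating-sign pattern of (i) propagates inductively. For the base cases, Lemma~\ref{mustbeone} says the first block $D_1$ has length $1$, so $s_1$ is a Farey neighbor of $q/p$ and $n_1=|s_1\bigcdot\tfrac qp|=1$; and $n_2=n_0+\ell_2 n_1=\ell_2=|D_2|$. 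Since $n_k\ge 1$ and $\ell_{k+2}\ge 1$ for $k\ge 1$, the recursion forces $n_{k+2}\ge n_k+n_{k+1}>n_{k+1}$, so the sequence is strictly increasing from $n_1$ onward once past the second term. Finally, $n_2=\ell_2=1$ exactly when both blocks $D_1,D_2$ starting at $q/p$ have length $1$, which by Lemma~\ref{morethanone} happens if and only if $q/p=-(2n+1)/2$; this is the unique case $n_1=n_2$, and in every other case $n_2=\ell_2\ge 2>n_1$.

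I expect input (iii), the identification of the block step with $\pm\mathbf{s}_{k+1}$, to be the only real obstacle, since it is the place where the combinatorics of the two interleaved continued fraction expansions must be pinned down; everything else is formal bilinear algebra together with the sign bookkeeping from the alternation. The case $pq>0$ (i.e.\ $q/p>1$) is handled identically using the Case 2 analysis of Section~\ref{pathsinFG}, where, as noted there, both leading blocks never have length $1$, so no degenerate tie occurs.
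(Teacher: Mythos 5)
Your proof is correct and is essentially the paper's argument: the identity $n_{k+2}=n_k+\ell_{k+2}\,n_{k+1}$ you derive from the step relation $\mathbf{s}_{k+2}=\mathbf{s}_k-\ell_{k+2}\mathbf{s}_{k+1}$ is exactly the paper's displayed equation $\left|s_{j-2}\bigcdot \frac qp\right| + \left|(ks_{j-1}) \bigcdot \frac qp\right| = \left|s_j \bigcdot \frac qp\right|$, obtained from the same fan structure $v_l=ls_{j-1}\oplus s_j$ recorded in Observation~\ref{obs}, with the same sign-alternation bookkeeping and the same base cases via Lemmas~\ref{mustbeone} and~\ref{morethanone}. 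The only cosmetic difference is that you compute $n_2=\ell_2$ from the recursion while the paper deduces $n_2>1$ directly from the absence of a Farey edge between $q/p$ and $s_2$.
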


\begin{proof}
  The claim for $q/p={(2n+1)/2}<0$ can easily be checked, so we assume that we have some other $q/p$. By Lemma~\ref{morethanone-mustbeone}, we know that there is an edge from $q/p$ to $s_1$ and there is not an edge between $q/p$ and $s_2$. Thus $n_1=|\frac qp\bigcdot s_1|=1<|\frac qp \bigcdot s_2|=n_2$. We now inductively prove the $n_i$ are strictly increasing. To this end we assume this has been proven for $i<j$ and establish that $n_i<n_j$. Recall, from Lemma~\ref{obs} we know that there is an edge from $s_{j-1}$ to $s_j$ and the minimal path from $s_j$ to $s_{j-2}$ is given by $v_0=s_j, v_1, \ldots, v_k=s_{j-2}$, where $v_l = ls_{j-1} \oplus s_j $ for $0 \leq l \leq k$. Then we have 
  \begin{align*}
    s_{j-2}\bigcdot \frac qp &= (ks_{j-1} \oplus s_j) \bigcdot \frac qp \\
    &= (ks_{j-1}   \bigcdot \frac qp) + (s_j \bigcdot \frac qp).
  \end{align*}
  Since $(s_{j-1} \bigcdot \frac qp)$ and $(s_{j} \bigcdot \frac qp)$ have opposite sign and $(s_{j-2} \bigcdot \frac qp)$ and $(s_{j} \bigcdot \frac qp)$ have the same sign, we have 
  \begin{equation} \label{eq:5}
    \left|s_{j-2}\bigcdot \frac qp\right| + \left|(ks_{j-1}) \bigcdot \frac qp\right| = \left|s_j \bigcdot \frac qp\right|, 
  \end{equation}
  confirming that the $n_i$ are increasing. 
\end{proof}

\begin{proof}[Proof of Proposition~\ref{wings}]
The basic idea of the proof is to show that if a stabilization of $L_{P_1,P_2}$ can be put on a convex torus in one of the solid tori described by $P_1$ or $P_2$ so that it splits $S^3$ into a pair of solid tori, one of which is obviously overtwisted, then the Legendrian knot is loose. But if this is not the case, then one may use discretization of isotopy arguments to show the Legendrian knot is still non-loose. We detail this argument below.

  From Lemma~\ref{seestab} we know that $L_{i-1}$ is the same as $S_+^{n_{i-1}}(L_{P_1,P_2})$. Notice, by Lemma~\ref{obs}, there is an edge in the Farey graph from $s_{i-1}$ to $s_i$. Thus the path $\overline{A}_i,\ldots, \overline{A}_2, B_1,\ldots B_{i-1}$ can be shortened. Since not all the signs of the basic slices in this path are the same, the resulting contact structure on $T^2\times [0,1]$ is overtwisted. That is we have found an overtwisted disk in the complement of $S_+^{n_{i-1}}(L_{P_1,P_2})$.
  
  Now we will show that $L := S_+^{n_{i-1}-1}S_-^{l}(L_{P_1,P_2})$ for $l \geq 0$ is non-loose. We can put $L$ on a convex torus $T$, which is contained in $V_2$ with slope $s_{i-3}$ (not as a standard ruling curve). To see this, notice that a ruling curve on $T$ would be $S_+^{n_{i-3}}(L_{P_1,P_2})$ by Lemma~\ref{seestab} and $n_{i-3}<n_{i-1}$ by Lemma~\ref{increasing}; thus any further stabilization can be put on $T$ but not as a ruling curve. Let $\overline{S^3 \setminus T} = V_1' \cup V_2'$. Clearly, $V_1'$ and $V_2'$ are tight as the paths in the Farey graph describing them are either minimal or can be shortened to be minimal at vertices whose adjacent edges have the same sign. 

  Suppose $L$ is loose. Then there is an overtwisted disk in the complement of a standard neighborhood $N$ of $L$. Notice that $T \cap (\overline{S^3 \setminus N})$ is an annulus $A$ and there is a smooth isotopy of $A$, rel boundary, to an annulus disjoint from the overtwisted disk. We can assume that the boundary of $A$ is Legendrian and perturb $A$ to be convex. By isotopy discretization (Theorem~\ref{thm:discretization}), there is a sequence of annuli $A_1=A, \ldots, A_k$ such that $A_k$ is disjoint from the overtwisted disk and each $A_j$ is obtained from $A_{j-1}$ by attaching a bypass. Notice that the $A_j$ can be extended to tori $T_j$ containing $L$, which is just $A_j \cup (T \cap N)$ (after perturbation). Each $T_j$ is obtained from $T_{j-1}$ by a bypass attachment in the complement of $L$, and $T_k$ is disjoint from the overtwisted disk.  

  Each $T_j$ breaks $S^3$ into two solid tori $V_1^j$ and $V_2^j$. By construction, we know that $V_1^1$ and $V_2^1$ are both tight. We will inductively prove that each $V_1^j$ and $V_2^j$ is tight, and this will contradict the fact that there is an overtwisted disk in the complement of $T_k$, thus showing that there could not have been an overtwisted disk in the complement of $N$ and that $L$ is non-loose.
 
  We recall that the part of the path $\overline{P_1}\cup P_2$ between $s_{i-3}$ and $s_{i-2}$ consists of all negative basic slices while the part between $s_{i-2}$ and $s_i$ contains some positive basic slices (and possibly some negative ones too). 

  We inductively assume that 
  \begin{itemize}
    \item the slope $t_{j-1}$ of the dividing curves on $T_{j-1}$ is in $(s_{i}, s_{i-1})$, 
    \item if $t_{j-1} \in (s_{i}, s_{i-2})$, then the contact structures on the tori are given by consistently dividing the path $\overline{P_1}\cup P_2$, by which we mean shortening the path at vertices whose adjacent edges have the same sign or dividing an edge into two edges of the same sign.
  \end{itemize}
  Notice that this condition guarantees that the contact structures on $V_1^{j-1}$ and $V_2^{j-1}$ are tight since the contact structures correspond to subdividing the path $\overline{P_1}\cup P_2$ at $t_{j-1}$ and when doing this only the continued fraction blocks with the same sign can be shortened.

  First, suppose $t_j \geq t_{j-1}$. By the inductive hypothesis, $t_j \in (s_i,s_{i-1}]$ and we will show that $t_j \neq s_{i-1}$. Assume $t_j = s_{i-1}$. Then the ruling curves on $T_{j}$ are Legendrian isotopic to $S_+^{n_{i-1}}(L_{P_1,P_2})$ by Lemma~\ref{seestab} and any Legendrian curve on $T_{j}$ is a stabilization of the ruling curve. However, since $L = S_+^{n_{i-1}-1}S_-^{l}(L_{P_1, P_2})$, it cannot be a stabilization of the ruling curve and $t_j \neq s_{i-1}$.

  Next, suppose $t_j < t_{j-1}$. By the inductive hypothesis, $t_j \in [s_{i},s_{i-1})$. Assume $t_j < s_{i-2}$ and the sign of the basic slice between $T_{j-1}$ and $T_{j}$ is positive. Recall the proof of Lemma~\ref{increasing}. We labeled the vertices in $A_i$ as $v_0=s_i, v_2,\ldots, v_k=s_{i-2}$ and $v_l= ls_{i-1} \oplus s_i$ for $1 \leq l \leq k$. Also, from Equation~(\ref{eq:5}), we have 
  \[
    \left|v_{l} \bigcdot \frac qp\right| = \left|s_i \bigcdot \frac qp\right| - l\left|s_{i-1} \bigcdot \frac qp\right|.
  \]  
  Clearly this implies
  \begin{equation} \label{eq:6}
    \left|v_{l} \bigcdot \frac qp\right| - \left|v_{l+1} \bigcdot \frac qp\right| = \left|s_{i-1} \bigcdot \frac qp\right|.
  \end{equation}
  Returning to our problem, the sign of the basic slice implies that $t_{j-1} = v_{l+1}$ for some $0 \leq l < k$. This is because if $v_{j-1}$ were between two $v_i$ then $v_j$ would also be between them and the basic slice would have to be negative (since the basic slice between the two $v_i$ are negative by our hypothesis on $t_{j-1}$). Clearly $t_j \in [v_l,v_{l+1}]$ and by \cite[Remark 2.13]{ChakrabortyEtnyreMin20Pre}, we have 
  \[
    \left|v_{l} \bigcdot \frac qp\right| \leq \left|t_j \bigcdot \frac{q}{p}\right|.
  \]
  Thus combining it with Equation~(\ref{eq:6}), we can conclude
  \begin{align*}
    \left|s_{i-1} \bigcdot \frac qp\right| &= \left|v_{l} \bigcdot \frac qp\right| - \left|v_{l+1} \bigcdot \frac qp\right| \\
    &\leq \left|t_j \bigcdot \frac qp\right| - \left|t_{j-1} \bigcdot \frac qp\right| \\
    &\leq \left|(t_j \ominus t_{j-1}) \bigcdot \frac qp\right|.
  \end{align*}
  Therefore, the ruling curves on $T_{j}$ is Legendrian isotopic to $S_+^kS_-^l(L_{P_1,P_2})$ for $k \geq n_i$ and $l \geq 0$ by Lemma~\ref{seestab} and any Legendrian curve on $T_{j}$ is a stabilization of the ruling curve. However, since $L = S_+^{n_{i-1}-1}S_-^{l}(L_{P_1,P_2})$, it cannot be a stabilization of the ruling curve, and the basic slice cannot be positive.
\end{proof} 

Let $(P^i_1, P^i_2)$ be an $i$-inconsistent pair of paths that describes a $(p,q)$-torus knot and assume it is not compatible with an $(i+1)$-inconsistent pair of paths (see Section~\ref{htpclasses} for terminology); moreover, if $pq>0$ assume that the contact structure given by the paths is not $\xi_1$.  As discussed in Section~\ref{htpclasses}, we know that these paths are compatible with a unique $k$-inconsistent pair of paths $(P_1^k,P_2^k)$ for all $k=2, 3, \ldots, i$. Let $L_{P_1^k,P_2^k}$ be the Legendrian $(p,q)$-torus knots corresponding to the paths $(P_1^k,P_2^k)$. We know they are all in the same contact structure and each generates a wing by Proposition~\ref{wings}. We will see that all of these wings merge in the sense that when two Legendrian knots in different wings have the same classical invariants, then they are isotopic. 

As discussed at the beginning of this section and in Section~\ref{subsec:pathsinFG}, we break the decorated paths $(P_1^k, P_2^k)$ into their continued fraction blocks  
\[
  (A^k_2,A^k_4,\ldots, A^k_{2n}) \text{ and } (B^k_1,B^k_3, \ldots, B^k_{2m-1}).
\] 
Notice that the paths $A^k_l$ and $B^k_l$ in the Farey graph are independent of $k$, only the signs on the edges vary with $k$. 

\begin{proposition}\label{merge}
  With the notation above, there is a fixed line of slope $\pm 1$ that contains the lower edge of each wing $W_{P_1^k,P_2^k}$ and the union of the wings 
  \[
    W=\bigcup_{k=2}^i W_{P_1^k,P_2^k}
  \]
  is coarsely Legendrian simple, i.e. any two Legendrian knots in $W$ with the same $\tb$ and $\rot$ are equivalent. See Figure~\ref{wingsfig}.
\end{proposition}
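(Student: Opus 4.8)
The plan is to establish the two assertions of Proposition~\ref{merge} in turn: first that the lower edges of the wings $W_{P_1^k,P_2^k}$ all lie on a single line of slope $\pm 1$, and then that the union $W$ is coarsely Legendrian simple. For the first assertion, I would use Lemma~\ref{seestab} together with Proposition~\ref{wings}. Recall that each $L_{P_1^k,P_2^k}$ has $\tb=pq$ and sits at the peak of its wing $W_{P_1^k,P_2^k}$, with the wing consisting of stabilizations $S_+^aS_-^b(L_{P_1^k,P_2^k})$. The key geometric input is the shuffling picture from Section~\ref{pathsinFG} (see Figure~\ref{compatible}): the peak of $W_{P_1^{k-1},P_2^{k-1}}$ is obtained from that of $W_{P_1^k,P_2^k}$ by changing the sign of one basic slice in the first ``inconsistent'' continued fraction block. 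Using Lemma~\ref{computer} to compute the rotation numbers $R(P_1^k,P_2^k)$, I would show that the rotation numbers of consecutive peaks differ by exactly $2n'_{k-1}$ (where $n'_{k-1}$ is the appropriate intersection number $|s_{k-1}\bigcdot \frac qp|$ from the shortened path), since the only change in the formula for $R$ is that a single basic slice contributes $\epsilon$ versus $-\epsilon$. Since all peaks have the same $\tb=pq$, and the wing generated by each peak has its lower-left (or lower-right) edge running along a line of slope $-1$ (resp.\ $+1$) beginning at the peak, the fact that consecutive peaks are displaced horizontally by $2n'_{k-1}$ and vertically by $0$ forces the lower edges to be collinear. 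Concretely, I would check that the lower edge of $W_{P_1^{k-1},P_2^{k-1}}$, when extended, passes through the point where the lower edge of $W_{P_1^k,P_2^k}$ terminates (at stabilization height $pq-n_{k-1}$), which is precisely the statement that the displacement of the peaks matches the ``width'' $n_{k-1}$ of the overlap; this is an arithmetic consequence of Lemma~\ref{increasing} and Equation~\eqref{eq:6}.

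For the second assertion, coarse Legendrian simplicity of $W$, I would argue that any two non-loose Legendrian knots in $W$ with the same $(\rot,\tb)$ can be realized as stabilizations of peaks and then shown to be contactomorphic by a convex-surface argument. The main point is that a Legendrian knot $L$ in $W$ with given invariants $(\rot,\tb)$ can be placed on a convex torus $T_k$ of slope $s_{k}$ as a ruling curve of slope $q/p$ for the appropriate $k$, via Lemma~\ref{seestab}; the complement of $L$ then decomposes as $V_1'\cup V_2'$ where the contact structures on the two solid tori are determined by subdividing and partially shortening the path $\overline{P_1}\cup P_2$ at the slope $s_k$. Because the wings all arise from compatible pairs of paths (which by the Lemma at the end of Section~\ref{pathsinFG}, ``Compatible pairs of decorated paths define the same contact structure,'' give the same ambient $\xi$), two Legendrian knots in different wings $W_{P_1^k,P_2^k}$ and $W_{P_1^l,P_2^l}$ sharing the same invariants will yield the same decorated subdivision of $\overline{P_1}\cup P_2$ at the common slope, hence contactomorphic complements, hence equivalent knots.

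The technically delicate step—and the one I expect to be the main obstacle—is verifying that a knot with fixed $(\rot,\tb)$ lands on a well-defined convex torus whose induced decorated path is independent of the wing from which the knot was stabilized. The subtlety is that the same point $(\rot,\tb)$ in the mountain range can be reached by stabilizing from two different peaks, and one must show these two stabilization sequences produce contact-isotopic complements rather than merely smoothly isotopic ones. I would handle this by tracking the signs carefully: the shuffling in Section~\ref{pathsinFG} shows that the decorated paths $(P_1^k,P_2^k)$ are pairwise compatible, so the point is to show that the extra stabilizations used to reach a common $(\rot,\tb)$ correspond to extending the decorated path past slope $s_k$ in a way that is forced by the rotation number. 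Since Lemma~\ref{lem:rotdiff} guarantees that distinct decorated paths give distinct rotation numbers, the equality of invariants $\rot(L)=\rot(L')$ pins down the subdivided decorated path uniquely up to compatibility, and the ``Compatible pairs'' lemma then yields the contactomorphism. The remaining bookkeeping is to confirm the arithmetic bound—using Lemma~\ref{increasing} so that $n_2<n_3<\cdots<n_i$—that guarantees the overlapping regions of consecutive wings match up exactly, which is a direct computation with Equation~\eqref{eq:6} and the intersection-number formula.
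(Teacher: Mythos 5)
You have correctly located the delicate step (showing that consecutive wings overlap compatibly), but your proposed resolution of it does not work. You want to argue that a knot in $W$ with fixed $(\rot,\tb)$ determines its ``subdivided decorated path'' uniquely up to compatibility, citing Lemma~\ref{lem:rotdiff}. That lemma only concerns pairs of paths representing $q/p$, i.e.\ the $\tb=pq$ peaks; its proof rests on the bounds $|r_m|\le p-1$ and $|r_n|\le |q|-1$, which are special to that case, and it says nothing about the longer paths describing complements of stabilized knots. Worse, the naive extension of that lemma to stabilized knots is false --- distinct decorated descriptions \emph{do} produce the same stabilized knot, which is exactly the merging phenomenon you are trying to prove --- so the qualifier ``up to compatibility'' carries all the content, and the argument becomes circular at the key step. (Also, a generic element $S_+^aS_-^b(L_{P_1^k,P_2^k})$ of a wing is not a $q/p$-ruling curve on any torus $T_j$; compare the proof of Proposition~\ref{wings}, where such knots sit on a convex torus ``not as a standard ruling curve''.) The missing idea is to exhibit one explicit Legendrian lying in two consecutive wings at once: the $q/p$-ruling curve $L'_{k-1}$ on the convex torus of slope $s'_{k-1}=s_{k-1}\oplus s_{k-2}$, the second-to-last vertex of the block $D_{k-1}$. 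This torus is visible in both $\xi_{P_1^k,P_2^k}$ and $\xi_{P_1^{k-1},P_2^{k-1}}$ because the two decorations induce the same contact structure on the complementary solid torus containing the shuffled edges (they differ only by permuting signs within a continued fraction block of the path describing it), and Lemma~\ref{seestab} identifies $L'_{k-1}$ as $S_+^{n'_{k-1}}(L_{P_1^k,P_2^k})$ and as $S_-^{n'_{k-1}}(L_{P_1^{k-1},P_2^{k-1}})$, where $n'_{k-1}=|s'_{k-1}\bigcdot \frac qp|$. Every knot in the overlap of the two wings is a further stabilization of $L'_{k-1}$, so coarse simplicity of the union is immediate, and the same identification hands you the peak displacement $2n'_{k-1}$.

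For the collinearity claim your plan is workable but contains a misstatement: passing from $(P_1^k,P_2^k)$ to $(P_1^{k-1},P_2^{k-1})$ does not change ``a single basic slice''; it reverses the signs of every edge of $\overline{P_1}\cup P_2$ between $v''_k$ and $v_k$ together with one edge of $D_k$. The formula of Lemma~\ref{computer} telescopes over such a segment, so the computation can be pushed through, but the geometric identification above delivers the displacement with no rotation-number computation at all. You are right that the displacement alone does not give collinearity and that one also needs the depths $n_{k-1}$ from Proposition~\ref{wings} together with the identity $n'_{k-1}=n_{k-1}-n_{k-2}$, which follows from $s'_{k-1}=s_{k-1}\oplus s_{k-2}$ and the fact that $s_{k-1}$ and $s_{k-2}$ lie on opposite sides of $q/p$; that arithmetic is the correct final ingredient.
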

When $pq<0$ and $\tb<pq$ this proposition also follows from \cite[Corollary~4.3]{Matkovic20Pre}, though the coarse Legendrian simplicity was not made explicit. 

\begin{proof}
We will assume that $i$ is even but the discussion for $i$ odd is entirely analogous. Since the pair $(P_1^i, P_2^i)$ is $i$-inconsistent, we can assume that all the basic slices in $A^i_2,\ldots, A^i_{i-2}$, $B^i_1, \ldots, B^i_{i-1}$ have the same sign, say negative (the positive case being entirely analogous), and $A_i$ has some positive basic slices. 
In Section~\ref{subsec:pathsinFG} we saw that one gets $(P_1^{i-1}, P_2^{i-1})$ from $(P_1^i,P_2^i)$ as follows: the union of $A^i_2,\ldots, A^i_{i-2}$, $B^i_1,\ldots, B^i_{i-3}$ and all but the last basic slice of $B^i_{i-1}$ can be shortened to a single edge in the Farey graph,  which will have a negative sign, and that edge extends $A^i_i$ to a longer continued fraction block. Thus, we exchange the positive basic slice in $A^i_i$ with this new edge and break the new edge back into its previous edges, but now all having positive signs. Specifically, this means $A^{i-1}_l=A^i_l$ and $B^{i-1}_l=B^i_l$ for all $l>i$, $A_i^{i-1}$ agrees with $A_i^i$ except one of the positive basic slices has turned into a negative one, $B_{i-1}^{i-1}$ consists of one negative basic slice and all the others are positive, and finally $A_l^{i-1}$ and $B_l^{i-1}$ all have only positive basic slices for $l<i-1$.  Continuing this shuffling, one sees that the $A_l^k$ and $B_l^k$ for $l<k$ will all have the same sign and the signs are negative if $k$ is even and positive is $k$ is odd. See Figure~\ref{compatible}. 

We also recall that $s_k$ is the slope in $A^i_k$ or $B^i_k$ which is farthest from $q/p$, $T_k$ is the convex torus in $V_1$ or $V_2$ with two dividing curves of slope $s_k$, and $L_k$ is a Legendrian ruling curve on $T_k$ of slope $q/p$. We also set $n_k=|s_k \bigcdot \frac qp|$. 

 With the notation established above, we begin with the Legendrian simplicity of $W$.
  Consider the contact structure $\xi_{P^i_1,P^i_2}$. 
  Let $s_{i-1}'$ be the slope in $B_{i-1}$ closest to $q/p$ with an edge to $s_{i-1}$ (that is, it is the slope of the second to the last vertex in $B_{i-1}$).  Let $T_{i-1}'$ be the convex torus in $V_2$ with two dividing curves of slope $s'_{i-1}$ and let $L_{i-1}'$ be a ruling curve on $T_{i-1}'$ with slope $q/p$. By Lemma~\ref{seestab} we know that 
  \[
    L'_{i-1} = S_+^{n'_{i-1}}(L_{P_1^i,P_2^i}) 
  \]
  for $n_{i-1}'=|s_{i-1}' \bigcdot \frac qp|$ (since the sign of all basic slices in $B_1, \ldots, B_{i-1}$ are negative). Now consider two solid tori $V'_1$ and $V'_2$ that $T_{i-1}'$ breaks $S^3$ into, and the contact structure on $V'_1$ is given by the path $\overline{P_1}$ followed by $B^i_1\cup \cdots \cup B^i_{i-3}$ followed by all but the last edge in $B_{i-1}$. We can thus exchange the basic slices in the continued fraction block as discussed above. Now it is clear that $T_{i-1}'$ is also a torus in the contact structure $\xi_{P_1^{i-1}, P_2^{i-1}}$ and hence its ruling curve is 
  \[
    L_{i-1}'=S_-^{n'_{i-1}}(L_{P_1^{i-1},P_2^{i-1}}). 
  \] 
  In other words, $S_+^{n_{i-1}'}(L_{P_1^i,P_2^i})$ is Legendrian isotopic to $S_-^{n_{i-1}'}(L_{P_1^{i-1},P_2^{i-1}})$. It is also the first time a stabilization of $L_{P_1^i,P_2^i}$ could be isotopic to a stabilization of $L_{P_1^{i-1},P_2^{i-1}}$. Moreover, it is clear that any Legendrian knot in 
  \[
    W(L_{P_1^{i},P_2^{i}})\cap W(L_{P_1^{i-1},P_2^{i-1}}) 
  \]
  is a stabilization of $L_{i-1}'$ and hence 
  \[
    W(L_{P_1^{i},P_2^{i}})\cup W(L_{P_1^{i-1},P_2^{i-1}})
  \] 
  is coarsely Legendrian simple. One may now similarly show that  $W(L_{P_1^{k},P_2^{k}})\cup W(L_{P_1^{k-1},P_2^{k-1}})$ is coarsely Legendrian simple for all $k$, thus yielding the second part of the proposition. 

  We now consider the first statement that there is a fixed line, which is the lower edge of all the wings. Notice that the lower boundary of all wings is contained in a line of slope $1$ (or $-1$ for $(-P_1,-P_1)$), and this line is determined by how many positive stabilizations make one of the $L_{P_1^l, P_2^l}$ loose. Now recall that $L_{P_1^i,P_2^i}$ becomes loose after exactly $n_{i-1}$ positive stabilizations and $L_{P_1^{i-1}, P_2^{i-1}}$ will become loose after exactly $n_{i-2}$ positive stabilizations. Moreover, we just saw that $S_+^{n_{i-1}'}(L_{P_1^i,P_2^i})$ is isotopic to $S_-^{n_{i-1}'}(L_{P_1^{i-1},P_2^{i-1}})$. We claim that $n_{i-2} = n_{i-1}-n_{i-1}'$.
  If this is true, then it is clear that the line defining the lower edge of the wing of $L_{P_1^i,P_2^i}$ and $L_{P_1^{i-1}, P_2^{i-1}}$ will be the same, and the same argument works for all adjacent wings. It is not hard to see from the Farey graph that $s_{i-1}'=s_{i-1}\oplus s_{i-2}$. Since $s_{i-1}$ and $s_{i-2}$ are on opposite sides of $q/p$, their intersection number with $q/p$ will have opposite sign. Thus we have 
  \[
    \left|s_{i-1}'\bigcdot \frac qp\right|=\left|(s_{i-1}\oplus s_{i-2})\bigcdot \frac qp\right| = \left|s_{i-1} \bigcdot \frac qp\right| - \left|s_{i-2} \bigcdot \frac qp\right|.
  \] 
\end{proof}

\begin{proposition}
  With the notation above and in Proposition~\ref{merge}, let 
  \[
    \overline{W}=\bigcup_{k=2}^i W_{-P_1^k,-P_2^k}.
  \]
  No Legendrian element in $W$ is equivalent to an element of $\overline{W}$. 
\end{proposition}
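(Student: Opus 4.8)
The plan is to distinguish elements of $W$ from elements of $\overline{W}$ by their rotation numbers, using the fact that the two families of wings are mirror images of each other under reflection across a vertical line. First I would establish the base case: the ``seed'' knots $L_{P_1^k,P_2^k}$ and $L_{-P_1^k,-P_2^k}$ both have $\tb=pq$, and by Lemma~\ref{computer} (together with Lemma~\ref{lem:rotdiff}) we know $\rot(L_{-P_1^k,-P_2^k})=-\rot(L_{P_1^k,P_2^k})$, since negating all the signs negates each $r_m$ and $r_n$ and hence negates $R(P_1,P_2)$. Moreover Lemma~\ref{lem:rotdiff} guarantees that the rotation numbers of the distinct seed knots $L_{P_1^2,P_2^2},\ldots,L_{P_1^i,P_2^i}$ are pairwise distinct (and nonzero once we exclude the exceptional $\xi_1$/$\xi_{std}$ cases treated separately), so none of them can coincide with any $-\rot(L_{P_1^l,P_2^l})$ unless the corresponding rotation number is zero.

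Next I would propagate this to the full wings by tracking how stabilization moves a knot through the $(\rot,\tb)$ plane. A knot in $W_{P_1^k,P_2^k}$ has the form $S_+^aS_-^b(L_{P_1^k,P_2^k})$ with $a<n_{k-1}$, and positive (resp.\ negative) stabilization sends $(\rot,\tb)\mapsto(\rot-1,\tb-1)$ (resp.\ $(\rot+1,\tb-1)$). Thus every knot in $W$ lies on the lower-left side of the peaks, i.e.\ its invariants satisfy $\rot \le -(\text{something})$ relative to the vertical axis of the $\textsf{X}$, whereas by the mirror symmetry (the wings for $(-P_1,-P_2)$ are obtained by reflecting about a vertical line, as noted after Proposition~\ref{merge}) every knot in $\overline{W}$ lies on the lower-right side. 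Concretely, I would show that for a fixed $\tb=pq-j$ value with $j>0$, the set of rotation numbers realized by non-loose knots in $W$ and the set realized by $\overline{W}$ are disjoint, because the former are all $\le$ the leftmost peak rotation value shifted down by $j$ and the latter are all $\ge$ the rightmost, and these two ranges do not overlap precisely because each individual $|\rot(L_{P_1^k,P_2^k})|$ is strictly positive and the wings only extend the rotation number further from zero under $S_+$ on the $W$ side and under $S_-$ on the $\overline W$ side.

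The cleanest way to organize this is to observe that, since equivalences preserve both $\tb$ and $\rot$, it suffices to check that $\Phi(W)\cap\Phi(\overline W)=\emptyset$ where $\Phi(L)=(\rot(L),\tb(L))$; and by coarse Legendrian simplicity of $W$ and of $\overline W$ (Proposition~\ref{merge}) equivalence of invariants within each side already implies equivalence, so the only thing that could fail is a coincidence of invariants across the two sides. I would then reduce to comparing rotation numbers at each fixed $\tb$-level. Using Lemma~\ref{increasing} to control the locations $n_k$ of the peaks and the computation in the proof of Proposition~\ref{merge} giving the slope-$\pm1$ lines bounding the wings, I can pin down the exact interval of rotation numbers occupied by $W$ at level $\tb=pq-j$ and the exact interval occupied by $\overline W$, and verify these intervals are disjoint.

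The main obstacle I anticipate is handling the ``crossing'' region near the peaks, where the wings of $W$ and $\overline W$ come closest together along the central vertical axis of the $\textsf{X}$: I must rule out a shared point of the form $(\rot,\tb)=(r,pq-j)$ realized from the left by $W$ and from the right by $\overline W$. This requires knowing that the rotation numbers of all the seed knots $L_{P_1^k,P_2^k}$ have the same sign (all of one sign), so that $W$ genuinely sits on one side of the vertical axis and $\overline W$ on the other; this sign coherence follows from the $i$-inconsistency hypothesis and the structure of the paths (all of $A_l^k,B_l^k$ for $l<k$ carry a single sign, forcing the relevant $r_m,r_n$ contributions to have a definite sign), but making the sign argument airtight is the delicate step. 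Once the seed rotation numbers are all strictly of one sign, the monotonic effect of $S_\pm$ on $\rot$ keeps $W$ strictly to one side and $\overline W$ strictly to the other, and no element of $W$ can share invariants with an element of $\overline W$.
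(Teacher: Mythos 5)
Your strategy---distinguishing $W$ from $\overline{W}$ by showing $\Phi(W)\cap\Phi(\overline{W})=\emptyset$---cannot work, because that intersection is \emph{not} empty in general. The remark immediately following this proposition states that when $pq>0$ the two collections of wings overlap in the $(\rot,\tb)$--plane, and the caption of Figure~\ref{fig-genericXwing} records that in the darker shaded region there are exactly two non-loose representatives with the same classical invariants: one from $W$ and one from $\overline{W}$. (For $pq>0$ the crossing of the $\textsf{X}$ lies \emph{below} $\tb=pq$, so the two slope-$\pm1$ families genuinely cross inside the stabilization range and then share a two-dimensional region of invariants.) Indeed, the entire content of the proposition is that these coincidences of $(\rot,\tb)$ do \emph{not} force equivalence; an argument that begins by assuming the invariants never coincide is arguing for a different, false statement. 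Your ``sign coherence'' step is also where this surfaces concretely: the seed rotation numbers being of one sign does not prevent the wings from crossing the vertical axis after enough stabilizations, since $S_-$ increases $\rot$ without bound along each wing of $W$.

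The paper's proof is a one-line argument of a completely different flavor: by Proposition~\ref{wings}, every element of $W$ stays non-loose under arbitrarily many stabilizations of one sign but becomes loose after finitely many of the other sign, while every element of $\overline{W}$ has the opposite behavior. Since a co-orientation preserving contactomorphism carrying $L_1$ to $L_2$ carries $S_\pm(L_1)$ to $S_\pm(L_2)$ and preserves (non-)looseness, no element of $W$ can be equivalent to an element of $\overline{W}$, regardless of whether their classical invariants agree. If you want to salvage part of your approach, the invariant comparison does suffice in the $pq<0$ case away from the crossing point, but you still need the stabilization argument to handle the overlaps.
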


\begin{remark}
  In Section~\ref{justifyalgorithm} below we will see that when $pq<0$, $W$ is disjoint from $\overline{W}$ and hence $W\cup \overline{W}$ is coarsely Legendrian simple. However when $pq>0$ $W\cap \overline{W}\not=\emptyset$ and hence $W\cup \overline{W}$ is not Legendrian simple. 
\end{remark}

\begin{proof}
  Notice any element in $W$ will become loose after a finite number of negative stabilizations, while elements of $\overline{W}$ will stay non-loose after any number of negative stabilizations so no element in $W$ can be equivalent to an element of $\overline{W}$. 
\end{proof}

\subsubsection{Diamonds in \texorpdfstring{$\xi_1$}{xi_1} when \texorpdfstring{$pq>0$}{pq > 0}}\label{diamondsforxi1}
Let $(P_1,P_2)$ be a pair of paths that represent $q/p$ with $pq>0$ and assume all the signs in the paths are the same, say negative. From Lemma~\ref{xi1} we know that $\xi_{P_1,P_2}$ is $\xi_1$. 

\begin{proposition}\label{diamonds}
  Given $P_1$ and $P_2$ as above, 
  the Legendrian knots $S^k_\pm S^l_\mp (L_{\pm P_1,\pm P_2})$ are non-loose if and only if $k<p$ and $l<q$.
\end{proposition}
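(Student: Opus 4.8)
\textbf{Plan for the proof of Proposition~\ref{diamonds}.}

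The plan is to analyze the Legendrian knots $S_\pm^k S_\mp^l(L_{\pm P_1, \pm P_2})$ by placing their representatives on convex tori inside $\xi_1$ and determining precisely when an overtwisted disk can be found in the complement. First I would use Lemma~\ref{xi1}, which gives an explicit description of $\xi_1$: the complement of $L_{P_1,P_2}$ is obtained from the complement $(C,\xi)$ of the maximal-$\tb$ representative of $T_{p,q}$ in $(S^3,\xi_{std})$ by attaching a basic slice in $\Tight^{min}(T^2\times[0,1];\infty,pq-p-q)$, then a basic slice of the opposite sign in $\Tight^{min}(T^2\times[0,1];pq,\infty)$, and finally the unique tight solid torus with meridional slope $\infty$ and dividing slope $pq$. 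This sandwich of two oppositely-signed basic slices is exactly the feature that distinguishes $\xi_1$ from the contact structures treated in Proposition~\ref{wings}, and it is what produces a two-dimensional family (a ``diamond'') of non-loose stabilizations rather than a one-sided wing.

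The key steps are then as follows. I would use Lemma~\ref{seestab} to identify the stabilizations $S_+^k S_-^l(L_{P_1,P_2})$ with $q/p$-sloped ruling curves (and their further stabilizations) on convex tori of various slopes obtained by subdividing the concatenated path $\overline{P_1}\cup P_2$. Because all signs in $P_1$ and $P_2$ agree, the only way to introduce opposite signs — and hence an overtwisted disk — is by crossing the slope $pq-p-q$ where the basic-slice structure of $\xi_1$ forces a sign change; this is where the bounds $k<p$ and $l<q$ should emerge. Concretely, positive stabilizing $p$ times or negative stabilizing $q$ times should push the ruling curve onto a torus where the path $\overline{P_1}\cup P_2$ can be shortened across the two oppositely-signed basic slices of Lemma~\ref{xi1}, producing an overtwisted disk and hence looseness; I would compute these two stabilization thresholds by intersection-number computations $|s\bigcdot \tfrac qp|$ exactly as in Lemma~\ref{increasing} and the proof of Proposition~\ref{wings}. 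For the converse — that $S_+^k S_-^l(L_{P_1,P_2})$ is non-loose when $k<p$ and $l<q$ — I would run the state-transition argument from Proposition~\ref{wings}: place the knot on a convex torus, assume a complementary overtwisted disk, apply isotopy discretization (Theorem~\ref{thm:discretization}) to the separating annulus, and inductively show each intermediate torus splits $S^3$ into two tight solid tori, the tightness being controlled by whether the relevant sub-path can only be shortened at same-signed edges. The symmetric statement for $L_{-P_1,-P_2}$ follows by reversing orientations.

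The main obstacle I expect is the converse direction, specifically controlling the induction in the state-transition argument when both positive and negative stabilizations are present simultaneously. In Proposition~\ref{wings} only one-sided stabilizations were considered, so the inductive slope $t_j$ moved monotonically within a single interval; here the diamond is genuinely two-dimensional, so I must track a convex torus whose dividing slope can be approached from either side of $q/p$, and I must rule out bypasses of either sign that would shorten the path prematurely. The crux is to show that for $k<p$ and $l<q$ no admissible bypass attachment can merge two oppositely-signed basic slices — equivalently, that the two distinguished slopes realized by $k=p$ and by $l=q$ are the only places where the tight sub-tori fail to remain tight. I would handle this by carefully recording, as an inductive invariant, that the contact structures on both complementary solid tori correspond to consistently subdividing $\overline{P_1}\cup P_2$ (shortening only at same-signed vertices), and that this invariant is preserved under bypass attachment precisely as long as the dividing slope stays strictly between the two thresholds; the boundary cases $k=p$ and $l=q$ are then exactly where the invariant first breaks, matching the looseness established in the forward direction.
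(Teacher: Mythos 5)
Your converse direction (non-looseness for $k<p$, $l<q$) is essentially the paper's: run the isotopy-discretization argument of Proposition~\ref{wings} on the convex torus carrying the stabilized knot, and your concern about the two-sidedness of that induction is fair, since the paper simply invokes ``the same argument.'' The forward direction, however, rests on the wrong mechanism, and this is a genuine gap. For the $2$-consistent pair $(P_1,P_2)$ there are no oppositely-signed basic slices anywhere to merge: by Lemma~\ref{lem:thickenornot} the complement of $L_{P_1,P_2}$ cannot be thickened past slope $pq$ at all, so there is no convex torus of slope $pq-p-q$ parallel to the boundary of the knot complement to cross, and the sandwich of opposite basic slices in Lemma~\ref{xi1} belongs to the different Legendrian $L_{P_1',P_2'}$ of Proposition~\ref{inftyV}, not to $L_{P_1,P_2}$. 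The slope $pq-p-q$ plays no role in this proposition.

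What actually produces the bounds $k<p$ and $l<q$ is the pair of meridional slopes of the two Heegaard solid tori. Using Lemma~\ref{gluingtoriandthickened} one extends the all-negative path describing $V_1$ inward to a convex torus $T_0$ of slope $0$ and the path describing $V_2$ inward to a convex torus $T_\infty$ of slope $\infty$, keeping every added edge the same sign. By Lemma~\ref{seestab} the $q/p$-sloped ruling curves on these tori are $S_-^{q}(L_{P_1,P_2})$ and $S_+^{p}(L_{P_1,P_2})$, since $\left|\frac 01\bigcdot\frac qp\right|=q$ and $\left|\frac 10\bigcdot\frac qp\right|=p$. Each of $T_0$ and $T_\infty$ bounds a solid torus whose meridional slope equals its dividing slope, so a Legendrian divide bounds a disk with twisting zero, i.e.\ there is an overtwisted disk in the complement of the ruling curve; no shortening of the path across opposite signs is involved. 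Your intersection-number computations would give the correct thresholds only once you identify $0$ and $\infty$ as the critical slopes, and the induction in your converse direction also needs these as the terminal slopes at which tightness of the complementary solid tori first fails.
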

\begin{definition}\label{rigorousdiamond}
We define the \dfn{diamond of $L_{\pm P_1, \pm P_1}$} to be the set
\[
  D_{\pm P_1, \pm P_1}= \{S^k_\pm S^l_\mp (L_{\pm P_1,\pm P_2}) : k<p, l<q\}
\] 
and think of these as the non-loose Legendrian knots generated from $L_{\pm P_1, \pm P_1}$. 
\end{definition}
\begin{proof}
  Notice that the path $\overline{P_1}$ starts at $\infty$ and goes clockwise to $q/p$. However, the first edge in the path goes from $\infty$ to $\lfloor q/p \rfloor$ and corresponds to the unique tight contact structure on the solid torus with longitudinal dividing curves. So the part of $\overline{P_1}$ that determines the contact structure is the path that starts at $\lfloor q/p \rfloor$ and goes clockwise in some number of jumps to $q/p$. By Lemma~\ref{gluingtoriandthickened} we can represent this contact structure by the unique contact structure on the solid torus with convex boundary $0$ and then a contact structure on $T^2\times [0,1]$ given by the path $0,1, \ldots,  \lfloor q/p \rfloor$ followed by $\overline{P_1}$ and the signs on the edges between $0$ and $\lfloor q/p \rfloor$ can be chosen arbitrarily.  In particular, we can choose them to be negative (that is, the same sign as the signs in $P_1$ and $P_2$). Thus, inside $V_1$ we have a convex torus $T_0$ with two dividing curves of slope $0$ such that the path from $0$ to $q/p$ consists of all negative signs. Similarly, we have a convex torus $T_\infty$ of slope $\infty$ inside $V_2$, and again the path from $q/p$ to $\infty$ consists of all negative signs. Let $L_0$ and $L_\infty$ be ruling curves of slope $q/p$ on the tori $T_0$ and $T_\infty$, respectively. By Lemma~\ref{seestab} we know that $L_0$ is $S_-^q(L_{P_1,P_2})$ and that $L_\infty$ is $S_+^p(L_{P_1,P_2})$. Notice that $T_0$ separates $S^3$ into two solid tori, one of which has meridional slope $0$, and hence we see that a dividing curve on $T_0$ bounds an overtwisted disk in this solid torus, and hence $L_0$ is loose. Similarly, $L_\infty$ is also loose. Thus we see that $S^k_+ S^l_- (L_{P_1,P_2})$ is loose if either $k\geq p$ or $l\geq q$. 

  Now if $k<p$ and $l <q$, then the Legendrian knot $S^k_\pm S^l_\mp (L_{\pm P_1,\pm P_2})$ cannot be put on either $T_0$ or $T_\infty$ and the same isotopy discretization argument as in the proof of Proposition~\ref{wings} shows that $S^k_\pm S^l_\mp (L_{\pm P_1,\pm P_2})$ is non-loose.

  A similar argument establishes the result for $L_{-P_1,-P_2}$. 
\end{proof}

\begin{proposition}\label{dmerge}
  Given $P_1$ and $P_2$ as above, the union $D'=D_{P_1,P_2}\cup D_{-P_1,-P_2}$ is coarsely Legendrian simple, that is, any two Legendrian knots in $D'$ with the same $\tb$ and $\rot$ are equivalent. See Figure~\ref{diamondsfig}. 
\end{proposition}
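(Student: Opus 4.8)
The plan is to imitate the proof of Proposition~\ref{merge}: I will isolate a single ``seed'' coincidence between the two diamonds and then check, by stabilization bookkeeping alone, that every pair of knots with matching classical invariants, one drawn from each diamond, is a common stabilization of the seed and hence equivalent. For the reduction, recall from Lemma~\ref{sdt+xi1} that both knots live in $\xi_{P_1,P_2}=\xi_{-P_1,-P_2}=\xi_1$, and a direct application of Lemma~\ref{computer} to the all--negative decoration gives $r_m=-(p-1)$ and $r_n=q-1$, so $\rot(L_{P_1,P_2})=q-p=-\rot(L_{-P_1,-P_2})$. Since $\tb=pq$ for both and a positive (resp.\ negative) stabilization moves $(\rot,\tb)$ by $(+1,-1)$ (resp.\ $(-1,-1)$), within a single diamond the pair $(k,l)$ is recovered from $(\tb,\rot)$; thus the only content of the statement is cross--diamond coincidences. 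Comparing invariants, $S_+^kS_-^l(L_{P_1,P_2})$ and $S_+^{k'}S_-^{l'}(L_{-P_1,-P_2})$ agree exactly when $k'=k+(q-p)$ and $l'=l-(q-p)$; with the ranges $k,k'<p$, $l,l'<q$ from Proposition~\ref{diamonds} this forces $q<2p$ (otherwise the diamonds are disjoint in the $(\rot,\tb)$--plane and there is nothing to prove). The extremal coincidence is the knot $L^{\ast}:=S_-^{q-p}(L_{P_1,P_2})$ against $S_+^{q-p}(L_{-P_1,-P_2})$, sitting at $(\rot,\tb)=(0,\,pq-q+p)$; granting that these two are equivalent, any cross--diamond coincidence rewrites as $S_+^{k}S_-^{\,l-(q-p)}(L^{\ast})$ from either side, so the whole overlap is coarsely Legendrian simple.

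It therefore remains to establish the seed coincidence. By Lemma~\ref{seestab}, $S_-^{q-p}(L_{P_1,P_2})$ is realized as the $q/p$--sloped ruling curve on the innermost convex torus $T\subset V_1$ of slope $\lfloor q/p\rfloor$ (note $|\lfloor q/p\rfloor \bigcdot \tfrac qp|=q-p$, and the stabilizations are negative because the basic slices of $P_1$ are), and the identical construction on the $(-P_1,-P_2)$ side, where the corresponding basic slices are positive, realizes $S_+^{q-p}(L_{-P_1,-P_2})$ as the $q/p$--ruling on the slope--$\lfloor q/p\rfloor$ torus $T'\subset V_1'$ (the underlying vertices of $P_1$ and $-P_1$ agree, only the decorations differ). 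The crucial simplification is that inside $T$ the contact structure is the \emph{unique} tight structure on a solid torus with longitudinal dividing curves, so the two decompositions agree there; they differ only on the outer region, namely by the signs of the chain of $q-p$ basic slices running from slope $\lfloor q/p\rfloor$ out to slope $q/p$ (and then on $V_2$ versus $V_2'$). Equivalently, the complement of each ruling curve lies in $\Tight_0(C;pq-q+p)$ and is built from the universally tight structure $\xi'_{\pm P_1,\pm P_2}\in\Tight_0(C;pq)$ of Lemma~\ref{standardstructures} by attaching a chain of $q-p$ basic slices, all negative in the first case and all positive in the second, and the goal is to produce a co--orientation preserving contactomorphism, smoothly isotopic to the identity, between these two complements.

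The hard part will be exactly this last identification. The two universally tight pieces $\xi'_{P_1,P_2}$ and $\xi'_{-P_1,-P_2}$ are conjugate, and conjugation carries negative basic slices to positive ones, so it interchanges the two complements; but conjugation reverses co--orientation and negates rotation numbers, so it does not by itself yield an equivalence. What makes the argument go through is precisely that $L^{\ast}$ has $\rot=0$, so this numerical obstruction vanishes. To upgrade this to an honest co--orientation preserving contactomorphism I expect to run the state--transition machinery of Section~\ref{sec:non-rotative} exactly as in the proofs of Proposition~\ref{wings} and Lemma~\ref{lem:staytight}: realize the ruling curve on a fixed annulus, and use isotopy discretization (Theorem~\ref{thm:discretization}) together with the bypass analysis to show that the complement of the ruling curve on $T$ cannot be distinguished from that on $T'$, the two opposite--sign chains of basic slices being absorbed into $I$--invariant neighborhoods over the shared inner solid torus. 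Finally I would record the routine bookkeeping: that $q<2p$ is exactly the range in which the diamonds meet, that every knot under discussion remains inside its diamond and so stays non--loose by Proposition~\ref{diamonds}, and that the degenerate arithmetic (small $q-p$, or the $n_1=n_2$ exception of Lemma~\ref{increasing}) introduces no new cases.
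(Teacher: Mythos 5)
There is a genuine gap, and it sits exactly where you flag ``the hard part'': the seed coincidence is never actually established. You reduce the proposition to showing $S_-^{q-p}(L_{P_1,P_2})$ is equivalent to $S_+^{q-p}(L_{-P_1,-P_2})$, and then propose to prove this by conjugation plus the state--transition machinery of Section~\ref{sec:non-rotative}; but that step is only announced (``I expect to run\dots''), and it is aimed at the wrong torus. You place the seed on the convex torus of slope $\lfloor q/p\rfloor$ and assert $|\lfloor q/p\rfloor \bigcdot \tfrac qp|=q-p$; in fact $|\lfloor q/p\rfloor\bigcdot \tfrac qp|=q-p\lfloor q/p\rfloor$, which equals $q-p$ only when $1<q/p<2$. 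Worse, for $\lfloor q/p\rfloor\geq 2$ the solid torus on the $V_2$ side cut off by that torus has dividing curves meeting its meridian $\lfloor q/p\rfloor$ times, so it does \emph{not} carry a unique tight contact structure; your claim that ``the two decompositions agree there'' fails, and the heavy machinery would genuinely be needed. The observation you are missing is the one that makes the whole thing elementary: take instead the convex torus $T_1\subset V_1$ with dividing slope $1$. Then $|1\bigcdot\tfrac qp|=q-p$, so its $q/p$--ruling curve $L_1$ is $S_-^{q-p}(L_{P_1,P_2})$ by Lemma~\ref{seestab}, and $T_1$ splits $S^3$ into two solid tori each with \emph{longitudinal} dividing curves, hence each carrying a unique tight contact structure. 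The decorations on both sides of $T_1$ can therefore be re-chosen arbitrarily (via Lemma~\ref{gluingtoriandthickened}), and choosing all signs positive exhibits the very same curve $L_1$ as $S_+^{q-p}(L_{-P_1,-P_2})$. No conjugation, no isotopy discretization, no bypass analysis is required.

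There is also an error in the reduction. The diamond of $L_{-P_1,-P_2}$ is $\{S_-^{k}S_+^{l}:k<p,\ l<q\}$, so in your notation $S_+^{k'}S_-^{l'}(L_{-P_1,-P_2})$ lies in it when $k'<q$ and $l'<p$, not when $k'<p$ and $l'<q$. With the correct ranges your own formulas $k'=k+(q-p)$, $l'=l-(q-p)$ show the two diamonds overlap for \emph{every} $q>p$ (the coincidences are exactly $k<p$, $q-p\leq l<q$): for instance both diamonds of the $(2,5)$--torus knot contain a knot with $(\rot,\tb)=(0,7)$ even though $q>2p$. So the claim that $q\geq 2p$ forces disjointness --- and that ``there is nothing to prove'' in that range --- is false. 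Your seed, being $S_-^{q-p}(L_{P_1,P_2})$ with $q-p<q$, always lies in the diamond, so the argument (once repaired as above) covers all cases and every cross-diamond coincidence is indeed a common stabilization of the seed; the spurious case split should simply be deleted.
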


\begin{figure}[htbp]
\begin{overpic}{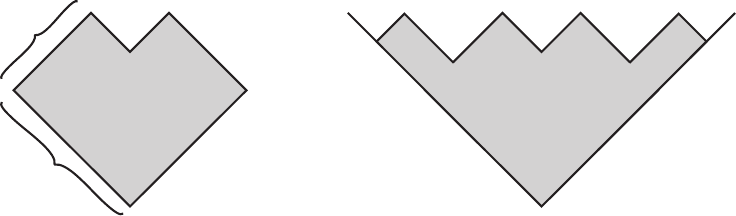}
  \put(7, 89){$p$}
  \put(17, 17){$q$}
\end{overpic}
\caption{The union $D'$ of the diamonds associated with two completely consistent paths describing $q/p$ for $pq>0$ is shown on the left. The peaks occur at $pq$, and the central valley occurs after stabilizing a peak $q-p$ times. The union of the diamonds of all pairs of paths compatible with the original paths is shown on the right. Each integral point in the shaded region, whose coordinates sum to be odd, is realized by a unique non-loose Legendrian knot with $\tor = 0$.}
\label{diamondsfig}
\end{figure}

\begin{proof}
  As argued in the proof of the previous proposition, inside $V_1$ we find a convex torus $T_1$ with two dividing curves of slope $1$. Let $L_1$ be a ruling curve on $T_1$ with slope $q/p$. Lemma~\ref{seestab} tells us that $L_1$ is $S_-^{q-p}(L_{P_1,P_2})$. 

  Notice that $T_1$ breaks $S^3$ into two solid tori $V_1'$ and $V_2'$ each having two longitudinal dividing curves, so there is a unique tight contact structure on each, the first described by a path that goes from $\infty$ clockwise to $1$ and the other going from $0$ anti-clockwise to $1$. As argued above we can break the first path into a path from $\infty$ to $0$ and then $0$ to $1$. The first edge describes the unique tight contact structure on a solid torus with longitudinal divides and the second edge can have any signs and here, we choose a positive sign. Similarly for the second path we may subdivide the edges from $1$ to $\lfloor q/p\rfloor$ and then the edges in $\overline{P_1}\cup P_2$ that are from $\lfloor q/p \rfloor$ to $\infty$ and we may assume that all the edges have a positive sign. This shows that $L_1$ also sits in the contact structure $\xi_{-P_1,-P_2}$ and in particular is $S_+^{q-p}(L_{-P_1,-P-2})$ and we see that $S_-^{q-p}(L_{P_1,P_2})$ is Legendrian isotopic to $S_+^{q-p}(L_{-P_1,-P_2})$. Since all other stabilizations of $L_{P_1,P_2}$ and $L_{-P_1,-P_2}$ with the same classical invariants are stabilizations of $S_-^{q-p}(L_{P_1,P_2}) = S_+^{q-p}(L_{-P_1,-P_2})$, the result follows.  
\end{proof}

As all the signs in all paths $P_1$ and $P_2$ are the same, we can shorten $\overline P_1\cup P_2$ to a path going from $\lfloor q/p \rfloor$ clockwise to $\infty$. Inside the contact structure on $T^2\times [0,1]$ described by this path we can find a torus $T$ with dividing slope $\lceil q/p \rceil$. Now $T$ divides $(S^3, \xi_{P_1,P_2})$ into two tight solid tori. One with lower meridian $\infty$ and convex boundary of slope $\lceil q/p \rceil$ and the other with upper meridian $0$ and convex boundary of slope $\lceil q/p \rceil$. So the first solid torus has longitudinal dividing curves and hence there is only one possible contact structure on it. Moreover, by Lemma~\ref{gluingtoriandthickened} we may split this torus into one with boundary slope $\lfloor q/p \rfloor$ and basic slice with boundary slopes  $\lfloor q/p \rfloor$ and  $\lceil q/p \rceil$ and the sign of the basic slice can be chosen arbitrarily. We choose the sign to be positive and then subdivide the path to $\overline {P_1}\cup P_2$ so that all the basic slices are positive except the last one in $P_2$ going from $\lceil q/p \rceil$ to $\infty$ which is still negative. Denote the paths with the new signs by $P_1^{2m-1}, P_2^{2m-1}$. Break the paths into their continued fraction blocks 
\[
  (A^{2m-1}_2,\ldots, A^{2m-1}_{2n}) \text{ and } (B^{2m-1}_1,\ldots, B^{2m-1}_{2m-1})
\] 
as in Section~\ref{subsec:pathsinFG}. Then this new pair of paths is $(2m-1)$-inconsistent (that is, maximally inconsistent). We leave the almost identical case when the continued blocks in $P_2$ have even subscripts to the reader. As we saw in Section~\ref{subsec:pathsinFG}, we will now get $k$-inconsistent pairs of paths $P_1^k,P_2^k$ for $k=2,3,\ldots, 2m-1$ that are all compatible. Notice that all the signs of the basic slices in $P^2_1$ are negative, except the first one, which is positive, and all the basic slices of $P^2_2$ are positive, except the first one, which is negative. 

\begin{proposition}\label{alldiamonds}
 With the notation above, consider the pairs of paths $(\pm P_1,\pm P_2)$ where both $P_1$ and $P_2$ have all positive signs. From Proposition~\ref{dmerge} we have the union of diamonds. 
 \[
 D'=D_{P_1,P_2}\cup D_{-P_1,-P_2}
 \]
 Consider the $\textsf{V}$ formed by the two rays starting at the bottom vertex of $D'$, tangent to the lower boundary of $D'$, and with the top of the $\textsf{V}$ at $\tb=pq$. Each pair of paths $(\pm P_1^k, \pm P_2^k)$ constructed above gives a Legendrian knot $L_{\pm P_1^k,\pm P^k_2}$ with $\tor = 0$ and $\tb = pq$, and stabilizations of it will remain non-loose exactly when the resulting Legendrian has its classical invariants on or above the $\textsf{V}$ described above. The set of non-loose stabilizations of $L_{\pm P_1^k,\pm P_2^k}$ gives the diamond $D_{\pm P_1^k,\pm P_2^k}$ of $L_{\pm P_1^k,\pm P_2}$. The union 
\[
  D=D'\cup \bigcup_{k=2}^{2m-2} D_{\pm P_1^k,\pm P_2^k}
\]
is coarsely Legendrian simple, i.e. any two Legendrian knots in $D$ with the same $\tb$ and $\rot$ are equivalent. See the right-hand side of Figure~\ref{diamondsfig}.
\end{proposition}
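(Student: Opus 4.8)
The plan is to build this result inductively on $k$, exactly paralleling the structure of the argument for wings in Proposition~\ref{merge}, but now using the diamond-merging established in Proposition~\ref{dmerge} as the base case. The key geometric input will be that adjacent diamonds $D(L_{\pm P_1^k, \pm P_2^k})$ and $D(L_{\pm P_1^{k-1}, \pm P_2^{k-1}})$ share a common boundary Legendrian knot, realized as a ruling curve on an intermediate convex torus, and that all the diamonds hang from a single fixed $\textsf{V}$.

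First I would verify that each $L_{\pm P_1^k, \pm P_2^k}$ genuinely has $\tor = 0$ and $\tb = pq$; this is immediate from the construction in Lemma~\ref{lem:=pq}, since each $(P_1^k, P_2^k)$ is a pair of paths representing $q/p$ and the associated Legendrian divide has $\tb = pq$ with tight complement. The base case $D' = D(L_{P_1, P_2}) \cup D(L_{-P_1, -P_2})$ is handled by Proposition~\ref{dmerge}, which already shows $D'$ is coarsely Legendrian simple and identifies the central valley as $S_-^{q-p}(L_{P_1,P_2}) = S_+^{q-p}(L_{-P_1,-P_2})$. This valley determines the $\textsf{V}$: the two rays emanate from it with slopes $\pm 1$ and meet $\tb = pq$ at the peaks.

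Next I would carry out the inductive step. Since $(P_1^k, P_2^k)$ is $k$-inconsistent and compatible with $(P_1^{k-1}, P_2^{k-1})$, I would locate, inside the solid torus whose continued-fraction block changes sign, a convex torus $T'$ of the appropriate intermediate slope $s'$ and let $L'$ be a $q/p$-sloped ruling curve on it. Using Lemma~\ref{seestab} to compute the stabilization exponents from the intersection numbers $|s' \bigcdot \frac qp|$, I would show that a specific mixed stabilization $S_\pm^a S_\mp^b(L_{P_1^k, P_2^k})$ equals a mixed stabilization of $L_{P_1^{k-1}, P_2^{k-1}}$; this is the knot lying on the shared lower edge between consecutive diamonds. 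The non-looseness of each stabilization on or above the $\textsf{V}$ follows from the isotopy-discretization argument already used in Propositions~\ref{wings} and~\ref{diamonds}, tracking the boundary slopes of the tori produced by bypass attachments and checking they never allow a sign-merging shortening of $\overline{P_1^k} \cup P_2^k$ until the Legendrian exits the $\textsf{V}$. That all diamonds hang from the \emph{same} $\textsf{V}$ reduces, as in Proposition~\ref{merge}, to a Farey-graph identity of the form $n_{k-2} = n_{k-1} - n_{k-1}'$ relating the intersection numbers of consecutive farthest slopes with $q/p$, which in turn follows from the Farey-sum relation $s_{k-1}' = s_{k-1} \oplus s_{k-2}$ and the fact that $s_{k-1}$ and $s_{k-2}$ lie on opposite sides of $q/p$.

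The main obstacle I anticipate is keeping careful track of the signs and the bookkeeping when passing between the two labeling conventions for the continued-fraction blocks (the case where the $B_i$ carry odd subscripts versus even, and the reindexing forced by the maximally-inconsistent path $(P_1^{2m-1}, P_2^{2m-1})$ obtained by the shuffling at the end of Section~\ref{diamondsforxi1}). Because $pq > 0$, the two families $D(L_{P_1^k,P_2^k})$ and $D(L_{-P_1^k,-P_2^k})$ genuinely overlap (unlike the negative case), so I must confirm that the coarse Legendrian simplicity is not merely a statement about each diamond separately but about their union: any two knots in $D$ with equal $(\tb, \rot)$ must be common stabilizations of one of the shared boundary knots $L'$. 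The cleanest way to organize this is to show that every element of $D$ is a stabilization of one of the peak knots $L_{\pm P_1^k, \pm P_2^k}$ and that the overlap pattern is precisely the nested chain of shared-edge knots, so that the whole region $D$ is swept out consistently and any coincidence of invariants forces Legendrian isotopy.
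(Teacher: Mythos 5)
Your proposal follows essentially the same route as the paper's proof: the paper also uses Proposition~\ref{dmerge} as the anchor, links $D'$ to the maximally inconsistent pair via the ruling curve on the convex torus of slope $\lceil q/p\rceil$ (a common positive stabilization of $L_{P_1,P_2}$ and negative stabilization of $L_{P_1^{2m-1},P_2^{2m-1}}$, identified using Lemma~\ref{seestab}), and then defers the remaining adjacent-diamond mergers and non-looseness claims to the arguments of Propositions~\ref{wings} and~\ref{merge}, exactly as you do. The only cosmetic difference is the order in which the chain of compatible paths is traversed, which does not change the substance.
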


\begin{proof}
  We first relate a stabilization of $L_{P_1^{2m-1}, P_2^{2m-1}}$ and a stabilization of $L_{P_1,P_2}$.  We use the notation from the paragraph preceding the statement of the proposition. Notice that in the contact structure $\xi_{P_1,P_2}$ we see that inside of $V_2$ there is a convex torus $T_{\lceil \scriptscriptstyle{q/p} \rceil}$ with two dividing curves of slope $\lceil q/p \rceil$. Let $L_{\lceil \scriptscriptstyle{q/p} \rceil}$ be a ruling curve on $T_{\lceil \scriptscriptstyle{q/p} \rceil}$ with slope $q/p$. By Lemma~\ref{seestab} we know that $L_{\lceil \scriptscriptstyle{q/p} \rceil}$ is isotopic to the result of positively stabilizing $L_{P_1,P_2}$ exactly $|\lceil \frac qp \rceil\bigcdot \frac qp|$ times. As noted in the paragraph above, we also know that $T_{\lceil \scriptscriptstyle{q/p} \rceil}$ is a convex torus inside $\xi_{P_1^{2m-1}, P_2^{2m-1}}$ and from this we see that $L_{\lceil \scriptscriptstyle{q/p} \rceil}$ is also the result of negatively stabilizing $L_{P_1^{2m-1},P_2^{2m-1}}$ exactly $|\lceil \frac qp \rceil\bigcdot \frac qp|$ times, thus all further stabilizations of these knots will remain isotopic. 

  Now just as in the proof of Proposition~\ref{wings} we see that $L_{P_1^{2m-1},P_2^{2m-1}}$ positively stabilized $n_{2n}=|s_{2n}\bigcdot \frac qp|$ times is loose but stabilizing any fewer times remains non-loose (here, $2n = 2m-2$). Also, as in the proof of Proposition~\ref{merge} we see that $S_-^{n_{2n}-1}(L_{P_1^{2m-1},P_2^{2m-1}})$ can be negatively stabilized some number of times to agree with the left corner of $D'$. From this we see that we get the diamond of $L_{P_1^{2m-1},P_2^{2m-1}}$ and when an element shares classical invariants with $D'$ it is isotopic to the corresponding element of $D'$. 

  The diamonds for the other paths $P_1^k, P_2^k$ follow from the same arguments as in Propositions~\ref{wings} and~\ref{merge} and the arguments above.  
\end{proof}

\subsection{Non-loose torus knot with \texorpdfstring{$\tb\geq pq$}{tb >= pq}}\label{classgeqpq}

In this section, we will classify non-loose $(p, q)$-torus knots with $\tb \geq pq$ and $\tor = 0$, which stabilize to $L_{P_1,P_2}$ for some $2$-inconsistent $(P_1,P_2)$. 

\begin{proposition}\label{propxwing}
  Let $(P_1,P_2)$ be a $2$-inconsistent pair of paths representing $q/p$. If $pq>0$, then we assume that $(P_1,P_2)$ are not $\pm(P'_1,P'_2)$ in Lemma~\ref{xi1}. Then $L_{P_1,P_2}$ and $L_{-P_1,-P_2}$ contribute an infinite $\textsf{X}$, that is there are non-loose Legendrian knots $L_-^k$ and $L_+^k$, for $k\in \Z$ with invariants 
  \[
    \tb(L_\pm^k)=k \text{ and } (L_\pm^k)=\pm r_0\mp k
  \]
  for some $r_0$, and such that 
  \[
    S_\pm^i(L_\pm^k)=L_\pm^{k-i} \text{ and } S_\mp(L_\pm^k) \text{ is loose.}
  \]
  See the left-hand side of Figure~\ref{xwingfig}.
\end{proposition}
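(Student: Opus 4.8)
The plan is to show that for a $2$-inconsistent pair $(P_1,P_2)$, the Legendrian knot $L_{P_1,P_2}$ both destabilizes to an infinite family going up in $\tb$ and has all of these destabilizations non-loose, while one direction of stabilization immediately produces a loose knot. The key geometric input is Lemma~\ref{lem:thickenornot}, which tells us that because $(P_1,P_2)$ is $2$-inconsistent, there is a subset $C'\subset C$ isotopic to $C$ with $(\xi'_{P_1,P_2})|_{C'}\in \Tight_0(C;\infty)$. In other words, the complement $C = S^3\setminus N(L_{P_1,P_2})$ contains a convex torus parallel to $\partial C$ with dividing slope $\infty$, so $C\setminus C'$ is a nontrivial thickened torus $T^2\times[0,1]$ rotating from slope $pq$ to $\infty$.

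First I would use this splitting to produce the destabilizations. By Lemma~\ref{lem:>pq}, for every integer $k>pq$ we have $|\Tight_0(C;k)|=2n(p,q)$, and the proof there exhibits each such contact structure as $\eta\cup\zeta$ where $\eta\in\Tight_0(C;\infty)$ and $\zeta$ is one of the two basic slices in $\Tight^{min}(T^2\times[0,1];k,\infty)$. Choosing the positive basic slice at each stage realizes $L_{P_1,P_2}$ as $S_+(L^{k}_+)$ for a Legendrian $L^{k}_+$ with $\tb = k$, giving the chain $S_+^i(L^{k}_+)=L^{k-i}_+$; the negative basic slices give the $L^{k}_-$ with $\tb=k$ and $S_-^i(L^k_-)=L^{k-i}_-$. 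Concretely, $L^k_\pm$ is the $q/p$-sloped Legendrian divide on the convex torus of dividing slope $k$ inside $C'\cup(\text{part of }C\setminus C')$, and Lemma~\ref{seestab} identifies the stabilization relations: passing outward through a positive (resp.\ negative) basic slice of $\Tight^{min}(T^2\times[0,1];k,\infty)$ realizes a positive (resp.\ negative) stabilization. This gives the two rays of the $\textsf{X}$ and the relation $S_\pm^i(L_\pm^k)=L_\pm^{k-i}$.

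Next I would compute the rotation numbers to get the claimed linear behavior $\rot(L_\pm^k)=\pm r_0\mp k$. Since $\tb$ increases by $1$ and a positive (resp.\ negative) stabilization changes $\rot$ by $-1$ (resp.\ $+1$) while lowering $\tb$ by $1$, the quantity $\tb+\rot$ is constant along the $L^k_+$ ray and $\tb-\rot$ is constant along the $L^k_-$ ray; evaluating at $k=pq$ using Lemma~\ref{computer} (which gives $\rot(L_{P_1,P_2})=R(P_1,P_2)$) fixes the common value $r_0$, and the two rays cross at $\tb=pq$. The value $r_0=|R(P_1,P_2)|\mp pq$ (with the sign depending on $pq\gtrless 0$) is exactly what Step~3 of the algorithm records, and I would verify the sign conventions match via Lemma~\ref{lem:rotdiff}, which guarantees $L_{P_1,P_2}$ and $L_{-P_1,-P_2}$ have distinct (hence oppositely signed) rotation numbers so the two rays really are distinct lines of slope $\pm1$.

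The main obstacle is proving that $S_\mp(L_\pm^k)$ is loose, i.e.\ that the $\textsf{X}$ really terminates on the ``wrong'' stabilization side rather than continuing. This is the genuinely contact-geometric content, and it is precisely the mechanism identified in Proposition~\ref{wings}: a wrong-sign stabilization of $L_\pm^k$ can be placed on a convex torus so that the concatenated path $\overline{P_1}\cup P_2$ is shortened at a vertex merging two basic slices of \emph{opposite} sign, which by Theorem~\ref{shortening} produces an overtwisted (i.e.\ non-tight) thickened torus in the complement, exhibiting an overtwisted disk. Here the $2$-inconsistency is essential: it means the very first continued fraction blocks already disagree in sign, so a single wrong-sign stabilization (one pass through a basic slice of the minority sign near $q/p$) forces such an opposite-sign merge. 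I would exclude the contact structure $\xi_1$ when $pq>0$ (the hypothesis ruling out $\pm(P'_1,P'_2)$ from Lemma~\ref{xi1}), because there the ambient structure behaves differently and one gets diamonds rather than a terminating $\textsf{X}$, as analyzed in Proposition~\ref{diamonds}. The remaining verification that $S_\pm^i(L_\pm^k)$ stays non-loose for all $i\ge 0$ follows the isotopy-discretization argument of Proposition~\ref{wings} verbatim, since these stabilizations never permit an opposite-sign shortening.
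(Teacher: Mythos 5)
Your proposal is correct and follows essentially the same route as the paper's own proof: Proposition~\ref{wings} supplies the knots with $\tb\le pq$ and their non-looseness, the slope-$\infty$ splitting from Lemma~\ref{lem:thickenornot} is factored into same-sign basic slices along the path $pq,\dots,n-1,n,\infty$ to produce the destabilizations $L_\pm^k$ for $k>pq$ together with the relations $S_\pm^i(L_\pm^k)=L_\pm^{k-i}$, and the wrong-sign stabilization is seen to be loose via an opposite-sign shortening in the Farey graph. The only slight imprecision is that for $k>pq$ the overtwisted shortening takes place in the two-edge path $\infty,n,n-1$ created by attaching the wrong-sign basic slice to $C_n$, rather than in $\overline{P_1}\cup P_2$ itself as in the $\tb\le pq$ case, but the mechanism is identical and the argument goes through.
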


The classification of $(p,np + 1)$-torus knots with $\tb= np2 + p + 1$  and $(p,-(np-1))$-torus knots with $\tb= -np2+p+1$ was also established in \cite{GeigesOnaran20a}.

\begin{remark}
  We will see in the proof that $L_{P_1,P_2}$ is either $L_+^{pq}$ or $L_-^{pq}$ and the other one is $L_{-P_1,-P_2}$, so the two pairs produce the same knots $L_\pm^k$. 
\end{remark}

\begin{remark}
  Notice that $L_+^{r_0}$ and $L_-^{r_0}$ both have rotation number zero, but are not equivalent since they behave differently under stabilization. 
\end{remark}
  
\begin{figure}[htbp]
\begin{overpic}{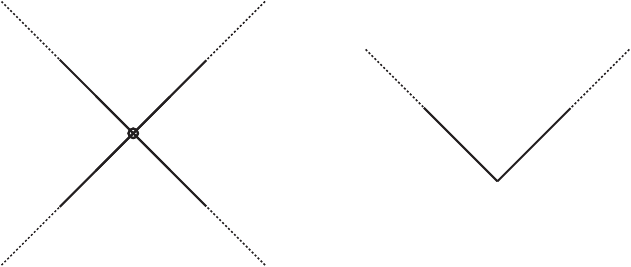}
\end{overpic}
\caption{On the left are the Legendrian realizations of a $(p,q)$-torus knot related to the $2$-inconsistent pairs $\pm(P_1,P_2)$ when $(P_1,P_2)\not=\pm(P_1',P_2')$ (see Lemma~\ref{xi1}). On the right we see the same when $(P_1,P_2)=\pm(P'_1,P'_2)$. For each integral point, 
there is a unique non-loose Legendrian representative except for the crossing point of the $\textsf{X}$. There are exactly two non-loose Legendrian representatives on that point.}
\label{xwingfig}
\end{figure}

\begin{remark}
  From the paragraph before Proposition~\ref{alldiamonds} we see that the excluded $2$-inconsistent pair of paths in the proposition is compatible with the pair of paths whose signs are all the same and hence by Lemma~\ref{sdt+xi1} we know the contact structure is $\xi_1$.
\end{remark}

\begin{proof}
  From Proposition~\ref{wings} we get the Legendrian knots $L_\pm^k$ for $k\leq pq$ with the desired properties. We now recall that Lemma~\ref{lem:thickenornot} says that there is a tight contact structure $\xi$ on the complement $C$ of the $(p,q)$-torus knot that has convex boundary with two dividing curves of slope $\infty$ and no convex torsion such that adding a basic slice $(T^2\times I,pq,\infty)$ to $(C,\xi)$ will result in the complement of a standard neighborhood of $L_{P_1,P_2}$. Suppose this basic slice was negative. Then given any integer $n>pq$ we can factor $T^2\times [0,1]$ into negative basic slices given by the path $pq, \ldots, n-1, n, \infty$ in the Farey graph. That is there is a convex torus $T_n$ in $T^2\times[0,1]$ with two dividing curves of slope $n$. This torus separates $C\cup T^2\times [0,1]$ into two pieces, one, denoted $C_n$, is diffeomorphic to $C$ and clearly the complement of a Legendrian $(p,q)$-torus knot $L_-^n$ with $\tb = n$. Moreover, since the complement of $L_{P_1,P_2}=L_-^{pq}$ is obtained by attaching $n-pq$ negative basic slices to $C_n$, we see that $L_{P_1,P_2}=S_-^{n-pq}(L_-^n)$. Similarly, all the tori $T_k$ for $k\in [pq, n]\cap \Z$ give rise to Legendrian knots $L_-^k$ with the desired properties. Since $n$ was arbitrary we see that we have constructed $L_-^k$ for any $k \in \mathbb{Z}$. Notice that one attaches a positive basic slice to $C_n$ with dividing slopes $n$ and $n-1$, then in the result we have a contact structure on $T^2\times [0,1]$ given by the path $\infty, n, n-1$ in the Farey graph and the signs on each edge are different. Since the path can be shortened the contact structure is overtwisted. Thus a positive stabilization of $L_-^n$ is loose. So we have constructed $L_-^k$ with the desired properties for all $k$. Similarly we can get the $L_+^k$ from $L_{-P_1,-P_2}$.
\end{proof}

\begin{proposition}\label{inftyV}
  Suppose that $pq>0$ and let $(P_1,P_2)$ be the pair of decorated paths $(P'_1,P'_2)$ in Lemma~\ref{xi1}. The Legendrian knots $L_{P_1,P_2}$ and $L_{-P_1,-P_2}$ contribute a $\textsf{V}$ of non-loose Legendrian $(p,q)$-torus knots in $\xi_{P_1,P_2}$. That is, there are non-loose Legendrian knots $L_\pm^k$ for $k> pq-r_0$, where $r_0=|R(P_q,P_2)|$ (see Lemma~\ref{computer}), and $L^{pq-r_0}$ in $\xi_{P_1,P_2}$, with invariants
  \[
    \tb(L_\pm^k)=k, \text{ and } \rot(L_\pm^k)=(\pm pq \mp r_0) \mp k
  \]
  \[
    \tb(L^{pq-r_0})=pq-r_0, \text{ and } \rot(L^{pq-r_0})=0,
  \]
  such that  
  \[
    S_\pm(L_\pm^i)=L_\pm^{i-1} \text{ and } S_\pm(L_\pm^{pq-r_0+1})=L^{pq-r_0},
  \]
  and 
  \[
    S_\mp(L_\pm^i) \text{ and } S_\pm(L^{pq-r_0}) \text{ are loose}.
  \]
  See the right-hand side of Figure~\ref{xwingfig}.
\end{proposition}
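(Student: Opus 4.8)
The plan is to mirror the structure of the proof of Proposition~\ref{propxwing}, but account for the fact that the special pair of paths $(P_1',P_2')$ from Lemma~\ref{xi1} behaves differently at $\tb = pq$. Recall from Lemma~\ref{xi1} that the contact structure $\xi_{P_1',P_2'}$ is $\xi_1$, and that it is obtained by attaching, in order, a basic slice in $\Tight^{min}(T^2\times[0,1];\infty,pq-p-q)$, then a basic slice of the \emph{opposite} sign in $\Tight^{min}(T^2\times[0,1];pq,\infty)$, to the complement $(C,\xi)$ of the maximal Thurston--Bennequin representative of $T_{p,q}$ in $(S^3,\xi_{std})$, and finally the unique tight solid torus. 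The key geometric point is that because the two basic slices at the boundary have opposite signs, $L_{P_1,P_2}$ sits at the bottom of a $\textsf{V}$ rather than at the crossing of an $\textsf{X}$: positive stabilizations carry it up one side and negative stabilizations up the other, and only one further stabilization in each direction is possible before the path shortens and the complement becomes overtwisted.

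First I would produce the upward side of the $\textsf{V}$, i.e.\ the knots $L_\pm^k$ for $k > pq - r_0$. The mechanism is exactly as in Proposition~\ref{propxwing}: I factor the outer basic slice (with dividing slopes $pq$ and $\infty$) through the path $pq, pq+1, \dots, n, \infty$ in the Farey graph, obtaining convex tori $T_n$ of slope $n$ for each integer $n > pq$, each cutting off a copy $C_n$ of the knot complement that is the complement of a Legendrian $(p,q)$--torus knot with $\tb = n$. Applying Lemma~\ref{seestab} and Proposition~\ref{wings} (which applies since $(P_1',P_2')$ is $2$-inconsistent and, for the upward direction, $\xi_{P_1,P_2}$ plays the same role it did there) gives the stabilization relations $S_\pm(L_\pm^i) = L_\pm^{i-1}$ and looseness of the opposite single stabilization. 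The rotation number normalization $\rot(L_\pm^k) = (\pm pq \mp r_0)\mp k$ comes from Lemma~\ref{computer}: one computes $\rot(L_{P_1',P_2'}) = R(P_1',P_2')$ at the bottom vertex, and each stabilization shifts $\rot$ by $\pm 1$ while $\tb$ drops by $1$.

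The essential new content is the bottom vertex $L^{pq-r_0}$ and the assertion that \emph{both} $S_\pm(L^{pq-r_0})$ are loose. Here I would use the explicit description from Lemma~\ref{xi1}: the complement of $L_{P_1',P_2'}$ is the $(C,\xi)$ above with a single basic slice attached, so $L_{P_1',P_2'}$ negatively stabilized $p$ times and positively stabilized $q$ times are the two ruling curves of slope $q/p$ on the convex tori $T_0$ (slope $0$) and $T_\infty$ (slope $\infty$) sitting inside the relevant solid tori, exactly as in the diamond analysis of Proposition~\ref{diamonds}. The two legs of the $\textsf{V}$ merge after the appropriate number of stabilizations: I would show $S_+^aS_-^b(L_{P_1,P_2})$ and $S_+^{a'}S_-^{b'}(L_{-P_1,-P_2})$ become isotopic at the common bottom point $L^{pq-r_0}$ by exhibiting a single convex torus on which both are realized as ruling curves, using Lemma~\ref{seestab} and the fact that $\overline{P_1'}\cup P_2'$ shortens to a path through $pq-r_0$ (the value $r_0 = |R(P_1',P_2')|$ records precisely how far down the vertex sits, since the relative Euler class evaluated on the Seifert surface equals the rotation number). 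The looseness of any stabilization past this vertex then follows because the shortened path has an edge of mismatched sign, producing an overtwisted disk exactly as in the looseness arguments of Propositions~\ref{wings} and~\ref{diamonds}.

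The main obstacle I anticipate is pinning down the exact location of the bottom vertex at $\tb = pq - r_0$ and proving that $L^{pq-r_0}$ has rotation number $0$ with both stabilizations loose, rather than merely establishing the two legs separately. This requires carefully tracking the sign data through the shortening of $\overline{P_1'}\cup P_2'$ and identifying the single convex torus on which the two families coincide; the bookkeeping is delicate because one must simultaneously control the number of positive stabilizations of $L_{P_1,P_2}$ and negative stabilizations of $L_{-P_1,-P_2}$ and verify these counts sum correctly to place the vertex symmetrically (hence $\rot = 0$). I expect the symmetry $\xi_{P_1,P_2} = \xi_{-P_1,-P_2}$ together with $\rot(L_{-P_1,-P_2}) = -\rot(L_{P_1,P_2})$ from Section~\ref{htpclasses} to force the vertex onto the $\rot = 0$ axis, but making this rigorous alongside the looseness claim is where the real work lies; everything else is a direct adaptation of the Farey-graph shortening and isotopy-discretization arguments already developed for wings and diamonds.
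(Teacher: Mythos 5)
Your proposal is correct and follows essentially the same route as the paper, which disposes of the range $\tb\le pq$ by citing the diamond results (Propositions~\ref{diamonds}, \ref{dmerge} and~\ref{alldiamonds} already locate the bottom vertex at $\tb=pq-r_0$ with $\rot=0$ and establish the merging and looseness claims that you flag as ``the real work'') and handles $\tb>pq$ exactly by the basic-slice factoring argument of Proposition~\ref{propxwing}. The one slip is your appeal to Proposition~\ref{wings}, which explicitly excludes $\xi_1$ when $pq>0$; the stabilization relations on the legs instead come from the diamond propositions below $\tb=pq$ and directly from the Farey-path factoring above it.
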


The classification for the right-handed trefoil with $\tb=7$ was also established in \cite{GeigesOnaran20a}.

\begin{remark}
  Notice that $L_{P_1,P_2}$ is either $L_+^{pq}$ or $L_-^{pq}$ and the other one is $L_{-P_1,-P_2}$. So $(P_1,P_2)$ and $(-P_1,-P_2)$ determine the same $\textsf{V}$. Moreover, it is also clear that the Legendrian knots in the $\textsf{V}$ are determined by their $\tb$ and $\rot$.
\end{remark}

\begin{proof}
  The Legendrian knots $L_\pm^k$ for $k\leq pq$ are given by Proposition~\ref{dmerge} and those for $k>pq$ are founds exactly as in the proof of Proposition~\ref{propxwing}. 
\end{proof}

\subsection{The extra torus knot when \texorpdfstring{$pq<0$}{pq < 0}}\label{classextra}
By Lemma~\ref{lem:>pq}, when $pq < 0$ there exists one extra contact structure $\xi_e$ in $\Tight_0(C;|pq| - |p| - |q|)$. If we glue a tight solid torus to $C$ to obtain $S^3$, then the added solid torus is a standard neighborhood of a non-loose Legendrian knot $L_e$ with $\tor = 0$. We now study the properties of this extra Legendrian knot. 

\begin{proposition} \label{prop:extra}
  Suppose $pq < 0$. Let $(P_1,P_2)$ be the paths describing a Legendrian $L_{P_1,P_2}$ such that the edges in $P_1$ have only positive signs and the edges in $P_2$ have only negative signs. Let $\xi_{p,q}$ be the contact structure supported by the open book with binding the $(p,q)$-torus knot. We have the following:

  \begin{enumerate}
    \item The transverse push-off of $L_{P_1,P_2}$ is the binding of an open book supporting $\xi_{p,q}$.
    \item $d_3(\xi_{p,q}) = |pq|-|p|-|q|+1$. 
    \item In $\xi_{p,q}$, there are non-loose Legendrian knots $L_\pm^i, i\in \Z$ such that 

    \begin{enumerate}
      \item $\tb(L_\pm^i)=i, \rot(L_\pm^i)=\pm(|pq|-|p|-|q|)\mp i$, 
      \item $L_\pm^i=S_\pm(L_\pm^{i-1})$, $S_\mp(L_\pm^i)$ is loose,  and 
      \item $L_\pm^{pq}=L_{\mp P_1,\mp P_2}$. 
    \end{enumerate}

    \item The extra Legendrian $L_e$ is in the contact manifold $(S^3,\xi_{p,q})$ such that 

    \begin{enumerate}
      \item $\tb(L_e)=|pq|-|p|-|q|$, $\rot(L_e)=0$, and 
      \item $S_\pm(L_e)=L_\pm^{|pq|-|p|-|q|-1}$.
    \end{enumerate}

  \end{enumerate}
\end{proposition}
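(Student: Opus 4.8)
The plan is to establish the four statements in order, building each on the tight contact structure machinery developed in Sections~\ref{classificationoncomplement} and~\ref{classlesspq}. For item~(1), I would start from the pair $(P_1,P_2)$ with $P_1$ all positive and $P_2$ all negative. By Lemma~\ref{standardstructures}(1), the ambient contact structure $\xi_{P_1,P_2}$ is precisely the contact structure $\xi_{|pq|-|p|-|q|+1}$ supported by the open book with binding $T_{p,q}$; this is exactly $\xi_{p,q}$. Since the complement $(C,\xi'_{P_1,P_2})$ is universally tight and remains tight after adding Giroux torsion (this is the content of the proof of Lemma~\ref{standardstructures}, invoking \cite{EtnyreVelaVick10}), the transverse push-off of $L_{P_1,P_2}$ is the binding of a supporting open book. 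The $d_3$-computation in item~(2) is then immediate from Lemma~\ref{standardstructures}(1), whose proof cites \cite{Hedden08} or \cite{BakerEtnyreVanHornMorris12}: the contact structure is $\xi_{|pq|-|p|-|q|+1}$, so $d_3(\xi_{p,q})=|pq|-|p|-|q|+1$.

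For item~(3), I would observe that $(P_1,P_2)$ is totally $2$-inconsistent (all of $P_1$ one sign, all of $P_2$ the opposite), so Proposition~\ref{propxwing} applies directly: the pair $\pm(P_1,P_2)$ contributes an infinite $\textsf{X}$, giving non-loose Legendrian knots $L_\pm^i$ for all $i\in\Z$ with $\tb(L_\pm^i)=i$, satisfying the stabilization relations $S_\pm^j(L_\pm^i)=L_\pm^{i-j}$ and $S_\mp(L_\pm^i)$ loose. The rotation number normalization in (3a) follows by computing $R(P_1,P_2)$ from Lemma~\ref{computer} with the all-positive/all-negative decoration; since $|R(P_1,P_2)| = |pq|-|p|-|q|$ for this configuration (and here $pq<0$, so the relevant case of the formula in Step~3 of the algorithm gives $\rot(L_\pm^i)=\mp(i-pq-|R(P_1,P_2)|)$, which simplifies to the stated $\pm(|pq|-|p|-|q|)\mp i$), the rotation numbers come out as claimed. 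Part (3c), identifying $L_\pm^{pq}=L_{\mp P_1,\mp P_2}$, follows from the remark after Proposition~\ref{propxwing} that $L_{P_1,P_2}$ is one of the two crossing-point knots and $L_{-P_1,-P_2}$ is the other.

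The substance of the proposition is item~(4), the extra Legendrian $L_e$. Here I would use Lemma~\ref{lem:>pq}: when $pq<0$ and $n=|pq|-|p|-|q|$, the count $|\Tight_0(C;n)|=2n(p,q)+1$, and the extra contact structure $\xi_e\in\Tight_0(C;|pq|-|p|-|q|)$ arises from the exceptional universally tight structure $\xi'_k$ produced in Lemma~\ref{lem:thickening-negative} (with $k=1$, giving slope $s_1=|pq|-|p|-|q|$). Gluing in the appropriate tight solid torus yields $L_e$ with $\tb(L_e)=|pq|-|p|-|q|$. The rotation number $\rot(L_e)=0$ follows because $\xi'_k$ is universally tight and symmetric under the coorientation-reversing involution $(P_1,P_2)\mapsto(-P_1,-P_2)$, forcing the relative Euler class on the Seifert surface (hence the rotation number) to vanish. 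For (4b), I must compute the stabilizations $S_\pm(L_e)$. The key geometric input is that in $\xi'_k$ every convex torus parallel to $\partial C$ has dividing slope exactly $s_1=|pq|-|p|-|q|$ (this rigidity is asserted in Lemma~\ref{lem:thickening-negative} and proved via the argument of \cite[Lemma~3.3]{EtnyreLaFountainTosun12}); thus stabilizing $L_e$ once in either direction produces a knot that thickens to slope $|pq|-|p|-|q|-1$, and I would match these against the knots $L_\pm^{|pq|-|p|-|q|-1}$ from item~(3) using Lemma~\ref{seestab} and the $d_3$-invariant, since both stabilizations must land in the same contact structure $\xi_{p,q}$.

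The main obstacle will be pinning down (4b) and the claim $\rot(L_e)=0$ rigorously. The delicacy is that $L_e$ lives in the exceptional universally tight structure whose complement does \emph{not} thicken to slope $\infty$, so the usual ``attach a basic slice of either sign'' argument from Proposition~\ref{propxwing} does not apply at the level of $L_e$ itself. I expect to argue that once $L_e$ is stabilized, the rigidity of $\xi'_k$ breaks (a stabilized knot's complement acquires a vertical Legendrian of nonpositive twisting, allowing thickening past $s_1$), and then a careful bypass analysis — of the kind carried out in the proof of Lemma~\ref{lem:thickening-negative}, Case~1 — shows $S_\pm(L_e)$ thickens to the slope-$\infty$ picture and is identified with $L_\pm^{|pq|-|p|-|q|-1}$. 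Confirming that the two stabilizations of $L_e$ are genuinely distinct (so that $L_e$ is a bona fide third knot at $\tb=|pq|-|p|-|q|$, not isotopic to $L_\pm^{|pq|-|p|-|q|}$) and that both $t_+(L_e)$ and $t_-(L_e)$ are infinite will require comparing relative Euler classes on the Seifert surface, exactly as in the proof of Claim~B in Lemma~\ref{postorsion}.
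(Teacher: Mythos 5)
Your treatment of items (1), (2), and the existence of the infinite $\textsf{X}$ in item (3) matches the paper (Lemma~\ref{standardstructures} plus Proposition~\ref{propxwing}). But there are two genuine problems. First, a concrete computational error in (3a): for the decoration with $P_1$ all positive and $P_2$ all negative, Lemma~\ref{computer} gives $r_m=p-1$, $r_n=1+q$, hence $R(P_1,P_2)=2pq+p-q$, so $|R(P_1,P_2)|=2|pq|-|p|-|q|$, not $|pq|-|p|-|q|$ (for the left-handed trefoil this is $7$, consistent with the $6n+1$ computed in the proof of Theorem~\ref{thm:(2,-(2n+1))}, not $1$). With your claimed value the algorithm's formula $\mp(i-pq-|R|)$ simplifies to $\mp(i+|p|+|q|)$, which is not the stated $\pm(|pq|-|p|-|q|)\mp i$; the correct value of $|R|$ does give the stated answer, so the route is salvageable but the asserted simplification fails as written. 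The paper avoids this computation entirely: it proves (4) first and then reads off the rotation numbers of the $L_\pm^i$ from $\rot(L_e)=0$ and the stabilization relations.

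The more serious gap is in (4b). You never establish that $S_\pm(L_e)$ is non-loose for \emph{both} signs, which is the whole point of $L_e$; ``the rigidity of $\xi'_k$ breaks'' and ``a careful bypass analysis'' cannot produce tightness of the stabilized complement. The mechanism the paper uses is Lemma~\ref{extraisUT}: $\xi_e$ is universally tight and remains universally tight after gluing a basic slice of \emph{either} sign in $\Tight^{min}(T^2\times[0,1];\infty,|pq|-|p|-|q|)$ (proved by comparing with the fibration complement in a common finite cover). Factoring that tight manifold at intermediate integer slopes produces all the stabilizations $\widetilde L_\pm^i$ simultaneously and shows each is non-loose. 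The identification $\widetilde L_\pm^{pq}=L_{\mp P_1,\mp P_2}$ then comes from the characterization in the proof of Lemma~\ref{standardstructures} of $L_{\pm P_1,\pm P_2}$ as the only $\tb=pq$ non-loose knots whose complements are universally tight and stay tight under adding Giroux torsion -- not from the $d_3$-invariant, which cannot distinguish knots within a fixed contact structure and, more to the point, cannot even be computed for $L_e$ directly since $L_e$ admits no surgery description. This identification is also what proves $L_e$ lives in $\xi_{p,q}$ at all, a fact your proposal assumes. Once it is in place, $\rot(L_e)=0$ follows immediately from $\rot(L_-^{pq})=-\rot(L_+^{pq})$ together with $L_\pm^{pq}$ being equal numbers of $\pm$-stabilizations of $L_e$; your coorientation-reversal symmetry argument for $\rot(L_e)=0$ is plausible but would need the uniqueness of the extra structure made explicit, and as stated it refers to an involution on pairs of paths even though $\xi_e$ is precisely the structure not described by any pair of paths.
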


\begin{proof}
  Item~(1) and~(2) are the content of Lemma~\ref{standardstructures} and its proof, while Item~(3) is Proposition~\ref{propxwing} except for the computation of the rotation numbers which will be done below. So we are left to check Item~(4).

  In Lemma~\ref{lem:thickening-negative}, we saw that when $pq<0$, there is an extra contact structure $\xi_e\in \Tight_0(C;|pq|-|p|-|q|)$ such that all convex tori parallel to $\partial C$ have dividing slope $|pq|-|p|-|q|$. Thus, if we glue a solid torus to $C$ and extend $\xi_e$ (there is a unique way to do this), then we get a Legendrian knot $L_e$ with standard neighborhood the glued in solid torus. Clearly $\tb(L_e)=|pq|-|p|-|q|$. By Lemma~\ref{extraisUT}, we also know that $\xi_e$ is universally tight and remains so after gluing any amount of convex torsion. Thus if $\xi_\pm$ is the result of adding a $\pm$-basic slice in $\Tight^{min}(T^2\times[0,1]; \infty, |pq|-|p|-|q|)$ to $\xi_e$, we know it is tight. Moreover, we may factor $\xi_\pm$ into a contact structure $\xi_\pm^i\in \Tight_0(C;i)$ and a $\pm$-basic slice with slopes $\infty$ and $i$ for $i< |pq|-|p|-|q|$. Clearly $\xi_\pm^i$ is the complement of a non-loose Legendrian knot $\widetilde L_\pm^i$ and $\widetilde L_\pm^i$ is a $(|pq|-|p|-|q|-i)$-fold $\pm$-stabilization of $L_e$. Notice that $\widetilde L_\pm^{pq}$ are non-loose Legendrian knots whose complements are universally tight and remain so after adding any amount of convex torsion. Thus by the proof of Lemma~\ref{standardstructures}, we know that $\widetilde L_\pm^{pq}$ is equivalent to $L_\pm^{pq}$ and thus all the $\widetilde L_\pm^i$ for $i<|pq|-|p|-|q|$ are equivalent to $L_\pm^i$ by Lemma~\ref{propxwing} (indeed we know there are only $2n(p,q)$ non-loose knots with $\tor=0$ having these invariants and only one $L_\pm^i$ can stabilize to $L_\pm^{pq}$ so $\widetilde L_\pm^i$ must agree with this Legendrian knot). Since we know that $\rot(L_-^{pq})=-\rot(L_+^{pq})$ we see that $\rot(L_e)$ must be zero. This establishes Item~(4). 

  The computation of the rotation numbers for the $L_\pm^i$ now follows since we know the rotation number of $L_e$ and how it relates to the $L_\pm^i$. 
\end{proof}

\subsection{The Giroux torsion of the examples above}\label{nogt}
In this section, we will see that all the examples constructed in Section~\ref{classlesspq} have no convex torsion in their complement unless $pq>0$ and we are in $\xi_{pq-p-q}$, in which case some of the Legendrian knots have half convex torsion.

\begin{remark}
  For the Legendrian knots discussed in Section~\ref{classgeqpq} and~\ref{classextra} that have $\tb\geq pq$ we already know they have no convex torsion in their complement because their complements are in $\Tight_0(C;n)$ for some $n\geq pq$ which by definition have no convex torsion.
\end{remark}

\begin{proposition}\label{shownotor}
  Given a pair of decorated paths $(P_1,P_2)$, any non-loose stabilization of $L_{P_1,P_2}$ has $\tor=0$, unless $pq>0$ and $(P_1,P_2)$ is the one from Lemma~\ref{standardstructures}. In the latter case, the non-loose stabilizations of $L_{P_1,P_2}$ will have $\tor = 0$ if $\tb>pq-p-q$, and $\tor = 1/2$ if $\tb\leq pq-p-q$. 
\end{proposition}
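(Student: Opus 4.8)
The plan is to analyze the Giroux torsion of the complement of a stabilized knot $S^k_\pm S^l_\mp(L_{P_1,P_2})$ by using the geometric picture developed in the proof of Proposition~\ref{wings} (and Proposition~\ref{diamonds} in the $\xi_1$ case), together with Lemmas~\ref{lem:thickenornot}, \ref{lem:overtwisted}, and~\ref{lem:staytight}. First I would recall that any non-loose stabilization of $L_{P_1,P_2}$ can be realized as a Legendrian ruling curve on a convex torus $T_k$ of some slope $s_k$ sitting inside $V_1$ or $V_2$, and that the complement of such a stabilization is obtained by cutting $S^3$ along $T_k$ and taking the piece that is diffeomorphic to a knot complement $C$. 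The key point is that the contact structure on this complement is described by a sub-path of $\overline{P_1} \cup P_2$, so I can read off whether it contains convex Giroux torsion directly from the decorations.

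The main structural dichotomy is whether $(P_1,P_2)$ is totally $2$-inconsistent or merely $2$-inconsistent (or more inconsistent). First I would treat the generic case where $(P_1,P_2)$ is $i$-inconsistent but \emph{not} totally $2$-inconsistent. Here Lemma~\ref{lem:overtwisted} says that gluing any basic slice in $\Tight_0(T^2\times[0,1];\infty,pq)$ to a contact structure in $\Tight_0(C;pq)$ with non-totally-$2$-inconsistent paths yields an overtwisted structure, and the same obstruction propagates to the stabilized complements: adding convex half Giroux torsion to the complement would, after appropriate thickening to an $\infty$-slope boundary, force an overtwisted disk, contradicting non-looseness. More precisely, I would argue that if the complement of the stabilization had $\tor \geq 1/2$, then inside it there would be a convex Giroux torsion layer, and pushing the boundary to slope $\infty$ (via Lemmas~\ref{lem:thickening-positive} and~\ref{lem:thickening-negative}) would exhibit $(C,\xi'_{P_1,P_2})$ as containing a sub-structure to which a basic slice of the wrong sign is attached; this is overtwisted by Lemma~\ref{lem:overtwisted}, contradicting that the original knot is non-loose. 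Hence $\tor = 0$ for all these stabilizations.

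Next I would handle the exceptional case $pq>0$ with $(P_1,P_2)$ as in Lemma~\ref{standardstructures}, so $\xi_{P_1,P_2} = \xi_{-pq+p+q}$, obtained from $\xi_{std}$ by a half Lutz twist on the maximal self-linking transverse representative. By Proposition~\ref{diamonds} the non-loose stabilizations form the diamond $D(L_{P_1,P_2})$, parametrized by $k<p$, $l<q$. I would compute $\tor$ for a representative on each convex torus $T_0$ of slope $0$ and $T_\infty$ of slope $\infty$ appearing in that proof: the half Lutz twist introduces exactly a half Giroux torsion layer visible on one side of the threshold $\tb = pq-p-q$, and a stabilization with $\tb > pq-p-q$ can be pushed past this layer so that its complement is genuinely torsion-free, while a stabilization with $\tb \leq pq-p-q$ retains the half torsion layer in its complement. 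The computation that the torsion is exactly $1/2$ (not more) is essentially Claim~A in the proof of Lemma~\ref{postorsion}, applied to the universally tight structure of Lemma~\ref{standardstructures}, so I would invoke that the maximal twisting $\overline{tw}(\gamma) = -1$ on the relevant Seifert-surface arc $\gamma$ pins the torsion at $1/2$; the upper bound $\overline{tw}(\gamma) \leq -1$ is immediate from the construction and the lower bound follows from universal tightness as in that proof.

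The hard part will be the exceptional case: cleanly separating ``the half torsion layer survives in the complement'' from ``it can be absorbed by the stabilization'' requires tracking exactly which convex torus of which slope the stabilized knot sits on, and verifying that the threshold is precisely $\tb = pq-p-q$ rather than off by a stabilization. I expect to reuse the isotopy-discretization argument from the proof of Lemma~\ref{lem:staytight} and the arc-twisting computation from Claim~A of Lemma~\ref{postorsion}, so the bulk of the technical work is already in place; the remaining task is to confirm that the boundary-slope bookkeeping, via the framing change $\psi$ and the positions of $T_0$ and $T_\infty$ in Proposition~\ref{diamonds}, lands the torsion jump exactly at $pq-p-q$. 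Once that bookkeeping is nailed down, combining the two cases gives the stated result.
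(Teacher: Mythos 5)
Your case division does not cover all pairs of decorated paths, and the omitted family is exactly the one where your main tool breaks down. You treat (a) pairs that are not totally $2$-inconsistent and (b) the single exceptional pair of Lemma~\ref{standardstructures}; but there are many totally $2$-inconsistent pairs besides that one (for the $(5,8)$--torus knot, for instance, the decorations giving $\xi_{-9}$, $\xi_{-15}$ and $\xi_{-19}$ are all totally $2$-inconsistent without being the Lemma~\ref{standardstructures} pair), and for these the proposition still claims $\tor=0$. For such a pair your contradiction argument is unavailable: the hypothesis of Lemma~\ref{lem:overtwisted} explicitly excludes totally $2$-inconsistent pairs, and Lemma~\ref{lem:staytight} shows the opposite of what you need --- one \emph{can} thicken the complement of $L_{P_1,P_2}$ past slope $\infty$ and add an arbitrary amount of convex Giroux torsion while staying tight. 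So ``torsion in the complement of the stabilization forces an overtwisted disk'' is simply false here, and the conclusion $\tor=0$ must instead come from a genuine upper bound on the torsion of the stabilized complement. That bound is supplied by Claim~A in the proof of Lemma~\ref{postorsion} (the $\overline{tw}(\gamma)$ computation via isotopy discretization) together with Lemma~\ref{infinitythicken}, which shows that for every totally $2$-inconsistent pair other than the Lemma~\ref{standardstructures} one the innermost thickening of the complement stops at slope $\infty$ rather than continuing on to $pq-p-q$, so no half Giroux torsion layer ever closes up. You invoke exactly this machinery, but only for the one exceptional pair; it is needed for the entire totally $2$-inconsistent family. This is how the paper proceeds: its proof has three cases --- $2$-inconsistent but not totally (your first argument), $2$-consistent (reduced to the previous case via compatibility and Propositions~\ref{merge} and~\ref{alldiamonds}; your unified treatment of everything non-totally-$2$-inconsistent also works, since Lemma~\ref{lem:overtwisted} only requires ``not totally $2$-inconsistent''), and then \emph{all} totally $2$-inconsistent pairs, which it settles by the proof of Lemma~\ref{postorsion}.

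Your handling of the exceptional pair itself --- locating the threshold at $\tb=pq-p-q$ via the tori $T_0$, $T_\infty$ and the slope-$(pq-p-q)$ torus from Lemma~\ref{infinitythicken}, and pinning the torsion at exactly $1/2$ by Claim~A --- is sound in outline and matches the paper. The fix is simply to run that same Claim~A argument uniformly over all totally $2$-inconsistent pairs, with Lemma~\ref{infinitythicken} deciding whether the innermost slope is $\infty$ (giving $\tor=0$) or $pq-p-q$ (giving $\tor=1/2$ once $\tb\leq pq-p-q$).
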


\begin{proof}
  We consider three cases: first when $(P_1,P_2)$ is $2$-inconsistent but not totally $2$-inconsistent, then when $(P_1,P_2)$ is $2$-consistent, and finally when $(P_1,P_2)$ is totally $2$-inconsistent. 

  We deal with the first case. By possibly replacing $(P_1,P_2)$ by $(-P_1,-P_2)$ if necessary, we can assume that $S^k_-(L_{P_1,P_2})$ is non-loose and $S_+(L_{P_1,P_2})$ is loose. (When we consider the paths with opposite signs the role of $\pm$ stabilizations is reversed). Let $(C,\xi)$ be the complement of $S^k_-(L_{P_1,P_2})$ and assume $\xi$ contains half convex torsion. By Lemma~\ref{lem:<pq} and Lemma~\ref{lem:finite}, we can split $C$ into $C'$ and $T^2\times [0,1]$ where $\xi|_{C'} \in \Tight_0(C;pq)$ and $\xi|_{T^2\times [0,1]}$ has a convex torsion layer in it. Since $\xi$ is tight and is obtained from $\xi|_{C'}$ by attaching a convex torsion layer, we know that $\xi|_{C'}$ must be associated to a totally $2$-inconsistent pair of paths $(P'_1,P_2')$ by Lemma~\ref{lem:overtwisted}. Thus we can add an arbitrarily amount of convex torsion to $\xi|_{C'}$ and the result is still tight by Lemma~\ref{lem:staytight}. But this, of course, implies that we can add an arbitrary amount of convex torsion to $\xi$ and the result is still tight, which contradicts Lemma~\ref{lem:overtwisted}. Thus $S^k_-(L_{P_1,P_2})$ has $\tor = 0$. 

  Now consider a pair of decorated paths $(P_1,P_2)$ that is $2$-consistent. We note that $(P_1,P_2)$ is compatible with a $2$-inconsistent pair of paths $(P'_1,P'_2)$ that is not totally $2$-inconsistent (see Section~\ref{subsec:pathsinFG}). Moreover, Propositions~\ref{merge} and~\ref{alldiamonds} say that any stabilization of $L_{P_1,P_2}$ can be further stabilized to be a stabilization of $L_{P'_1,P'_2}$ and since the latter does not have any convex torsion in its complement, neither does the former. 

  We are left to consider totally $2$-inconsistent pairs of paths, and the proof of Lemma~\ref{postorsion} gives the result in this case. 
\end{proof}

\subsection{Non-loose torus knots with convex torsion}\label{classmoretorsion}
We begin by noticing that all non-loose torus knots have finite torsion.

\begin{lemma}\label{lem:finite}
  If $L$ is a non-loose Legendrian torus knot, then $\tor(L) < \infty$. 
\end{lemma}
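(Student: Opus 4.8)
The claim is that any non-loose Legendrian torus knot $L$ has finite convex Giroux torsion in its complement. The plan is to argue by contradiction: suppose $\tor(L) = \infty$ in the sense that the complement $(C,\xi)$ contains a convex $k$ Giroux torsion layer for every $k \in \frac{1}{2}\N$. I want to derive overtwistedness of the ambient manifold, contradicting non-looseness. The key structural input is that $\tb(L)$ is a fixed integer, so the dividing slope on $\partial C$ is some fixed $s = \tb(L)$, and I may freely apply the thickening results Lemma~\ref{lem:thickening-positive} and Lemma~\ref{lem:thickening-negative} together with the destabilization result Lemma~\ref{lem:<pq}.

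First I would reduce to a contact structure in $\Tight_0(C; pq)$ with a Giroux torsion layer attached. Using Lemma~\ref{lem:<pq}, any non-loose representative with $\tb < pq$ destabilizes, so it suffices (after peeling off basic slices) to understand the situation where $(C,\xi)$ splits as $\xi' \cup (\text{torsion layer})$ with $\xi'$ a contact structure whose boundary has dividing slope $pq$ and whose interior tori all have slope $pq$, i.e. $\xi' \in \Tight_0(C;pq)$ realized by a pair of decorated paths $(P_1,P_2)$. The point is that if $(C,\xi)$ contained arbitrarily large convex Giroux torsion, then in particular it contains a convex torus parallel to $\partial C$ of slope $0$ bounding off a half-torsion layer, and the complementary piece $C'$ is again diffeomorphic to $C$ with dividing slope $pq$ (or $\infty$, via Lemma~\ref{lem:thickenornot} when $(P_1,P_2)$ is $2$-inconsistent). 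This isolates a genuine $\Tight_0(C;pq)$ piece carrying the path data.

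Next I would invoke the overtwistedness dichotomy established in Lemma~\ref{lem:overtwisted} and Lemma~\ref{lem:staytight}. These show that one can add a convex Giroux torsion layer to $(C,\xi'_{P_1,P_2})$ and stay tight \emph{only} when $(P_1,P_2)$ is totally $2$-inconsistent; otherwise adding any basic slice in $\Tight_0(T^2\times[0,1];\infty,pq)$, hence any torsion, produces an overtwisted contact structure. So the only way to have unbounded torsion while staying tight is if $(C,\xi)$ comes from a totally $2$-inconsistent pair of paths. But then I would apply Claim~A from the proof of Lemma~\ref{postorsion}, which says that adding convex $l$ Giroux torsion to such a $\xi'_{P_1,P_2}$ yields a contact structure with torsion \emph{exactly} $l$, not more: the obstruction is the computation $\overline{\tw}(\gamma) = -2l$ for the distinguished arc $\gamma$ on the Seifert surface, which pins the torsion down to a finite value. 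Thus in every case the torsion of a tight complement is a finite number, so $\tor(L) < \infty$.

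The main obstacle I anticipate is cleanly handling the reduction in the first two paragraphs so that the path-data apparatus of Lemma~\ref{lem:overtwisted}, Lemma~\ref{lem:staytight}, and Claim~A actually applies to an arbitrary non-loose $L$, rather than to the model knots $L_{P_1,P_2}$ for which those lemmas are phrased. Concretely, I must verify that an arbitrary tight complement $(C,\xi)$ with a torsion layer can always be cut along a convex torus of appropriate slope into a $\Tight_0(C;pq)$ piece plus a torsion layer, so that the finiteness statement of Claim~A can be quoted. This uses the thickening lemmas to find the right convex tori and Lemma~\ref{lem:finite} is plausibly used elsewhere (e.g.\ in Lemma~\ref{lem:thickenornot}), so I would be careful to avoid circularity — the argument should rest only on the thickening, overtwistedness, and $\overline{\tw}$-computation results, all of which are logically prior. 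Once the reduction is secure, the contradiction with unbounded torsion is immediate from the exact torsion computation.
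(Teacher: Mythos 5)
Your endgame is fine---once the complement is exhibited as a contact structure in $\Tight_0(C;pq)$ with a convex Giroux torsion layer of a \emph{given finite} amount attached, Claim~A in the proof of Lemma~\ref{postorsion} pins the torsion down to exactly that amount, and Lemmas~\ref{lem:overtwisted} and~\ref{lem:staytight} control which path data can occur. But the reduction in your first paragraph is a genuine gap, and it is essentially the statement being proved. You assert that $(C,\xi)$ ``splits as $\xi'\cup(\text{torsion layer})$'' with $\xi'$ a torsion-free core in $\Tight_0(C;pq)$. If the torsion were infinite, peeling off a half (or $k$) Giroux torsion layer leaves a complementary piece diffeomorphic to $C$ that could again have infinite torsion, so the peeling need not terminate; and Lemma~\ref{lem:<pq} together with the thickening lemmas does not manufacture the decomposition for you---those results are stated for contact structures already known to lie in some $\Tight_i(C;s)$ with $i$ finite. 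Since Claim~A only applies to the explicitly constructed structures $\xi'_{P_1,P_2}$ with a prescribed finite torsion layer glued on, you cannot quote it until the finite decomposition exists, which is the content of the lemma.

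The paper closes exactly this gap with a compactness input your argument is missing: write $C=V_1\cup(S^1\times P)\cup V_2$, thicken the $V_i$ so their dividing slopes become $0$ in the $\mathcal{F}_2$ coordinates, and perturb the pair of pants $P$ to be convex. The dividing set of $P$ is a \emph{finite} collection of arcs and curves, and there are only two possible configurations. One is ruled out because it produces a Giroux torsion layer inside a solid torus, hence an overtwisted disk; in the other, cutting along a convex torus through a $0$-twisting vertical Legendrian curve exhibits $C$ as a $\Tight_0(C;pq)$ piece union a torsion layer whose torsion is bounded by the number of dividing curves on $P$. That single fixed convex surface is what converts the torsion into a combinatorial quantity that is manifestly finite; without it, the contradiction with unbounded torsion is not ``immediate.''
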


\begin{proof}
  The idea of the proof is to realize the complement of a standard neighborhood of $L$ as an element of $\Tight_l(C;s)$ for some $l \in \tfrac12\N$ and then use Lemma~\ref{ClaimA} to conclude that $\tor(L)$ is finite.

  Suppose $\tor(L) \neq 0$. Then we can stabilize or destabilize $L$ to make $\tb(L) = pq$. Let $C$ be the complement of a standard neighborhood of $L$. As in Section~\ref{knotcomp}, we can decompose $C$ into $V_1 \cup (S^1 \times P) \cup V_2$ where $P$ is a pair of pants and $V_1$, $V_2$ are solid tori. We use the coordinate system $\mathcal{F}_2$ from Section~\ref{knotcomp} so that (a push-off of) $L$ is considered as a $0$-twisting vertical Legendrian curve in $S^1 \times P$. Use this $0$-twisting vertical Legendrian curve to thicken $V_1$ and $V_2$ so that their dividing slopes become $0$. Let $T_1$, $T_2$, and $T_3$ be the boundary of $S^1 \times P$ and perturb them so that $\partial P$ is a collection of ruling curves. After that, perturb $P$ to be convex and there exist two possible dividing sets on $P$ as shown in Figure~\ref{fig:pants2}.

  We will first show that the dividing set shown in the first drawing of Figure~\ref{fig:pants2} results in an overtwisted contact structure. In the first drawing of Figure~\ref{fig:pants2}, we can find a bypass for $T_2$ and thicken $V_2$ so that the dividing slope becomes $\infty$. However, $V_2$ contains a convex torus with slope $(q/p)^c$ measured in the coordinate system $\mathcal{F}_1$, which is $\infty$ measured in the coordinates system $\mathcal{F}_2$. Thus $V_2$ contains a half convex torsion after thickening, and we can find an overtwisted disk in $C$.

  Thus, the dividing set on $P$ should be the one shown in the second drawing of Figure~\ref{fig:pants2}. Choose a $0$-twisting vertical Legendrian curve in $S^1 \times P$, which is in an $I$-invariant neighborhood of $T_1$. Then we can find a convex torus $T$ which contains this curve and is smoothly isotopic to $T_3$, and its ruling curve sits on $P$ and intersects the dividing curves at two points as shown in Figure~\ref{fig:pants2}. Now cut $S^1 \times P$ along the torus $T$ and we obtain a contact structure on $S^1 \times P$ with boundary slope $0$ and the dividing set on $P$ being as shown in the first drawing of Figure~\ref{fig:sdisk}. Since $T$, $T_1$ and $T_2$ co-bound an $S^1\times P$ and from Lemma~\ref{all000}, there exists a unique tight contact structure on this $S^1 \times P$ up to boundary twisting. Let $C'$ be the union of this $S^1 \times P$ and $V_1$, $V_2$. Then $C$ is decomposed into $C'$ and a finite convex $l$-torsion layer for some $l \in \frac12\N$. Clearly, the contact structure on $C'$ is in $\Tight_0(C;pq)$. Thus the contact structure on $C$ is in $\Tight_l(C;pq)$, and by Lemma~\ref{ClaimA}, the amount of convex torsion is exactly $l$, so $\tor(L)$ is finite. 
\end{proof}

\begin{figure}[htbp]
\begin{overpic}{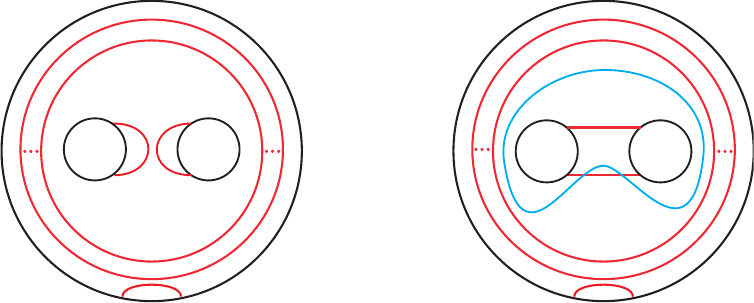}{\small
  \put(32,50){$T_1$}
  \put(82,50){$T_2$}
  \put(20,5){$T_3$}}
\end{overpic}
\caption{Some possible dividing curves on the pair of pants $P$. The blue curve is a Legendrian curve.}
\label{fig:pants2}
\end{figure}

Next, we show that all non-loose torus knots with convex torsion in their complement are obtained from the knots $L_{P_1,P_2}$ for some pair of totally 2-inconsistent paths $(P_1,P_2)$. To do so, we recall that by Proposition~\ref{wings} either $L_{P_1, P_2}$ or $L_{-P_1,-P_2}$ remains non-loose after arbitrarily many negative stabilizations, and the other becomes loose after a single negative stabilization. From our discussion in Section~\ref{lutzsection} we know that the transverse push-off of the one that remains non-loose under negative stabilizations will be a non-loose transverse knot. We call this transverse knot the \emph{transverse knot associated to $(P_1,P_2)$}.

\begin{proposition}\label{moretorsion}
  Let $L$ be any non-loose $(p,q)$-torus knot in $(S^3,\xi)$ with $\tor(L) = n$. Then there is some pair of totally 2-inconsistent paths $(P_1,P_2)$ representing $q/p$ such that the complement of a standard neighborhood of $L$ is obtained from the complement of a standard neighborhood of $L_{P_1,P_2}$ by attaching a basic slice $(T^2\times [0,1]; \infty, pq)$ and then attaching a convex $(n - 1/2)$-torsion layer, and finally a basic slice $(T^2\times[0,1]; \tb(L),\infty)$. In particular, the complement of a standard neighborhood of $L$ is a contact structure in $\Tight_n(C; \tb(L))$ and any element in $\Tight_n(C;\tb(L))$ gives a non-loose Legendrian knot with $\tor(L) = n$. 

  In addition, $\xi=\xi_{P_1,P_2}$ if $n$ is an integer, or $\xi$ is obtained from $\xi_{P_1,P_2}$ by a half Lutz twist on the non-loose transverse knot associated to $(P_1,P_2)$. 
\end{proposition}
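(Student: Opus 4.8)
The plan is to reduce the statement to the structural results already established about $\Tight_n(C;\tb(L))$ and the complements of the $L_{P_1,P_2}$. Given a non-loose $L$ with $\tor(L)=n$, let $C$ be the complement of a standard neighborhood of $L$, so that the contact structure on $C$ lies in $\Tight_n(C;\tb(L))$ by the definition of $\tor$ together with Lemma~\ref{lem:finite}, which guarantees $n<\infty$. First I would use Lemma~\ref{lem:<pq} to destabilize (or, if $\tb(L)>pq$, use the thickening of Lemmas~\ref{lem:thickening-positive} and~\ref{lem:thickening-negative}) to locate inside $C$ a subset $C'$ isotopic to $C$ with $\xi|_{C'}\in\Tight_0(C;pq)$, so that $C$ is recovered from $C'$ by attaching a layer $T^2\times[0,1]$ carrying exactly the convex Giroux torsion that records $n$. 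This is the step that turns the abstract torsion count into an explicit decomposition.

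Next I would identify the contact structure on $C'$. Since $n\geq 1/2>0$, the layer attached to $C'$ contains genuine convex Giroux torsion, so by Lemma~\ref{lem:overtwisted} the only way for the total contact structure on $C$ to remain tight is for $\xi|_{C'}$ to be described by a \emph{totally} $2$-inconsistent pair of paths $(P_1,P_2)$ representing $q/p$; this is exactly the dichotomy proved in Lemma~\ref{lem:staytight} and exploited in the proof of Lemma~\ref{postorsion}. Thus $\xi|_{C'}=\xi'_{P_1,P_2}$ for such a pair, and $C'$ is the complement of $L_{P_1,P_2}$. By Lemma~\ref{lem:thickenornot}, since $(P_1,P_2)$ is (totally) $2$-inconsistent, $\xi'_{P_1,P_2}$ further factors as a contact structure in $\Tight_0(C;\infty)$ glued to a basic slice $(T^2\times[0,1];\infty,pq)$; reading the decomposition of $C$ from the $\tb=pq$ level outward then gives precisely the claimed assembly: a basic slice $(T^2\times[0,1];\infty,pq)$, followed by a convex $(n-\tfrac12)$ Giroux torsion layer, followed by a basic slice $(T^2\times[0,1];\tb(L),\infty)$. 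The counting statement—that every element of $\Tight_n(C;\tb(L))$ arises this way and conversely yields a non-loose knot with $\tor=n$—is then just a restatement of the content of Lemma~\ref{postorsion} and its Claims~A and~B.

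Finally I would pin down the ambient contact structure $\xi$. When $n$ is an integer, the attached torsion is a full number of Giroux torsion layers, which does not change the homotopy class of the plane field, so $\xi=\xi_{P_1,P_2}$; this follows from the relative $d_3$ computation in Section~\ref{classwithtorsion} (adding full torsion leaves the relative Euler class, hence the homotopy class, unchanged). When $n$ is a half-integer, the extra half-layer is a half Lutz twist, and by the relative $d_3$-invariant formula $d_3(\xi',\xi)=\self(T)$ of Ding--Geiges--Stipsicz (recalled in Section~\ref{classwithtorsion}), $\xi$ is obtained from $\xi_{P_1,P_2}$ by a half Lutz twist on the transverse push-off of $L_{P_1,P_2}$, where we use Proposition~\ref{wings} to fix the convention that $L_{P_1,P_2}$ (rather than $L_{-P_1,-P_2}$) is the representative surviving arbitrarily many negative stabilizations.

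I expect the main obstacle to be the second step—showing rigorously that $\xi|_{C'}$ must come from a \emph{totally} $2$-inconsistent pair of paths and that the decomposition into a basic slice, a torsion layer, and a second basic slice is canonical rather than merely one possible factorization. The subtlety is that an element of $\Tight_n(C;\tb(L))$ could a priori admit several splittings, and one must invoke the uniqueness built into Lemmas~\ref{lem:staytight}, \ref{lem:overtwisted}, and the bypass analysis of Lemmas~\ref{lem:nobypasses} and~\ref{lem:dijointbypasses} to rule out tight structures arising from non-totally-inconsistent paths; once Lemma~\ref{postorsion} is in hand, however, this bookkeeping is already effectively done and the proposition becomes a matter of carefully transcribing that correspondence in terms of the explicit layer-by-layer gluing.
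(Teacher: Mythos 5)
Your argument is correct and follows exactly the route the paper takes: the paper's proof of this proposition is a one-line deferral to Lemma~\ref{postorsion} and its proof, and your proposal is a faithful unpacking of that lemma's argument (finiteness via Lemma~\ref{lem:finite}, reduction to $\Tight_0(C;pq)$, total $2$-inconsistency forced by Lemmas~\ref{lem:overtwisted} and~\ref{lem:staytight}, the basic-slice factorization from Lemma~\ref{lem:thickenornot}, and the $d_3$/Lutz-twist identification of $\xi$). The uniqueness concern you flag at the end is precisely what Claims~A and~B in the proof of Lemma~\ref{postorsion} settle, so no new argument is needed.
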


\begin{proof}
Let $C$ denote the complement of the $(p,q)$-torus knot. Towards the end of Section~\ref{contactonCsec} we defined $\Tight_n(C;k)$ to be the space of contact structures on $C$ with boundary being convex with two dividing curves of slope $k$ and exactly $n$ convex torsion along a torus parallel to the boundary. 
We recall that Lemma~\ref{postorsion} says that for any $(p,q)$-torus knot and integer $k$ 
  \[
    |\Tight_n(C;k)|=2|(a_1+1)\cdots(a_{m-1}+1)||(b_1+1)\cdots (b_{n-1}+1)|
  \]
for any $n\in \frac 12 \N$, where the $a_i$ and $b_i$ are defined in Section~\ref{introgeneral}. This is the same as the number of totally $2$-inconsistent pairs of paths representing $q/p$. 

The contact structures in $\Tight_n(C;k)$ were constructed as described in the statement of the proposition. Moreover, any Legendrian representative of the $(p,q)$-torus knot with $\tb=k$ and convex $l$-torsion will have complement in $\Tight_n(C;k)$ and any element of $\Tight_n(C;k)$ will give such a Legendrian knot by gluing a solid torus with meridional slope $\infty$ to the contact structure and extending the contact structure over the solid torus to be the unique tight contact structure on the solid torus with the given boundary conditions. 

The discussion in Section~\ref{lutzsection} shows that $\xi$ is the claimed contact structure. 
\end{proof}

\subsection{Proof that the algorithm gives a complete classification}\label{justifyalgorithm}
We will now show that the algorithm from Section~\ref{thealgorithm} does indeed give all non-loose torus knots. 

We first consider non-loose Legendrian $(p,q)$-torus knots with $\tor = 0$. We first note that any such knot with $\tb = pq$ will be of the form $L_{P_1,P_2}$ for some pair of decorated paths $(P_1,P_2)$ representing $q/p$ by Lemma~\ref{lem:=pq}. Moreover, if a non-loose Legendrian knot with $\tor = 0$ has $\tb<pq$, then it will stabilize to one with $\tb=pq$ by Lemma~\ref{lem:<pq}. Thus we know that it will be in a Wing or a Diamond of $L_{P_1,P_2}$ for some decorated pair $(P_1,P_2)$ by Propositions~\ref{wings} and~\ref{alldiamonds}. 

Now if a non-loose Legendrian $(p,q)$-torus knot with $\tor = 0$ has $\tb=n>pq$, then its complement is in $\Tight_0(C;n)$ and hence is a destabilization of some $L_{P_1,P_2}$ for some decorated pair of paths by Lemma~\ref{lem:>pq}, and moreover they must be $2$-inconsistent by Lemma~\ref{lem:thickenornot}. Thus we see that such a knot must be in an infinite $\textsf{X}$ or $\textsf{V}$ from Propositions~\ref{propxwing} and~\ref{inftyV}. 

These observations show that the classification algorithm in the generic case (Steps 1 and 3 of the algorithm) give the desired result except when $pq>0$ and we are in the situation where $P_1$ has all one sign and $P_2$ has all the other sign. The only things that might not be immediately clear is the rotation numbers of $L^{pq}_{k,\pm}$. However, those easily follow from the computation of $R(P_1,P_2)$ for the $2$-inconsistent paths according to Lemma~\ref{computer}, and the proofs of Proposition~\ref{wings} and Proposition~\ref{alldiamonds} that indicates when compatible pairs of decorated paths stabilize to become the same. In the excluded case, we will not have an infinite $\textsf{X}$ associated to $L_{P_1,P_2}$ and $L_{-P_1,-P_2}$ with $\tor = 0$. Only the knots in the $\textsf{X}$ with $\tb>pq-p-q$ will have no convex torsion. Those with $\tb\leq pq-p-q$ will have half convex torsion by Proposition~\ref{shownotor}.

\begin{remark}
  To see that the generic $\textsf{X}$-wings are as depicted in Figure~\ref{fig-genericXwing}, we need to see that the crossing of the $\textsf{X}$ is above $pq$ when $pq<0$ and otherwise is below $pq$. This is actually clear by considering the inequalities in Theorem~\ref{looseLegbound} (shown graphically in Figure~\ref{geogfig}). Indeed, suppose the crossing of the $\textsf{X}$ was below $pq$ when $pq<0$ then the top part of the $\textsf{X}$ would not fit through the allowable range when $\tb=0$ (we see that when $\tb=0$ we must have $\rot$ between $-|pq|+|p|+|q|$ and $|pq|-|p|-|q|$). We can similarly argue for $pq>0$.
\end{remark}

In the exceptional cases (Step 2), we first consider $pq>0$. In this case, the above discussion shows that in $\xi_1$, we have an infinite $\textsf{V}$ together with some other diamonds. The only thing to consider is the claimed values for the rotation numbers. To see this we first consider the pair of paths $(P_1,P_2)$ with all signs the same. We saw in Proposition~\ref{dmerge} that the diamonds associated to $L_{P_1,P_2}$ and $L_{-P_1,-P_2}$ have a common lowest vertex that has $\tb=pq-p-q+2$. Now for the $2$-inconsistent pairs of decorated paths that are compatible with these paths, we see that they must be stabilized either strictly positively or strictly negatively to get to this lowest vertex (see Proposition~\ref{alldiamonds}). Thus we get the desired rotation numbers for these two Legendrian knots and the rotation numbers for the others follow from the proof of Proposition~\ref{alldiamonds}. 

We now consider the exceptional case when $pq<0$. Here the classification follows directly from the above discussion and Proposition~\ref{prop:extra}. 

Finally, the classification of non-loose torus knots with convex torsion in their complement follows directly from Proposition~\ref{moretorsion} and the fact that the non-loose representatives with half Giroux torsion come from applying a half Lutz twist along a transverse push-off $T$ of $L_-^i$ (see Section~\ref{classwogt} for the definition of this knot) in the contact structure $\xi_{P_1,P_2}$. (Notice $L_-^i$ has the same transverse push-off for any $i \in \mathbb{Z}$.) So the contact structure $\xi'_{P_1,P_2}$ discussed in the algorithm is the result of a half Lutz twist along $T$.


In Section~\ref{lutzsection}, we recalled how a half Lutz twist along a transverse knot affects the $d_3$-invariant of the contact manifold. 
Using this, we see that
\[
  d_3(\xi'_{P_1,P_2}) = \begin{cases} d_3(\xi_{P_1,P_2}) + |R(P_1,P_2)| - pq \, &pq>0,\\ d_3(\xi_{P_1,P_2}) - |R(P_1,P_2)| - pq \, &pq<0.\end{cases}
\]

\section{General results of non-loose torus knots}

Theorem~\ref{gen1} claims that any Legendrian $(p,q)$-torus knot destabilizes if $\tb\not =pq$ except for one with $\tb=|pq|-|p|-|q|$ when $pq<0$ and some with $\tb=pq$ do but others do not. 

\begin{proof}[Proof of Theorem~\ref{gen1}]
  This follows directly from Lemma~\ref{lem:<pq} and Lemmas~\ref{lem:thickenornot} and \ref{lem:thickening-negative}.
\end{proof}

The parity of the $d_3$-invariants of contact structures supporting non-loose torus knots is given in Theorem~\ref{parity} which we now prove.

\begin{proof}[Proof of Theorem~\ref{parity}]
We begin by noticing that when $pq>0$ the contact structure $\xi_0$ is obtained from the standard tight contact structure on $S^3$ by a full-Lutz twist on the maximal self-linking number transverse $(p,q)$-torus knot and $\xi_{-pq+p+q}$ is obtained on the same transverse knot by a half Lutz twist. The claimed result then follows from Proposition~\ref{shownotor} and the discussion of Lutz twist in Section~\ref{lutzsection}.

If $(P_1,P_2)$ are a pair of decorated paths such that $\xi_{P_1,P_2}$ supporting a non-loose $(p,q)$-torus knot with $\tb=pq$ (and all contact structures supporting non-loose torus knots have such a non-loose Legendrian knot), then we can draw a surgery diagram for $\xi_{P_1,P_2}$ as described in Section~\ref{sec:FareytoSurgery} and then use Equation~\eqref{formula:d3} to compute its $d_3$-invariant. Notice that in that equation the only term that depends on the decorations on $(P_1,P_2)$ is $c^2$. Recall that $c$ is the vector of rotation numbers of the link in the surgery presentation of $\xi_{P_1,P_2}$ and $c^2$ is computed with the intersection pairing given by the linking matrix $M$ of the surgery diagram. The class $c$ is a characteristic element of the pairing $M$ (see the proof of Corollary~3.6 in \cite{DingGeigesStipsicz04}, where they show that $c$ is related to $c_1$ of a complex structure, which is known to be characteristic by $9q$ where $q$ is the number of $(+1)$-contact surgeries in the diagram. Since $q=2$ in our case we see $c$ is characteristic). Now since the surgery diagram presents $S^3$ we know that $M$ is unimodular, we know that $c^2$ is congruent to the signature of $M$ modulo $8$. 
Hence all the decorated paths $(P_1,P_2)$ have the same $d_3$-invariant modulo $2$. 

Since for $pq>1$ we know there are always non-loose $L_{P_1,P_2}$ in $\xi_1$, see Section~\ref{diamondsforxi1}, we know that all $d_3$-invariants of contact structures supporting non-loose Legendrian knots with $\tor = 0$ must have odd $d_3$-invariants. Moreover, those with $\tor = n \in \mathbb{N}$ will have the same $d_3$-invariants since full Lutz twists do not change the $d_3$-invariant and those with $\tor = (2n-1)/2$ will have even $d_3$-invariants since half Lutz twists will change the $d_3$-invariant by the self-linking number of the transverse knot which is Lutz twisted about, see Section~\ref{classwithtorsion}, and we know these are all odd. We have a similar result for $pq<0$ since there is always some non-loose representative with $\tor=0$ in the contact structure $\xi_{|pq|-|p|-|q|+1}$, see Lemma~\ref{standardstructures}. 
\end{proof}

Theorem~\ref{thm:>pq} details all the possible Legendrian knots with $\tor=0$ and $\tb > pq$. 

\begin{proof}[Proof of Theorem~\ref{thm:>pq}]
  This follows directly from the classification given in Section~\ref{justifyalgorithm}, or more specifically Propositions~\ref{propxwing}, \ref{inftyV}, and~\ref{prop:extra}. 

  We are left to show that all $L^i_{\pm,k}$ can be realized as a Legendrian knot in Figure~\ref{fig:tb=pq+m}. We start with $i = pq+1$. First, a simple Kirby calculus shows that $L_-$ and $L_+$ are smooth $(p,q)$-torus knots. See Figure~\ref{fig:kirby}. 
\begin{figure}[htbp]{\tiny
  \vspace{0.1cm}
\begin{overpic}[scale=1,tics=10]{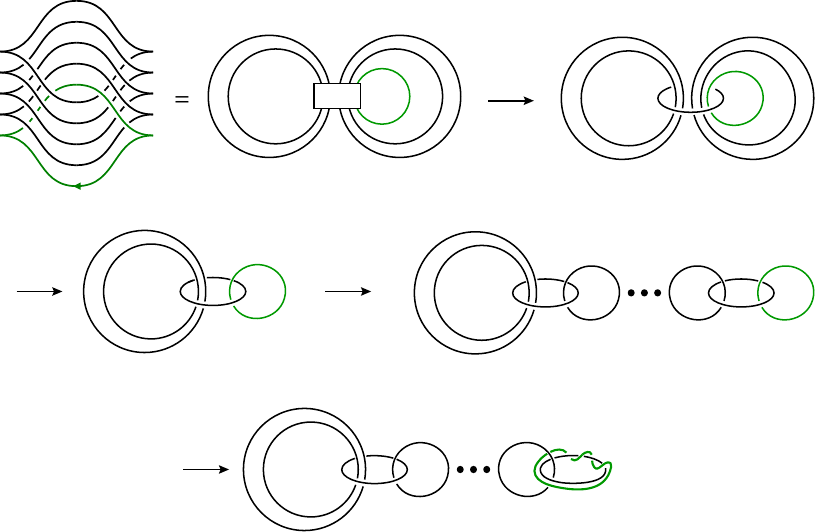}
  \put(-7,245){$(-\frac{p}{p'})$}
  \put(-33,223){$(-\frac{q}{q-q'})$}
  \put(-20,208){$(+1)$}
  \put(-20,198){$(+1)$}

  \put(87,235){$-\frac{p+p'}{p'}$}
  \put(115,215){$\frac{2q-q'}{q'-q}$}
  \put(201,220){$0$}
  \put(213,235){$0$}
  \put(157,208){$-1$}
    
  \put(265,235){$-\frac{p}{p'}$}
  \put(285,216){$\frac{q}{q'-q}$}
  \put(369,220){$1$}
  \put(383,235){$1$}
  \put(310,206){$1$}
  \put(233,215){$\text{blow up}$}
    
  \put(33,140){$-\frac{p}{p'}$}
  \put(55,125){$\frac{q}{q'-q}$}
  \put(75,113){$-1$}
  \put(0,120){$\text{blow down}$}

  \put(193,140){$-\frac{p}{p'}$}
  \put(215,125){$\frac{q}{q'-q}$}
  \put(235,113){$-2$}
  \put(257,125){$-2$}
  \put(278,133){$-2$}
  \put(330,133){$-2$}
  \put(352,125){$-1$}
  \put(155,120){$\text{blow up}$}

  \put(113,57){$-\frac{p}{p'}$}
  \put(133,40){$\frac{q}{q'-q}$}
  \put(152,28){$-2$}
  \put(174,40){$-2$}
  \put(196,48){$-2$}
  \put(246,48){$-2$}
  \put(270,42){$-1$}
  \put(75,35){$\text{handle slide}$}
\end{overpic}}
\caption{Various Kirby diagrams of a $(p,q)$-torus knot. On the upper left is a contact surgery presentation for the $(p,q)$-torus knot with $\tb=pq$.}
\label{fig:kirby}
\end{figure}
If we perform a Legendrian surgery on $L_\pm$ in the first row of Figure~\ref{fig:tb=pq+m}, then we obtain tight $L(p,-q) \# L(q,-p)$. Thus $\tor(L_\pm)=0$. Since $S^3_{pq}(T_{p,q}) \cong L(p,-q) \# L(q,-p)$ and $S^3_m(T_{p,q}) \not\cong S^3_n(T_{p,q})$ for $m \neq n$, the smooth surgery coefficient on $L_\pm$ 
must be $pq$, which is $\tb-1$. Thus $\tb(L_\pm)=pq+1$. Clearly there exist $n(p,q)$ different $L_-$ and the same for $L_+$. Now suppose some $L_-$ and $L_+$ are equivalent. This implies that the Legendrian surgery on them results in the same contact structure. We can calculate the rotation numbers of these $L_\pm$ using the Formula~(\ref{formula:rot}). Clearly, $\mathbf{rot}, M, \mathbf{lk}$ are the same for both $L_\pm$. The only difference is $r_0$. Thus $\rot(L_-) \neq -\rot(L_+)$, so they cannot be equivalent. Thus the first row of Figure~\ref{fig:tb=pq+m} represents $2n(p,q)$ non-loose torus knots with $\tb = pq + 1$ and $\tor=0$. 

Now consider $i = pq + m$ for $m>1$. If we perform a Legendrian surgery on $L_\pm$ in the second row of Figure~\ref{fig:tb=pq+m}, then we obtain Stein fillable contact manifolds and it is diffeomorphic to $S^3_{pq+m-1}(T_{p,q})$. Thus the smooth surgery coefficient on $L_\pm$ is $pq+m-1$ and $\tb(L_\pm) = pq + m$. The Legendrian surgery on each $L_\pm$ produces $n(p,q)$ different contact structures distinguished by the $\spinc$ structures on the Stein filling given by the surgery diagrams in Figure~\ref{fig:tb=pq+m} without the $(+1)$-surgery components, see~\cite[Theorem~1.2]{LiscaMatic97}. Finally, each $L_-$ and $L_+$ are not equivalent since we can calculate their rotation numbers as above and they are different. Thus the second row of Figure~\ref{fig:tb=pq+m} represents $2n(p,q)$ non-loose torus knots with $\tb = pq + m$ and $\tor = 0$. Since we know there are exactly $2n(p,q)$ of non-loose $(p,q)$-torus knots with $\tb>pq$, except with $pq<0$ and $\tb=|pq|-|p|-|q|$, we have established the theorem except in the exceptional case. 

In the case that $pq<0$ and $\tb=|pq|-|p|-|q|$, we know there are $2n(p,q)+1$ non-loose Legendrian knots, the extra one we are denoting $L_e$. We claim that none of the surgery diagrams in Figure~\ref{fig:tb=pq+m} gives $L_e$, given this we have completed the proof of Theorem~\ref{thm:>pq}. To see this, recall that $S_\pm(L_e)$ is non-loose for either choice of stabilization, but for all the other non-loose Legendrian knots with $\tb=|pq|-|p|-|q|$ one sign of stabilization will give a loose knot, while the other will remain non-loose. So we will establish our claim by showing that all the Legendrian knots in Figure~\ref{fig:tb=pq+m} become loose after one stabilization of the correct sign. 

To this end, consider the lower left diagram in Figure~\ref{fig:tb=pq+m}. Let $S$ be a standard neighborhood of a $\tb=-1$ unknot that contains the bottom Legendrian unknot with contact framing $(+1)$ which we will call $K$. We can assume that $L_+$ is contained in this neighborhood too. We will show that the complement of $S_-(L_+)$ in $S$ is overtwisted and thus the complement of $S_-(L_+)$ in $S^3$ is also overtwisted.  Let $S'$ be a standard neighborhood of $K$ in $S$, notice that $L_+$ sits on $\partial S'$ as a Legendrian divide. So $S\setminus S'$ is a positive basic slice with dividing slopes $-2$ and $-1$.  When we perform contact $(+1)$ surgery on $K$ we remove $S'$ from $S$ are replace it with a solid torus $S_{-1}$ with lower meridian ${-1}$ and dividing slope $-2$, call the result $S_K$. Clearly $S_K$ is overtwisted, but when we remove $S_{-1}$ from $S_K$ we get a tight basic slice. Notice that $S_{-1}$ is a standard neighborhood of $L_+$ in $S_K$ (since it sits on $\partial S_{-1}$ as a Legendrian divide). If we negatively stabilize $L_+$ in $S_{-1}$ the result will have a standard neighborhood with boundary slope $-\infty$ (and lower meridian $-1$) and the complement of the standard neighborhood in $S_{-1}$ will be a negative basic slice with dividing slopes $-\infty$ and $-2$. Thus the complement of this neighborhood in $S_K$ will be the union of a positive basic slice with dividing slopes $-2$ and $-1$ and a negative basic slice with dividing slopes $-\infty$ and $-2$. Since the path from $-\infty$ to $-2$ to $-1$ can be shortened and our basic slices have opposite sign, we see that the complement of $S_-(L_+)$ in $S_K$ is overtwisted as claimed. 
\end{proof}


Theorem~\ref{thm:<=pq} gives the number of non-loose Legendrian knots $\tor = 0$ and $\tb = pq$. 

\begin{proof}[Proof of Theorem~\ref{thm:<=pq}]
  From Lemma~\ref{lem:=pq}, we know that the number of Legendrian $(p,q)$-torus knots with tight complement and $\tor = 0$ and $\tb = pq$ is $m(p,q)$. For $pq>0$ there are no such Legendrian knots in $(S^3,\xi_{std})$ so in this case the number of non-loose such knots is $m(p,q)$. However, by \cite{EtnyreHonda01}, we know that there are $2\lceil q/p\rceil$ such knots in $(S^3,\xi_{std})$ and hence we have the claimed number of non-loose Legendrian knots. The fact that they come from the claimed surgery diagram was shown in Section~\ref{sec:FareytoSurgery}. 
\end{proof}

We discuss that there can be arbitrarily many peaks and deep valleys in Theorem~\ref{bigwings}.  
\begin{proof}[Proof of Theorem~\ref{bigwings}]
  It is clear that one can choose $q/p$ so that there are arbitrarily many continued fraction blocks in $P_1$ and $P_2$ and that these blocks are arbitrarily long except for the first one. According to Proposition~\ref{merge} and~\ref{alldiamonds}, the result about the number of peaks now follows by that we can choose $i$-inconsistent decorated pair of paths for arbitrarily large $i$. According to the proof of Proposition~\ref{merge}, the depth of the valleys is determined by the difference between the lengths of two continued fraction blocks $A_{i-1}$ and $B_i$, or $B_{i-1}$ and $A_i$, which we can make arbitrarily large. 
\end{proof}

In Theorem~\ref{numbersupportingnonloose}, we give an upper bound on the number of overtwisted contact structures supporting non-loose $(p,q)$-torus knots. 

\begin{proof}[Proof of Theorem~\ref{numbersupportingnonloose}]
  Any non-loose Legendrian torus knot with $\tor = 0$ is in a contact structure given by a $2$-inconsistent pair of paths by the discussion in Section~\ref{subsec:pathsinFG} and the classification given in Section~\ref{thealgorithm}. Moreover, there are $2n(p,q)$ of such pairs by Lemmas~\ref{lem:>pq} and~\ref{lem:thickenornot}. Since $(P_1,P_2)$ and $(-P_1,-P_2)$ give the same contact structures, we see an upper bound is $n(p,q)$ as claimed. 

  Now allow non-loose Legendrian knots with any convex torsion. According to Proposition~\ref{moretorsion}, the extra contact structures only come from totally $2$-inconsistent pairs of paths. According to Lemma~\ref{postorsion}, the number of totally $2$-inconsistent pairs of paths is twice what we want. Again, since $(P_1,P_2)$ and $(-P_1,-P_2)$ give the same contact structures, we see an upper bound in the formula is correct. 
\end{proof}

We now establish Theorem~\ref{thm:torsion} about the convex torsion in non-loose torus knot complements. 

\begin{proof}[Proof of Theorem~\ref{thm:torsion}]
  This directly follows from Lemmas~\ref{ClaimA}, \ref{ClaimB} and \ref{lem:finite}.
\end{proof}

We end by considering non-loose transverse knots by giving the proof of Theorem~\ref{gentransverse}.

\begin{proof}[Proof of Theorem~\ref{gentransverse}]
  As noted in \cite[Theorem~2.10]{EtnyreHonda01}, the classification of transverse knots is equivalent to the classification of Legendrian knots up to negative stabilization. So any non-loose transverse knot will be the transverse push-off of some non-loose Legendrian knot. Suppose $\xi$ supports non-loose Legendrian knots with a mountain range given in Figure~\ref{fig-genericXwing}. Since we only need to consider Legendrian knots up to negative stabilization, we only need to consider the lower left of the figure. If the ``wings" are non-trivial (that is, there is more than just an  $\textsf{X}$ in the mountain range), then none of the Legendrian knots can have Giroux torsion. This is because by Lemma~\ref{lem:thickenornot} the $\textsf{X}$ is associated to a $2$-inconsistent decorated pair of paths, but one cannot add torsion to any Legendrian knot in the $\textsf{X}$ unless the decorated pair of paths is totally $2$-inconsistent by Lemma~\ref{lem:overtwisted}. For there to be non-trivial wings in the mountain range, the $2$-inconsistent decorated pair of paths would need to be compatible with a $3$-inconsistent decorated pair of paths. Finally, we notice that a totally $2$-inconsistent decorated pair of paths cannot be 
  compatible with a $3$-inconsistent pair of paths, see Section~\ref{pairsodecorated} for the construction of compatible decorated pairs of paths. 
Thus, in the case there are non-trivial wings, see the transverse push-offs of these Legendrian knots give transverse knots as in Item~\eqref{1} of Theorem~\ref{gentransverse}. 

If the mountain range has just an  $\textsf{X}$, then it might support non-loose knots with convex torsion or not. If there is no convex torsion, then we are in the case above, if there is convex torsion then we know that for every point in the mountain range, there is an infinite number of Legendrian knots with different convex torsion in their complement. Their transverse push-offs will give transverse knots as in Item~\eqref{2} of the theorem. 
\end{proof}

\providecommand{\bysame}{\leavevmode\hbox to3em{\hrulefill}\thinspace}
\providecommand{\MR}{\relax\ifhmode\unskip\space\fi MR }
\providecommand{\MRhref}[2]{%
  \href{http://www.ams.org/mathscinet-getitem?mr=#1}{#2}
}
\providecommand{\href}[2]{#2}

\end{document}